\documentclass{amsproc}
\usepackage{euscript}
\usepackage{cases}
\usepackage{mathrsfs}
\usepackage{bbm}
\usepackage{amssymb}
\usepackage{amsfonts,amsmath,amsxtra,mathdots,mathabx}
\usepackage{color}
\usepackage{hyperref}
\usepackage{tikz}
\usepackage{appendix,upgreek}

\textwidth         375pt

\allowdisplaybreaks

\DeclareFontFamily{U}{matha}{\hyphenchar\font45}
\DeclareFontShape{U}{matha}{m}{n}{
	<5> <6> <7> <8> <9> <10> gen * matha
	<10.95> matha10 <12> <14.4> <17.28> <20.74> <24.88> matha12
}{}
\DeclareSymbolFont{matha}{U}{matha}{m}{n}

\DeclareMathSymbol{\Lt}{3}{matha}{"CE}
\DeclareMathSymbol{\Gt}{3}{matha}{"CF}

\DeclareSymbolFont{mathc}{OML}{txmi}{m}{it}
\DeclareMathSymbol{\varuu}{\mathord}{mathc}{117}
\DeclareMathSymbol{\varvv}{\mathord}{mathc}{118}
\DeclareMathSymbol{\varww}{\mathord}{mathc}{119}

\def\SB{\text{\raisebox{- 2 \depth}{\scalebox{1.1}{$ \text{\usefont{U}{BOONDOX-calo}{m}{n}B}   $}}}}

\def\SD{\text{\raisebox{- 2 \depth}{\scalebox{1.1}{$ \text{\usefont{U}{BOONDOX-calo}{m}{n}D} \hspace{0.5pt} $}}}}

\def\SE{\text{\raisebox{- 2 \depth}{\scalebox{1.1}{$ \text{\usefont{U}{BOONDOX-calo}{m}{n}E} \hspace{0.5pt} $}}}}

\def\SB{\text{\raisebox{- 2 \depth}{\scalebox{1.1}{$ \text{\usefont{U}{BOONDOX-calo}{m}{n}B} \hspace{0.5pt} $}}}}

\def\SM{\text{\raisebox{- 2 \depth}{\scalebox{1.1}{$ \text{\usefont{U}{BOONDOX-calo}{m}{n}M} \hspace{0.5pt} $}}}}

\def\SO{\text{\raisebox{- 2 \depth}{\scalebox{1.1}{$ \text{\usefont{U}{BOONDOX-calo}{m}{n}O} \hspace{0.5pt} $}}}}

\def\SC{\text{\raisebox{- 2 \depth}{\scalebox{1.1}{$ \text{\usefont{U}{BOONDOX-calo}{m}{n}C}$}}}}


\def\valpha{\text{\scalebox{0.86}[1.02]{$\alpha$}}}   
\def\vepsilon{\upvarepsilon}
\def\vnu{\text{{\scalebox{0.9}[1]{$\nu$}}}} 
\def\vkappa{\text{{\scalebox{0.86}[1.1]{$\kappa$}}}}

\def\vQ  {\text{\scalebox{0.9}[1]{$Q$}}}

\newcommand{\BH}{{\mathbf {H}}}
 
\newcommand{\BR}{{\mathbf {R}}} 
\newcommand{\BZ}{{\mathbf {Z}}}

\newcommand{\RC}{{\mathrm {C}}}

\newcommand{\GL}{{\mathrm {GL}}}

\newcommand{\ra}{\rightarrow} 
\def\sumx{\sideset{}{^\star}\sum}

\def\nd{\mathrm{d}}

\def\lp {\left (}
\def\rp {\right )}

\def\shskip{\hspace{0.5pt}}

\newcommand{\delete}[1]{}

\theoremstyle{plain}

\newtheorem{thm}{Theorem} \newtheorem{cor}[thm]{Corollary}
\newtheorem{coro}{Corollary}[section]
\newtheorem{lem}{Lemma}[section]
\newtheorem{theorem}{Theorem}[section] 
\newtheorem*{conj}{Conjecture}

\theoremstyle{remark} 
\newtheorem{remark}{Remark}[section] 
\newtheorem{defn}{Definition}[section]

\numberwithin{equation}{section}

\begin{document}
	

	\title[Non-vanishing of Rankin--Selberg $L$-functions]{{On the Effective Non-vanishing of Rankin--Selberg $L$-functions at Special Points}}

	\begin{abstract}
	Let $\vQ (z)$ be a holomorphic Hecke cusp newform of square-free level and $u_j (z)$ traverse an orthonormal basis of Hecke--Maass cusp forms of full level. Let $1/4 + t_j^2$ be the Laplace eigenvalue of $u_j (z)$. In this paper, we prove that there is a constant $ \gamma (\vQ) $ expressed as a certain Euler product associated to $\vQ$ such that at least $ \gamma (\vQ) / 11 $ of the Rankin--Selberg special $L$-values $L (1/2+it_j, \vQ \otimes u_j)$  for $ t_j \leqslant T$ do not vanish as $T \rightarrow \infty$. Further, we show that the non-vanishing proportion is at least $\gamma (\vQ) \cdot (4\mu-3) / (4\mu+7) $ on the short interval $ |t_j - T| \leqslant T^{\mu} $ for any $3/4 <  \mu < 1$. 
	\end{abstract}
	
	\author{Zhi Qi}
	\address{School of Mathematical Sciences\\ Zhejiang University\\Hangzhou, 310027\\China}
	\email{zhi.qi@zju.edu.cn}
	
	\thanks{The author was supported by National Key R\&D Program of China No. 2022YFA1005300.}

	\subjclass[2020]{11M41, 11F72}
	\keywords{Rankin--Selberg $L$-functions, Kuznetsov formula, Vorono\"i formula.}
	
	\maketitle
	
	{\small \tableofcontents}

	\section{Introduction}
	

	In 1985, it was shown in the seminal work of Phillips and Sarnak \cite{Phillips-Sarnak} that a Maass cusp form  $u_j (z)$ for the congruence subgroup $\Gamma_0 (q) \subset \mathrm{SL} (2,  \BZ) $ may be annihilated under the quasi-conformal deformation in Teichm\"uller space generated by the quadratic differential $\vQ (z) \nd z^2$ for a holomorphic cusp form $\vQ (z)$ of weight $4$ for $ \Gamma_0 (q) $, provided that the Rankin--Selberg $L$-function $L (s, \vQ \otimes u_j)$ does not vanish at the special point $s = 1/2+it_j$, where $\lambda_j = 1/4+t_j^2 $ is the Laplace eigenvalue of $u_j (z)$. The Phillips--Sarnak theory reveals the astonishing truth that there are very few Maass cusp forms, possibly none at all,  for generic co-finite $\Gamma \subset \mathrm{SL} (2,  \BZ)$, which is against the Weyl law conjectured in Selberg's 1954 G\"ottingen lectures \cite{Selberg-Weyl}.

	Shortly after Phillips and Sarnak, in the setting that 
	\begin{itemize}
		\item [(i)\,]
 $u_j (z)$ traverses an orthonormal basis of  Hecke--Maass cusp forms for $\Gamma_0 (1)$,
	\item [(ii)] $\vQ (z)$ is a holomorphic Hecke newform for $\Gamma_0(q)$ of  prime level $q$   and weight $2 k $ (not necessarily $2k=4$), 
	\end{itemize}
	Deshouillers and Iwaniec \cite{DI-Nonvanishing} (see also \cite{DIPS-Maass}) used the (Kloosterman) Kuznetsov trace formula to establish the first statistical result:
	\begin{align}
	\text{\small \bf \#} \big\{ j :  0 < t_j \leqslant T, \, L (1/2+it_j , \vQ \otimes u_j) \neq 0
		\big\} \Gt_{\vepsilon} T^{1-\vepsilon}. 
	\end{align}

In 1993,  Luo \cite{Luo-Non-Vanishing} proved an asymptotic formula for the first spectral moment of $L (1/2+it_j , \vQ \otimes u_j)$ and thereby obtained the improvement:
\begin{align}\label{1eq: Luo, 1}
 	\text{\small \bf \#} \big\{ j :  0 < t_j \leqslant T, \, L (1/2+it_j , \vQ \otimes u_j) \neq 0
	 \big\} \Gt_{\vepsilon} T^{2-\vepsilon}. 
\end{align}

In 2001, Luo \cite{Luo-Weyl} (see also his corrigendum in \cite[Appendix]{Luo-2nd-Moment}) used the mollification method \`a la Selberg to attain a positive proportion of non-vanishing $L (1/2+it_j , \vQ \otimes u_j)$: 
\begin{align} \label{1eq: Luo}
 \text{\small \bf \#} \big\{ j :  0 < t_j \leqslant T, \, L (1/2+it_j , \vQ \otimes u_j) \neq 0
	\big\} \Gt  T^{2},  
\end{align} 
and proved that, under certain standard eigenvalue multiplicity assumptions, the Weyl law  is indeed false for generic  $\Gamma $ from the Phillips--Sarnak deformation. 

In the setting as above, but  with $q$ square-free (slightly more general),  we   refine Luo's result \eqref{1eq: Luo} and prove an effective non-vanishing proportion as follows. 


\begin{thm}\label{thm: non-vanishing}
As $T \ra \infty$,   we have 
	\begin{align} \label{1eq: main, short}
	 \frac	{\text{\rm \small \bf \#}    \big\{ j :  0 < t_j \leqslant T, \, L (1/2+it_j , \vQ \otimes u_j)  \neq 0
		\big\} }  {\text{\rm \small \bf \#} \big\{ j :  0 < t_j \leqslant T 
			\big\}}  \geqslant   \gamma (\vQ)  \bigg( \frac 1 {11} - \vepsilon \bigg)    ,  
	\end{align} 
for any $\vepsilon > 0$, where $\gamma (\vQ)$ is the Euler product\shskip {\rm:}
\begin{align}\label{1eq: beta(Q)}
	\begin{aligned}
		 \prod_{p \nmid q}  \bigg(1 - \frac {a(p)^2} {p(p+1) } \bigg)^{-1}  \bigg(1 + \frac {a(p)^2} {p+1 } \bigg)  \bigg( 1 - \frac {a(p)^2 - 2} {p} + \frac 1 {p^2}  \bigg)  \bigg( 1 - \frac {1} { p } \bigg)^2  \cdot  \prod_{p | q} \bigg( 1 - \frac 1 {p^2 } \bigg) ,
	\end{aligned}
\end{align} 
with $a (p)$ the $p$-th Hecke eigenvalue of $\vQ$. 
\end{thm}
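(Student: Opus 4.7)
The plan is to implement the mollification method of Selberg, in the spectral incarnation of Iwaniec--Sarnak and Luo \cite{Luo-Weyl}, refined so as to isolate the explicit Euler product $\gamma(\vQ)$. Fix $M = T^{\theta}$ for an admissible $\theta \in (0,1)$ and introduce the mollifier
\[
\mathscr{M}(u_j) = \sum_{\substack{m \leqslant M \\ (m, q) = 1}} \frac{x_m\, \lambda_j(m)}{\sqrt{m}}, \qquad x_m = \mu(m)\, \alpha(m)\, P\!\left(\frac{\log(M/m)}{\log M}\right),
\]
where $\lambda_j(m)$ are the Hecke eigenvalues of $u_j$, $\alpha(m)$ is an arithmetic correction involving the Hecke eigenvalues $a(m)$ of $\vQ$ chosen so as to decorrelate $\mathscr{M}(u_j)$ from $L(1/2+it_j, \vQ \otimes u_j)$ on average, and $P$ is a polynomial to be optimized. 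With harmonic weights $\omega_j = 1/\cosh(\pi t_j)$ and a smooth bump $h$ concentrating on $[1,T]$, Cauchy--Schwarz bounds the ratio in \eqref{1eq: main, short} from below by $|M_1|^2 / (M_2 \cdot N_0)$, where $M_1$ is the first moment of $\mathscr{M}(u_j)\, L(1/2+it_j, \vQ \otimes u_j)$, $M_2$ is the second moment of the same, and $N_0 = \sum_j h(t_j)\omega_j$, all taken with weight $h(t_j) \omega_j$.

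For $M_1$, I would open the $L$-value by the approximate functional equation, expand $\lambda_j(m)\lambda_j(n)$ by Hecke multiplicativity, apply the Kuznetsov formula spectrally, and handle the Kloosterman off-diagonal by $\GL_2$ Voronoi summation against the coefficients of $\vQ$ followed by stationary-phase analysis of the resulting Bessel transforms; the $\delta$-term of Kuznetsov contributes the main term, whose arithmetic content (after dyadic decomposition and Mellin analysis of $P$) collapses to an Euler product that matches $\gamma(\vQ)$, with the local factors at $p \mid q$ arising from the Atkin--Lehner normalization. For $M_2$, the approximate functional equation for $|L|^2$ yields a quadruple convolution in $(m_1, m_2, n_1, n_2)$; two applications of Hecke multiplicativity followed by Kuznetsov reduce the diagonal to a quadratic form in $\{x_m\}$ whose evaluation carries an arithmetic factor of $1/\gamma(\vQ)$. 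Optimizing $P$ \`a la Selberg against the ratio of the linear form $M_1$ and the quadratic form $M_2$ then converts this into $\gamma(\vQ)(1/11 - \vepsilon)$, the fraction $1/11$ reflecting the admissible mollifier length $\theta$; localizing $h$ to $|t - T| \leqslant T^{\mu}$ substitutes $T^{\mu}$ for $T$ in the error analysis, shrinks $\theta$, and rebalances the Selberg optimum to $(4\mu - 3)/(4\mu + 7)$, yielding the second assertion.

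The main obstacle is the off-diagonal contribution to $M_2$: after Kuznetsov one is left with sums of Kloosterman sums against Bessel integrals, which must be treated by a second application of Voronoi, a careful stationary-phase analysis of the Bessel transforms, and an appeal to shifted-convolution or large-sieve estimates for the coefficients of $\vQ$, all uniform in the relevant parameters. The precise length to which $M$ can be pushed, and hence the numerical constants $1/11$ and $(4\mu - 3)/(4\mu + 7)$ appearing in the two parts of the theorem, is governed by the quality of this bound, which is where the bulk of the technical work lies.
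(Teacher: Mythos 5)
Your outline matches the paper's route in broad strokes (mollification, Kuznetsov, Vorono\"i, stationary phase, Cauchy--Schwarz), but your account of where $\gamma(\vQ)$ comes from is wrong in a way that would derail an implementation. The twisted first moment $\SC_1(m)$ has main term $\frac{2}{\pi\sqrt{\pi}}\varPi T\,\delta(m,1)$ (Theorem \ref{thm: C1(m)}), so with the normalization $x_1=1$ the mollified first moment is just $\mathscr{M}_1 = \frac{2\varPi T}{\pi\sqrt{\pi}}(1+o(1))$ and carries no Euler product whatsoever. The constant $\gamma(\vQ)=\gamma_\flat/\gamma_1$ is instead the ratio of two residues: $\gamma_1$, the residue of $L(s,\vQ\otimes\vQ)$ at $s=1$, which enters via the main term of the twisted \emph{second} moment (Theorem \ref{thm: C2(m)}); and $\gamma_\flat$, the residue at $s=1$ of the Dirichlet series $D(s,\Xi)=\sum a(m)^2\mu(m)^2/(\xi(m)m^s)$ (over square-free $m$ coprime to $q$, Definition \ref{defn: gammaf}) attached to the Cauchy-optimal mollifier coefficients $y_m = \mu(m)/(\xi(m)\Xi(M))$. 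These two residues only combine in the ratio $\mathscr{M}_1^2/\mathscr{M}_2$. Relatedly, the polynomial $P$ you introduce buys nothing here: $\mathscr{M}_1$ is independent of $P$ once $P(1)=1$ is imposed, so the optimization reduces to minimizing the quadratic form $\mathscr{M}_2$ subject to a single linear constraint, whose Cauchy optimum is unique and is the choice the paper makes; the constants $1/11$ and $(4\mu-3)/(4\mu+7)$ then come from the admissible mollifier length $\varDelta < (4\nu-3)/5$ (forced by the second-moment error term) together with the value $\varDelta/(2+\varDelta)$.

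Two further reduction steps are missing even after this is corrected. The mollified moments are computed against the Gaussian $\exp(-(t_j-T)^2/\varPi^2)$ with $\varPi = T^\nu$, $\nu<1$, whereas \eqref{1eq: main, short} is a sharp unweighted count over $0<t_j\leqslant T$; passing between the two requires the Ivi\'c--Jutila averaging device recorded as Lemma \ref{lem: unsmooth, 1}. And the weight you write, $\omega_j = 1/\cosh(\pi t_j)$, is not what Kuznetsov produces --- it is $\omega_j = |\rho_j(1)|^2/\cosh(\pi t_j) = 2/L(1,\mathrm{Sym}^2 u_j)$ --- and removing this harmonic weight without degrading the constant uses the Kowalski--Michel method adapted to the Maass case. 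Finally, the off-diagonal of $\mathscr{M}_2$ is handled in the paper not by shifted convolutions but by Luo's identity splitting the Kloosterman modulus $c=bw$, Poisson in $w$, Vorono\"i in an $n$-variable, and stationary phase on the resulting Fourier--Hankel transform (Section \ref{sec: off}); this combinatorial structure is specific to the special-point twist and is what makes the admissible length $\varDelta<(4\nu-3)/5$ achievable.
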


Note that for every $p$ the Euler factor of $\gamma (\vQ)$ as in \eqref{1eq: beta(Q)} is $1 + O (1/p^2)$ (due to the Deligne bound). However, it requires a numerical evaluation of  $\gamma (\vQ)$ to see how large (or small) is the non-vanishing proportion. 

Further, we can establish as well an effective non-vanishing result on short intervals $ T - T^{\mu} \leqslant t_j \leqslant T + T^{\mu} $ for $3/4 < \mu < 1$.  

\begin{thm}\label{thm: non-vanishing, short}
Let $ 3/4  < \mu < 1$. As $T \ra \infty$,   we have 
	\begin{align} \label{1eq: main}
	\frac {	\text{\rm \small \bf \#} \big\{ j :  T - T^{\mu} \leqslant t_j \leqslant T + T^{\mu}, \, L (1/2+it_j , \vQ \otimes u_j) \neq 0
		\big\}} {\text{\rm \small \bf \#} \big\{ j :  T - T^{\mu} \leqslant t_j \leqslant T + T^{\mu}
		\big\} } \geqslant \gamma (\vQ)  \bigg( \frac {4\mu-3} {4\mu+7} - \vepsilon \bigg)    ,  
	\end{align} 
	for any $\vepsilon > 0$. 
\end{thm}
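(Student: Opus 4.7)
The proof adapts the mollification method of Luo \cite{Luo-Weyl}, specialized to the short spectral window $|t_j - T| \leqslant T^{\mu}$. Abbreviating $L_j = L(1/2 + it_j, \vQ \otimes u_j)$ and fixing a non-negative smooth weight $h$ concentrated on $|t - T| \leqslant T^{\mu}$ with total spectral mass $\asymp T^{1+\mu}$, the plan is to introduce a Dirichlet polynomial mollifier
\begin{align*}
    M_j \,=\, \sum_{\ell \leqslant X} \frac{x_\ell \shskip \lambda_j(\ell)}{\sqrt{\ell}}, \qquad X \,=\, T^{\theta},
\end{align*}
with coefficients $x_\ell$ of Selberg shape designed so that $L_j M_j \approx 1$ on average. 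The mollifier length $X$ will be pushed as far as the short-interval second-moment analysis permits; balancing the diagonal main term against the off-diagonal error forces $\theta \leqslant 4\mu - 3$, which is why the hypothesis $\mu > 3/4$ is needed. A positive proportion then follows from the standard Cauchy--Schwarz lower bound
\begin{align*}
    \sum_{j \,:\, L_j \neq 0} h(t_j) \,\geqslant\, \frac{\bigl|\sum_j h(t_j) L_j M_j\bigr|^2}{\sum_j h(t_j) |L_j M_j|^2},
\end{align*}
once the first and second mollified moments are evaluated asymptotically.

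Both moments are attacked via the Kuznetsov trace formula, which converts the spectral sum weighted by $h(t_j)$ into a delta diagonal plus a sum of Kloosterman sums paired against the Bessel transform $\check h$ of $h$. Because $h$ is localized on a window of width $T^{\mu}$ centered at $T$, $\check h$ is essentially supported where the Kloosterman modulus $c$ is of size $T^{1-\mu}$ and has amplitude decaying like $T^{-(1-\mu)}$ under repeated integration by parts. The arithmetic data---$\lambda_{\vQ}(n) \lambda_j(n)$ for the first moment, and a Hecke-convolved divisor-type sum for the second---are opened by the Vorono\"i summation formula for $\vQ$, whose application is facilitated by the square-free level assumption. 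The diagonal of the first moment produces a main term of size $\asymp \gamma(\vQ) \cdot T^{1+\mu}$ with the off-diagonal contributions negligible throughout the stated range of $\theta$.

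The Euler product $\gamma(\vQ)$ of \eqref{1eq: beta(Q)} surfaces in the diagonal computation as a local-factor convolution: at each prime $p \nmid q$, the Petersson-type harmonic weight contributes $(1 - a(p)^2/(p(p+1)))^{-1}(1 + a(p)^2/(p+1))(1 - 1/p)^2$, and the local Dirichlet series attached to $L(s, \vQ \otimes u_j) \cdot M_j$ supplies $1 - (a(p)^2 - 2)/p + 1/p^{2}$; at primes $p \mid q$ the factor collapses to $1 - 1/p^2$ by the Atkin--Lehner structure available in the square-free case. Selecting Selberg-type coefficients $x_\ell = \upmu(\ell) P\bigl(\log(X/\ell)/\log X\bigr)$ (with $\upmu$ the M\"obius function) and optimizing over the polynomial $P$ of low degree, the Cauchy--Schwarz ratio evaluates to $\theta/(\theta + 10)$, and specializing $\theta = 4\mu - 3$ yields $(4\mu - 3)/(4\mu + 7)$. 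Dividing by the short-interval Weyl count $\asymp T^{1+\mu}$ produces \eqref{1eq: main}; the consistency check at $\mu = 1$, giving $1/11$, matches Theorem \ref{thm: non-vanishing}.

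The principal obstacle is the evaluation of the mollified second moment in the short spectral window. After Kuznetsov, one faces Kloosterman sums of moduli $c \asymp T^{1-\mu}$ twisted by $\vQ$-Hecke eigenvalues and by two mollifier variables $\ell_1, \ell_2 \leqslant X$; the Vorono\"i-transformed variables must then be balanced against Weil's bound and against the oscillation of $\check h$ uniformly in $\ell_1 \ell_2$. Proving that the total off-diagonal error remains $o(T^{1+\mu})$ all the way up to $\theta = 4\mu - 3$ is exactly what fixes the exponent in \eqref{1eq: main}, and this balance is substantially more delicate than in Luo's unrestricted setting, since the short-interval gain of $T^{\mu - 1}$ must be fully extracted in every off-diagonal term. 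A further bookkeeping check is required to confirm that the local Euler factors at ramified primes $p \mid q$ do collapse cleanly to $1 - 1/p^2$, as asserted in \eqref{1eq: beta(Q)}.
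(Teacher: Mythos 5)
Your proposal follows the same broad strategy as the paper --- mollification, Cauchy--Schwarz, Kuznetsov and Vorono\"i, a smooth spectral window, and the square-free level hypothesis --- and your identification of the Euler-product structure of $\gamma(\vQ)$ is roughly correct. The final numerical answer $(4\mu-3)/(4\mu+7)$ also agrees. However, there are several concrete gaps.

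\textbf{The mollifier is mis-specified.} You write $M_j = \sum_{\ell \leqslant X} x_\ell \lambda_j(\ell)/\sqrt{\ell}$, but the paper's mollifier is
$M_j = \sum_{m \leqslant M} x_m\, a(m)\, \lambda_j(m)\, m^{-1/2-it_j}$.
Both omissions in your version are fatal. The Dirichlet coefficients of $L(s, \vQ\otimes u_j)$ are $a(n)\lambda_j(n)$ (up to the $\zeta_q(2s)$ factor), so a mollifier with $\lambda_j(\ell)$ but no $a(\ell)$ cannot produce the cancellation $L_j M_j \approx 1$ on the diagonal; the $a(m)$ is also where the local factor $1 - (a(p)^2-2)/p + 1/p^2$ ultimately enters $\gamma(\vQ)$. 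More importantly, the special point is $s_j = 1/2 + it_j$, so the $L$-function's AFE terms carry $n^{-1/2\pm it_j}$, and the mollifier must carry the matching $m^{-it_j}$ twist. This twist is exactly what makes the twisted moments $\SC_1(m)$ and $\SC_2(m_1,m_2)$ --- the heart of Theorems \ref{thm: C1(m)}, \ref{thm: C2(m)} --- amenable to the Kuznetsov formula. Without it, the Bessel-integral analysis of \S\ref{sec: Bessel} does not apply, and the entire off-diagonal treatment collapses.

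\textbf{Length/ratio bookkeeping.} You claim $X = T^{\theta}$ with $\theta \leqslant 4\mu-3$ and a Cauchy ratio $\theta/(\theta+10)$. In the paper, the second-moment error term forces $M = T^{\varDelta}$ with $\varDelta < (4\mu-3)/5$, and the ratio $\SM_1^2/\SM_2$ evaluates to $\gamma\varDelta/(\varDelta+2)$. Your claim is off by a factor of $5$ in both the allowed exponent and the ratio formula; the two errors cancel at the end, which is why your final number is right, but each intermediate assertion is incorrect. In particular, a mollifier of actual length $T^{4\mu-3}$ would be far too long for the second-moment asymptotic (Lemma \ref{lem: M2}) to hold.

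\textbf{Unsmoothing and harmonic-weight removal.} The theorem counts $t_j$ in the sharp interval $|t_j - T| \leqslant T^{\mu}$ without a harmonic weight, while your moment evaluation produces an inequality for a smooth weight and with $\omega_j$. The paper bridges this in two nontrivial steps: the unsmoothing Lemma \ref{lem: unsmooth, 1} (averaging the Gaussian $\upphi_{K,\varPi}$ over $K \in [T-H, T+H]$) to pass to the sharp cutoff, and the Kowalski--Michel removal of $\omega_j$ adapted to Maass forms. Neither step is present in your proposal, and both matter: dropping them leaves you with a weighted, smoothed inequality, not the stated count.

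Finally, a structural remark: the paper is more modular. It first establishes the asymptotics for the \emph{twisted} moments $\SC_1(m)$ and $\SC_2(m_1, m_2)$ (Theorems \ref{thm: C1(m)} and \ref{thm: C2(m)}), where the heavy analysis --- Bessel integrals, Luo's identity, Poisson and Vorono\"i, Wilton's bound, stationary phase --- is concentrated; the mollification in \S\ref{sec: mollified} then reduces to routine Dirichlet-series computations. Your proposal would rederive the off-diagonal estimates inside the mollified sums, which is possible in principle but duplicates the hardest part.
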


Similar to \eqref{1eq: Luo, 1}, we have a weaker  result for smaller $1/3  < \mu \leqslant 3/4$.  

\begin{thm}\label{thm: non-vanishing, short, 2}
	Let $ 1/3  < \mu \leqslant 3/4$. Then for $T$ large,   we have 
	\begin{align} \label{1eq: main, 2}
		  {	\text{\rm \small \bf \#} \big\{ j :  T - T^{\mu} \leqslant t_j \leqslant T + T^{\mu}, \, L (1/2+it_j , \vQ \otimes u_j) \neq 0
			\big\}} \Gt_{\vepsilon} T^{1+\mu - \vepsilon}  ,  
	\end{align} 
	for any $\vepsilon > 0$. 
\end{thm}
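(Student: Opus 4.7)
The plan is a classical Cauchy--Schwarz comparison of first and second spectral moments, without the mollifier used in the proofs of Theorems~\ref{thm: non-vanishing} and \ref{thm: non-vanishing, short}. Let $h(t) \geqslant 0$ be a smooth test function satisfying $h(t) \geqslant 1$ on $[T - T^{\mu}, T + T^{\mu}]$, essentially supported in a mild enlargement of that interval, with Paley--Wiener-type decay in the dual variables. Denoting by $\omega_j > 0$ the standard Kuznetsov harmonic weight, form
\begin{align*}
	M_1 = \sum_j \omega_j\, h(t_j)\, L(1/2 + it_j, \vQ \otimes u_j), \qquad M_2 = \sum_j \omega_j\, h(t_j)\, |L(1/2 + it_j, \vQ \otimes u_j)|^2.
\end{align*}
Cauchy--Schwarz restricted to the non-vanishing indices, combined with the standard removal of harmonic weights at a cost of $T^{\vepsilon}$, gives a lower bound for the non-vanishing count of the shape $T^{-\vepsilon} \cdot M_1^2 / M_2$.

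The first step is to establish the asymptotic $M_1 \sim c(\vQ)\, T^{1+\mu}$ with $c(\vQ) > 0$ proportional to the Euler product $\gamma(\vQ)$ from \eqref{1eq: beta(Q)}. Starting from the approximate functional equation for $L(1/2 + it_j, \vQ \otimes u_j)$, one applies Kuznetsov's formula to the spectral sum over $u_j$: the diagonal produces the main term, while the off-diagonal is a sum of Kloosterman sums weighted by the Bessel transform of $h$ and twisted by the Hecke eigenvalues of $\vQ$. Opening the $\vQ$-coefficients by Vorono\"i summation and applying stationary phase to the Bessel integrals reduces the off-diagonal to dual sums of Kloosterman sums; the Weil bound then barely beats the main term, and the required inequality holds precisely when $\mu > 1/3$. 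This is the source of the stated threshold.

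The second step is the upper bound $M_2 \Lt T^{1 + \mu + \vepsilon}$, which is more routine: the approximate functional equation for $|L|^2$ followed by Kuznetsov yields a diagonal of order $T^{1+\mu}(\log T)^{O(1)}$ together with an off-diagonal that is controlled directly by Weil at no additional cost in $\mu$. Combining the two steps gives $M_1^2 / M_2 \Gt T^{1+\mu-\vepsilon}$, which is the desired conclusion. The \emph{main obstacle} is therefore the off-diagonal estimate in the first step; the $\mu > 1/3$ restriction reflects the balance between the oscillatory Bessel transform of $h$ and the moduli of the post-Vorono\"i dual Kloosterman sums.
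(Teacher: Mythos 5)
Your proposal has the correct high-level structure (first moment lower bound, second moment upper bound, Cauchy--Schwarz, then strip the harmonic weight via $\omega_j\ll t_j^\vepsilon$), and your account of the first moment—Kuznetsov, analysis of the Bessel transform, estimation of the remaining exponential sum, $\mu>1/3$ coming from the competition between the error and the main term $\asymp \varPi T$—is essentially the content of Theorem~\ref{thm: C1(m)} and Corollary~\ref{cor: 1st moment}. (One small inaccuracy: the paper does not apply Vorono\"i to the first-moment off-diagonal; it handles that exponential sum directly via the Wilton bound in Lemma~\ref{lem: Wilton}. Also the leading constant for $\SC_1(1)$ is $2/(\pi\sqrt\pi)$ with no Euler factor $\gamma(\vQ)$ — the product $\gamma(\vQ)$ only enters through mollification in Theorems~\ref{thm: non-vanishing} and \ref{thm: non-vanishing, short}, not here.)

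The genuine gap is in your second step. The claim that the off-diagonal of the second moment is ``controlled directly by Weil at no additional cost in $\mu$'' is false, and it is precisely the reason the paper cannot simply reuse its Kuznetsov machinery here. After the approximate functional equation for $|L|^2$ the summation runs up to $n_1n_2\le T^{2+\vepsilon}$ (conductor $\asymp t_j^2$), and Kuznetsov plus Weil leaves an off-diagonal of size roughly $T^{2+\vepsilon}$, which does not beat the diagonal $\varPi T^{1+\vepsilon}$ once $\varPi\ll T$. Indeed, even the much more elaborate off-diagonal analysis in Theorem~\ref{thm: C2(m)} (Luo's identity, Poisson, Vorono\"i, stationary phase) requires $\varPi\gg T^{1/2}$ to run and only dominates the error for $\varPi\gg T^{3/4+\vepsilon}$ — which is exactly why Theorem~\ref{thm: non-vanishing, short, 2} needs a separate device in the range $\mu\le 3/4$. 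The paper's actual input for the second moment upper bound is Theorem~\ref{thm: Lindelof}, the mean Lindel\"of bound $\SC_2(1,1)\ll\varPi T^{1+\vepsilon}$, which is derived not from Kuznetsov at all but from Luo's spectral large sieve inequality \eqref{10eq: LS, M=1} (Theorem~\ref{thm: Lindelof, 2}), summed over unit windows. Without this large-sieve input your Cauchy--Schwarz argument cannot close for $1/3<\mu\le 3/4$.
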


	Note that by the Weyl law for $\Gamma_0 (1) $ (see for example \cite[(11.5)]{Iw-Spectral}) we have
	\begin{align*}
		 \text{\rm \small \bf \#} \big\{ j :  0 < t_j \leqslant T 
		 \big\} = \frac 1 {12} T^2 + O (T \log T), 
	\end{align*}
and hence 
\begin{align*}
	\text{\rm \small \bf \#} \big\{ j :  T - T^{\mu} \leqslant t_j \leqslant T+ T^{\mu}
	\big\} = \frac 1 {3}  T^{1+\mu  }  + O (T \log T). 
\end{align*}

\subsection*{Notation}

By $X \Lt Y$ or $X = O (Y)$ we mean that $|X| \leqslant c Y$  for some constant $c  > 0$, and by $X \asymp Y$ we mean that $X \Lt Y$ and $Y \Lt X$. We write $X \Lt_{\valpha, \beta, ...} Y $ or $  X = O_{\valpha, \beta, ...} (Y) $ if the implied constant $c$ depends on $\valpha$, $\beta$, ....  

The notation $x \sim X$ stands for  $ X \leqslant  x \leqslant 2 X $.  


Throughout the paper,  $\vepsilon  $ is arbitrarily small and its value  may differ from one occurrence to another.

\delete{Moreover, Luo \cite{Luo-2nd-Moment}  proved the following asymptotic formula:
\begin{align}
	\sum_{j=1}^{\infty} \omega_j \left|L (s_j, \vQ_2  \otimes u_j) \right|^2 \exp (-t_j / T) = c_0 T^2 \log T + c_1 T + O_{\vQ_2, \vepsilon} (T^{11/6+\vepsilon}),  
\end{align}
where $c_0$ and $c_1$ are constants depending on $\vQ_2$ only.}

\section{Theorems on the Twisted Spectral Moments} 

Proof of the effective non-vanishing results in Theorems \ref{thm: non-vanishing} and \ref{thm: non-vanishing, short} will rely on the asymptotic formulae for the twisted first and second moments of $ L (1/2+it_j , \vQ \otimes u_j) $.  

\subsection{Definitions} 

Let $\{u_j (z)\}_{j=1}^{\infty} $ be an orthonormal basis of (even or odd) Hecke--Maass cusp forms on the modular surface $\mathrm{SL}  (2, \BZ) \backslash \mathbf{H} $.   Let $\lambda_j = s_j (1-s_j)$  
be the Laplace eigenvalue of $u_j (z)$,  
with $s_j = 1/2+ i t_j$ ($t_j > 0$). 
The Fourier expansion of $u_j (z)$ reads:
\begin{align*}
	u_j (x+iy) =   \sqrt{y} \sum_{n \neq 0}  \rho_j (n) K_{i t_j} (2\pi |n| y) e (n x), 
\end{align*}
where as usual $K_{\vnu} (x)$ is the $K$-Bessel function  and $e (x) = \exp (2\pi i x)$. Let $\lambda_j (n)$  be the $n$-th Hecke eigenvalue of $ u_j (z) $. It is well known that $\lambda_j (n)$ is real and $ \rho_j (  n) =   \lambda_j (n) \rho_j (  1)  $ for any $n \geqslant 1$. Define the harmonic weight 
\begin{align*}
	\omega_j = \frac {|\rho_j (1)|^2} {\cosh \pi t_j} .    
\end{align*}
	Let $\vQ (z)$ be a primitive holomorphic newform for $\Gamma_0 (q) \backslash \BH$, with Fourier expansion
\begin{align*} 
	\vQ (z) = \sum_{n=1}^{\infty} a(n) n^{(2k-1)/2} e (n z), \qquad a(1) = 1.   
\end{align*}

Let $T, \varPi$ be large parameters such that $T^{\vepsilon} \leqslant \varPi \leqslant T^{1-\vepsilon}$.  Define the smoothly weighted twisted moments on the cuspidal spectrum: 
\begin{align}\label{2eq: moments M1}
	\SC_{1}  (m  ) = \sum_{j = 1}^{\infty} \omega_j \lambda_j (m) m^{-it_j}  L (s_j,  \vQ \otimes u_j )    \exp \left(  - \frac {(t_j - T)^2}  {\varPi^2} \right) , 
\end{align}
\begin{align}\label{2eq: moments M2}
	\begin{aligned}
		\SC_{2}  (m_1, m_2) = \mathrm{Re}  \sum_{j = 1}^{\infty}   \omega_j   \frac {\lambda_j ( m_1  ) \lambda_j ( m_2  )  } {(m_1/m_2)^{it} }  |L (s_j,  \vQ \otimes u_j )|^2     \exp \left(  - \frac {(t_j - T)^2}  {\varPi^2} \right)  . 
	\end{aligned}
\end{align}  

\subsection{Main Theorems} 

\begin{thm}\label{thm: C1(m)} 
Let  $T^{\vepsilon} \leqslant \varPi \leqslant T^{1-\vepsilon}$. We have
	\begin{align}\label{2eq: C1(m)}
		\SC_{1}  (m  ) = \frac {2} {\pi\sqrt{\pi}} \varPi T \cdot {\delta ({m, 1})}  + O_{\vQ, \vepsilon} \big( \sqrt[3]{m^2 (T+\varPi^2)} T^{1+\vepsilon} \big), 
	\end{align}
where $\delta (m ,1)$ is the Kronecker $\delta$ that detects $m = 1$. 
\end{thm}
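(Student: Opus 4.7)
The plan is to combine three classical tools: the approximate functional equation (AFE), the Kuznetsov trace formula, and the Vorono\"i summation formula for $\vQ$. The Rankin--Selberg $L$-function factors as $\zeta^{(q)}(2s)^{-1}\sum_n a(n)\lambda_j(n)/n^s$, and its analytic conductor at $s = 1/2+it_j$ is $\asymp (1+|t_j|)^4$, so the AFE represents $L(1/2+it_j, \vQ\otimes u_j)$ by two smooth sums in $n$ of effective length $\asymp t_j^{2+\vepsilon}$. Inserting this into $\SC_1(m)$ and opening the Hecke multiplicativity
$$\lambda_j(m)\lambda_j(n) = \sum_{d \mid (m,n)} \lambda_j(mn/d^2),$$
the moment reduces (modulo a short outer $\zeta^{(q)}$-M\"obius sum) to spectral averages of the shape
$$\sum_j \omega_j\, \lambda_j(mn/d^2)\, (mn)^{\mp i t_j}\, h_T^{\pm}(n; t_j),$$
where $h_T^\pm(n;t) = V^\pm(n/t^2)\, \exp\bigl(-(t-T)^2/\varPi^2\bigr)$ is smooth and essentially supported near $t \asymp T$.

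Next, I would apply the Kuznetsov trace formula on $\Gamma_0(1)$. Its delta (diagonal) term is supported on $mn/d^2 = 1$, which combined with $d \mid (m,n)$ forces $m = n = d$. When $m = 1$ only the single summand $n = d = 1$ survives and contributes
$$\frac{2}{\pi^2}\int_0^\infty t \tanh(\pi t)\, e^{-(t-T)^2/\varPi^2}\, dt = \frac{2}{\pi\sqrt\pi}\, \varPi T + O(T^{1+\vepsilon}),$$
which is precisely the claimed main term. When $m \geqslant 2$, the lone diagonal survivor carries an oscillation $m^{-2it_j}$ with no stationary point in the $\varPi$-window, and repeated integration by parts confines it to the error term. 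The continuous (Eisenstein) contribution is handled identically via standard mean-value bounds for $L(1/2+it, \vQ)$ and is of smaller order.

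The main work lies in the Kloosterman part
$$\sum_c \frac{S(mn/d^2, 1; c)}{c}\; \check h_T^\pm\!\left(\frac{4\pi\sqrt{mn/d^2}}{c}\right),$$
with $\check h_T^\pm$ the Bessel transform of $h_T^\pm$ twisted by $(mn)^{\mp it}$. My plan is to perform a smooth dyadic decomposition in $c$ and $n$, apply the Vorono\"i summation formula for $\vQ$ to the $n$-sum (replacing the Kloosterman sum by a shorter hyper-Kloosterman / Ramanujan sum and trading the $n$-sum of length $\asymp t_j^{2+\vepsilon}/c^2$ for a dual sum of controlled length), and then bound the result via the Weil bound $|S(a,b;c)| \ll c^{1/2+\vepsilon}$ together with a stationary-phase analysis of $\check h_T^\pm$ on the $\varPi$-window. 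The optimal three-way balance among the size of $c$, the Weil saving, and the length of the dual sum is exactly what produces the cube-root factor $\sqrt[3]{m^2(T+\varPi^2)}$ in the error.

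The principal obstacle is this last step: carrying out the three-way optimisation uniformly in $m$, $T$, and $\varPi$ while tracking the stationary point of $\check h_T^\pm$ (shifted by the twist $(mn)^{\mp it_j}$) and invoking the Vorono\"i formula for $\vQ$ on $\Gamma_0(q)$ in its precise form for square-free level. The range $T^\vepsilon \leqslant \varPi \leqslant T^{1-\vepsilon}$ in the hypothesis is exactly what makes both the spectral localisation and the stationary-phase step effective, so the optimisation goes through without further loss.
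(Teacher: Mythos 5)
Your framework (AFE at the special point, Kuznetsov, diagonal plus off-diagonal) and your diagonal analysis are both fine, but there are two substantive problems. First, at the special point $s=s_j$ the analytic conductor of $L(s,\vQ\otimes u_j)$ is $\asymp (1+t_j)^2$, not $(1+t_j)^4$: the archimedean factor $\Gamma(s+k-s_j)\Gamma(s+k-\overline{s}_j)$ degenerates to $\Gamma(k)\Gamma(k+2it_j)$, so one of the two $\Gamma$-factors contributes nothing. Consequently the AFE sums have effective length $\asymp t_j^{1+\vepsilon}$, not $t_j^{2+\vepsilon}$. This degeneration is the whole point of the ``special points'' setting, and getting it wrong by a square changes all the subsequent ranges.

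Second, and more importantly, you have missed the mechanism that actually produces the cube root. The paper introduces an asymmetric parameter $X$ into the AFE (Luo's trick, eq.\ \eqref{3eq: AFE}), so that the two halves have lengths $\asymp TX$ and $\asymp T/X$ respectively, and \emph{no Vorono\"i summation is used at all} in the proof of Theorem \ref{thm: C1(m)}. The off-diagonal splits into $\SO_1^{+}$ and $\SO_1^{-}$ according to the two halves. For $\SO_1^{+}$, the Mehler--Sonine/stationary-phase analysis of the Bessel integral $H_+$ restricts $c\ll C_+=mN/(hT)$; opening the Kloosterman sum and applying the \emph{Wilton bound} to the $n$-sum gives $\SO_1^{+}(m)\ll mT^{1+\vepsilon}X$. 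For $\SO_1^{-}$, the factor $\upepsilon(t)$ forces $c$ into a short window around $C_0\asymp mT/h$, and one bounds $H_-$ in size (Lemma \ref{lem: analysis of I, -}) and uses Weil plus Deligne, obtaining $\SO_1^{-}(m)\ll \sqrt{m}\,T(\sqrt{T}+\varPi)/\sqrt{X}$. The cube root comes from a \emph{two}-term balance $X=\sqrt[3]{(T+\varPi^2)/m}$, not the three-way Vorono\"i optimisation you describe. Your proposed Vorono\"i step in fact goes the wrong way for the $\upepsilon(t)$-half: there $c\asymp mT/h$ is much larger than the $n$-range $\ll T$, so dualising would lengthen the $n$-sum to $\asymp m^2T$ rather than shorten it. Without the $X$-asymmetry the $\upepsilon(t)$-half alone already exceeds the claimed error, so the gap is not cosmetic.
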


\begin{thm}\label{thm: C2(m)} 
Let $T^{1/2} \leqslant \varPi \leqslant T^{1-\vepsilon}$. Assume   $m_1$, $m_2$ square-free and $(m_1 m_2, q) = 1$.  Set \begin{align}
	m = (m_1, m_2), \qquad r = \frac {m_1m_2}  {m^2}. 
\end{align}  
 Then 
	\begin{align}\label{2eq: C2(m)}
		\begin{aligned}
			\SC_{2}  (m_1, m_2 ) =\frac {4 a(r)} {\pi \sqrt{\pi r} } \varPi T \bigg(  \bigg( & \gamma_1 \log \frac {T} {\sqrt{r}}  + \gamma_{0}'  \bigg) \SB (m)  + \gamma_1 \SB ' (m)   \bigg) & \\
			& +   O_{\vQ, \vepsilon} \bigg(  T^{\vepsilon} \bigg(  \frac {  \varPi^3  } {T \sqrt{ r } } +  {\frac { T^2 \sqrt{T}  } { \, {\varPi} } } {\textstyle \sqrt{m_1^3+m_2^3}}    \bigg) \bigg)    , 
		\end{aligned}
	\end{align}
where 
the constants $\gamma_0' = \gamma_0' (\vQ) $, $\gamma_1 = \gamma_1 (\vQ) $, and   $\SB (m)  $, $\SB ' (m)$ are defined explicitly in Definition \ref{defn: constants, 1} and Lemma \ref{lem: main term}. 
\end{thm}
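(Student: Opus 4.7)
The plan is to implement the classical three-step scheme for second moments of Rankin--Selberg $L$-functions: (i) represent $|L(1/2+it_j, \vQ \otimes u_j)|^2$ via an approximate functional equation (AFE), (ii) dualise the spectral sum with the Kuznetsov trace formula, and (iii) dualise the $\vQ$-coefficient sums via the $\GL(2)$ Voronoi formula. Throughout, the Gaussian $\exp(-(t_j - T)^2/\varPi^2)$ serves as a short spectral window of width $\varPi$ centred at $T$, effectively localising $t_j$ to $|t_j - T| \Lt \varPi T^{\vepsilon}$.

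Since $L(s, \vQ \otimes u_j)$ has analytic conductor $\asymp t_j^4$, the AFE represents $|L(1/2+it_j, \vQ \otimes u_j)|^2$ as two conjugate sums of effective length $n_1 n_2 \Lt T^{2+\vepsilon}$, each of the shape
\begin{align*}
\sum_{n_1, n_2} \frac{a(n_1)\, a(n_2)\, \lambda_j(n_1)\, \lambda_j(n_2)}{\sqrt{n_1 n_2}} \bigg(\frac{n_2}{n_1}\bigg)^{it_j} V\!\bigg(\frac{n_1 n_2}{t_j^2}\bigg),
\end{align*}
with a smooth rapidly-decaying $V$. Invoking the Hecke relation $\lambda_j(m_i)\lambda_j(n_i) = \sum_{d_i \shskip | \shskip (m_i, n_i)} \lambda_j(m_i n_i/d_i^2)$ collapses the fourfold product of Hecke eigenvalues arising in $\SC_2(m_1, m_2)$ into $\lambda_j(M_1)\lambda_j(M_2)$ with $M_i = m_i n_i/d_i^2$, after which Kuznetsov is applicable.

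Kuznetsov decomposes the spectral sum into a diagonal ($M_1 = M_2$) and a Kloosterman part. On the diagonal, the twist in $t_j$ reduces to $(m_1 d_2/m_2 d_1)^{2it_j}$, and integration against the Gaussian forces $|\log(m_1 d_2/m_2 d_1)| \Lt 1/\varPi$. The surviving arithmetic sum is a Dirichlet series in $a(n)^2$, essentially the symmetric-square $L$-function of $\vQ$ corrected by local Euler factors at $m_1, m_2, q$; a Mellin contour shift around $s = 1$ captures a double pole producing the leading term $\gamma_1 \log(T/\sqrt{r}) + \gamma_0'$ together with the arithmetic factors $\SB(m)$ and $\SB'(m)$. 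The residual non-principal diagonal contribution is bounded by $\varPi^3 T^{\vepsilon}/(T\sqrt{r})$, accounting for the first error term. The off-diagonal part then takes the form
\begin{align*}
\sum_{c \geqslant 1} \frac{1}{c} \sum_{\pm} S(M_1, \pm M_2; c)\, \Phi^{\pm}(M_1, M_2; c),
\end{align*}
where $\Phi^{\pm}$ is the Bessel transform of the Gaussian-weighted AFE weight. The hypothesis $\varPi \geqslant T^{1/2}$ makes stationary-phase analysis of $\Phi^{\pm}$ uniform and confines the essential range to $c \Lt T^{1+\vepsilon}/\varPi$. Applying $\GL(2)$ Voronoi for $\vQ$ to both the $n_1$ and $n_2$ sums dualises them against the Kloosterman modulus, producing factors of $m_i^{3/2}$ from the unfolding of $M_i = m_i n_i /d_i^2$ inside the Bessel--Kloosterman kernel. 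Combining the Weil bound for Kloosterman sums with the $T^{1/2}/\varPi$ gain from stationary phase and summing over $c$ yields the $T^{5/2+\vepsilon}\sqrt{m_1^3 + m_2^3}/\varPi$ error.

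The principal obstacle is the off-diagonal step, where the transform $\Phi^{\pm}$ couples three distinct oscillations (the Kuznetsov Bessel kernel, the Voronoi Mellin--Bessel kernel, and the joint twist $(m_1 n_2/m_2 n_1)^{it_j}$) and the stationary points in $t_j$ and in the Voronoi-dual variables must be located uniformly in $m_1, m_2, c$. This amounts to a genuinely two-parameter stationary-phase analysis in which the window $\varPi$ interpolates between the Bessel and Voronoi oscillation scales; cleanly tracking the $m_i^{3/2}$ dependence through both dualisations is the delicate part. Once this is executed, the range $T^{1/2} \leqslant \varPi \leqslant T^{1-\vepsilon}$ ensures both error terms are $o(\varPi T)$, so the stated main term genuinely dominates.
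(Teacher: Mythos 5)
Your overall blueprint --- approximate functional equation, Kuznetsov, then Vorono\"i --- is the right skeleton, and your diagonal discussion (Hecke relation to collapse eigenvalues, Gaussian localisation forcing $h_1 \approx h_2$, Mellin contour shift around a double pole producing $\gamma_1 \log(T/\sqrt{r}) + \gamma_0'$ together with the local Euler factors $\SB(m)$, $\SB'(m)$, and the $\varPi^3 T^{\vepsilon}/(T\sqrt{r})$ secondary term) matches the paper's Lemma~\ref{lem: main term} reasonably well. The gap is in the off-diagonal treatment, which you compress into ``apply $\GL(2)$ Vorono\"i to both the $n_1$ and $n_2$ sums'' followed by a two-parameter stationary-phase analysis. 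That is not what the paper does, and it is not clear that the symmetric double Vorono\"i you propose would actually deliver the stated $T^{5/2+\vepsilon}\sqrt{m_1^3+m_2^3}/\varPi$ bound: dualising both variables against the same Kloosterman modulus $c$ just reproduces a Kloosterman sum on the dual side, and no obvious extra saving emerges without a further idea.

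The paper's actual mechanism has three moving parts you have not identified. First, a dichotomy on the dyadic ranges: when $\max\{\langle \boldsymbol{mN}\rangle, \langle\widetilde{\boldsymbol{mN}}\rangle\} \leqslant T/\varPi$ (the ``balanced'' case) the off-diagonal is killed directly by the Wilton bound (Lemma~\ref{lem: Wilton}) applied after opening the Kloosterman sum --- no Vorono\"i at all --- giving Corollary~\ref{cor: O, 1}. Second, in the ``unbalanced'' case $\langle\boldsymbol{mN}\rangle > T/\varPi$, the exponential factor from the Bessel kernel is combined with $S(m,n;c)$ via Luo's identity (Lemma~\ref{lem: Luo's identity}), which splits the $c$-sum into $b$- and $w$-sums. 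Third, Poisson summation is applied to the \emph{new} variable $w$ (producing a Ramanujan sum that imposes a congruence $w \equiv \bar f_{\star}\bar q_b b_{\mathrm o} n \ (\mathrm{mod}\ g)$), and Vorono\"i is applied only to the longer $n_1$-sum, asymmetrically --- the paper even remarks ``we shall avoid the use of bi-variable notation, due to different treatments of the $n_1$- and $n_2$-sums.'' The congruence from the Ramanujan sum is what makes the dual $n$- and $w$-ranges short enough after the Fourier--Hankel stationary-phase analysis (Lemmas~\ref{lem: w (u;v)}--\ref{lem: w(u;v)}) to yield Corollary~\ref{cor: O, 2}. Your sketch would need to be rebuilt around this Luo-identity $\to$ Poisson-on-$w$ $\to$ one-sided Vorono\"i chain, or else you would need to supply a genuinely independent argument showing that the symmetric double Vorono\"i, without the Ramanujan-sum congruence, produces the same cancellation; as written, the off-diagonal step has no actual engine.
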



\delete{
There are two major differences between this work and the previous works of Deshouillers, Iwaniec, and Luo \cite{DI-Nonvanishing,Luo-Non-Vanishing,Luo-Weyl}:
\begin{itemize}
	\item [(1)] the spectral Kuznetsov formula (\cite[Theorem 1]{Kuznetsov}) is used instead of the Kloosterman Kuznetsov formula (\cite[Theorem 2]{Kuznetsov}) with careful analysis of  the related Bessel integral, 

\item  [(2)] the Vorono\"i summation formula is applied together with the Wilton bound. 
\end{itemize}}

\subsection{Moments with no Twist} 

The main term in \eqref{2eq: C1(m)} survives only for $m=1$, and it dominates the error term   when $T^{1/3+\vepsilon} \leqslant \varPi \leqslant T^{1-\vepsilon}$.  

\begin{cor}\label{cor: 1st moment}
Let  $T^{1/3+\vepsilon} \leqslant \varPi \leqslant T^{1-\vepsilon}$. 	We have 
	\begin{align}\label{2eq: C1(1)}
		\SC_{1} (1)  = \frac {2} {\pi\sqrt{\pi}} \varPi T  + O_{\vQ, \vepsilon} \big(\sqrt[3]{T + \varPi^2} T^{1+\vepsilon} \big). 
	\end{align} 
\end{cor}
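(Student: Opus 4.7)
The plan is to specialize Theorem \ref{thm: C1(m)} to the untwisted case $m = 1$. Setting $m = 1$ in \eqref{2eq: C1(m)}, we have $\delta(1,1) = 1$, so the main term $\tfrac{2}{\pi\sqrt{\pi}}\varPi T$ survives, while the factor $m^{2}$ in the error term becomes $1$. This immediately yields
\begin{align*}
\SC_1(1) \ = \ \tfrac{2}{\pi\sqrt{\pi}}\,\varPi T \ + \ O_{\vQ,\vepsilon}\bigl(\sqrt[3]{T+\varPi^2}\,T^{1+\vepsilon}\bigr),
\end{align*}
which is exactly the stated formula \eqref{2eq: C1(1)}.

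What is substantive in the corollary is not the identity itself but the claim that, under the more restrictive hypothesis $T^{1/3+\vepsilon}\leqslant \varPi\leqslant T^{1-\vepsilon}$, this identity is a genuine asymptotic, i.e.\ the error is strictly smaller than the main term. To see this, I would split the range at $\varPi = T^{1/2}$. If $\varPi \leqslant T^{1/2}$, then $T+\varPi^{2}\asymp T$, so the error is of size $T^{4/3+\vepsilon}$; this is dominated by the main term $\asymp \varPi T$ precisely when $\varPi \Gt T^{1/3+\vepsilon}$, which is the hypothesis. If instead $\varPi \geqslant T^{1/2}$, then $T+\varPi^{2}\asymp \varPi^{2}$, and the error becomes $\varPi^{2/3}T^{1+\vepsilon}$; this is dominated by $\varPi T$ as soon as $\varPi \Gt T^{3\vepsilon}$, which is trivially true. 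Combining the two cases, the lower bound $\varPi \geqslant T^{1/3+\vepsilon}$ is sharp (up to the value of $\vepsilon$) for the formula to be asymptotic.

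There is no real obstacle at this stage: every piece of analytic work is already carried out inside Theorem \ref{thm: C1(m)}, and the present corollary only records the useful specialization together with the range of $\varPi$ in which the first moment exhibits a main term of the predicted shape. I would therefore present the proof as the one-line substitution above, accompanied by the short two-case comparison verifying that $\sqrt[3]{T+\varPi^{2}}\,T^{1+\vepsilon} = o(\varPi T)$ throughout the hypothesized range.
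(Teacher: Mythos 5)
Your proof is correct and is essentially the same as the paper's (implicit) argument: the corollary is just the $m=1$ specialization of Theorem \ref{thm: C1(m)}, and the paper's preceding remark already records that the lower bound $\varPi \geqslant T^{1/3+\vepsilon}$ is exactly what makes the error term subordinate to the main term $\varPi T$. Your two-case verification of the range condition is correct and matches what the paper intends.
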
 

The main term in \eqref{2eq: C2(m)} simplifies  if $(m_1, m_2) = (1, 1)$   (or $(p, 1)$ for $p$ prime as in Corollary \ref{cor: C2(p)} below) since $\SB (1) = 1$ and $\SB' (1) = 0$.  Note that in this case the error term is inferior to the main term only for $T^{3/4+\vepsilon} \leqslant \varPi \leqslant T^{1-\vepsilon}$. 

\begin{cor}\label{cor: 2nd moment}
Let $T^{3/4+\vepsilon} \leqslant \varPi \leqslant T^{1-\vepsilon}$.	We have 
	\begin{align}\label{2eq: C2(1)}
		\SC_{2}  (1,1) = \frac {4   } {\pi \sqrt{\pi  }} \varPi T \big(\gamma_1 \log   {T}   + \gamma_0'   \big) + O_{\vQ, \vepsilon} \bigg(T^{\vepsilon} \bigg(\frac {\varPi^3} {T} + \frac {T^{2}\sqrt{T}} {\varPi}  \bigg)\bigg). 
	\end{align}
\end{cor}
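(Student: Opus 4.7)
The plan is to obtain Corollary \ref{cor: 2nd moment} directly from Theorem \ref{thm: C2(m)} by specializing to $m_1 = m_2 = 1$. First I would verify the hypotheses of Theorem \ref{thm: C2(m)}: the values $m_1 = m_2 = 1$ are trivially square-free and coprime to $q$, and the assumed range $T^{3/4+\vepsilon} \leqslant \varPi \leqslant T^{1-\vepsilon}$ is contained in the range $T^{1/2} \leqslant \varPi \leqslant T^{1-\vepsilon}$ required by the theorem, so \eqref{2eq: C2(m)} is applicable.

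Next I would substitute and simplify. With $m_1 = m_2 = 1$ we get $m = (m_1,m_2) = 1$, $r = m_1 m_2 / m^2 = 1$, hence $a(r) = a(1) = 1$, $\sqrt{r} = 1$, and $\log(T/\sqrt{r}) = \log T$. Invoking the identities $\SB(1) = 1$ and $\SB'(1) = 0$ stated in the excerpt just before the corollary, the main term of \eqref{2eq: C2(m)} collapses to
\begin{equation*}
\frac{4}{\pi\sqrt{\pi}}\, \varPi T \bigl( \gamma_1 \log T + \gamma_0' \bigr),
\end{equation*}
which matches the stated leading term of \eqref{2eq: C2(1)}.

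For the error term, $\sqrt{m_1^3 + m_2^3} = \sqrt{2}$, a constant which can be absorbed into the implied $O$-constant, and $\sqrt{r} = 1$ so that the factor $1/\sqrt{r}$ disappears. This reduces the error term of \eqref{2eq: C2(m)} to
\begin{equation*}
O_{\vQ,\vepsilon}\!\left( T^{\vepsilon} \left( \frac{\varPi^3}{T} + \frac{T^2\sqrt{T}}{\varPi} \right) \right),
\end{equation*}
precisely as in \eqref{2eq: C2(1)}.

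There is no genuine obstacle here; the only item requiring a sanity check is the range restriction. The lower bound $\varPi \geqslant T^{3/4+\vepsilon}$ in the corollary is exactly the threshold at which the second error contribution $T^2\sqrt{T}/\varPi$ becomes smaller than the main term (of size $\varPi T \log T$): the inequality $T^{5/2}/\varPi \ll \varPi T \log T$ rearranges to $\varPi^2 \gg T^{3/2}$, justifying the hypothesis and confirming that \eqref{2eq: C2(1)} is genuinely an asymptotic formula on this range.
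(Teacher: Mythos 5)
Your proof is correct and follows exactly the paper's own route: Corollary \ref{cor: 2nd moment} is obtained by substituting $m_1 = m_2 = 1$ into Theorem \ref{thm: C2(m)}, using $m = r = 1$, $a(1) = 1$, $\SB(1) = 1$, $\SB'(1) = 0$, and noting that the hypothesis $T^{3/4+\vepsilon} \leqslant \varPi \leqslant T^{1-\vepsilon}$ lies within the range $T^{1/2} \leqslant \varPi \leqslant T^{1-\vepsilon}$ of the theorem. Your sanity check that $T^{3/4+\vepsilon}$ is exactly the threshold at which $T^{5/2}/\varPi$ drops below the main term is the same observation the paper makes in the sentence preceding the corollary.
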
 

For  $ T^{\vepsilon} \leqslant \varPi \leqslant T^{3/4+\vepsilon} $, however, we have the mean Lindel\"of bound as a simple consequence of the large sieve inequality of Luo in \cite{Luo-LS}. 

\begin{thm} \label{thm: Lindelof}
	Let $T^{\vepsilon} \leqslant \varPi \leqslant T^{3/4+\vepsilon}$.	We have 
	\begin{align}
		\SC_{2}  (1,1) = O_{\vQ, \vepsilon} (\varPi T^{1+\vepsilon}). 
	\end{align} 
\end{thm}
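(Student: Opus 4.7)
The plan is to combine the approximate functional equation for $L(1/2+it_j, \vQ\otimes u_j)$ with Luo's spectral large sieve \cite{Luo-LS}, thereby obtaining the mean Lindel\"of estimate $\SC_{2}(1,1) \Lt_{\vQ,\vepsilon} \varPi T^{1+\vepsilon}$ as a black-box application.

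First, I would set up the approximate functional equation. Since $\vQ$ is of fixed weight $2k$ and $u_j$ has spectral parameter $t_j$, at $s = 1/2+it_j$ the four shifted archimedean parameters $\pm(k-1)/2 \pm it_j$ of the $\Gamma$-factor of $L(s,\vQ\otimes u_j)$ collapse to two of size $\asymp t_j$ and two of size $O(1)$. Hence the analytic conductor at the special point is $\Lt_\vQ (1+|t_j|)^{2+\vepsilon}$, and AFE gives
\begin{align*}
L(1/2+it_j,\vQ\otimes u_j) \;\approx\; \sum_{n \Lt T^{1+\vepsilon}} \frac{a(n)\lambda_j(n)}{\sqrt n}\,W(n;t_j) + (\text{dual}),
\end{align*}
where $W$ is smooth of modulus $\Lt T^\vepsilon$ with rapid decay once $n \Gt T^{1+\vepsilon}$, carrying the oscillatory phase $n^{-it_j}$, a ratio of $\Gamma$-factors, and a $\zeta(1+2it_j) \Lt T^\vepsilon$ factor.

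Next, dyadically splitting $n\sim N$ with $N \Lt T^{1+\vepsilon}$ and applying Cauchy--Schwarz over the $O(\log T)$ scales gives $|L(1/2+it_j,\vQ\otimes u_j)|^2 \Lt T^\vepsilon \sum_N |S_N(t_j)|^2$, where $S_N(t_j) = \sum_{n\sim N}\alpha_n(t_j)\lambda_j(n)$ with $|\alpha_n(t_j)| \Lt |a(n)|/\sqrt n$. I would then apply Luo's spectral large sieve from \cite{Luo-LS} in its smoothly weighted form, producing
\begin{align*}
\sum_j \omega_j \exp\!\lp-\frac{(t_j-T)^2}{\varPi^2}\rp |S_N(t_j)|^2 \;\Lt_\vepsilon\; (\varPi T + N)^{1+\vepsilon}\sum_{n\sim N}\frac{|a(n)|^2}{n},
\end{align*}
with the $\ell^2$-norm bounded by $\Lt_{\vQ,\vepsilon} T^{\vepsilon}$ via Rankin--Selberg for $\vQ$. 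In the range $\varPi \leqslant T^{3/4+\vepsilon}$, $\varPi \geqslant T^\vepsilon$, we have $N \Lt T^{1+\vepsilon} \Lt \varPi T$, so summing the $O(\log T)$ dyadic pieces yields the claimed bound.

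The only technical nuisance is that the coefficients $\alpha_n(t_j)$ depend on $j$ through the phase $n^{-it_j}$, whereas the classical large sieve is stated for $j$-independent coefficients. The clean workaround is to partition the spectral window $|t_j - T| \Lt \varPi$ into $O(\varPi\log T)$ subintervals of length $1/\log T$, on each of which $n^{-it_j}$ is essentially constant for $n \Lt T^{1+\vepsilon}$; applying the classical large sieve on each subinterval and summing recovers the same bound. This is presumably already built into the form of the large sieve proven in \cite{Luo-LS} that we invoke here.
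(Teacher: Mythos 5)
Your proposal is essentially the same as the paper's proof: both reduce $\SC_2(1,1)$ to the approximate functional equation for $L(s_j, \vQ\otimes u_j)$, truncate at length $T^{1+\vepsilon}$, apply Cauchy--Schwarz, and invoke Luo's spectral large sieve from \cite{Luo-LS}. The paper carries out the reduction via Theorem~\ref{thm: Lindelof, 2} (the unit-interval estimate $\sum_{T<t_j\leq T+1}|L(s_j,\vQ\otimes u_j)|^2\Lt T^{1+\vepsilon}$), then divides $(T-\varPi\log T, T+\varPi\log T]$ into $O(\varPi\log T)$ unit intervals and sums. One small correction to your workaround for the $n^{-it_j}$ twist: on a window of length $1/\log T$ the phase $(t_j-T_0)\log n$ is $O(1)$ for $n\Lt T^{1+\vepsilon}$, not $o(1)$, so $n^{-it_j}$ is bounded but not essentially constant; the clean resolution, and the one the paper uses, is the twisted form \eqref{10eq: LS, M=1, twisted} of Luo's large sieve, which (as Luo observed) follows from the untwisted form \eqref{10eq: LS, M=1} by partial summation over unit intervals. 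You anticipate this in your final sentence, and the remainder of your argument (conductor computation, dyadic splitting, $N\Lt T^{1+\vepsilon}\Lt\varPi T$) matches the paper.
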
 

It is natural to propose the following conjecture. 

\begin{conj}
	For any $ T^{\vepsilon} \leqslant \varPi \leqslant T^{1-\vepsilon} $, we have
	\begin{align*}
	\SC_{1} (1)  = \frac {2} {\pi\sqrt{\pi}} \varPi T  + o (\varPi T), \qquad 	 \SC_{2}  (1,1) = \frac {4   } {\pi \sqrt{\pi  }} \varPi T \big(\gamma_1 \log   {T}   + \gamma_0'   \big) + o (\varPi T).  
	\end{align*}
\end{conj}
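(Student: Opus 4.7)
The plan is to push the Kuznetsov and Vorono\"i analysis underlying Theorems \ref{thm: C1(m)} and \ref{thm: C2(m)} further, so as to cover the full range $T^{\vepsilon} \leqslant \varPi \leqslant T^{1-\vepsilon}$. In both cases the diagonal already produces the claimed main term (up to negligible error from the Gaussian smoothing), so the obstacle is entirely the off-diagonal contribution, which is currently controlled only for $\varPi \geqslant T^{1/3+\vepsilon}$ in the first moment and $\varPi \geqslant T^{3/4+\vepsilon}$ in the second.

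For $\SC_{1}(1)$, after opening $L(s_j, \vQ \otimes u_j)$ by the approximate functional equation and applying the spectral Kuznetsov formula, the off-diagonal reduces to a weighted sum of Kloosterman sums $S(1, n; c)$ twisted against the Hecke coefficients $a(n)$, with a Bessel transform that localizes the variables. To extend below $\varPi = T^{1/3}$, I would apply Vorono\"i summation on the $n$-sum (using that $(n, q) = 1$ essentially always), pass to the dual sum, and exploit cancellation in the resulting configuration through the Deshouillers--Iwaniec large sieve for Kloosterman sums on average over $c$. If carried through cleanly, this should lower the off-diagonal bound from $T^{4/3+\vepsilon}$ to $\varPi T^{1-\delta}$ for some $\delta>0$ uniformly throughout the conjectured range.

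For $\SC_{2}(1,1)$ I would proceed via a Motohashi-type spectral reciprocity. Opening $|L(s_j, \vQ \otimes u_j)|^2$ by the approximate functional equation and inverting the spectral average through Kuznetsov leads to a shifted convolution problem for the Rankin--Selberg coefficients of $\vQ \otimes \vQ$, which can be attacked either by the $\delta$-symbol method of Duke--Friedlander--Iwaniec or by a second application of Kuznetsov in the style of Blomer--Khan. The reciprocal dual moment ought to be of size $\varPi T \log T$, matching the predicted main term; extracting the precise constants $\gamma_1$ and $\gamma_0'$ together with the Euler factors in \eqref{1eq: beta(Q)} would require a careful residue/contour computation analogous to the one already performed in Lemma \ref{lem: main term}.

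The principal obstacle shared by both tasks is that one is working inside a short spectral window of width $\varPi$: the standard spectral large sieve loses a factor $T/\varPi$, and the Bessel transform of the Kuznetsov formula localizes the dual variable to a range longer by this same factor than in the full-spectrum setting. Pushing all the way to $\varPi = T^{\vepsilon}$ in the first moment is morally equivalent to a power-saving subconvexity bound for $L(1/2+it, \vQ \otimes u_j)$ uniform in both $t$ and $t_j$, which is not presently available. I would expect partial progress --- replacing the thresholds $1/3$ and $3/4$ by some $\vartheta < 1/3$ and $\vartheta' < 3/4$ respectively --- to be within reach of present technology, while closing the remaining gap to $T^{\vepsilon}$ seems to demand genuinely new input.
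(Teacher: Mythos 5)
The statement you have addressed is labeled as a \emph{conjecture} in the paper and carries no proof there: the paper establishes the asymptotic for $\SC_1(1)$ only for $\varPi\geqslant T^{1/3+\vepsilon}$ (Corollary \ref{cor: 1st moment}), for $\SC_2(1,1)$ only for $\varPi\geqslant T^{3/4+\vepsilon}$ (Corollary \ref{cor: 2nd moment}), and supplies just the mean Lindel\"of \emph{upper} bound of Theorem \ref{thm: Lindelof} on the complementary range $T^{\vepsilon}\leqslant\varPi\leqslant T^{3/4+\vepsilon}$. Your write-up is appropriately honest on this point: it offers a heuristic programme rather than a proof, and concedes explicitly that reaching $\varPi=T^{\vepsilon}$ ``seems to demand genuinely new input.'' That assessment agrees with the paper's own decision to leave the statement as a conjecture, so there is no paper proof against which to check your argument.

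One technical caveat on your first-moment sketch: Vorono\"i summation is already embedded in the paper's treatment of the $+$-part off-diagonal $\SO_1^+(m)$, since the Wilton bound (Lemma \ref{lem: Wilton}) invoked in \S\ref{sec: O+, 2} is itself a Vorono\"i-type cancellation estimate for the coefficients $a(n)$. The barrier at $\varPi=T^{1/3}$ arises from balancing $\SO_1^+(m)$ against the reflected piece $\SO_1^-(m)$ of Lemma \ref{lem: O1(m)-}, where the Bessel kernel $H_-(x,y)$ from Lemma \ref{lem: analysis of I, -} forces the modulus $c$ into a window around $C_0\asymp T$, and the present argument uses only Weil plus trivial estimation there. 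Applying another layer of Vorono\"i to $\SO_1^+$ without simultaneously improving $\SO_1^-$ does not move the threshold; a Deshouillers--Iwaniec large-sieve gain on the latter is plausible but would need to be carried out rather than invoked. Your remark that $\varPi\to T^{\vepsilon}$ in the first moment is morally of subconvex strength, and that the second moment would demand a spectral-reciprocity or shifted-convolution input in the style of Motohashi or Blomer--Khan, is a fair characterization of why the paper stops where it does.
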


\subsection{Proof of Theorem \ref{thm: non-vanishing, short, 2}}  Theorem \ref{thm: non-vanishing, short, 2} is an easy consequence of Corollary \ref{cor: 1st moment} and Theorem \ref{thm: Lindelof}, along with 
the bounds of Iwaniec, Hoffstein, and Lockhart \cite{Iwaniec-L(1),HL-L(1)}:
\begin{align}\label{3eq: omegaj}
	t_j^{-\vepsilon} \Lt \omega_j \Lt t_j^{\vepsilon}. 
\end{align}
For $1/3 < \mu < 1$ let $ \varPi = T^{\mu}/ \log T $.   By Corollary \ref{cor: 1st moment},  
\begin{align*}
\varPi^2 T^2  & \Lt \Bigg(   \sum_{ |t_j - T| \leqslant \varPi \log T } \omega_j | L (s_j,  \vQ \otimes u_j )| \Bigg)^2 .
\end{align*}
By Cauchy and  Theorem \ref{thm: Lindelof},  the right-hand side is bounded by 
\begin{align*}
	 \Bigg(\mathop{\sum_{ |t_j - T| \leqslant \varPi \log T }}_{L (s_j,  \vQ \shskip \otimes u_j ) \neq 0} \omega_j  \Bigg)  \Bigg(\mathop{\sum_{ |t_j - T| \leqslant \varPi \log T }}  \omega_j |L (s_j,  \vQ \otimes u_j )|^2  \Bigg)  \Lt \varPi  T^{1+\vepsilon} \mathop{\sum_{ |t_j - T| \leqslant \varPi \log T }}_{L (s_j,  \vQ \shskip \otimes u_j ) \neq 0} \omega_j. 
\end{align*}
Thus \eqref{1eq: main, 2} follows immediately due to the upper bound in \eqref{3eq: omegaj}. 

\subsection{Aside: the Determination Problem} 


\begin{cor}\label{cor: C2(p)}
Let $T^{3/4+\vepsilon} \leqslant \varPi \leqslant T^{1-\vepsilon}$.	Let $p$ be a prime with $p \nmid q$. We have 
	\begin{align}\label{2eq: C2(p)}
	\SC_{2}  (p, 1 ) = \frac {4 a (p) } {\pi \sqrt{\pi p}} \varPi T \bigg(\gamma_1 \log \frac {T} {\sqrt{p}} + \gamma_0'   \bigg) + O_{\vQ, p, \vepsilon } \bigg(T^{\vepsilon} \bigg( \frac {\varPi^3} {T} + \frac {T^{2}\sqrt{T}} {\varPi}  \bigg)\bigg) .
	\end{align}
\end{cor}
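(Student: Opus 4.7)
The plan is to deduce Corollary \ref{cor: C2(p)} as a direct specialization of Theorem \ref{thm: C2(m)}. Since $p$ is prime with $p \nmid q$, both $m_1 = p$ and $m_2 = 1$ are square-free with $(m_1 m_2, q) = 1$, so the hypotheses of Theorem \ref{thm: C2(m)} are satisfied. First, I would read off the derived parameters: $m = (p, 1) = 1$ and $r = m_1 m_2 / m^2 = p$. Substituting these into \eqref{2eq: C2(m)}, the main term on the right becomes
\begin{align*}
\frac{4 a(p)}{\pi \sqrt{\pi p}} \varPi T \left( \left( \gamma_1 \log \frac{T}{\sqrt{p}} + \gamma_0' \right) \SB(1) + \gamma_1 \SB'(1) \right).
\end{align*}

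Next, I would invoke the identities $\SB(1) = 1$ and $\SB'(1) = 0$, which are noted in the text just before Corollary \ref{cor: 2nd moment} (and to be verified from the explicit formulas in Lemma \ref{lem: main term}). This collapses the bracketed factor to $\gamma_1 \log (T / \sqrt{p}) + \gamma_0'$, giving exactly the claimed main term
\begin{align*}
\frac{4 a(p)}{\pi \sqrt{\pi p}} \varPi T \left( \gamma_1 \log \frac{T}{\sqrt{p}} + \gamma_0' \right).
\end{align*}

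Finally, the error term from \eqref{2eq: C2(m)} reads
\begin{align*}
O_{\vQ, \vepsilon}\bigg( T^{\vepsilon} \bigg( \frac{\varPi^3}{T\sqrt{r}} + \frac{T^2 \sqrt{T}}{\varPi} \sqrt{m_1^3 + m_2^3} \bigg) \bigg).
\end{align*}
Plugging in $r = p$ and $\sqrt{m_1^3 + m_2^3} = \sqrt{p^3 + 1}$ yields a bound that depends on $p$ only through harmless constants $1/\sqrt{p}$ and $\sqrt{p^3+1}$. Since $p$ is fixed, both of these can be absorbed into the implied constant, producing the stated $O_{\vQ, p, \vepsilon}(T^{\vepsilon}(\varPi^3/T + T^2\sqrt{T}/\varPi))$ error. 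There is no substantive obstacle here — this corollary is essentially a bookkeeping consequence of Theorem \ref{thm: C2(m)} combined with the simple evaluation $\SB(1) = 1$, $\SB'(1) = 0$; the only point worth flagging is that, unlike Corollary \ref{cor: 2nd moment}, the implied constant now acquires a $p$-dependence through the error term, which is why $p$ appears as a subscript.
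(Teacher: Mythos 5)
Your proposal is correct and is exactly the intended derivation: the corollary is a direct specialization of Theorem \ref{thm: C2(m)} at $(m_1, m_2) = (p, 1)$, with $m = 1$, $r = p$, $\SB(1) = 1$, $\SB'(1) = 0$, and the $p$-dependent factors $1/\sqrt{p}$ and $\sqrt{p^3+1}$ absorbed into the implied constant. Your closing remark — that the $p$-subscript on the error constant is the one genuine point of bookkeeping that distinguishes this from Corollary \ref{cor: 2nd moment} — is precisely right.
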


However, it is curious that the asymptotic for $ \SC_{1}  (p)  $  in \eqref{2eq: C1(m)} yields no information on $\vQ$ as it has no main term at all.

\begin{thm}\label{thm: determination}
	Let $\vQ$, $\vQ'$ be holomorphic Hecke newforms, respectively, of  square-free level $q$, $q'$  and even weight $2 k $, $2k'$. If there is a constant $c > 0$ such that 
	\begin{align}
		|L (s_j , \vQ \otimes u_j)| = c \,  | L (s_j , \vQ' \otimes u_j) |
	\end{align}
for all Hecke--Maass cusp forms $u_j$, then we have $ \vQ = \vQ'$. 
\end{thm}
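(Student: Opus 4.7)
The plan is to extract the Hecke eigenvalues of $\vQ$ directly from the hypothesis by using the twisted second-moment asymptotics already proven, and then to appeal to strong multiplicity one for $\GL_2$. Since $|L(s_j, \vQ \otimes u_j)|^2 = c^2 |L(s_j, \vQ' \otimes u_j)|^2$ for every Hecke--Maass cusp form $u_j$, and because the only newform-dependent factor in the definition \eqref{2eq: moments M2} of $\SC_2 (m_1, m_2)$ is precisely $|L(s_j, \vQ \otimes u_j)|^2$, the hypothesis yields the exact identity
\begin{align*}
\SC_{2}^{\vQ} (m_1, m_2) = c^2 \, \SC_{2}^{\vQ'} (m_1, m_2)
\end{align*}
for every $m_1, m_2$, every $T$, and every admissible $\varPi$.

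I would then fix $\varPi = T^{4/5}$ (any choice in $T^{3/4+\vepsilon} \leqslant \varPi \leqslant T^{1-\vepsilon}$ will do) so that both Corollary \ref{cor: 2nd moment} and Corollary \ref{cor: C2(p)} apply with the main terms strictly dominating the error terms as $T \ra \infty$. Applying Corollary \ref{cor: 2nd moment} to both sides of the identity above at $(m_1, m_2) = (1,1)$, dividing by $\varPi T$, and separating the coefficients of $\log T$ and of the constant term, one extracts
\begin{align*}
\gamma_1 (\vQ) = c^2 \gamma_1 (\vQ'), \qquad \gamma_0' (\vQ) = c^2 \gamma_0' (\vQ').
\end{align*}
Next, for any prime $p \nmid q q'$, I would apply Corollary \ref{cor: C2(p)} to the identity at $(m_1, m_2) = (p, 1)$ and compare the leading coefficients of $\varPi T \log T$, obtaining
\begin{align*}
\frac{a(p)}{\sqrt{p}} \, \gamma_1 (\vQ) = c^2 \, \frac{a'(p)}{\sqrt{p}} \, \gamma_1 (\vQ').
\end{align*}
Dividing this relation by the first yields $a(p) = a'(p)$ for every prime $p$ with $p \nmid q q'$. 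Strong multiplicity one for cuspidal $\GL_2$ automorphic representations then forces $\vQ = \vQ'$ (and as a byproduct $c = 1$, $q = q'$, $k = k'$).

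The main obstacle is the non-degeneracy input $\gamma_1 (\vQ') \neq 0$, which is needed to divide in the step above. From the shape of \eqref{2eq: C2(m)} -- a second-moment main term with a genuine $\log T$ factor -- one expects $\gamma_1$ to be a positive constant multiple of $\mathrm{Res}_{s=1} L(s, \mathrm{sym}^2 \vQ')$, which is nonzero by Shimura. Verifying this directly from the explicit formula for $\gamma_1$ in Definition \ref{defn: constants, 1} is the one piece of work outside the asymptotic machinery already developed; should it fail, the same argument goes through using $\gamma_0'$ in place of $\gamma_1$ (by comparing the coefficients of $\varPi T$ rather than $\varPi T \log T$), so long as at least one of $\gamma_1 (\vQ'), \gamma_0' (\vQ')$ is nonzero.
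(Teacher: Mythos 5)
Your approach is exactly the paper's: compare the twisted second-moment asymptotics of Corollaries \ref{cor: 2nd moment} and \ref{cor: C2(p)} applied to $\vQ$ and $\vQ'$, extract $\gamma_1 = c^2\gamma_1'$, $\gamma_0' = c^2\gamma_0''$, and then $a(p) = a'(p)$ for all $p \nmid qq'$, and conclude by strong multiplicity one. The nonvanishing of $\gamma_1$ that you flag as the remaining obstacle is immediate from the explicit Euler product \eqref{3eq: gamma1} (every factor is positive by Deligne), or equivalently from $\gamma_1$ being the residue of $L(s,\vQ\otimes\vQ)$ at its simple pole, so there is no gap; you also correctly write $c^2$ where the paper's displayed relation $\gamma_1 = c\gamma_1'$ carries an immaterial typo.
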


\begin{proof}
	 Let us denote $a' (p)$, $\gamma_1'$, and $\gamma_{0}''$ for the $p$-th Hecke eigenvalue and the constants  associated to $Q'$ as in \eqref{2eq: C2(1)} and \eqref{2eq: C2(p)}. It follows from \eqref{2eq: C2(1)} that $\gamma_1 = c \gamma_1'$ and $ \gamma_{0}' = c \gamma_{0}''$, and hence from  \eqref{2eq: C2(p)} that $a (p) = a' (p)$ for any $p \nmid q q'$.  Consequently, we have $\vQ = \vQ'$ by the strong multiplicity one theorem. 
\end{proof}

Since the pioneer work of Luo and Ramakrishnan \cite{Luo-Ram-GL(2)xGL(1)}, many authors have studied the determination problem (usually) by  Rankin--Selberg central $L$-values at $s =1/2$ in the $\GL (2) \times \GL(1)$, $\GL (2) \times \GL (2) $, $\GL (3) \times \GL(1)$, or $\GL (3) \times \GL(2)$ settings. See for example \cite{CD-GL(3)xGL(1),GHS-GL(2)xGL(2),Liu-GL(3)xGL(2),Luo-Ram-GL(2)xGL(1),Luo-GL(2)xGL(2),Munshi-GL(2)xGL(1)-Effective}.  Note that  their results are all based on the asymptotic formulae for the twisted {\it first} moments. 

\subsection{Comparison with Luo's Results} 
As Luo \cite{Luo-Weyl} treated the mollified moments directly,  so a comparison of strength of our (twisted) results with his  is not quite transparent, especially for the second moment. However, his method enabled him to establish  the following asymptotic formulae in \cite{Luo-Non-Vanishing,Luo-2nd-Moment}:
\begin{align}\label{2eq: Luo 1st}
	\sum_{j = 1}^{\infty} \omega_j   L (s_j,  \vQ \otimes u_j )  \exp \left(  -  t_j / T \right) & = \frac {2 } {\pi^2} T^{2}  + O (T^{7/4} \log^9 T),  \\
	\label{2eq: Luo 2nd}
	\sum_{j = 1}^{\infty} \omega_j    |L (s_j,  \vQ \otimes u_j )|^2  \exp \left(  -  t_j / T \right) & = a_1 T^2 \log T + a_0 T^2 + O (T^{ 11/6+\vepsilon } ),
\end{align}
where $a_1 = 4 \gamma_1 / \pi^2$, while the method in this paper would yield 
the following asymptotics for the unsmoothed spectral moments. 
\delete{The $2$ is from partial summation: 
\begin{align*}
	\int_0^{\infty} t^2 \exp (-t ) \nd t = 2 . 
\end{align*}}

\begin{thm}\label{cor: unsmoothed}
	We have 
	\begin{align}\label{2eq: 1st moment}
		\sum_{t_j \leqslant T} \omega_j L (s_j,  \vQ \otimes u_j ) = \frac {1} {\pi^2} T^{2} + O (T^{5/3+\vepsilon}), 
	\end{align} 
\begin{align}\label{2eq: 2nd moment}
	\sum_{t_j \leqslant T} \omega_j |L (s_j,  \vQ \otimes u_j )|^2 = \frac {2\gamma_1} {\pi^2}  T^2 \log T + \frac {2\gamma_0' - \gamma_1 } {\pi^2 } T^2  + O (T^{7/4+\vepsilon}). 
\end{align}
\end{thm}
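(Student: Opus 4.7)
The plan is to deduce both unsmoothed moments from the Gaussian-smoothed asymptotics of Corollaries~\ref{cor: 1st moment} and \ref{cor: 2nd moment} together with the Lindel\"of-quality bound of Theorem~\ref{thm: Lindelof}, via a standard convolution/unsmoothing argument. Concretely, for a free parameter $\varPi$, I would approximate the sharp indicator $\mathbf{1}_{t_j\leqslant T}$ by the smooth cutoff
\[
\widetilde{\chi}_T(t) \,=\, \frac{1}{\varPi\sqrt{\pi}}\int_0^T \exp\!\bigl(-(u-t)^2/\varPi^2\bigr)\,\nd u,
\]
so that $\mathbf{1}_{t\leqslant T}-\widetilde{\chi}_T(t)$ is super-polynomially small outside the transition windows $|t-T|,|t|\lesssim \varPi\log T$. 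Swapping orders of summation gives
\[
\sum_{t_j\leqslant T}\omega_j L(s_j,\vQ\otimes u_j) \,=\, \frac{1}{\varPi\sqrt\pi}\int_0^T \SC_1(1;u)\,\nd u \,+\, \mathcal E_1,
\]
where $\SC_1(1;u)$ denotes the Gaussian-smoothed first moment centered at $u$ (of width $\varPi$), and analogously for $\SC_2(1,1;u)$.

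Inserting the asymptotics, the main terms integrate explicitly: $\frac{1}{\varPi\sqrt\pi}\int_0^T\frac{2}{\pi\sqrt\pi}\varPi u\,\nd u=T^2/\pi^2$, while $\frac{1}{\varPi\sqrt\pi}\int_0^T\frac{4}{\pi\sqrt\pi}\varPi u(\gamma_1\log u+\gamma_0')\,\nd u$ produces $\frac{2\gamma_1}{\pi^2}T^2\log T+\frac{2\gamma_0'-\gamma_1}{\pi^2}T^2$, precisely matching \eqref{2eq: 1st moment} and \eqref{2eq: 2nd moment}. The boundary error $\mathcal E_1$ (and its analogue $\mathcal E_2$) is controlled by Cauchy--Schwarz, together with the Weyl law, the harmonic-weight bound \eqref{3eq: omegaj}, and Theorem~\ref{thm: Lindelof}: for any $\varPi\leqslant T^{3/4+\vepsilon}$,
\[
\sum_{|t_j-T|\leqslant\varPi\log T}\omega_j |L(s_j,\vQ\otimes u_j)|^{\kappa} \,\Lt\, \varPi\shskip T^{1+\vepsilon}\qquad (\kappa=1,2),
\]
whereas for larger $\varPi$ one uses Corollary~\ref{cor: 2nd moment} directly. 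Balancing the integrated remainder against the boundary loss and optimizing the choice of $\varPi$ then yields the two claimed error exponents: for the second moment \eqref{2eq: 2nd moment}, the integrated error from Corollary~\ref{cor: 2nd moment} contributes $T^{\vepsilon}(\varPi^2+T^{5/2}/\varPi)$ which together with $\mathcal E_2\Lt \varPi T^{1+\vepsilon}$ is optimized at $\varPi\asymp T^{3/4}$, giving the desired $O(T^{7/4+\vepsilon})$.

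The main obstacle is obtaining the sharper $O(T^{5/3+\vepsilon})$ in \eqref{2eq: 1st moment}: the naive triangle-inequality bound of the remainder in Corollary~\ref{cor: 1st moment}, namely $\sqrt[3]{T+\varPi^2}\,T^{1+\vepsilon}$, only produces $\varPi^{-1/3}T^{2+\vepsilon}$ after division by $\varPi\sqrt\pi$ and integration over $u\in[0,T]$, which balanced against $\varPi T^{1+\vepsilon}$ yields at best $T^{7/4+\vepsilon}$ at $\varPi=T^{3/4}$. To reach $T^{5/3+\vepsilon}$ one must exploit cancellation in the remainder upon integrating in $u$ before taking absolute values — essentially reopening the Kuznetsov/Vorono\"i transform inside the proof of Theorem~\ref{thm: C1(m)} and executing the outer $u$-integration by stationary phase so that an additional factor of $\varPi^{1/2}/\sqrt[3]{\varPi^2}$ is gained. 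Optimizing the refined remainder $\varPi^{1/2}T^{1+\vepsilon}$ against the boundary loss then fixes $\varPi=T^{2/3}$ and delivers $O(T^{5/3+\vepsilon})$.
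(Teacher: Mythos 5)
Your unsmoothing setup is essentially the right framework---it is the convolution in Definition~\ref{defn: unsmooth} and Lemma~\ref{lem: unsmooth, 1}---and the computation of both main terms is correct. The main terms therefore carry over. However, the claimed error exponents do not follow from the naive argument you describe, and the two ``improvements'' you invoke are not substantiated.

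For the second moment, your claim that the integrated remainder from Corollary~\ref{cor: 2nd moment} is $T^{\vepsilon}(\varPi^2 + T^{5/2}/\varPi)$ contains an arithmetic error. With $\SC_2 (1,1;u) = \text{main} + O(T^{\vepsilon}(\varPi^3/u + u^{5/2}/\varPi))$, the normalization $\frac{1}{\varPi\sqrt\pi}\int_0^T(\cdot)\,\nd u$ turns the dominant part ($u\sim T$) of the remainder into
\begin{equation*}
 \frac{1}{\varPi}\cdot T\cdot T^{\vepsilon}\Big(\frac{\varPi^3}{T}+\frac{T^{5/2}}{\varPi}\Big) \asymp T^{\vepsilon}\Big(\varPi^2 + \frac{T^{7/2}}{\varPi^2}\Big),
\end{equation*}
not $T^{\vepsilon}(\varPi^2 + T^{5/2}/\varPi)$. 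Balancing $T^{7/2}/\varPi^2$ against the boundary loss $\varPi T^{1+\vepsilon}$ gives $\varPi = T^{5/6}$ and $O(T^{11/6+\vepsilon})$---precisely Luo's exponent, not $T^{7/4+\vepsilon}$. So the naive argument does \emph{not} reach \eqref{2eq: 2nd moment}; the second moment, no less than the first, needs the refined analysis of \S\S\ref{sec: Bessel, refined}--\ref{sec: unsmooth, 2}.

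For the first moment, you correctly note that the naive triangle inequality only gives $T^{7/4+\vepsilon}$ and that one must exploit cancellation in the $u$-integration \emph{before} taking absolute values. But the mechanism you gesture at---a square-root saving via stationary phase, gaining a factor $\varPi^{1/2}/\varPi^{2/3}$ to produce a refined remainder $\varPi^{1/2}T^{1+\vepsilon}$---does not hold up: with that bound and the boundary loss $\varPi T^{1+\vepsilon}$, the total would keep shrinking as $\varPi$ decreases and there is no balance at $\varPi = T^{2/3}$. What actually happens in the paper is different in two essential ways. First, the Bessel transform $H(x,y)$ is decomposed as $H^{\mathrm{o}}(x,y) + \frac{\varPi\log T}{T}H^{\flat}(x,y)$ (Lemma~\ref{lem: Bessel, refined}), and the $K$-average of the $H^{\mathrm{o}}$-piece is performed \emph{first}, inside the explicit integral representation \eqref{12eq: Ho}, by the partial integration \eqref{12eq: partial} combined with the split \eqref{12eq: split integral}. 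The outcome is the ``rule'' stated before \eqref{12eq: Of1}--\eqref{12eq: Oo2}: the averaged $\SO_1^{\mathrm{o}}$ retains the \emph{pointwise} bound $O(T^{1+\vepsilon}X)$, i.e.\ the full factor $H/\varPi$ that the naive average would cost is saved---a much larger gain than $\varPi^{-1/6}$, and one that comes from the explicit Gaussian weight $\exp(-\varPi^2 r^2)$, not from an outer stationary phase. Second, and crucially, your proposal misses that the ``minus'' Kuznetsov term $\SO_1^{-}$ (which carries the archimedean root-number factor $\upepsilon(t)$ and the Bessel transform $H_-(x,y)$) does \emph{not} admit this saving; as the paper says, ``the method does not apply well to the Bessel integral $H_-(x,y)$.'' Consequently $\SO_1^-$ is bounded with the full $H/\varPi$ loss in \eqref{12eq: O-1}, and the competition between the saved $\SO_1^{\mathrm{o}}$ and the unsaved $\SO_1^{-}$ re-opens the choice of the Vorono\"i parameter $X$, yielding \eqref{12eq: 1st moment} and then $T^{5/3+\vepsilon}$ at $H=T/3$. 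Without tracking this split and the separate behaviour of $H_-(x,y)$, the exponent $5/3$ cannot be derived.
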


The situation might be illustrated by the similar example of the fourth moment of the Riemann $\zeta$ function: Heath-Brown \cite{Heath-Brown-4th} compared his asymptotic formula 
\begin{align*}
	\int_0^{T} |\zeta (1/2+it)|^4 \nd t = a_4 T \log^4 T + a_3 T \log^3 T + \cdots + a_0 T + O(T^{7/8+\vepsilon}), 
\end{align*}
 with Atkinson's 
 \begin{align*}
 	\int_0^{\infty} |\zeta (1/2+it)|^4 \exp (-t/T) \nd t = a_4 T \log^4 T + a_3' T \log^3 T + \cdots + a_0' T + O(T^{8/9+\vepsilon}), 
 \end{align*}
where  the change  $\delta = 1 / T $ has been made to \cite{Atkinson-4th}, and   he remarked that Atkinson's result yields no information about 
\begin{align*}
	\int_0^{T} |\zeta (1/2+it)|^4 \nd t 
\end{align*}
beyond 
\begin{align*}
	\int_0^{T} |\zeta (1/2+it)|^4 \nd t \sim \frac 1 {2\pi^2}  T \log^4 T .
\end{align*}
Note that Heath-Brown's $ O(T^{7/8+\vepsilon}) $ was improved into $O (T^{2/3} \log^8 T )$ by Ivi\'c and Motohashi \cite{IM-4th-Moment,Motohashi-Riemann} (a slightly weaker bound $O (T^{2/3+\vepsilon})$ was obtained in \cite{Zavorotnyi-4th}), while Atkinson's $O(T^{8/9+\vepsilon})$ was  improved drastically into $O (T^{1/2}   )$ by Ivi\'c \cite{Ivic-4th}, which is best possible as indicated in \cite{Ivic-4th-2}.

	\section{Preliminaries}

\delete{	

	Let $e (x) = \exp (2\pi i x)$. 	For integers $  m,  n , \rho$ and   $c  \geqslant 1$,      define 
	\begin{align}\label{1eq: defn Kloosterman}
		S   (m, n ; c ) = \sumx_{   \valpha      (\mathrm{mod} \, c) } e \bigg(   \frac {  \valpha     m +   \widebar{\valpha    } n} {c} \bigg) ,
	\end{align} 
	\begin{align} \label{2eq: defn V}
		V_{\rho} (m, n; c) =	\mathop{\sum_{\valpha     (\mathrm{mod}\, c)}}_{ (\valpha     (\rho-\valpha    ), c) = 1 }  e \bigg(   \frac {  \widebar{\valpha    } m +  \overline{\rho - \valpha    } n } {c} \bigg),
	\end{align}
	where the $\star$ indicates the condition $(\valpha    , c) = 1$ and $ \widebar{\valpha    }$ is given by $\valpha     \widebar{\valpha    }  \equiv 1 (\mathrm{mod} \, c)$. The definition of $V_{\rho} (m, n; c)$ is essentially from Iwaniec--Li  \cite[(2.17)]{Iwaniec-Li-Ortho}. Note that the Kloosterman sum $S (m,n;c)$ is real valued. We have Weil's  bound:
	\begin{align}
		\label{2eq: Weil}
		S (m, n; c)   \Lt \tau (c) \sqrt{ (m, n, c) } \sqrt{c}, 
	\end{align} 
	where as usual $\tau (c) = \tau_0 (c)$ is the number of divisors of $c$.  Moreover, we have Luo's identity in \cite[\S 3]{Luo-LS} (see also \cite[(2.16)]{Iwaniec-Li-Ortho}): 
		\begin{align}\label{2eq: S = V}
			S (m, n; c)  e\Big(\frac {m+n} {c} \Big) = \sum_{r \rho   = c } V_{\rho} (m, n; r). 
		\end{align} 
}
	
	\subsection{Basics of Bessel functions}

The Bessel function $J_{\vnu}(x)$ 
arises in both the Kuznetsov trace formula and the Vorono\"i summation formula.  

For $\mathrm{Re}  (\vnu)  > -   1 / 2$, we have Poisson's integral representation
as in \cite[3.3 (5)]{Watson}: 
\begin{align}\label{3eq: Poisson, J}
	J_{\vnu} (x) = \frac {  ( x/2  )^{\vnu}} {\sqrt{\pi} \Gamma   (\vnu +     1 / 2  )  } \int_0^{\frac 1 2 \pi}   \cos (x \cos \theta ) \sin^{2 \vnu} \theta \,  {\nd}   \theta. 
\end{align}  

For $|\mathrm{Re} (\nu)| < 1$, we have the Mehler--Sonine integral as in \cite[6.21 (12)]{Watson}: 
\begin{align}\label{3eq: integral Bessel}
	J_{  \vnu} (x) = \frac 2 {\pi} \int_0^{\infty} \sin (x \cosh r - \vnu \pi/2) \cosh ( \vnu r) \nd r. 
\end{align}

Moreover, we have the expression (see  \cite[\S 16.12, 16.3, 17.5]{Whittaker-Watson} or \cite[\S 7.2]{Watson}): 
 \begin{equation} \label{2eq: Bessel, Whittaker}
 	J_{\vnu}(x) = \frac {1} {\sqrt {2 \pi x}} \big( \exp ({i x}) W_{\vnu,\scriptscriptstyle +}(x) + \exp ({- i x}) W_{ \vnu, \scriptscriptstyle -} (x) \big), 
 \end{equation}
 with 
 \begin{equation}
 	\label{2eq: bounds for Whittaker functions}
 	x^{n} W_{\vnu, \scriptscriptstyle \pm}^{( n )} (x) \Lt_{  \vnu, n } 1, \qquad x \Gt 1, 
 \end{equation}
while it follows from \eqref{3eq: Poisson, J} that 
\begin{align}\label{3eq: bounds for Bessel, x<1}
	 x^{n - \vnu} J_{\vnu}^{(n)} (x) \Lt_{  \vnu, n }  1  , \qquad x \Lt 1 .
\end{align}

\subsection{Kloosterman and Ramanujan Sums}

Let $e (x) = \exp (2\pi i x)$. 	For integers $  m,  n $ and   $c  \geqslant 1$,      define  the Kloosterman sum
\begin{align}\label{3eq: Kloosterman}
	S   (m, n ; c ) = \sumx_{   \valpha      (\mathrm{mod} \, c) } e \bigg(   \frac {  \valpha     m +   \widebar{\valpha    } n} {c} \bigg) ,
\end{align} 
where the $\star$ indicates the condition $(\valpha    , c) = 1$ and $ \widebar{\valpha    }$ is given by $\valpha     \widebar{\valpha    }  \equiv 1 (\mathrm{mod} \, c)$. 
We have the Weil bound: 
\begin{align}
	\label{3eq: Weil}
	S (m, n; c)   \Lt \tau (c) \sqrt{ (m, n, c) } \sqrt{c} ,  
\end{align} 
where as usual $\tau (c)  $ is the number of divisors of $c$. For the Ramanujan sum $S (m, 0; c)$ we have
\begin{align}\label{3eq: Ramanujan}
	S (m, 0; c) = \sum_{d| (c, m)} d \mu \Big(   \frac {c} {d}  \Big).  
\end{align}

	\subsection{Maass Cusp Forms and Kuznetsov Trace Formula} 
	
As before, let $ \{u_j (z)\}_{j=1}^{\infty}$ be an orthonormal basis of Hecke--Maass cusp forms for the cuspidal space $L_c^2 (\Gamma_0 (1) \backslash \BH)$. For each $u_j (z)  $ with Laplacian eigenvalue $\lambda_j = 1 / 4 + t_j^2$ ($t_j > 0$), it has Fourier expansion of the form
	\begin{align*}
		u_j(z)=   \sqrt{ y}  \sum_{n\neq 0}\rho_j(n) K_{it_j}(2\pi|n|y)e(nx) .
	\end{align*}
As usual, write $s_j = 1/2+ i t_j$ so that $\lambda_j = s_j (1-s_j)$. 	Let $\lambda_j (n)$ ($n \geq 1$) be its Hecke eigenvalues. It is well known that $ \lambda_j (n)$ are all real.  We may assume  $ u_j (z)$ is even or odd in the sense that $ u_j (- \widebar{z}) = \pm u_j  (z)$.     
Then   $\rho_j (\pm n) =   \allowbreak \rho_j (\pm 1)  \lambda_j (n) $, while $\rho_j (-1) = \pm  \rho_j (1)$. Moreover, we have the Hecke relation
\begin{align}\label{3eq: Hecke}
	\lambda_j (m) \lambda_j (n) = \sum_{d | (m, n)} \lambda_j (mn/d^2).  
\end{align}
Define the divisor function (as the Fourier coefficients of the Eisenstein series) 
\begin{align}\label{3eq: divisor function}
	\tau_{\vnu} (n) = \tau_{- \vnu} (n) = \sum_{ a b = n} (a/b)^{  \vnu}   . 
\end{align}
Similar to \eqref{3eq: Hecke}, we have
\begin{align}
	\label{3eq: Hecke, tau}
	\tau_{\vnu} (m)  \tau_{\vnu} (n) = \sum_{d | (m, n)}  \tau_{\vnu} (mn/d^2). 
\end{align}
 
Now we state the spectral Kuznetsov trace formula as in \cite[Theorem 1]{Kuznetsov}. 

	\begin{lem}\label{lem: Kuznetsov}
	Let  $h (t)$ be an even function satisfying the conditions{\hspace{0.5pt}\rm:}
	\begin{enumerate} 
		\item[{\rm (i)\,}] $h (t)$ is holomorphic in  $|\operatorname{Im}(t)|\leq {1}/{2}+\vepsilon$,
		\item[{\rm (ii)}] $h(t)\Lt (|t|+1)^{-2-\vepsilon}$ in the above strip. 
	\end{enumerate}
	Then for $m, n \geq  1$ 	we have the    identity{\hspace{0.5pt}\rm:} 
	\begin{equation}\label{2eq: Kuznetsov}
		\begin{split}
			\sum_{j = 1}^{\infty}  \omega_j h(t_j)   \lambda_j(m)   \lambda_j(n)  + \frac{1}{\pi} & \int_{-\infty}^{\infty} \omega(t) h(t)  \tau_{it}(m) \tau_{it} (n) \nd t\\
			&= \delta ({m, n}) \cdot H +   \sum_{c= 1}^{\infty} \frac{S(m, n;c)}{c} H\bigg(\frac{4\pi\sqrt{m n}}{c}\bigg),
		\end{split}
	\end{equation}
	where  $\delta ({m, n})$ is the Kronecker $\delta$-symbol, 
	\begin{equation}\label{3eq: omega}
		\omega_j=\frac{ |\rho_j(1)|^2}{\cosh(\pi t_j)}, \qquad \omega (t) = \frac {1} {|\zeta(1+2it)|^2},
	\end{equation}
	\begin{align}\label{3eq: integrals} 
			  H = \frac{1}{\pi^2} \int_{-\infty}^{\infty} h(t)\tanh(\pi t) t \nd t , \qquad  
			  H (x)= \frac {2i} {\pi}   \int_{-\infty}^{\infty} h (t ) J_{2it} (x) \frac {t \nd t} {\cosh (\pi t) }. 
		\end{align}  
\end{lem}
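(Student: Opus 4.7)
The plan is to follow the classical Poincar\'e-series derivation of Kuznetsov. For each $m \geqslant 1$ and complex $s$ with $\mathrm{Re} (s) > 1$, introduce the Poincar\'e series
$$P_m (z, s) = \sum_{\gamma \in \Gamma_{\infty} \backslash \mathrm{SL} (2, \BZ)} (\mathrm{Im}\, \gamma z)^s e (m \gamma z),$$
and compute the Petersson inner product $\langle P_m (\cdot, s), P_n (\cdot, s')\rangle$ in two different ways in a region of absolute convergence, then invert an integral transform to reach arbitrary $h(t)$.

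On the geometric side, I unfold via the Bruhat-type decomposition of $\Gamma_{\infty} \backslash \mathrm{SL} (2, \BZ) / \Gamma_{\infty}$. The $c = 0$ double coset contributes $\delta (m, n)$ times a product of Gamma factors, while each $c \geqslant 1$ double coset produces the Kloosterman sum $S (m, n; c)/c$ multiplied by an integral of $y^{s + s' - 2} e^{-2 \pi (m+n) y}$ against a $J$-Bessel kernel, yielding $\sum_{c \geqslant 1} S (m, n; c)/c \cdot H_{s, s'} (4 \pi \sqrt{mn}/c)$ for an explicit kernel $H_{s, s'}$. On the spectral side, Parseval for the decomposition $L^2 (\Gamma_0 (1) \backslash \BH) = \BC \oplus L_c^2 \oplus L_E^2$ reduces the inner product to a sum over the $u_j$ plus an Eisenstein integral, since the Fourier coefficients of $P_m$ pair against $\overline{\rho_j (m)}$ and $\tau_{it} (m)$ up to explicit Gamma factors. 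Equating the two expressions yields the Kuznetsov identity for the specific two-parameter family of test functions $h = h_{s, s'}$.

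To reach an arbitrary even $h (t)$ satisfying (i), (ii), I would invoke the Sears--Titchmarsh inversion of the transform $h \mapsto H$ defined by \eqref{3eq: integrals}: write any admissible $h$ as a Mellin--Barnes superposition of the $h_{s, s'}$, apply linearity on both sides of the identity already established, and recollect the resulting expressions into \eqref{2eq: Kuznetsov}. The main obstacle will be justifying the interchange of summation, integration, and contour shifts. The Kloosterman-side series $\sum_c S (m, n; c) H (4 \pi \sqrt{mn}/c)/c$ is only conditionally convergent under (i), (ii), and the integral defining $H$ is only conditionally convergent at $t = \pm \infty$; exploiting the evenness of $h$ together with its analyticity in the strip $|\mathrm{Im}\, t| \leqslant 1/2 + \vepsilon$, and the representations \eqref{3eq: Poisson, J}, \eqref{3eq: integral Bessel}, \eqref{2eq: Bessel, Whittaker}--\eqref{3eq: bounds for Bessel, x<1}, is precisely what licenses shifting the $t$-contour to the real line without picking up residues and secures absolute convergence after an integration by parts in $t$. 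Finally, matching the diagonal contribution with the normalization $H = \pi^{-2} \int h (t) \tanh (\pi t) t \, \nd t$ and the continuous-spectrum contribution with the $\tau_{it} (m) \tau_{it} (n)/|\zeta (1 + 2 i t)|^2$ weight is routine bookkeeping via the Fourier expansion of the real-analytic Eisenstein series.
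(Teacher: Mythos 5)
The paper does not prove Lemma \ref{lem: Kuznetsov}; it is stated verbatim as Theorem 1 of Kuznetsov's original paper, with only a citation. So there is no ``paper's own proof'' to compare against, and you should not expect one for a standard trace formula of this kind. Your sketch correctly reproduces the architecture of the classical proof found in the cited reference and in standard treatments (Iwaniec's \emph{Spectral Methods}, Motohashi, etc.): unfold inner products of Poincar\'e series geometrically to get the diagonal plus Kloosterman terms, expand spectrally via Parseval for $L^2(\Gamma_0(1)\backslash\BH)$, match the two for an explicit two-parameter family of test functions, then extend to general $h$ by an integral-transform inversion. One imprecision worth flagging: what you call the ``Sears--Titchmarsh inversion'' is properly the inversion theory of the Kontorovich--Lebedev transform, which involves the $K$-Bessel kernel, whereas the transform $h \mapsto H$ appearing in \eqref{3eq: integrals} is built from $J_{2it}$. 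The relevant inversion for the Kuznetsov formula is the one due to Kuznetsov himself (sometimes organized via the Bruggeman--Kuznetsov kernels), and it is not literally the Sears--Titchmarsh theorem, though the analytic spirit is similar. Your attention to the conditional-convergence issues and the role of evenness and analyticity of $h$ in the strip is appropriate; those are indeed where the technical work lies, but since the lemma is only being invoked, not reproved, none of this affects the paper.
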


\subsection{Poisson Summation Formula}

\begin{lem}\label{lem: Poisson}
 Let $\phi ( n | c)$ be an arithmetic function of period $c$. Then for $F (x) \in C_c^{\infty} \allowbreak (-\infty, \infty)$ we have 
	\begin{align}\label{4eq: Poisson Cor}
		\sum_{n =-\infty}^{\infty}  \phi (n| c)  F (n) = \frac 1 {c} \sum_{n =-\infty}^{\infty} \hat {\phi }  (n|c) \hat{F} (n/   c), 
	\end{align}
	where $\hat{F} (y)$ is   the   Fourier transform of $ F (x) $ defined by  
	\begin{align}\label{4eq: Fourier}
		\hat {F} (y) = \int_{-\infty}^{\infty} F (x) e (-xy) \nd x , 
	\end{align}
and
	\begin{align}
		\hat {\phi }  (n|c) =  \sum_{\valpha (\mathrm{mod}\, c) }  \phi (\valpha|c) e \Big(     \frac {   \valpha {n}  } {c} \Big) .  
	\end{align}
\end{lem}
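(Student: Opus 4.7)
The plan is to reduce this generalized Poisson formula to the classical Poisson summation formula by grouping the integers $n$ according to their residue class modulo $c$. Since $\phi(n|c)$ depends only on $n \bmod c$, we may write
\[
\sum_{n=-\infty}^{\infty} \phi(n|c) F(n) = \sum_{\valpha\, (\mathrm{mod}\, c)} \phi(\valpha | c) \sum_{m=-\infty}^{\infty} F(mc + \valpha),
\]
which separates the arithmetic and analytic aspects of the identity.

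Next, for each fixed residue $\valpha$, I would apply the classical Poisson summation formula to the (Schwartz, indeed compactly supported) function $G_{\valpha}(x) := F(xc + \valpha)$. A direct change of variables gives the Fourier transform
\[
\widehat{G_{\valpha}}(n) = \int_{-\infty}^{\infty} F(xc + \valpha) e(-xn)\, \nd x = \frac{1}{c}\, e\!\left(\frac{\valpha n}{c}\right) \hat F(n/c),
\]
so that classical Poisson yields
\[
\sum_{m=-\infty}^{\infty} F(mc + \valpha) = \sum_{n=-\infty}^{\infty} \widehat{G_{\valpha}}(n) = \frac{1}{c} \sum_{n=-\infty}^{\infty} e\!\left(\frac{\valpha n}{c}\right) \hat F(n/c).
\]

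Finally, I would substitute this back and swap the order of the two now-absolutely-convergent sums (the inner sum over $\valpha$ is finite, and the outer sum over $n$ converges because $\hat F$ is of rapid decay by the compact support and smoothness of $F$). The inner sum in $\valpha$ becomes
\[
\sum_{\valpha\, (\mathrm{mod}\, c)} \phi(\valpha|c)\, e\!\left(\frac{\valpha n}{c}\right) = \hat\phi(n|c),
\]
by the definition of $\hat\phi(n|c)$, giving exactly the claimed identity \eqref{4eq: Poisson Cor}.

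There is no real obstacle here; the statement is a routine repackaging of the classical Poisson summation formula for a periodic weight, and the only points requiring care are the convergence justification (handled by $F \in C_c^{\infty}$) and the Fourier transform computation under the affine change of variables $x \mapsto xc + \valpha$, which produces the twist $e(\valpha n/c)$ that ultimately assembles into the finite Fourier transform $\hat\phi(n|c)$.
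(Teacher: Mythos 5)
Your proof is correct and is the standard derivation of the twisted Poisson summation formula; the paper states Lemma \ref{lem: Poisson} without proof, treating it as a known result, so there is no competing argument to compare against. The decomposition by residue class, the change of variables $x \mapsto xc + \valpha$ producing the factor $\frac{1}{c} e(\valpha n/c)\hat F(n/c)$, and the final reassembly into $\hat\phi(n|c)$ are all carried out correctly, and the convergence justification via $F \in C_c^{\infty}$ (hence $\hat F$ Schwartz) is adequate.
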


	\subsection{Holomorphic Cusp Forms and Vorono\"i Summation Formula} 
	
	Let $\vQ (z)$ be a holomorphic newform for $\Gamma_0 (q) \backslash \BH$.  Suppose that $ \vQ (z) $ has trivial nebentypus  and Fourier expansion
	\begin{align*} 
		\vQ (z) = \sum_{n=1}^{\infty} a(n) n^{(2k-1)/2} e (n z),  
	\end{align*}
Hecke-normalized (that is, primitive) so that $ a (1) = 1$. We have the Hecke relation 
\begin{align}\label{3eq: Hecke, Q}
	a (mn) = \sum_{d | (m, n)} \mu (d) \varepsilon_q  (d)  a(m/d) a(n/d),  
\end{align}
where $\varepsilon_q $ is the trivial character modulo $q$. 
Moreover, we have the celebrated Deligne bound  (\cite{Deligne-I}):
\begin{align}\label{3eq: Deligne}
	 |a(n)| \leqslant \tau (n) . 
\end{align} 
 Subsequently, let $q$ be square-free. For $d| q$, we have \begin{align}\label{3eq: a(d)}
 	 a (d)^2 = \frac 1 {d}. 
 \end{align}
According to \cite{Atkin-Lehner-1}, for  $d  | q $ the Atkin--Lehner eigenvalue $\eta_{\vQ} (d)$ is equal to $ \mu (d)  a (d) \sqrt{d}  = \pm 1$.

	Now we state in our setting the Vorono\"i summation formula as in \cite[Theorem A.4]{KMV}. 

 \begin{lem}\label{lem: Voronoi}
Assume that the level $q$ is square-free. Let $\valpha, c $ be integers with $c  \geqslant 1$ and $(\valpha, c) = 1$.  Set $q_c = q / (c, q)$.  Then for  $F (x) \in C_c^{\infty} (0, \infty)$ we have 
 	 \begin{align}\label{2eq: Voronoi, GL(2)} 
 	 	\sum_{n=1}^{\infty}  a  (n)    e\Big(\frac{{\valpha} n}{c}\Big) F ( {n}  )   =     \frac {\mu (q_c) a (q_c) }  {c   }     \sum_{n=1}^{\infty} a (n) e \left(  - \frac{ \overline{\valpha q_c}  n}{c} \right) \check{F}  \bigg(    \frac{  n }{c^2 q_c}   \bigg) ,
 	 \end{align} 
  where $\check{F} (y) $ is the Hankel transform of $F (x)$ defined by 
  \begin{align}
  	\check{F} (y) = 2\pi i^{2k} \int_0^{\infty} F(x) J_{2k-1} (4\pi \sqrt{xy}) \nd x. 
  \end{align}
 \end{lem}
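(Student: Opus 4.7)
The plan is to derive the Vorono\"i formula from a functional equation for the additively twisted $L$-function of $\vQ$, combined with Mellin inversion against the test function $F$. Introduce, for $\mathrm{Re}(s) > 1$,
\[
L (s, \vQ; \valpha/c ) := \sum_{n=1}^{\infty} \frac{a(n)\, e(\valpha n / c)}{n^s} ,
\]
and start from the standard integral representation
\[
(2\pi)^{-s} \Gamma (s + k - 1/2 )\, L(s, \vQ; \valpha/c) = \int_0^{\infty} \vQ (\valpha/c + iy) \, y^{s+k-1/2} \, \frac{\nd y}{y} ,
\]
which is entire in $s$ because $\vQ$ is cuspidal.

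To produce the functional equation, I would use that, $q$ being square-free, $\vQ$ is a simultaneous eigenform of every Atkin--Lehner involution $W_d$ with $d \| q$, with eigenvalue $\eta_{\vQ} (d) = \mu(d)\, a (d) \sqrt d $ recorded just after \eqref{3eq: a(d)}. Writing $(c, q) = q/q_c$, I would construct an explicit matrix $\gamma \in \GL_2 (\BR)^+$ of determinant $c^2 q_c$ sending $\valpha/c + iy$ to $-\overline{\valpha q_c}/c + i/(c^2 q_c y)$, chosen so that its weight-$2k$ action on $\vQ$ factors as $W_{q_c}$ followed by an element of $\Gamma_0(q)$. Substituting $y \mapsto 1/(c^2 q_c y)$ in the Mellin integral then yields a functional equation of the shape
\[
L (s, \vQ; \valpha/c ) = \mu (q_c)\, a (q_c) \cdot (c^2 q_c)^{1/2-s} \cdot (2\pi)^{2s-1} \cdot \frac{\Gamma(k+1/2-s)}{\Gamma(s+k-1/2)} \cdot L (1-s, \vQ; -\overline{\valpha q_c}/c) ,
\]
in which the constant $\mu(q_c) a(q_c)$ assembles the Atkin--Lehner eigenvalue $\eta_{\vQ}(q_c) = \mu(q_c) a(q_c) \sqrt{q_c}$ together with the $\sqrt{q_c}$ absorbed by the conductor factor.

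To finish, starting from the Mellin inversion identity $\sum_{n} a(n)\, e(\valpha n/c)\, F (n) = \frac{1}{2\pi i} \int_{(\sigma)} \widetilde F (s)\, L(s, \vQ; \valpha/c)\, \nd s$ with $\widetilde F (s) = \int_0^{\infty} F (x) x^{s-1} \nd x$ and $\sigma$ large, I would shift the contour to $\mathrm{Re}(s) = 1 - \sigma$ (no poles intervene, since $\vQ$ is cuspidal) and substitute the functional equation. Interchanging the resulting sum and integral reduces matters to identifying
\[
\frac{1}{2\pi i} \int_{(1-\sigma)} \widetilde F(s)\, (2\pi)^{2s-1} \frac{\Gamma(k+1/2-s)}{\Gamma(s+k-1/2)} \left(\frac{n}{c^2 q_c}\right)^{s-1} \nd s
\]
with $\check F(n/(c^2 q_c))$, which is a classical computation using $\int_0^\infty J_{2k-1}(4\pi\sqrt u)\, u^{s-1} \nd u = (2\pi)^{-2s}\, \Gamma(s+k-1/2)/\Gamma(k+1/2-s)$; the whole bookkeeping yields exactly the prefactor $\mu (q_c) a (q_c)/c$ claimed in \eqref{2eq: Voronoi, GL(2)}. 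The main obstacle is the explicit construction of $\gamma$ and the correct bookkeeping of its Atkin--Lehner twist when $(c, q) > 1$: here square-freeness of $q$ is essential, as it allows one to isolate each prime $p \| q_c$ and multiply the local Atkin--Lehner constants to recover the global factor $\mu (q_c)\, a (q_c) \sqrt{q_c}$; everything else is classical contour shifting and Bessel-function Mellin transforms.
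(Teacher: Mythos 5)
The paper does not prove this lemma; it is stated as a direct citation to \cite[Theorem~A.4]{KMV}. Your sketch reconstructs essentially the argument used there: Mellin representation of the additively twisted $L$-series, a functional equation obtained from automorphy under an explicit $\gamma$ built from the Atkin--Lehner involution $W_{q_c}$ (where square-freeness lets you factor $W_{q_c}$ into commuting local involutions $W_p$), a contour shift, and the Mellin transform of $J_{2k-1}$. Your formula $\int_0^\infty J_{2k-1}(4\pi\sqrt u)\,u^{s-1}\,\nd u = (2\pi)^{-2s}\Gamma(s+k-1/2)/\Gamma(k+1/2-s)$ is correct, and the overall strategy is right.

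There is, however, a bookkeeping slip in the constant of your proposed functional equation, and your parenthetical remark about the $\sqrt{q_c}$ being ``absorbed by the conductor factor'' points in the wrong direction. If you write the functional equation as
\begin{align*}
L(s,\vQ;\valpha/c) = \mathcal E \cdot (c^2 q_c)^{1/2-s}\,(2\pi)^{2s-1}\,\frac{\Gamma(k+1/2-s)}{\Gamma(s+k-1/2)}\, L(1-s,\vQ;-\overline{\valpha q_c}/c),
\end{align*}
then Mellin inversion against $\widetilde F$ and the substitution $y = n/(c^2 q_c)$ produce, after comparing with the definition $\check F(y) = 2\pi i^{2k}\int_0^\infty F(x)J_{2k-1}(4\pi\sqrt{xy})\,\nd x$, the identity
\begin{align*}
\sum_n a(n)\,e(\valpha n/c)\,F(n) = \frac{\mathcal E}{\,i^{2k}\,c\,\sqrt{q_c}\,}\sum_n a(n)\,e(-\overline{\valpha q_c}\,n/c)\,\check F\Big(\frac{n}{c^2 q_c}\Big).
\end{align*}
The extra $i^{2k}$ is supplied by the $2\pi i^{2k}$ in $\check F$, and the extra $1/\sqrt{q_c}$ by $(c^2 q_c)^{1/2-s}$ evaluated against $y^{s-1}(c^2 q_c)^{s-1}$. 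To land on the prefactor $\mu(q_c)a(q_c)/c$ of the lemma, you therefore need $\mathcal E = i^{2k}\,\mu(q_c)\,a(q_c)\,\sqrt{q_c} = i^{2k}\,\eta_{\vQ}(q_c)$, not $\mathcal E = \mu(q_c)a(q_c)$ as written. The Atkin--Lehner $\sqrt{q_c}$ and the weight-$2k$ automorphy factor $i^{2k}$ are not absorbed inside the functional equation; they survive into $\mathcal E$ and only cancel at the very end against the corresponding factors contributed by $\check F$ and the conductor. When you carry out the explicit construction of $\gamma$ this $i^{2k}$ will appear naturally from the $(cz+d)^{2k}$ automorphy cocycle; keeping it visible until the last step will make the computation of the constant reliable.
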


Moreover, we shall need the Wilton bound as in \cite[Theorem 3.1]{Iwaniec-Topics}.

\begin{lem}\label{lem: Wilton}
	For any real $\gamma$ and $N \geqslant 1$ we have
	\begin{align}
		\sum_{n \leqslant N} a (n) e (\gamma n) \Lt_{\vQ} \sqrt{N} \log 2 N, 
	\end{align}
where the implied constant depends only on the form $\vQ$ {\rm(}not on $\gamma${\rm)}. 
\end{lem}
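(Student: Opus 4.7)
The plan is to bound the partial sum through the analytic properties of the twisted Dirichlet series
\begin{equation*}
L_\gamma(w) := \sum_{n=1}^{\infty} \frac{a(n) e(\gamma n)}{n^w}.
\end{equation*}

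First, I would establish the uniform sup-norm bound
\begin{equation*}
|\vQ(x + iy)| \Lt_{\vQ} y^{-k}, \qquad x \in \BR, \; y > 0.
\end{equation*}
This holds because $y^{k} |\vQ(z)|$ is $\Gamma_0(q)$-invariant, continuous, and vanishes at every cusp (by cuspidality of $\vQ$), hence is bounded on the compactified modular curve.

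Second, from the Mellin identity
\begin{equation*}
(2\pi)^{-s} \Gamma(s) L_\gamma\bigl(s - (2k-1)/2\bigr) = \int_0^\infty \vQ(\gamma + iy)\, y^{s-1} \nd y,
\end{equation*}
I would continue $L_\gamma$ into the strip containing $\mathrm{Re}(w) = 1/2$ and establish the convexity-type bound
\begin{equation*}
|L_\gamma(1/2 + \vepsilon + it)| \Lt_{\vQ, \vepsilon} (1 + |t|)^{1/2 + \vepsilon},
\end{equation*}
uniformly in $\gamma \in \BR$. The key technical step is to rotate the $y$-contour from the positive real axis into the sector $0 < \arg y < \pi/2$, where $\gamma + iy$ remains in $\BH$ and $\vQ$ is still holomorphic. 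Choosing the angle as $\arg y = \pi/2 - 1/|t|$ produces a factor $e^{-\pi |t|/2}$ that precisely cancels the analogous exponential growth of $1/|\Gamma(s)|$ from Stirling, yielding the stated polynomial bound.

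Third, for any smooth $\phi$ of compact support in $(0, \infty)$, Mellin inversion gives
\begin{equation*}
\sum_n a(n) e(\gamma n) \phi(n/M) = \frac{1}{2\pi i} \int_{(c)} L_\gamma(s) \tilde\phi(s) M^s \nd s.
\end{equation*}
Shifting to $\mathrm{Re}(s) = 1/2 + \vepsilon$ and using the Schwartz decay of $\tilde\phi$ against the convexity bound from the previous step yields the smoothed estimate $\Lt M^{1/2 + \vepsilon}$. A dyadic decomposition $\sum_{n \leqslant N} = \sum_{M = 2^j, \, j \leqslant \log_2 N} \sum_{n \sim M}$ with a smooth partition of unity, followed by geometric summation in $j$, gives the claimed $\sqrt{N} \log 2N$ after a standard trick to remove the $\vepsilon$-loss (e.g., by taking $\vepsilon = 1/\log N$).

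The principal obstacle is the second step: verifying the validity of the contour deformation (which demands careful handling of the tail behavior of $\vQ$ along the rotated ray) and ensuring the resulting estimate is genuinely uniform in the real parameter $\gamma$. Once the convexity bound on the critical line is available, the final step is a routine application of Mellin inversion, and the bound is essentially sharp by a random-walk heuristic (since $\sum_{n \leqslant N} |a(n)|^2 \sim c N$ by Rankin--Selberg).
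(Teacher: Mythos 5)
The paper does not prove this lemma; it simply cites Iwaniec \cite[Theorem 3.1]{Iwaniec-Topics}, where the argument is a short, direct Fourier-analytic one. Starting from your Step 1 (the uniform bound $|\vQ(x+iy)|\Lt_{\vQ} y^{-k}$, which is correct), one recovers the weighted partial sum by integrating $\vQ$ along a horocycle at height $y=1/N$:
\begin{equation*}
\sum_{n\leqslant N} a(n)\,n^{(2k-1)/2}e(\gamma n)
 \;=\; \int_0^1 \vQ\big(\gamma+x+i/N\big)\sum_{n\leqslant N}e^{2\pi n/N}e(-nx)\,\nd x ,
\end{equation*}
then bounds the geometric sum by $\Lt\min(N,\|x\|^{-1})$ and $\vQ$ by $\Lt N^{k}$, giving $\Lt N^{k}\log 2N$; a single Abel summation divides out $n^{(2k-1)/2}$ and yields $\Lt\sqrt{N}\log 2N$. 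No Mellin transform, analytic continuation, contour rotation, or smoothing is involved. Your plan is a genuinely different route via a convexity bound for the additively twisted $L$-function, and Steps 1 and 2 are sound: the rotated Mellin integral does yield $|L_\gamma(1/2+\vepsilon+it)|\Lt_{\vQ,\vepsilon}(1+|t|)^{1/2}$ uniformly in $\gamma\in\BR$, with the implied constant carrying a factor $1/\vepsilon$.

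The gap is in Step 3. The exponent $1/2$ in the convexity bound sits exactly at the threshold where the Perron kernel fails: with a sharp cutoff the integrand on $\mathrm{Re}(s)=1/2+\vepsilon$ is of size $|t|^{1/2}/|t|=|t|^{-1/2}$, which is not integrable, and a truncation at height $T$ costs an $N/T$-type error; balancing gives $N^{2/3+\vepsilon}$, well short of $\sqrt{N}\log 2N$. The smooth dyadic pieces do each give $\Lt M^{1/2+\vepsilon}$, but a smooth partition of unity cannot reproduce $\mathbf{1}_{n\leqslant N}$ at the top of the range; the sharp-versus-smooth discrepancy lives on a block adjacent to $N$, and bounding it is exactly the same type of problem you are trying to solve (or, if you shorten the transition to width $N/T$, the Mellin transform of the cutoff loses its rapid decay and you are back to the balancing above). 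There is no ``standard trick'' that promotes a pointwise convexity bound with exponent exactly $1/2$ to the sharp-cutoff bound; the $\vepsilon=1/\log N$ device only removes the $N^{\vepsilon}$ loss, not the smoothing loss. As written, your argument establishes the smoothed analogue $\sum_n a(n)e(\gamma n)W(n/N)\Lt\sqrt{N}\log 2N$ for fixed compactly supported $W$, but not Lemma~\ref{lem: Wilton} itself -- and the latter (uniform control of $\sup_{M\leqslant N}|A(M)|$) is what the paper actually uses in \S\ref{sec: O+, 2} via partial summation. Iwaniec's horocycle argument avoids the issue entirely.
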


\subsection{Hecke $L$-function {$L (s, {\protect\vQ})$}}   Define 
\begin{align}\label{3eq: L(s,Q)}
	L (s, \vQ) = \sum_{n    =1}^{\infty}\frac{ a (n)  }{n^s} ,
\end{align}
for $\mathrm{Re} (s) > 1$. It is known that $L (s, \vQ)$ has analytic continuation to the whole complex plane and satisfies the functional equation
\begin{align}\label{3eq: Lambda (Q)}
	\Lambda (s, \vQ) = \epsilon_{\vQ}   \Lambda (1-s, \vQ),   
\end{align} 
with
\begin{align}\label{3eq: FE, Q}
	\Lambda (s, \vQ) = \lp   {\sqrt{q}} / {2\pi } \rp^{s} \Gamma (s+k-1/2) L (s, \vQ) , \qquad \epsilon_{\vQ} = i^{2k} \mu (q) a(q) \sqrt{q} = \pm 1.
\end{align} 
The next lemma will be used to estimate the Eisenstein contributions. 

\begin{lem}\label{lem: L(Q)}
	Let $ 1 \leqslant H \leqslant T/3 $. We have
	\begin{align}\label{3eq: L(Q) 1}
		\int_{T- H}^{T+ H}  | L (1/2 +it , \vQ) | \nd t \Lt_{\vQ} {\textstyle \sqrt{     ( H^2 + H T^{2/3}  )   \log T }} ,   
	\end{align} 
	\begin{align}\label{3eq: L(Q) 2}
		\int_{T- H}^{T+ H}  | L (1/2 +it , \vQ) |^2 \nd t \Lt_{\vQ}   {  ( H + T^{2/3}  )} \log T. 
	\end{align} 
\end{lem}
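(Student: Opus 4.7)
The first inequality \eqref{3eq: L(Q) 1} follows from \eqref{3eq: L(Q) 2} by the Cauchy--Schwarz inequality over the interval $[T-H, T+H]$, so the task reduces to proving the mean square bound. My plan is to start from the approximate functional equation obtained by contour-shifting $L(1/2+it, \vQ) \cdot G(s)/s$ against the gamma factors in \eqref{3eq: FE, Q}: modulo a negligible remainder, $L(1/2+it, \vQ)$ is expressed as a pair of smoothly truncated Dirichlet polynomials of effective length $\asymp T$. Squaring and integrating over $t \in [T-H, T+H]$, the diagonal $m=n$ terms contribute $\Lt H \log T$ via the Rankin--Selberg estimate $\sum_{n \leqslant N} |a(n)|^2 \Lt_{\vQ} N$, while the off-diagonal is controlled by $\int_{T-H}^{T+H} (n/m)^{it}\, \nd t \Lt \min(H, m/|m-n|)$, reducing the problem to a shifted-convolution sum over $a(m)\bar a(m+h)$.

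To extract the sharp $T^{2/3}$ from the off-diagonal, I would apply the Vorono\"i summation formula (Lemma \ref{lem: Voronoi}) with trivial modulus to dualize one of the $a$-twisted polynomials, trading a length-$T$ sum at height $T$ for a dual sum whose oscillatory kernel is governed by the Bessel asymptotic \eqref{2eq: Bessel, Whittaker}. A stationary-phase analysis localizes the dual to an effective length $\asymp T^{2/3}$, after which the Wilton bound of Lemma \ref{lem: Wilton} absorbs the remaining oscillation and produces a contribution of size $T^{2/3} \log T$, as required.

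The main obstacle will be the stationary-phase treatment of the Bessel integral appearing after Vorono\"i, together with uniform bookkeeping across the transition $|m-n| \asymp m/H$ between the two regimes of $\min(H, m/|m-n|)$. An alternative and shorter route, which I would pursue as a backup, is to invoke Good's classical spectral mean-square bound for $\GL(2)$ $L$-functions on short intervals, which directly yields the shape $H + T^{2/3}$ without explicit dualization.
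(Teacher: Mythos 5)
Your reduction of \eqref{3eq: L(Q) 1} to \eqref{3eq: L(Q) 2} by Cauchy--Schwarz matches the paper exactly. For \eqref{3eq: L(Q) 2}, however, the paper simply differences Good's asymptotic formula
\begin{align*}
\int_0^T |L(1/2+it,\vQ)|^2\,\nd t = c_1 T\log T + c_0 T + O_{\vQ}\big(T^{2/3}\log^{2/3}T\big)
\end{align*}
at the endpoints $T\pm H$, which immediately gives $(H+T^{2/3})\log T$. Your ``backup'' route is therefore precisely the paper's proof, while your primary plan --- approximate functional equation, diagonal via Rankin--Selberg, off-diagonal via Vorono\"i/Bessel stationary phase and the Wilton bound --- is in effect a sketch of a reproof of Good's theorem itself. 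That route can certainly be pushed through, and it is instructive because it makes transparent where the $T^{2/3}$ saving originates (the stationary-phase localization of the dual sum), but it is substantially heavier: you would need to control the shifted-convolution sums uniformly across the $|m-n|\asymp m/H$ transition and track the $H$-dependence carefully to recover the clean $H+T^{2/3}$ shape, all of which Good has already done. Since the paper invokes Good as a black box, the two-line citation argument is the intended (and much shorter) proof; your main plan would work but is overkill here.
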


\begin{proof}
By Cauchy--Schwarz, \eqref{3eq: L(Q) 1} follows directly from \eqref{3eq: L(Q) 2}, while the latter is deducible easily from the asymptotic formula of Good \cite{Good-Weyl}: 
\begin{align*}
	 \int_{0}^{T}  | L (1/2 +it , \vQ) |^2 \nd t = c_1   T \log T + c_0 T + O_{\vQ} \big(T^{2/3} \log^{2/3} T \big),  
\end{align*}
where $ c_0  $ and $c_1$ are constants depending on the form $\vQ$ (for instance, $c_1 $ is twice the residue at $s = 1$ of  the Dirichlet series ($\zeta_q (2s)$ excluded) in \eqref{3eq: L(s, QQ)} below).  
\end{proof}

\subsection{Rankin--Selberg $L$-function {$L (s, {\protect\vQ} \otimes {\protect\vQ})$}} Define
\begin{align}\label{3eq: L(s, QQ)}
	L (s, \vQ \otimes \vQ ) = \zeta_q (2s) \sum_{n=1}^{\infty} \frac {a (n)^2 } {n^s},  
\end{align}
 for $\mathrm{Re} (s) > 1$, with 
 \begin{align}\label{3eq: zeta(s)}
 	\zeta_q (s) = \sum_{n    =1}^{\infty} \frac {\varepsilon_q (n)} {n^s} = \prod_{p \shskip \nmid q} \lp 1 - \frac 1 {p^s} \rp^{-1}; 
 \end{align}
it is known that $L (s, \vQ \otimes \vQ )$ has analytic continuation on the complex plane except for a simple pole at $s = 1$. Moreover, we have the Euler product (see \cite[\S 4]{KMV}): 
\begin{align}\label{3eq: L(QQ), Euler}
	L (s, \vQ \otimes \vQ ) = \prod_{p \nmid q} \bigg(1 - \frac {a(p)^2 - 2} {p^s} + \frac 1 {p^{2s}}   \bigg)^{-1} \lp 1 - \frac 1 {p^s} \rp^{-2} \cdot  \prod_{p | q} \bigg( 1 - \frac 1 {p^{1+s}}\bigg)^{-1} . 
\end{align}
\begin{defn} \label{defn: constants, 1}
	Define the constants $\gamma_{0} = \gamma_0(\vQ)$ and $\gamma_{1} = \gamma_{1} (\vQ)$ by the Laurent expansion at $s=1$: 
\begin{align}\label{3eq: Laurent}
	L (s, \vQ \otimes \vQ ) = \frac {\gamma_{1}  } {s-1} + \gamma_0   + O(|s-1|) ; 
\end{align}
more explicitly, 
\begin{align}	\label{3eq: gamma1}
\gamma_1 = \prod_{p \nmid q} \lp 1 - \frac {a(p)^2 - 2} {p} + \frac 1 {p^2} \rp^{-1} \lp 1 - \frac 1 {p} \rp^{-1} \cdot \prod_{p | q} \bigg( 1 + \frac 1 {p } \bigg)^{-1}. 
\end{align}
\end{defn}
\delete{By the well-known Rankin--Selberg method, 
\begin{align}
	\sum_{n \leqslant X}    {\varepsilon_q (n) a(n)^2} { } = \frac {\gamma_1   } {\zeta (2)} X + O (X^{3/5+\vepsilon});  
\end{align}
here we have extracted the Euler product over $p | q$ (it is the product of $ 1/ (1 - 1/ p^{1+s}) $ in view of \eqref{3eq: Hecke, Q} and  \eqref{3eq: a(d)}) out of the Dirichlet series in \eqref{3eq: zeta(s)}. }

\subsection{Rankin--Selberg $L$-function {$L (s, {\protect\vQ} \otimes u_j)$}}  
Define $L (s, \vQ \otimes u_j) $ by 
\begin{align}\label{3eq: RS L(s)}
	L (s, \vQ \otimes u_j) =  \zeta_q (2s)  \sum_{n    =1}^{\infty}\frac{ a (n) \lambda_j (n    )}{n^s},  
\end{align}
for $\mathrm{Re} (s) > 1$, and 
 by analytic continuation on the complex plane. 
Define
\begin{align}\label{3eq: RS}
	\Lambda ( s , \vQ \otimes u_j) =    ( { {q}} / {4\pi^2 }  )^{s} \Gamma  ( s+k-s_j ) \Gamma  ( s+k-\overline{s}_j ) L(s, \vQ \otimes u_j). 
\end{align}
Then the    functional equation for $ L(s,\vQ \otimes u_j) $ reads
\begin{align}\label{3eq: RS, FE}
\Lambda ( s , \vQ \otimes u_j) =  \Lambda ( 1- s , \vQ \otimes u_j) . 
\end{align} 

Moreover, in parallel to \eqref{3eq: RS L(s)}, if we replace $\lambda_j (n)$ by  $  \tau_{ it} (n) $ as defined in \eqref{3eq: divisor function}, then we have the Ramanujan identity
\begin{align}\label{3eq: RS, Eis}
	 L (s +it , \vQ) L (s - it, \vQ ) = \zeta_q (2s ) \sum_{n    =1}^{\infty}\frac{ a (n) \tau_{it} (n    )}{n^{s }} .
\end{align} 
Similar to   \eqref{3eq: RS, FE}, 
it follows from \eqref{3eq: Lambda (Q)} and \eqref{3eq: FE, Q} that
\begin{align}\label{3eq: RS, FE, Eis}
\Lambda (s+it, \vQ) \Lambda (s-it, \vQ) =  \Lambda (1-s+it, \vQ) \Lambda (1-s-it, \vQ) . 
\end{align}

By expanding $\zeta_q (2s)$  (see \eqref{3eq: zeta(s)}) in \eqref{3eq: RS L(s)} and \eqref{3eq: RS, Eis}, we rewrite 
\begin{align}\label{3eq: RS, 2}
	L (s, \vQ \otimes u_j) =    \sum_{n    =1}^{\infty}\frac{ \valpha_j (n    )}{n^s},  \qquad 
	L (s+it, \vQ) L (s-it, \vQ) =  \sum_{n    =1}^{\infty}\frac{ \beta_{it} (n    )}{n^s},
\end{align}
with 
\begin{align}\label{3eq: RS coeff}
	\valpha_j (n) = 
	\sum_{h^2m = n} \varepsilon_q (h) a (m) \lambda_j (m), \qquad  \beta_{it} (n) = 
	\sum_{h^2m = n} \varepsilon_q (h) a (m) \tau_{it}  (m).
\end{align}

\subsection{Approximate Functional Equations} 
\label{sec: AFE}

\delete{As the Kuznetsov trace formula in Lemma \ref{lem: Kuznetsov} requires that $h (t)$ is an even function, in our setting we have to use the approximate functional equation for $\Lambda (s_j, \vQ \otimes u_j)$ or  $\Lambda (1/2+it, \vQ \otimes E(it))$ instead of $L (s_j, \vQ \otimes u_j)$ or $L (1/2+it, \vQ \otimes E(it))$. 
	More explicitly, similar to the proof of \cite[Theorem 5.3]{IK}, consider the integral
	\begin{align*}
		I (s_j, \vQ \otimes u_j) = \frac 1 {2\pi i} \int_{(3)} \Lambda (s_j + v, \vQ \otimes u_j)  \exp (v^2) \frac {\nd v} {v} .
	\end{align*}
	Then we shift the integral contour 
	to   $\mathrm{Re}(v) = - 3$ and apply the functional equation \eqref{3eq: RS, FE},  obtaining
	\begin{align*}
		\Lambda ( s_j , \vQ \otimes u_j) = 2 \mathrm{Re} \, I (s_j, \vQ \otimes u_j)   . 
	\end{align*} 
	By expanding into absolutely convergent Dirichlet series via \eqref{3eq: RS L(s)} and \eqref{3eq: RS}, we have 
	
	Consider the integral
	\begin{align*}
		I_+ (s_j, \vQ \otimes u_j) = \frac 1 {2\pi i} \int_{(1)} \exp (v^2) L (s_j + v, \vQ \otimes u_j)  \lp   \frac {\sqrt{q}} {2\pi } \rp^{2 v} \frac {\Gamma (k+v)} {\Gamma (k) }  \frac {\nd v} {v} .
	\end{align*}
	Then we shift the integral contour  
	to $\mathrm{Re}(v) = - 1$  and apply the functional equation \eqref{3eq: RS, FE} (after $v \ra - v$), obtaining
	\begin{align*}
		L ( s_j , \vQ \otimes u_j) = I_+ (s_j, \vQ \otimes u_j) +  I_- (s_j, \vQ \otimes u_j) ,
	\end{align*} 
	with
	\begin{align*}
		I_- (s_j , \vQ \otimes u_j) = \frac 1 {2\pi i} \int_{(1)}  \exp (v^2) L (\overline{s}_j + v, \vQ \otimes u_j) 
\end{align*}}

As it will occur quite often, let us write
\begin{align}
	\theta = \frac {4\pi^2} {q}. 
\end{align}
Similar to \cite{Luo-Non-Vanishing}, we introduce a parameter $X  $  to be specified later. By applying \cite[Theorem 5.3]{IK} to $L ( s  , \vQ \otimes u_j)$ at $s = s_j$ and to  $ L (s+it_j , \vQ \otimes u_j) L (s-it_j, \vQ \otimes u_j) $ at $s = 1/2$, we deduce from \eqref{3eq: RS}, \eqref{3eq: RS, FE}, and \eqref{3eq: RS, 2} the approximate functional equations:  
\begin{align}\label{3eq: AFE} 
	L ( s_j  , \vQ \otimes u_j) =     \sum_{n=1}^{\infty}   \frac{ \valpha_j (n)  }{n^{1/2 + i t_j}} V_{1}  ( \theta n/X  ; t_j ) +     \upepsilon (t_j) \sum_{n=1}^{\infty}   \frac{ \valpha_j (n)  }{n^{1/2 - i t_j}} V_{1}  ( \theta n X  ; - t_j ), 
\end{align}
\begin{align}\label{3eq: AFE, 2} 
	\begin{aligned}
		|L ( s_j  , \vQ \otimes u_j)|^2 = 2 \sum_{n_1=1}^{\infty} \sum_{n_2=1}^{\infty}  \frac{ \valpha_j (n_1) \valpha_j (n_2)  }{n_1^{1/2 + i t_j} n_2^{1/2 - i t_j}} V_{2} ( \theta^2 n_1 n_2   ; t_j ), 
	\end{aligned}
\end{align}
with
\begin{align}\label{3eq: V(y;t)}
	V_{\upsigma} (y; t) =  \frac 1 {2\pi i}  \int_{(3)} \upgamma_{\upsigma}  (v, t)   y^{-v}  \frac {\nd v} {v} , \qquad \upsigma = 1, 2, 
\end{align}
where
\begin{align}\label{3eq: defn G+-}
	\upgamma_{1} (v, t) =   \exp (v^2) \frac {\Gamma  ( k + v )  \Gamma  ( k+v + 2i t ) } {\Gamma (k)   \Gamma (k+2it) } , 
\end{align}
\begin{align}\label{3eq: defn G2}
	\upgamma_{2} (v, t) =  \exp (v^2) \frac {\Gamma  ( k + v )^2 \Gamma  ( k+v + 2i t ) \Gamma  ( k+v - 2i t )} {\Gamma (k)^2 \Gamma (k + 2it) \Gamma (k - 2it)} ,
\end{align}  
\begin{align}\label{3eq: defn e(t)} 
	\upepsilon (t) = \theta^{2 it } \frac {\Gamma (k-2it)} {\Gamma (k+2it)}. 
\end{align}
In parallel,  
\begin{align}\label{3eq: AFE, E(t)} 
	L (1/2 \hspace{-1pt} + \hspace{-1pt} 2it , \vQ) L (1/2, \vQ ) \hspace{-1pt} =  \hspace{-2pt} \sum_{n=1}^{\infty} \hspace{-2pt}  \frac{ \beta_{  it} (n    )}{n^{1/2 + i t }} V_{1 }  ( \theta n/X \hspace{-1pt} ; t   )  \hspace{-1pt} + \hspace{-1pt}   \upepsilon (t) \hspace{-2pt} \sum_{n=1}^{\infty}  \hspace{-2pt} \frac{ \beta_{  it} (n    )}{n^{1/2 - i t }} V_{1}  ( \theta n X \hspace{-1pt} ; -t    ),  
\end{align}
\begin{align}\label{3eq: AFE, E(t), 2} 
	|L (1/2 +2it , \vQ) L (1/2, \vQ )|^2 = 2  \sum_{n_1=1}^{\infty} \sum_{n_2=1}^{\infty}   \frac{   \beta_{  it} (n _1   ) \beta_{  it} (n _2   )}{n_1^{1/2 + i t} n_2^{1/2 - i t}} V_{2} ( \theta^2 n_1 n_2   ; t ) .  
\end{align}

\delete{After the applications of Kuznetsov formula, for the diagonal terms,  the expression of $ V_{\upsigma} (y; t)  $ in \eqref{3eq: V(y;t)} will be suitable, but for the off-diagonal terms, we need to further expand  $\zeta_{\upsigma} (1+2 v, 2t)$  via 
\begin{align*} 
	\zeta_{\pm} (s, t) = \sum_{(h, q) = 1}  \frac {1 } {h^{s \pm  it}}, \qquad \zeta_{2} (s, t)  
	= \mathop{\sum \sum}_{(h_1h_2, q)=1} \frac { (h_1/h_2)^{it} } {(h_1 h_2)^s} , 
\end{align*}
for $\mathrm{Re}(s) > 1$, 
so that
\begin{align}\label{3eq: V+-}
	V_{\pm} (y; t) = \sum_{(h, q)=1} \frac {1 } {h^{1 \pm 2it}} \breve{V}_{\pm} (h^2 y; t),  
\end{align}
\begin{align}\label{3eq: V2}
	V_2 (y; t) = \mathop{\sum \sum}_{(h_1h_2, q)=1}  \frac { (h_1/h_2)^{2it} } {h_1 h_2 } \breve{V}_{2} \big((h_1 h_2)^2   y; t \big),
\end{align}
with 
\begin{align}\label{3eq: V'}
	\breve{V}_{\upsigma}  (y; t) = \frac 1 {2\pi i}  \int_{(3)} \upgamma_{\upsigma}  (v, t)   y^{-v}  \frac {\nd v} {v} .
\end{align}
\delete{Note that in the notation of \S \ref{sec: bi-varialbe}, we may rewrite \eqref{3eq: V2} as 
\begin{align}\label{3eq: V2, 2}
	V_2 (y; t) = \mathop{ \sum}_{(\| \boldsymbol{h} \|, q)=1}  \frac { \mathrm{c}_{t} (\boldsymbol{h}^2) } { \| \boldsymbol{h} \| } \breve{V}_{2} \big(\| \boldsymbol{h} \|^2   y; t \big),
\end{align}} 

}

\begin{lem}\label{lem: AFE}
	Let $U \Gt 1$ and $\vepsilon > 0$.  Then for $\upsigma = 1, 2$, we have
	\begin{align}\label{3eq: V(y;t), 1}
		 {V}_{\upsigma} (y; t) \Lt_{A, k } \bigg(1 + \frac {y} {|t|^{\upsigma}+1} \bigg)^{-A}, 
	\end{align} 
for any $A \geqslant 0$,  
	\begin{align}\label{3eq: V(y;t), 2}
		 {V}_{\upsigma} (y; t) =  \frac 1 {2\pi i}  \int_{\vepsilon - i U}^{\vepsilon+i U}  \upgamma_{\upsigma} (v, t)   y^{-v}  \frac {\nd v} {v} + O_{\vepsilon, k  } \bigg( \frac {(|t|+1)^{\vepsilon}  } {y^{\vepsilon} \exp (U^2/2)}  \bigg) . 
	\end{align} 
\end{lem}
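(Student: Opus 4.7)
The argument is a standard contour-shift analysis of the Mellin integral \eqref{3eq: V(y;t)}, resting on uniform Stirling bounds for the weight $\upgamma_\upsigma(v,t)$.

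The first step is to establish that, uniformly for $t \in \BR$ and $v$ ranging over any fixed vertical strip $-k + \delta \leqslant \mathrm{Re}(v) \leqslant A$,
\[
	|\upgamma_\upsigma (v,t)| \Lt_{\delta, A, k} \exp \big(- (\mathrm{Im}(v))^2 / 2\big)  (|t|+1)^{\upsigma \mathrm{Re}(v)}.
\]
Indeed, by Stirling each of the $t$-dependent ratios $\Gamma(k+v\pm 2it)/\Gamma(k \pm 2it)$ is of size $\Lt (|t|+1)^{\mathrm{Re}(v)} \exp (\pi |\mathrm{Im}(v)|/2)$, and $\upgamma_\upsigma$ contains exactly $\upsigma$ such ratios together with a polynomially bounded factor $\Gamma(k+v)^\upsigma/\Gamma(k)^\upsigma$. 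The exponential growth in $|\mathrm{Im}(v)|$ is swallowed by $|\exp (v^2)| = \exp (\mathrm{Re}(v)^2 - \mathrm{Im}(v)^2)$, leaving genuine Gaussian decay of width $1$ along vertical lines.

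For \eqref{3eq: V(y;t), 1}, I would split into two regimes. If $y \geqslant |t|^\upsigma + 1$, shift the contour from $(3)$ to $(A)$ for $A$ as large as desired. The integrand is holomorphic between these lines (the only pole of $\upgamma_\upsigma(v,t)/v$ to the left of $v=3$ is the simple pole at $v=0$, which is not crossed), so the displayed Stirling bound yields contribution $\Lt ((|t|^\upsigma+1)/y)^A$. If instead $y \leqslant |t|^\upsigma+1$, shift leftward to $(-\delta)$ for a small $\delta > 0$, which remains to the right of the $\Gamma$-poles at $v = -k,\, -k \mp 2it - n$ as long as $\delta < k$; this crosses the simple pole at $v=0$ and picks up the residue $\upgamma_\upsigma (0, t) = 1$, while the shifted integral is $\Lt ((|t|^\upsigma+1)/y)^{-\delta} \leqslant 1$. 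Combining the two regimes gives the claimed decay bound for any $A \geqslant 0$.

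For \eqref{3eq: V(y;t), 2}, shift the contour from $(3)$ to $(\vepsilon/\upsigma)$ (no poles crossed) and truncate at height $U$. On the tails $|\mathrm{Im}(v)| \geqslant U$, factor $\exp (\mathrm{Re}(v)^2 - \mathrm{Im}(v)^2) = \exp (\mathrm{Re}(v)^2 - \tau^2/2) \cdot \exp (-\tau^2/2)$ and use $\exp (-\tau^2/2) \leqslant \exp (-U^2/2)$ for $|\tau| \geqslant U$; the remaining integral $\int \exp (-\tau^2/2) \, \nd \tau$ converges absolutely, producing the claimed error $\Lt (|t|+1)^\vepsilon y^{-\vepsilon} \exp (-U^2/2)$ after relabelling $\upsigma \vepsilon \mapsto \vepsilon$. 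The principal point requiring care is precisely this uniform Stirling bound in regimes where $|\mathrm{Im}(v)|$ is comparable to or exceeds $|t|$, which is the raison d'\^etre of the Gaussian factor $\exp (v^2)$ built into the definitions \eqref{3eq: defn G+-}, \eqref{3eq: defn G2}.
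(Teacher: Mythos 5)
Your argument is correct. In fact, the paper does not give a proof here at all: it simply cites \cite[Proposition 5.4]{IK} for \eqref{3eq: V(y;t), 1} and \cite[Lemma 1]{Blomer} for \eqref{3eq: V(y;t), 2}. What you have written out is precisely the standard contour-shift argument underlying those references, with the key observation correctly identified---namely, that the factor $\exp(v^2)$ built into $\upgamma_\upsigma$ supplies Gaussian decay $\exp(-\mathrm{Im}(v)^2)$ along vertical lines that overwhelms the $\exp(O(|\mathrm{Im}(v)|))$ growth of the $\Gamma$-ratios coming from Stirling, uniformly in $t$, which is what makes both the rightward shift (for the decay bound) and the truncation at height $U$ (for the asymptotic expansion) go through. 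The small bookkeeping point about whether the shifted line sits at $\mathrm{Re}(v)=\vepsilon$ or $\vepsilon/\upsigma$ and the resulting relabelling of $\vepsilon$ is immaterial and handled exactly as you indicate.
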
 

\begin{proof}
For \eqref{3eq: V(y;t), 1} and \eqref{3eq: V(y;t), 2}, we refer the reader to	  \cite[Proposition 5.4]{IK} and \cite[Lemma 1]{Blomer}. 
\end{proof}

\begin{lem}\label{lem: G(v,t)}
	  Let $\mathrm{Re}(v) > 0$ be fixed. Then for  $|\mathrm{Re} (it)| \leqslant k/2$, we have
	 \begin{align}\label{3eq: bound for G, 1}
	 	  \upgamma_{\upsigma} (v, t)  \Lt_{ \mathrm{Re} (v) , k }   (|t|+1)^{\upsigma \mathrm{Re} (v)} , 
	 \end{align} 
	 and, for $t$ real, we have 
	 \begin{align}\label{6eq: bound for G, 2}
	 \begin{aligned}
	 		\frac {\partial^{n } \upgamma_{\upsigma } (v, t) } {\partial t^n  } \Lt_{n,  \mathrm{Re} (v) ,   k }   (|t|+1)^{   {\upsigma} \mathrm{Re} (v) }  \lp \frac { \log (|t|+2) } {|t|+1 } \rp^{n}   ,  
	 \end{aligned}
	 \end{align} 
	 for any $n \geqslant 0$. 
\end{lem}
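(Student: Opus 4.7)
The plan is to derive both estimates directly from Stirling's asymptotic expansion for $\log\Gamma(z)$ and its polygamma derivatives $\psi^{(m)}(z)$. The Gaussian factor $\exp(v^2)$ appearing in \eqref{3eq: defn G+-}--\eqref{3eq: defn G2} is the mechanism that reduces the dependence on $v$ to dependence on $\mathrm{Re}(v)$ alone: it absorbs any $\exp(O(|\mathrm{Im}(v)|))$ or polynomial-in-$|v|$ growth produced by the gamma analysis.

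For \eqref{3eq: bound for G, 1}, observe that on the strip $|\mathrm{Re}(it)| \leq k/2$ the point $k+2it$ stays bounded away from the non-positive integers and satisfies $|\arg(k+2it)| \leq \pi/2$, so Stirling's expansion applies uniformly and yields
\[
\log\frac{\Gamma(k+v+2it)}{\Gamma(k+2it)} = v\log(k+2it) + O_{v,k}\!\left(\frac{1}{|t|+1}\right).
\]
Exponentiating and taking absolute values gives $\bigl|\Gamma(k+v+2it)/\Gamma(k+2it)\bigr| \ll (|t|+1)^{\mathrm{Re}(v)}\exp(C_k|\mathrm{Im}(v)|)$. The factor $|\exp(v^2)| = \exp(\mathrm{Re}(v)^2 - \mathrm{Im}(v)^2)$ then swallows the exponential in $|\mathrm{Im}(v)|$, establishing the bound $(|t|+1)^{\mathrm{Re}(v)}$ for $\upgamma_1$. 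For $\upgamma_2$, applying the same analysis to the conjugate ratio $\Gamma(k+v-2it)/\Gamma(k-2it)$ produces an additional factor of $(|t|+1)^{\mathrm{Re}(v)}$, for a total of $(|t|+1)^{2\mathrm{Re}(v)}$.

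For the derivative bound \eqref{6eq: bound for G, 2}, I take $t$ real and use logarithmic differentiation. Writing $F(t) = \log\upgamma_\upsigma(v,t)$, each derivative $F^{(j)}(t)$ is a linear combination of the differences $\psi^{(j-1)}(k+v\pm 2it) - \psi^{(j-1)}(k\pm 2it)$. The Stirling asymptotic $\psi^{(m)}(z) = (-1)^{m-1}(m-1)!/z^m + O(1/|z|^{m+1})$ for $m \geq 1$, combined with the mean value theorem, gives $|F^{(j)}(t)| \ll_{j,v,k} (|v|+1)/(|t|+1)^j$ for $j \geq 1$. By Fa\`a di Bruno's formula
\[
\frac{d^n}{dt^n}\upgamma_\upsigma(v,t) = \upgamma_\upsigma(v,t) \cdot B_n\bigl(F'(t),F''(t),\ldots,F^{(n)}(t)\bigr),
\]
where $B_n$ is the complete Bell polynomial, whose dominant contribution is $(F'(t))^n \ll ((|v|+1)/(|t|+1))^n$. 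Combining with the first bound on $\upgamma_\upsigma$ and again using $\exp(v^2)$ to absorb $|v|^n$ yields
\[
\biggl|\frac{d^n}{dt^n}\upgamma_\upsigma(v,t)\biggr| \ll_{n,\mathrm{Re}(v),k} (|t|+1)^{\upsigma\mathrm{Re}(v)-n},
\]
which is in fact stronger than \eqref{6eq: bound for G, 2} since $1/(|t|+1) \leq \log(|t|+2)/(|t|+1)$.

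No substantive difficulty arises. The proof is essentially careful bookkeeping with Stirling; the only point deserving attention is the uniformity of the asymptotic expansion on the strip $|\mathrm{Re}(it)| \leq k/2$, which holds because $k + v \pm 2it$ stays bounded away from the non-positive integers throughout the effective range of $v$ (large $|\mathrm{Im}(v)|$ being suppressed by $\exp(v^2)$).
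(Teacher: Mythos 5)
Your proof is correct and follows essentially the same route as the paper's: Stirling's asymptotics for $\log\Gamma$ and the polygamma functions, with the Gaussian factor $\exp(v^2)$ used to neutralize the dependence on $\mathrm{Im}(v)$. The paper packages the derivative estimate directly via \eqref{app1: Stirling, 4} whereas you rebuild it through Fa\`a di Bruno, but this is cosmetic; note also that your slightly sharper form of \eqref{6eq: bound for G, 2}, namely $(|t|+1)^{\upsigma\mathrm{Re}(v)-n}$ without the $\log$ factor, is indeed correct---the paper's $\log(|t|+2)$ arises from the cruder step of replacing $1+|v|$ by $\log(|t|+2)$ on the effective range $|\mathrm{Im}(v)|\leq\log(|t|+2)$ imposed by $\exp(v^2)$, and either version serves equally well in the applications (Definition \ref{defn: space T(r)} and Lemma \ref{lem: Bessel}).
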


\begin{proof}
	The bounds in \eqref{3eq: bound for G, 1} and \eqref{6eq: bound for G, 2} follow  readily from Stirling's formulae for  $\log \Gamma (s)$ and its derivatives (recorded in Appendix \ref{sec: Stirling} for the reader's convenience).  Note that in the definition of $\upgamma_{\upsigma} (v, t)$ as in \eqref{3eq: defn G+-} and \eqref{3eq: defn G2} the factor $ \exp ({v^2})$ is negligibly small unless $ |\mathrm{Im} (v) | \leqslant 
	\log (|t|+2) $. 
\end{proof}

\begin{lem}\label{lem: e(t)} 
For $|\mathrm{Im} (t)| \leqslant (k-1)/2$, we have 
	\begin{align}\label{3eq: e(t), bound}
		\upepsilon (t) \Lt_{  k, q } (|t|+1)^{4 \mathrm{Im}(t)} , 
	\end{align}
and, for $t$ real and $|t|$ large, we have
\begin{align}
	\upepsilon (t) = (\sqrt{\theta} e/2|t|)^{4i t} \updelta (t)  , 
\end{align}
such that 
\begin{align}\label{3eq: d(t), bound}
t^{n}	\updelta^{(n)} (t) \Lt_{n, k} 1 ,
\end{align}
for any $n \geqslant 0$. 
\end{lem}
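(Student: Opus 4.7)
The plan is to derive both assertions from Stirling's asymptotic expansion for $\log \Gamma(z)$ applied to the two factors $\Gamma(k \pm 2it)$ entering the definition $\upepsilon(t) = \theta^{2it} \Gamma(k-2it)/\Gamma(k+2it)$. Writing $t = \sigma + i\tau$ we have $k \pm 2it = (k \mp 2\tau) \mp 2i\sigma$, and the hypothesis $|\tau| \leqslant (k-1)/2$ keeps both real parts $\geqslant 1/2$, so the arguments stay at positive distance from the poles of $\Gamma$ and Stirling applies uniformly throughout the strip. This is exactly the set-up of the Stirling bounds recorded in Appendix \ref{sec: Stirling} and already exploited for Lemma \ref{lem: G(v,t)}.

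For the polynomial bound \eqref{3eq: e(t), bound}, I would use the classical asymptotic $|\Gamma(x+iy)| \asymp_{x} |y|^{x-1/2} e^{-\pi |y|/2}$ (for $|y| \geqslant 1$ and $x$ in a bounded range away from the poles). The exponential decay in $|\sigma|$ cancels in the ratio, leaving
\begin{align*}
|\Gamma(k-2it)/\Gamma(k+2it)| \Lt_{k} (|\sigma|+1)^{4\tau} \Lt_{k} (|t|+1)^{4 \mathrm{Im}(t)};
\end{align*}
combined with $|\theta^{2it}| = \theta^{-2\tau} \Lt_{k,q} 1$ and the trivial continuity of $\upepsilon(t)$ on the compact set $|\sigma| \leqslant 1$, this produces the stated estimate.

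For the asymptotic form, I would apply the full Stirling series $\log \Gamma(z) \sim (z-1/2)\log z - z + \tfrac12 \log (2\pi) + \sum_{m\geqslant 1} \tfrac{B_{2m}}{2m(2m-1)} z^{1-2m}$ to $k \pm 2it$ for $t$ real with $|t|$ large. Using $\log(k \pm 2it) = \log(2|t|) \pm (i\pi/2)\mathrm{sgn}(t) + O(1/|t|)$, the split $(k -2it-1/2)\log(k-2it) - (k+2it-1/2)\log(k+2it) = (k-1/2)(\log(k-2it) - \log(k+2it)) - 2it(\log(k-2it)+\log(k+2it))$ produces, upon adding $2it \log\theta$,
\begin{align*}
\log \frac{\Gamma(k-2it)}{\Gamma(k+2it)} + 2it \log \theta = 4it \log \frac{\sqrt{\theta}\, e}{2|t|} - i\pi (k - \tfrac12) \mathrm{sgn}(t) + \sum_{m \geqslant 1} \frac{c_m(k)}{t^m},
\end{align*}
as an asymptotic series with coefficients $c_m(k)$ depending only on $k$. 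Exponentiating extracts the prefactor $(\sqrt{\theta}\, e/2|t|)^{4it}$ cleanly, and $\updelta(t)$ is a unimodular constant times the exponential of a bounded $1/t$-series.

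The main obstacle is the derivative bound \eqref{3eq: d(t), bound}. The key fact is that Stirling's expansion may be differentiated term-by-term: from the polygamma estimates $\psi^{(j)}(k \pm 2it) \Lt_{j,k} |t|^{-\max(j,1)}$, the derivative $(d/dt)^n$ applied to the tail $\sum_{m \geqslant 1} c_m(k)/t^m$ and to the error in Stirling's expansion is $\Lt_{n,k} |t|^{-n}$. One obtains by induction $(d/dt)^n \log \updelta(t) \Lt_{n,k} |t|^{-n}$ for every $n \geqslant 1$, and Fa\`a di Bruno's formula together with the boundedness of $\updelta(t)$ upgrades these to $\updelta^{(n)}(t) \Lt_{n,k} |t|^{-n}$. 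Once the leading cancellation in $\log \updelta$ has been isolated, the rest is routine bookkeeping with Stirling.
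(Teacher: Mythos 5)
Your proposal is correct and takes essentially the same route as the paper, which dispatches the lemma by a one-line appeal to the Stirling formulae \eqref{app1: Stirling, 1}--\eqref{app1: Stirling, 2.2}; you have simply worked out the details that the paper leaves implicit. One small slip: the asserted polygamma bound $\psi^{(j)}(k\pm 2it)\Lt_{j,k}|t|^{-\max(j,1)}$ is false at $j=0$ (where $\psi(z)\sim\log z$), but this is harmless since, as you yourself note, the divergent $\log|t|$-size contributions to $\frac{\nd}{\nd t}\log\updelta$ cancel exactly against the derivative of the explicit prefactor $4it\log(\sqrt\theta e/2|t|)$, so the induction and the Fa\`a di Bruno step only ever use the correct estimates for $j\geqslant 1$.
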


\begin{proof}
	The bounds in \eqref{3eq: e(t), bound} and \eqref{3eq: d(t), bound} follow from Stirling's formulae in \eqref{app1: Stirling, 1}--\eqref{app1: Stirling, 2.2}. 
\end{proof}

\delete{ Moreover, it follows from \eqref{3eq: L(s,Q)}--\eqref{3eq: FE, Q}  the approximate functional equation:  
\begin{align}\label{3eq: AFE, L(Q)} 
	L (1/2 + it , \vQ)  = \sum_{n=1}^{\infty}   \frac{ a (n)  }{n^{1/2 + i t }} V \bigg(  \frac {2\pi n} {\sqrt{q} } ; t  \bigg) + \epsilon (\vQ, t)  \sum_{n=1}^{\infty}   \frac{ a (n)  }{n^{1/2 - i t }} \overline{V} \bigg(  \frac {2\pi n} {\sqrt{q} } ; t  \bigg),
\end{align}
where $ \epsilon (\vQ, t) $ is of unity norm, and
\begin{align}
	V (y; t) = \frac 1 {2\pi i}  \int_{(3)} \frac {\Gamma (k+v+it)} {\Gamma (k+it)}   \exp (v^2) y^{-v}  \frac {\nd v} {v}. 
\end{align}

\begin{lem}\label{lem: AFE, 2}
Let the notation be as in Lemma {\rm \ref{lem: AFE}} {\rm (1)}. Then  
	\begin{align}\label{3eq: W(y;t), 1}
		V (y; t) \Lt \bigg(1 + \frac {y} {\RC (t)} \bigg)^{-A}, 
	\end{align} 
	\begin{align}\label{3eq: W(y;t), 2}
		V  (y; t) =  \frac 1 {2\pi i}  \int_{\vepsilon - i U}^{\vepsilon+i U}  \frac {\Gamma (k+v+it)} {\Gamma (k+it)}  \exp (v^2) y^{-v}  \frac {\nd v} {v} + O_{\vepsilon, k} \bigg( \frac {\RC(t)^{\vepsilon}  } {y^{\vepsilon} \exp (U^2/2)}  \bigg).
	\end{align}
\end{lem} 



\begin{proof}
	By Cauchy--Schwarz,  \eqref{3eq: L(Q) 1} follows directly from \eqref{3eq: L(Q) 2}, so it suffices to prove the latter. To this end, we apply the approximate functional equation \eqref{3eq: AFE, L(Q)}, along with \eqref{3eq: W(y;t), 1} and \eqref{3eq: W(y;t), 2}. Let $ |t-T|\leqslant H  $. Note that the $n$-sums in  \eqref{3eq: AFE, L(Q)} may be truncated effectively at $n = T^{1+\vepsilon}$ due to \eqref{3eq: W(y;t), 1}, the error term in \eqref{3eq: W(y;t), 2} is negligibly small if we choose $U = \log T$, and for  $v$ on the  integral contour in  \eqref{3eq: W(y;t), 2} we have
	\begin{align*}
		\frac {\Gamma (k+v+it)} {\Gamma (k+it)} = O (T^{\vepsilon}). 
	\end{align*}
by Stirling.   Thus,  up to a negligibly small error,  it follows from Cauchy--Schwarz that
	\begin{align*}
	  | L (1/2 +it , \vQ) |^2   \Lt T^{\vepsilon}  \int_{\vepsilon - i \log T}^{\vepsilon+i \log T}  \bigg| \sum_{n \leqslant T^{1+\vepsilon}} \frac {a(n)} {n^{1/2+it + v} }  \bigg|^2 \nd v . 
	\end{align*}
Next, we invoke  the large sieve for Dirichlet polynomials (see \cite[Theorem 6.1]{Montgomery-Topics})
\begin{align*}
	\int_{-H}^H \bigg|\sum_{ n \leqslant   N} a_n n^{it} \bigg|^2 d t \Lt (H+N) \sum_{ n \leqslant   N}  |a_n|^2.
\end{align*} 
By the Deligne bound (the Ramanujan conjecture)  $ a(n) = O (n^{\vepsilon}) $ (of course, we may also use the Rankin--Selberg bound (the averaged Ramanujan conjecture)), we deduce 
\begin{align*}
T^{\vepsilon} \int_{\vepsilon - i \log T}^{\vepsilon+i \log T}	\int_{T- H}^{T+ H} \bigg| \sum_{n \leqslant T^{1+\vepsilon}} \frac {a(n)} {n^{1/2+it + v} }  \bigg|^2 \nd t\,  \nd v  \Lt (H+T) T^{\vepsilon} \Lt T^{1+\vepsilon} , 
\end{align*}
 and hence \eqref{3eq: L(Q) 2}.  
\end{proof}
}

	\subsection{Bi-variable Notation} \label{sec: bi-varialbe}

Subsequently, particularly in our study of the second twisted moment in \S \ref{sec: 2nd moment},  we shall use bold $\boldsymbol{m}$, $\boldsymbol{n}$, ... for the  (positive integral) pairs  $(m_1, m_2)$, $(n_1, n_2)$, .... Define the norm $ \|\boldsymbol{n}\| = n_1 n_2 $, the fraction $\langle \boldsymbol{n} \rangle = n_1/ n_2$,  the dual $\widetilde{\boldsymbol{n}} = (n_2, n_1)$, the reduction $\boldsymbol{n}^{\star} = \boldsymbol{n} / \mathrm{gcd}(\boldsymbol{n}) $, and the product $ \boldsymbol{m n} = (m_1 n_1, m_2 n_2)  $. Let $\delta (\boldsymbol{n})$ be the Kronecker $\delta$ that detects $n_1 = n_2$.  

Define 
\begin{align}\label{3eq: ct(n)}
	\mathrm{c}_t ( \boldsymbol{n}) = \mathrm{Re} \big((n_1/n_2)^{it} \big) =  \cos (t \log (n_1/n_2)). 
\end{align}
For an arithmetic function $f (n)$, define 
\begin{align}\label{3eq: f(n), bi-var}
	f (\boldsymbol{n}) = f (n_1) f(n_2). 
\end{align} 

For instance,  \eqref{2eq: moments M2} now reads 
\begin{align}\label{3eq: moments M2}
	\begin{aligned}
		\SC_{2}  ( \boldsymbol{m} ) = \sum_{j = 1}^{\infty} \omega_j   \lambda_j ( \boldsymbol{m}  ) \mathrm{c}_{t_j} (\boldsymbol{m})   |L (s_j,  \vQ \otimes u_j )|^2     \exp \left(  - (t_j - T)^2/\varPi^2 \right).
	\end{aligned}
\end{align}  

Occasionally, we shall use $\boldsymbol{x} $
 for the positive real pair $(x_1 , x_2)$, and for a smooth function $\varww (x)$ write 
 \begin{align}\label{3eq: w(x)}
 	\varww (\boldsymbol{x}) = \varww ( {x}_1 )  \varww ({x}_2) . 
 \end{align}

For $\boldsymbol{n} = (n_1, n_2)$ and $\boldsymbol{N} = (N_1, N_2)$, we shall write $ \boldsymbol{n} \sim \boldsymbol{N} $ for `$ n_1 \sim N_1 $, $n_2 \sim N_2 $', and similarly $ \boldsymbol{n} \Lt \boldsymbol{N} $ for `$ n_1 \Lt N_1 $, $n_2 \Lt N_2 $'.  
	
	\section{Analysis for the Bessel Integrals}\label{sec: Bessel}

	Let $T, \varPi$ be large parameters such that $ T^{\vepsilon} \leqslant \varPi \leqslant T^{1-\vepsilon} $. Subsequently, by `negligibly small' we mean $ O_A ( T^{-A} )$ for arbitrarily large but fixed $A \geqslant 0$.   
	
	In this section, we study the Bessel transform of test functions in the form
	\begin{align}\label{4eq: defn h}
		h (t; y) = \upgamma (t) y^{-2it} \upphi (t) + \upgamma (- t) y^{2it} \upphi (- t), 
	\end{align} 
\begin{align}\label{4eq: defn h-}
	h_{\scriptscriptstyle -} (t; y) = \upgamma (- t) \upepsilon (t)  y^{-2it} \upphi (t) + \upgamma (  t) \upepsilon (-t) y^{2it} \upphi (- t), 
\end{align} 
	for $\upgamma   (t) $ and $\upepsilon (t)$ respectively in the function spaces $ \mathscr{T}_{\vkappa} $ and $\mathscr{F}_{\valpha    } (a)$ defined below in Definitions \ref{defn: space T(r)} and \ref{defn: space F(r)} and 
	\begin{align}\label{4eq: defn phi}
		\upphi (t) = \exp \bigg(  \hspace{-2pt}    -   \frac {(t - T)^2 } {\varPi^2 }    \bigg) . 
	\end{align}
In the case that $\upgamma (t)$ is even, we also need to consider
\begin{align}
	h_2 (t; y) = h (t; y) + h (t; 1/y) = 2 \upgamma (t) \cos (2\log y) (\upphi (t) + \upphi (-t)) . 
\end{align}
Let $H (x, y)$ and $H_{-} (x, y)$ respectively denote the Bessel transform of $h (t; y)$ and $h_{\scriptscriptstyle -} (t; y)$ as in \eqref{3eq: integrals}, namely 
\begin{align}\label{4eq: H(x,y)}
	H (x, y) = \frac{2 i } {\pi} \int_{-\infty}^{\infty}  h (t; y) J_{2it} (x) \frac {t \nd t} {\cosh (\pi t) }. 
\end{align}
Define $ H_2 (x, y)$ in the same way, but 
\begin{align}\label{4eq: H2=H}
	H_2 (x, y) = H(x, y) + H (x, 1/y), 
\end{align} 
so the results below for $H(x, y) $ will be valid for  $H_2 (x, y) $ as well.

		\begin{defn}\label{defn: space T(r)}
		For fixed real $\vkappa $, 
		let $ \mathscr{T}_{\vkappa } $ denote the space of holomorphic functions $\upgamma   (t) $ on the strip $ |\mathrm{Im} (t)| \leqslant 1/2 + \vepsilon$ such that 
		\begin{align}\label{4eq: bound for w, 1}
			 \upgamma (t)    \Lt (|t|+1)^{\vkappa  }  , 
		\end{align}
and that for  $t$ real its derivatives
			\begin{align}\label{4eq: bound for u, 2}
				\upgamma^{(n)}   (t)    \Lt_{n } (|t|+1)^{ \vkappa }  \lp \frac { \log (|t|+2) } { |t|+1 } \rp^{n}   , 
			\end{align} 
			for any $n \geqslant 0$. 
		
	\end{defn}

\begin{defn}\label{defn: space F(r)}
	For fixed real $a,  \valpha     > 0$. 
	let $ \mathscr{F}_{ \valpha     } (a)$ denote the space of holomorphic functions $\upepsilon   (t) $ on the strip $ |\mathrm{Im} ( t)| \leqslant 1/2 + \vepsilon$ such that 
	\begin{align}\label{4eq: bound for e, 1}
		\upepsilon (t)    \Lt (|t|+1)^{ 2 \valpha        \mathrm{Im} (  t)  }  , 
	\end{align}
	and that for  $t$ real and $|t|$ large 
	\begin{align}\label{4eq: bound for e, 2}
	\upepsilon (t)  = (a e / |t|)^{2 \valpha    i t} \updelta (t) , \qquad t^n \updelta^{(n)}   (t)    \Lt_{n } 1  , 
	\end{align} 
	for any $n \geqslant 0$. 
	
\end{defn}
	 
Later, in practice, $ \upgamma (t)$ will be those $ \upgamma_{\upsigma} (v, t) $ for  $ v \in [\vepsilon -i \log T, \vepsilon + i \log T] $  in view of Lemma \ref{lem: AFE}; indeed,  one may effectively be restricted to $ ||t| - T| \leqslant \varPi \log T $ since $\upphi (\pm t)$  is exponentially small if otherwise, so the error term in \eqref{3eq: V(y;t), 2} is negligibly small if one chooses $U = \log T$. It follows from Lemma \ref{lem: G(v,t)} that $ \upgamma_{\upsigma} (v, t) $  lies in  $ \mathscr{T}_{ \upsigma \vepsilon }  $. Moreover,  the $\upepsilon (t)$ given in \eqref{3eq: defn e(t)} lies in the space $   \mathscr{F}_{2}  (\sqrt{\theta}  /2)$ by Lemma \ref{lem: e(t)}. 

Now let us analyze the Bessel integrals $H(x, y)$ and $H_{-} (x, y)$.  Some preliminary analysis will be similar to those in  Li and Young's works  \cite{XLi2011,Young-Cubic}.


\begin{lem}\label{lem: x<1}
Set $u = xy +x/y$. Then for $u \Lt 1  $,  we have  $H (x, y)   =  \allowbreak O   ( \varPi T^{  \vkappa   }  u   )$. 
\end{lem}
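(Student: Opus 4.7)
The plan is to shift the contour of integration downward past the first pole of $1/\cosh(\pi t)$, thereby extracting the smallness of $x$. Decompose $H(x,y) = I_+ + I_-$ according to the two pieces of $h(t;y)$ in \eqref{4eq: defn h}:
\begin{align*}
	I_{\pm} = \frac{2i}{\pi}\int_{\mathbf{R}} \upgamma(\pm t)\, y^{\mp 2it}\, \upphi(\pm t)\, J_{2it}(x)\, \frac{t \, \nd t}{\cosh(\pi t)}.
\end{align*}
For each $I_\pm$ I move the contour from $\mathbf{R}$ down to $\mathbf{R} - i(1/2+\eta)$ for a small $0 < \eta < \vepsilon$. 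The integrand is holomorphic in the strip $|\mathrm{Im}\, t|\leqslant 1/2+\vepsilon$ by Definition \ref{defn: space T(r)} and the entireness of $J_{2it}(x)$ in $t$; the Gaussian $\upphi(\pm t)$ kills the vertical edges at infinity; the only pole crossed is the simple pole of $1/\cosh(\pi t)$ at $t = -i/2$, of residue $i/\pi$. Since $J_1(x) = x/2 + O(x^3)$ and $|\upphi(\pm i/2)| = \exp((1/4 - T^2)/\varPi^2)$ is superpolynomially small (using $\varPi \leqslant T^{1-\vepsilon}$), the residue contribution to $H(x,y)$ is $O_A(T^{-A} u)$ for any $A$ and is negligible.

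For the shifted integral, parametrize $t = s - i(1/2+\eta)$, so that $2it = (1+2\eta) + 2is$. The Poisson representation \eqref{3eq: Poisson, J}, valid since $\mathrm{Re}(1+2\eta+2is) > -1/2$, yields
\begin{align*}
	|J_{(1+2\eta)+2is}(x)| \Lt \frac{(x/2)^{1+2\eta}}{|\Gamma(3/2+2\eta+2is)|}.
\end{align*}
In the region $|s - T| \leqslant \varPi \log T$ where $\upphi$ is non-negligible (forcing $|s|\asymp T$), Stirling gives $|\Gamma(3/2+2\eta+2is)| \asymp |s|^{1+2\eta} e^{-\pi|s|}$ and $|\cosh(\pi(s - i(1/2+\eta)))| \asymp e^{\pi|s|}$, and the exponential factors cancel to give
\begin{align*}
	\biggl|\frac{t\, J_{2it}(x)}{\cosh(\pi t)}\biggr| \Lt \frac{(x/2)^{1+2\eta}}{|s|^{2\eta}}.
\end{align*}
Combining with $|\upgamma(\pm t)|\Lt (|s|+1)^{\vkappa}$, $|y^{\mp 2it}| = y^{\mp(1+2\eta)}$ and the Gaussian localization of $\upphi(\pm t)$ near $s = \pm T$, integration in $s$ yields $|I_+|\Lt \varPi\, T^{\vkappa-2\eta}(x/(2y))^{1+2\eta}$ and $|I_-|\Lt \varPi\, T^{\vkappa-2\eta}(xy/2)^{1+2\eta}$. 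Since $x/y, xy \leqslant u \Lt 1$, both factors are bounded by $u$, whence $|H(x,y)| \Lt \varPi T^{\vkappa-2\eta} u \Lt \varPi T^{\vkappa} u$, in fact with a saving of $T^{-2\eta}$.

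The main technical point is the estimate of $t J_{2it}(x)/\cosh(\pi t)$ on the shifted line: on the real line this kernel is only $O(\sqrt{|t|})$ and shows no direct smallness in $x$, so the smallness of $u$ must come from the contour shift. The subtle step is to verify that the exponential residue at $t = -i/2$ is in fact killed by the Gaussian localization of $\upphi$ far from $-i/2$, which crucially uses $\varPi \leqslant T^{1-\vepsilon}$.
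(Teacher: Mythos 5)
Your proof is correct and takes essentially the same route as the paper: shift the $t$-contour down to $\mathrm{Im}(t) = -(1/2+\eta)$, observe that the residue at $t=-i/2$ of $1/\cosh(\pi t)$ is killed by the Gaussian $\upphi$ (using $\varPi\leqslant T^{1-\vepsilon}$), and extract the factor $u^{1+2\eta}\leqslant u$ from the Poisson integral representation of $J_\nu$ on the shifted line. The paper treats $h(t;y)$ as a whole rather than splitting into $I_\pm$, and packages the $\Gamma$/$\cosh$ cancellation as a single estimate $J_{1+2\vepsilon+2it}(x)/\sin(\pi(\vepsilon+it))\Lt (x/(|t|+1))^{1+2\vepsilon}$ rather than invoking Stirling explicitly, but these are presentational differences only.
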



\begin{proof}
This lemma follows easily from shifting the integral contour in \eqref{4eq: H(x,y)} up to $\mathrm{Re} (it) = 1/2 + \vepsilon$. More explicitly, 
in view of \eqref{4eq: defn h}, \eqref{4eq: defn phi},   \eqref{4eq: H(x,y)}, and \eqref{4eq: bound for w, 1}, the  residue  at $ it = 1/2 $ 
	is bounded by
	\begin{align*}
		   \exp  ( - T^2/\varPi^2  )  (y+1/y)  |J_{1} (x) | , 
	\end{align*}
	while the resulting integral is bounded by 
	\begin{align*}
		   (y+1/y)^{1+2\vepsilon} 	\int_{-\infty}^{\infty} (|t|+1)^{1+\vkappa } \exp \bigg(  \hspace{-2pt} -   \frac {(t - T)^2 } {\varPi^2 }    \bigg)  \frac {|J_{1 + 2\vepsilon +2it} (x) |} { |\sin  (\pi (\vepsilon  + it)) |  }  \nd t . 
	\end{align*}
	By the Poisson integral \eqref{3eq: Poisson, J},  
	\begin{align*}
		J_{ 1} ( x)  \Lt  x , \qquad 
		\frac {J_{ 1 + 2\vepsilon +2it } (x)  } { \sin  (\pi (\vepsilon  + it))   }   \Lt    \lp \frac { x  } { |t|+1 } \rp^{1+2\vepsilon}  . 
	\end{align*}  
	Thus 
	\begin{align*}
		H (x, y) \Lt \exp  (  - T^2/\varPi^2  )  u +   {\varPi T^{  \vkappa  - 2\vepsilon } u^{1+2\vepsilon} }   \Lt   {\varPi T^{  \vkappa  } u} ,
	\end{align*}
	provided that $u \Lt 1 $.  
\end{proof}

By the same proof with \eqref{4eq: bound for w, 1} and \eqref{4eq: bound for e, 1} combined, we have a similar bound for $H_-(x, y)$.

\begin{lem}\label{lem: x<1, -}
	Set $$u_{\scriptscriptstyle -} = xy T^{\valpha} + \frac {x}  { y T^{\valpha} } .$$ Then for $u_{\scriptscriptstyle -} \Lt 1  $,  we have    $ H_- (x, y) = O  (\varPi T^{ \vkappa    } u_{\scriptscriptstyle -} )   $.  
\end{lem}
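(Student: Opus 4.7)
The plan is to adapt the contour-shifting argument of Lemma \ref{lem: x<1} to each of the two summands of $h_-(t; y)$, choosing the shift direction so that the extra factor $\upepsilon(\pm t)$ contributes decay rather than growth. Split $H_-(x, y) = I_1 + I_2$ according to the decomposition in \eqref{4eq: defn h-}.

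For $I_1$, whose integrand carries $\upepsilon(t)$, the bound $|\upepsilon(t)| \Lt (|t|+1)^{2\valpha \mathrm{Im}(t)}$ from Definition \ref{defn: space F(r)} provides decay when $\mathrm{Im}(t) < 0$. I would therefore shift the contour down to $\mathrm{Im}(t) = -(1/2+\vepsilon)$, exactly as in Lemma \ref{lem: x<1}, picking up the simple pole of $1/\cosh(\pi t)$ at $t = -i/2$. The residue contains the factor $\upphi(-i/2)$, whose magnitude $\exp(-(T^2-1/4)/\varPi^2)$ is negligibly small when $\varPi \leqslant T^{1-\vepsilon}$. On the shifted contour, writing $t = s - i(1/2+\vepsilon)$, I combine $|\upgamma(-t)| \Lt (|s|+1)^\vkappa$, the crucial new factor $|\upepsilon(t)| \Lt (|s|+1)^{-\valpha(1+2\vepsilon)}$, $|y^{-2it}| = y^{-(1+2\vepsilon)}$, Gaussian concentration of $\upphi$ near $s = T$, and the estimate $|J_{1+2\vepsilon+2is}(x)/\sin(\pi(\vepsilon + is))| \Lt (x/(|s|+1))^{1+2\vepsilon}$ from the Poisson representation \eqref{3eq: Poisson, J}. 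This parallels the calculation of Lemma \ref{lem: x<1} but gains the additional saving $T^{-\valpha(1+2\vepsilon)}$ from $\upepsilon$, producing
\begin{align*}
|I_1| \Lt \varPi T^{\vkappa - 2\vepsilon} \bigg( \frac{x}{y T^{\valpha}} \bigg)^{1+2\vepsilon} .
\end{align*}

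For $I_2$, whose integrand carries $\upepsilon(-t)$ (decaying for $\mathrm{Im}(t) > 0$), I would instead shift up to $\mathrm{Im}(t) = +(1/2+\vepsilon)$, crossing the pole at $t = i/2$; the residue is again negligible thanks to $\upphi(-i/2)$. The Gaussian $\upphi(-t)$ now localizes $s$ near $-T$, but since only $|s|$ enters the polynomial estimates, the identical bound holds for $|I_2|$.

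Combining the two pieces and using that $u_- \Lt 1$ forces $x/(y T^{\valpha}) \leqslant u_- \Lt 1$---so that $(x/(y T^{\valpha}))^{1+2\vepsilon} \leqslant x/(y T^{\valpha}) \leqslant u_-$---yields $|H_-(x, y)| \Lt \varPi T^\vkappa u_-$. I anticipate no essential obstacle: the structure is entirely parallel to Lemma \ref{lem: x<1}, and the only delicate point is the deliberate choice of shift direction for each summand so that $\upepsilon(\pm t)$ furnishes the $T^{-\valpha(1+2\vepsilon)}$ saving rather than a catastrophic $T^{+\valpha(1+2\vepsilon)}$ amplification.
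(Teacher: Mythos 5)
There is a genuine gap in your treatment of $I_2$. After you shift the contour for $I_2$ up to $\mathrm{Im}(t) = +(1/2+\vepsilon)$, writing $t = s + i(1/2+\vepsilon)$, the Bessel order becomes $2it = -(1+2\vepsilon)+2is$, so $\mathrm{Re}(2it) = -(1+2\vepsilon) < -1/2$ and the Poisson representation \eqref{3eq: Poisson, J} is no longer valid. The bound you invoke, $|J_{1+2\vepsilon+2is}(x)/\sin(\pi(\vepsilon+is))| \Lt (x/(|s|+1))^{1+2\vepsilon}$, is correct only on the $I_1$ contour; on your $I_2$ contour the power series gives instead $|J_{2it}(x)| \asymp x^{-(1+2\vepsilon)}/|\Gamma(-2\vepsilon+2is)|$, which blows up as $x \ra 0$. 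Since the whole hypothesis $u_- \Lt 1$ forces $x \Lt 1$, this negative $x$-power destroys the estimate, and the conclusion ``the identical bound holds for $|I_2|$'' is unjustified. A symptom of the problem is that your final answer $\varPi T^{\vkappa}(x/(yT^{\valpha}))^{1+2\vepsilon}$ has no trace of the other piece $xyT^{\valpha}$ of $u_-$, which cannot be right: for $y$ large one may have $x/(yT^{\valpha})$ tiny while $xyT^{\valpha}$ is of order $1$, and a bound depending only on the former would be far too strong.

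The intended argument (the paper's ``same proof with \eqref{4eq: bound for w, 1} and \eqref{4eq: bound for e, 1} combined'') shifts \emph{both} summands in the \emph{same} direction $\mathrm{Im}(t) = -(1/2+\vepsilon)$, exactly as in Lemma \ref{lem: x<1}, so that $\mathrm{Re}(2it) = 1+2\vepsilon$ throughout and the Poisson bound applies. For $I_2$ the cost is that $\upepsilon(-t)$ then \emph{grows}, contributing $(|s|+1)^{+\valpha(1+2\vepsilon)} \approx T^{\valpha(1+2\vepsilon)}$; but on the same contour $|y^{2it}| = y^{1+2\vepsilon}$, and together with $x^{1+2\vepsilon}$ this assembles into $(xyT^{\valpha})^{1+2\vepsilon}$ --- the \emph{other} summand of $u_-$ raised to the power $1+2\vepsilon$. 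Since $u_- \Lt 1$ forces both $xyT^{\valpha} \Lt 1$ and $x/(yT^{\valpha}) \Lt 1$, one has $(xyT^{\valpha})^{1+2\vepsilon} \leqslant xyT^{\valpha} \leqslant u_-$ and $(x/(yT^{\valpha}))^{1+2\vepsilon} \leqslant u_-$, giving $H_-(x,y) \Lt \varPi T^{\vkappa} u_-$. Your computation for $I_1$ and your handling of the residues are fine; only the shift direction for $I_2$ needs to be reversed, and then the growth of $\upepsilon(-t)$ must be absorbed into the $xyT^{\valpha}$ piece of $u_-$ rather than avoided.
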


Now we establish the integral representations of $H (x, y)$ and $H_{-} (x, y)$ which will be most crucial in our analysis. 

\begin{lem}\label{lem: Bessel} 
Let $A \geqslant 0$. Then for $x \Gt T^{-A}$ we have   \begin{equation}\label{4eq: H(x,y) = I(v,w)}
	\begin{split} 
		H (x, y) = \varPi T^{1+\vkappa}  \int_{-\varPi^{\vepsilon}/\varPi}^{\varPi^{\vepsilon}/ \varPi} 
		g (  \varPi r) \exp (  2i  T r) \cos  ( f (r; v, w) )   \nd r  + O (T^{-A})  ,   
	\end{split}
\end{equation}  
where 
\begin{align}\label{4eq: f(r,v,w)}
f (r; v, w) =  v \exp  (r) +   w \exp (- r) , \qquad 	v = \frac {xy} {2}, \qquad w = \frac {x/y} 2, 
\end{align}
and  $g (r)$ is a Schwartz function {\rm(}namely, $ g^{(n)} (r) = O ( (|r|+1)^{-m})$ for any $m, n \geqslant 0${\rm)}. 
\end{lem}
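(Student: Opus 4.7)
The plan is to insert the Mehler--Sonine formula \eqref{3eq: integral Bessel} for $J_{2it}(x)$ into \eqref{4eq: H(x,y)}, swap the order of integration, and carry out the $t$-integral first. After using the evenness in $\xi$ of $\sin(x\cosh\xi - it\pi)\cosh(2it\xi)$ to symmetrise over $\mathbb{R}$, one may write $J_{2it}(x) = \frac{1}{\pi}\int_{-\infty}^{\infty}\sin(x\cosh\xi - it\pi)e^{2it\xi}\,\nd\xi$, and then the elementary identity
\[
\frac{\sin(x\cosh\xi - it\pi)}{\cosh\pi t} = \sin(x\cosh\xi) \,-\, i\tanh(\pi t)\cos(x\cosh\xi)
\]
cleanly separates the $J$-Bessel kernel into an $\xi$-even and an $\xi$-odd piece. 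Since $\upphi(\pm t)$ is essentially supported on $|t|\asymp T$, I would replace $\tanh(\pi t)$ by $\mathrm{sgn}(t)$ at cost $O(e^{-cT})$.

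Expanding $h(t;y) = \upgamma(t)y^{-2it}\upphi(t) + \upgamma(-t)y^{2it}\upphi(-t)$ and substituting $t\to -t$ in the second summand reduces the $t$-integral to evaluations of the single Fourier-type integral
\[
\Psi(\alpha) := \int_{-\infty}^{\infty}\upgamma(t)\upphi(t)\,e^{2it\alpha}\,t\,\nd t
\]
at $\alpha = \pm\xi - \ln y$. A short parity check then delivers the crucial cancellation: the $\sin(x\cosh\xi)$ piece pairs with $\Psi(\xi - \ln y) - \Psi(-\xi - \ln y)$, which is odd in $\xi$ and therefore integrates to $0$; whereas the $\cos(x\cosh\xi)$ piece pairs with $\Psi(\xi - \ln y) + \Psi(-\xi - \ln y)$ (after the $\mathrm{sgn}(t)$ replacement), which collapses by $\xi\to-\xi$ symmetry to $2\int\cos(x\cosh\xi)\Psi(\xi - \ln y)\,\nd\xi$. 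The substitution $\xi = r + \ln y$ then turns $x\cosh\xi$ into exactly $(xy/2)e^r + (x/2y)e^{-r} = f(r;v,w)$.

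What remains is to analyse $\Psi(r)$, which I would do by rescaling $t = T + \varPi s$ to get $\Psi(r) = \varPi T^{1+\vkappa}e^{2iTr}g(\varPi r)$ with
\[
g(u) = \frac{4}{\pi^2}\int_{-\infty}^{\infty}\frac{\upgamma(T+\varPi s)}{T^\vkappa}\Bigl(1+\frac{\varPi s}{T}\Bigr)e^{-s^2+2ius}\,\nd s.
\]
The bounds \eqref{4eq: bound for w, 1}--\eqref{4eq: bound for u, 2} of Definition \ref{defn: space T(r)} imply that $\upgamma(T+\varPi s)/T^\vkappa$ and all of its $s$-derivatives are bounded on the effective support of $e^{-s^2}$ (the $n$-th derivative costs only $(\varPi\log T/T)^n \Lt (\log T)^n$), so $g$ inherits the Schwartz property (with harmless $(\log T)^n$ constants). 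Its rapid decay then permits truncation of the $r$-integral to $|r|\leqslant \varPi^{\vepsilon-1}$ at cost $O(T^{-A})$.

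The main obstacle I anticipate is the parity bookkeeping behind the vanishing of the $\sin(x\cosh\xi)$ contribution: one must track how the $y^{\pm 2it}$ twist interacts with the substitutions $t\to -t$ and $\xi\to -\xi$ (noting that $\upgamma$ is \emph{not} assumed even), and confirm that the two halves of $h(t;y)$ really do combine into an odd function of $\xi$. Once that cancellation is in hand, the Schwartzness of $g$ and the final truncation are standard rescaling arguments.
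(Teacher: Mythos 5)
Your proposal is correct and follows essentially the same route as the paper: both insert the Mehler--Sonine representation, exploit parity to leave only the $\cos(x\cosh\cdot)$ kernel paired with $\tanh(\pi t)$, shift $r\to r+\log y$, rescale $t=T+\varPi s$, and identify $g$ as a Fourier transform of a Gaussian-weighted, essentially $T$-and-$\varPi$-uniformly-bounded profile. The only cosmetic difference is where you execute the parity bookkeeping: the paper folds it in at the $t$-level first, writing the transform in terms of $(J_{2it}-J_{-2it})/\cosh(\pi t)$ and invoking Mehler--Sonine for the difference, whereas you insert Mehler--Sonine for $J_{2it}$ alone, split via $\sin(A-it\pi)/\cosh\pi t = \sin A - i\tanh(\pi t)\cos A$, and observe the $\sin$-piece dies by $\xi$-oddness of $\Psi(\xi-\log y)-\Psi(-\xi-\log y)$ after the $t$-integral; these are the same trigonometric identity applied at adjacent stages. (One small slip of phrasing: both $\sin(x\cosh\xi)$ and $\cos(x\cosh\xi)$ are $\xi$-even; the even/odd dichotomy you want is in the $t$-integrated factors $\Psi(\xi-\log y)\pm\Psi(-\xi-\log y)$, which your subsequent text correctly identifies.)
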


\begin{proof}
By the Mehler--Sonine integral \eqref{3eq: integral Bessel}, we have  
	\begin{align*}
		\frac{J_{2it} (x) - J_{-2it} (x)} {\cosh (\pi t)}
		=   \frac {2} {\pi i} \tanh (  \pi t) \int_{-\infty}^{\infty} \cos (x \cosh r) \exp (2 i t r) \nd r. 
	\end{align*}
For $x \Gt T^{-A}$, it follows by partial integration that only a negligibly small error will be lost if the integral
above is truncated at $|r| = T^{\vepsilon}$.  Thus, in view of \eqref{4eq: defn h}, \eqref{4eq: defn phi}, and \eqref{4eq: H(x,y)}, up to a negligibly small error, the Bessel integral $H (x, y)$  equals
\begin{align*}
 	\frac{2 } {\pi^2}  \int_{-T^{\vepsilon}}^{T^{\vepsilon}} \cos (x \cosh r)   \int_{ - \infty }^{ \infty }  t \upgamma (t) \tanh (\pi t) \upphi (t)  \exp ( 2 i t (    r - \log y ))  \nd t \, {  \nd r} . 
\end{align*}
Note that $\upphi (t)$ as defined in \eqref{4eq: defn phi} is exponentially small unless $|t - T | \leqslant \varPi \log T$, so $\tanh (\pi t)$ is removable as $ \tanh (\pi t) = 1 + O (\exp (-\pi T)) $ on this range. By the change of variables  $r \ra r + \log y$ and $t \ra T + \varPi t$, the expression above  turns  into 
\begin{align*}
\varPi T^{1+\vkappa}   \int_{-T^{\vepsilon}}^{T^{\vepsilon}} \exp (   2 i T r ) \cos (x \cosh (r + \log y))     \int_{ - \infty }^{ \infty }  \upbeta   (t) \exp ( 2 i \varPi t r )  \nd t \, {  \nd r}  ,
\end{align*}
where 
\begin{align*}
\upbeta  (t) =	\frac{2 (1 + \varPi t/ T ) \upgamma (T+\varPi t) } {\pi^2 T^{\vkappa}}  \exp (-t^2)	 . 
\end{align*}
By \eqref{4eq: bound for u, 2}, we infer that
\begin{align*}
	\upbeta^{(n)} (t) \Lt_{n} \exp (-t^2/2) , 
\end{align*} 
for any $n  \geqslant 0$, where the implied constants do {\it not} depend on $T$ or $\varPi$. Now the inner integral is the Fourier transform $ \hat{\upbeta}  ( - \varPi r/ \pi) $, so the $r$-integral may be truncated effectively at $|r| = \varPi^{\vepsilon} / \varPi$. Let us write $ g (r) = \hat{\upbeta} ( -  r/ \pi) $, then \eqref{4eq: H(x,y) = I(v,w)} follows from 
\begin{align*}
	x \cosh (r + \log y) =   v \exp (r) +   w \exp (-r) . 
\end{align*}
\end{proof}

\begin{lem}\label{lem: Bessel, -} 
Let $A \geqslant 0$. Then for $x \Gt T^{-A}$ we have   \begin{equation}\label{4eq: H-(x,y)}
		\begin{split} 
			H_{\scriptscriptstyle -} (x, y) = \varPi T^{1+\vkappa}   \int_{  -\rho \varPi^{\vepsilon}  /\varPi}^{  \rho\varPi^{\vepsilon}  / \varPi}   g_{\scriptscriptstyle -} (  \varPi r) \exp (  2i  T r) \cos  (   f (r; v_{\scriptscriptstyle -}, w_{\scriptscriptstyle -} ) )  \nd r  
			+ O \big(T^{-A} \big)    ,   
		\end{split}
	\end{equation}  
	where 
	\begin{align}\label{4eq: rho}
		\rho =   1 + \frac {\varPi^2} {T}, 
	\end{align}
the function $f (r; v , w )$ is defined as in {\rm\eqref{4eq: f(r,v,w)}},
	\begin{align}\label{4eq: v-, w-}
		v_{\scriptscriptstyle -} =  \frac {x y T^{\valpha}} {2 a^{\valpha}} , \qquad w_{\scriptscriptstyle -} = \frac {a^{\valpha} x/y } {2  T^{\valpha} } , 
	\end{align}
\begin{align}\label{4eq: g-(r)}
	g_{\scriptscriptstyle -} (r) =   \int_{ - \log T }^{  \log T }  \upbeta_{\scriptscriptstyle -}   (t) \exp ( 2 i (t r - \valpha \lambda (t) ) )  \nd t ,
\end{align}
\begin{align}
	\lambda (t) =     (T+\varPi t) \bigg( \log \bigg(1+ \frac {\varPi t} T\bigg) - 1 \bigg),
\end{align}
and  $ \upbeta_{\scriptscriptstyle -}  (t)  $ is  a certain weight of bounds  
\begin{align}\label{4eq: beta-(t)}
	\upbeta_{\scriptscriptstyle -}^{(n)} (t) \Lt_{n} \exp (-t^2/2) , 
\end{align} 
for any $n  \geqslant 0$. Moreover,  the integral $g_{\scriptscriptstyle -} (r)$ is negligibly small unless $|r   | \leqslant \rho\varPi^{\vepsilon}$ {\rm(}as indicated in the integral domain of {\rm\eqref{4eq: H-(x,y)}}{\rm)}, in which case 
	\begin{align}\label{4eq: bound for g-(r)}
		 g_{\scriptscriptstyle -}^{(n)} (r) \Lt \frac 1 {\sqrt{\rho}},
	\end{align} 
for any $ n \geqslant 0 $. 
\end{lem}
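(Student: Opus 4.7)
The plan is to parallel the proof of Lemma~\ref{lem: Bessel} step by step, with $\upepsilon(t)$ responsible for two effects: shifting the argument of the cosine via its leading behaviour $(ae/T)^{2\valpha i t}$, and introducing the secondary phase $-\valpha \lambda(t)$ in the inner $t$-integral via the sub-leading $t$-dependence.

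First I would apply the Mehler--Sonine identity together with the evenness of $h_{\scriptscriptstyle -}(t;y)$ in $t$ to rewrite
\begin{equation*}
H_{\scriptscriptstyle -}(x,y) = \frac{c_{0}}{\pi^{2}} \int \int h_{\scriptscriptstyle -}(t;y)\, \cos(x \cosh r)\, \exp(2itr)\, t \tanh(\pi t)\, dt\, dr
\end{equation*}
for an absolute constant $c_{0}$, and truncate the $r$-integral at $|r| \leqslant T^{\vepsilon}$ by repeated integration by parts, legitimate because $x \Gt T^{-A}$. The two pieces of $h_{\scriptscriptstyle -}$ combine via the substitutions $t \mapsto -t$ and $r \mapsto -r$ (both permissible since $\cos(x \cosh r)$ is even in $r$), exactly as in the proof of Lemma~\ref{lem: Bessel}.

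Next, on the effective support $|t - T| \leqslant \varPi \log T$ of $\upphi(t)$ I would use $\upepsilon(t) = (ae/|t|)^{2\valpha i t} \updelta(t)$ and expand $\log t = \log T + \log(1 + (t - T)/T)$, so that the combined phase of $\upepsilon(t) y^{-2it} \exp(2itr)$ becomes
\begin{equation*}
2t\bigl(r - \log y - \valpha \log(T/a) + \valpha\bigr) - 2\valpha t \log\bigl(1 + (t - T)/T\bigr).
\end{equation*}
Performing the shift $r \mapsto r + \log y + \valpha \log(T/a)$ and rescaling $t = T + \varPi t'$, the cosine factor becomes $\cos\bigl(x \cosh(r + \log y + \valpha \log(T/a))\bigr) = \cos(f(r; v_{\scriptscriptstyle -}, w_{\scriptscriptstyle -}))$ by the same hyperbolic identity as in Lemma~\ref{lem: Bessel} (applied with the effective $y' = y(T/a)^{\valpha}$), while the exponential phase collapses to $2Tr + 2\varPi t' r - 2\valpha \lambda(t')$ upon recognizing $\lambda(t') = (T + \varPi t')(\log(1 + \varPi t'/T) - 1)$. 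The amplitude $t \tanh(\pi t)\, \upgamma(-t)\, \updelta(t)\, \upphi(t)$, appropriately normalized by $\varPi T^{1+\vkappa}$, defines the weight $\upbeta_{\scriptscriptstyle -}(t')$ with $\upbeta_{\scriptscriptstyle -}^{(n)}(t') \Lt_{n} \exp(-t'^{2}/2)$ by virtue of Definitions~\ref{defn: space T(r)} and~\ref{defn: space F(r)}, producing \eqref{4eq: H-(x,y)} and \eqref{4eq: g-(r)}.

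Finally, I would establish the bounds on $g_{\scriptscriptstyle -}(R)$ via stationary phase analysis. The phase $2(tR - \valpha \lambda(t))$ has derivative $\phi'(t) = 2(R - \valpha \varPi \log(1 + \varPi t/T))$ and second derivative $\phi''(t) = -2\valpha \varPi^{2}/(T + \varPi t) \asymp -\valpha \varPi^{2}/T$ on the support. Since $\varPi \leqslant T^{1-\vepsilon}$ forces $\varPi \log T / T \to 0$, one has $|\valpha \lambda'(t)| \Lt \valpha \varPi^{2} \log T / T$, which is negligible against $\rho\, \varPi^{\vepsilon}$ because $\rho \geqslant \varPi^{2}/T$ and $\varPi^{\vepsilon}$ dominates $\log T$. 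Consequently, for $|R| > \rho\, \varPi^{\vepsilon}$ one has $|\phi'(t)| \Gt |R|$, and repeated integration by parts (using $(1/\phi')' = -\phi''/(\phi')^{2} \Lt \rho/|R|^{2}$) yields a negligibly small bound. For $|R| \leqslant \rho\, \varPi^{\vepsilon}$, two regimes must be handled uniformly: when $\varPi^{2} \Gt T$ the phase is genuinely stationary and classical stationary phase gives $|g_{\scriptscriptstyle -}(R)| \Lt 1/\sqrt{|\phi''|} \Lt \sqrt{T}/\varPi \asymp 1/\sqrt{\rho}$, while when $\varPi^{2} \Lt T$ the phase is nearly flat and the trivial bound $|g_{\scriptscriptstyle -}(R)| \Lt \int |\upbeta_{\scriptscriptstyle -}|\, dt \Lt 1 \asymp 1/\sqrt{\rho}$ suffices. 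Differentiating in $R$ brings down a factor $t^{n}$ which is absorbed by the Gaussian decay of $\upbeta_{\scriptscriptstyle -}$, preserving the bound for all $n \geqslant 0$. The main obstacle is precisely this uniform handling of the transition between the stationary and the near-flat phase regimes; the definition $\rho = 1 + \varPi^{2}/T$ is chosen so as to interpolate smoothly between the two.
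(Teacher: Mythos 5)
Your proposal follows essentially the same route as the paper's proof: apply Mehler--Sonine as in Lemma~\ref{lem: Bessel}, extract the oscillatory factor $(ae/|t|)^{2\valpha it}$ from $\upepsilon(t)$, absorb its leading part into the shift $r \to r + \log y_{\scriptscriptstyle -}$ with $y_{\scriptscriptstyle -} = yT^{\valpha}/a^{\valpha}$ (so that $\cos(x\cosh(r+\log y_{\scriptscriptstyle -})) = \cos f(r;v_{\scriptscriptstyle -},w_{\scriptscriptstyle -})$), and package the remaining sub-leading phase $-2\valpha\lambda(t)$ into $g_{\scriptscriptstyle -}$, which you then control by stationary-phase considerations exactly as the paper does (negligible for $|r|>\rho\varPi^{\vepsilon}$, trivial estimate when $\varPi<\sqrt T$, second-derivative test when $\varPi\geqslant\sqrt T$). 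The only small point you omit, which the paper flags explicitly, is that to invoke Lemmas~\ref{lem: staionary phase} and \ref{lem: 2nd derivative} rigorously on $g_{\scriptscriptstyle -}$ (whose domain $[-\log T,\log T]$ has hard endpoints) one should insert a smooth cut-off $\varvv(t/\log T)$; since $\upbeta_{\scriptscriptstyle -}$ decays like $\exp(-t^2/2)$, the boundary/commutator terms are negligibly small, so this is a cosmetic fix rather than a gap.
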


\begin{proof}
The proof is similar to that of Lemma \ref{lem: x<1, -}, but we need to incorporate the factor $ (a e / |t|)^{2 \valpha    i t} $ from $\upepsilon (t)$ (see \eqref{4eq: bound for e, 1}). For  simplicity, write $$ y_{\scriptscriptstyle -} =  \frac {y T^{\valpha}} {a^{\valpha}} . $$
By the line of arguments above,  we obtain 
$$\varPi T^{1+\vkappa}   \int_{-T^{\vepsilon}}^{T^{\vepsilon}} g_{\scriptscriptstyle -} (  \varPi r) \exp (   2 i T r ) \cos (x \cosh (r + \log y_{\scriptscriptstyle -} ))      {  \nd r}  , $$
with 
\begin{align*}
	\upbeta_{\scriptscriptstyle -}  (t) =	\frac{2 (1 + \varPi t / T  ) \upgamma \updelta (T+\varPi t) } {\pi^2 T^{\vkappa}}  \exp (-t^2)	,
\end{align*}
\begin{align*}
	x \cosh (r + \log y_{\scriptscriptstyle -}) =   v_{\scriptscriptstyle -} \exp (r) +   w_{\scriptscriptstyle -} \exp (-r) . 
\end{align*}
 Note that the change $r \ra r + \log y$ needs to be replaced by $r \ra r + \log y_{\scriptscriptstyle -}$.   
 Moreover,  the bounds for $\upbeta_{\scriptscriptstyle -}  (t)$ in \eqref{4eq: beta-(t)} follow  readily from
  \eqref{4eq: bound for u, 2}  and \eqref{4eq: bound for e, 2}. 
 
 It is left to prove the statements for the integral $g_{\scriptscriptstyle -} (r) $ as defined in  \eqref{4eq: g-(r)}. 
To this end, we apply stationary phase as in Lemmas   \ref{lem: staionary phase} and \ref{lem: 2nd derivative}. Now  the phase function has derivative 
\begin{align*}
	r - \valpha \lambda' (t) = r - \valpha   \varPi    \log  \bigg(1+ \frac {\varPi t} T\bigg)    = r  + O \bigg(\log T \cdot \frac {\varPi^2} {T} \bigg), 
\end{align*}
while $$  \lambda '' (t) =  \frac {  \varPi^2  } {T+\varPi t},  $$
and hence $ \lambda^{(j)} (t) = O_{n} (T (\varPi/T)^j) $ for any $j \geqslant 2$. By an application of Lemma \ref{lem: staionary phase} with $P = 1$, $Q = T/\varPi$,  $Z = T$, and $ R = \rho \varPi^{\vepsilon}$, we infer that  the integral $ g_{\scriptscriptstyle -} (r)$ is negligibly small if $ |r  | > \rho \varPi^{\vepsilon} $. As for the bound in \eqref{4eq: bound for g-(r)}, we estimate the integral (after differentiation) trivially if $ \varPi  < \sqrt{T} $ and by the second derivative test in Lemma \ref{lem: 2nd derivative} if otherwise. Actually, to be strict, in the applications of Lemmas \ref{lem: staionary phase} and \ref{lem: 2nd derivative} we need to use a smooth cut-off function of the form $\varvv (  t / \log T)$. 
\end{proof}

Next,  we analyze the integrals   in \eqref{4eq: H(x,y) = I(v,w)} and \eqref{4eq: H-(x,y)} by stationary phase  (Lemmas   \ref{lem: staionary phase}, \ref{lem: 2nd derivative}, and    \ref{lem: 2nd derivative test, dim 2}).   

\begin{lem}\label{lem: analysis of I}
	Let   $T^{\vepsilon} \leqslant \varPi \leqslant T^{1-\vepsilon}$.  
	Then $H (x, y) = O  (T^{-A})$ if $v, w \Lt T$. 
\end{lem}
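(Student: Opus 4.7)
The plan is to apply the integral representation from Lemma~\ref{lem: Bessel} and show that the resulting oscillatory integral has no stationary point on the truncated range of integration, so that iterated integration by parts yields a power saving of $T^{-A}$ for arbitrary $A$.

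First, I would split
\[
\cos(f(r; v, w)) = \tfrac{1}{2} \bigl( \exp(i f(r;v,w)) + \exp(-i f(r;v,w)) \bigr),
\]
so that the integral in \eqref{4eq: H(x,y) = I(v,w)} becomes a sum of two oscillatory integrals
\[
I_{\pm} = \int_{-\varPi^{\vepsilon}/\varPi}^{\varPi^{\vepsilon}/\varPi} g(\varPi r) \exp(i \phi_{\pm}(r)) \, \nd r, \qquad \phi_{\pm}(r) = 2 T r \pm (v \exp(r) + w \exp(-r)).
\]
Since $\varPi \geqslant T^{\vepsilon}$, I have $\exp(\pm r) = 1 + O(\varPi^{\vepsilon - 1})$ uniformly on the integration range, so
\[
\phi_{\pm}'(r) = 2T \pm (v \exp(r) - w \exp(-r)) = 2T \pm (v - w) + O\bigl((v + w) \varPi^{\vepsilon - 1}\bigr),
\]
while for $j \geqslant 2$, $|\phi_{\pm}^{(j)}(r)| \Lt v + w \Lt T$. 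Under the hypothesis $v, w \Lt T$, with implied constant smaller than $2$, the linear term $2T$ dominates and yields the uniform lower bound $|\phi_{\pm}'(r)| \Gt T$ on the entire range.

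Finally, I would invoke the first derivative test (Lemma~\ref{lem: staionary phase}) with the amplitude $g(\varPi r)$: since $g$ is Schwartz, its derivatives satisfy $|(\nd/\nd r)^n g(\varPi r)| \Lt_n \varPi^n$, while $|\phi_{\pm}'(r)|^{-1} \Lt 1/T$ and derivatives of $1/\phi_{\pm}'$ contribute at most $|\phi_{\pm}''|/|\phi_{\pm}'|^2 \Lt 1/T$. Each integration by parts therefore gains a factor $\Lt \varPi/T \leqslant T^{-\vepsilon}$, and iterating $\lceil (A + \vkappa + 2)/\vepsilon \rceil$ times absorbs the prefactor $\varPi T^{1+\vkappa}$ in \eqref{4eq: H(x,y) = I(v,w)} and gives $H(x,y) = O(T^{-A})$. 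The main delicate step is securing the uniform lower bound $|\phi_{\pm}'(r)| \Gt T$: stationarity of $\phi_+$ (resp.\ $\phi_-$) would force $w \approx 2T + v$ (resp.\ $v \approx 2T + w$), which is excluded when the implied constant in $v, w \Lt T$ is below $2$; the perturbation $\exp(\pm r) - 1 = O(\varPi^{\vepsilon-1})$ is small enough not to disturb this.
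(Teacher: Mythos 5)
Your proof is correct and follows essentially the same route as the paper: insert the integral representation from Lemma~\ref{lem: Bessel}, expand the cosine, note that the derivative of the phase $2Tr \pm (v\exp(r) + w\exp(-r))$ is bounded below by $\Gt T$ on the range $|r| \leqslant \varPi^{\vepsilon}/\varPi$ when $v, w \Lt T$, and apply the non-stationary-phase Lemma~\ref{lem: staionary phase} (the paper takes $P = 1/\varPi$, $Q = 1$, $Z = v + w$, $R = T$), each iteration gaining a factor $\Lt \varPi/T \leqslant T^{-\vepsilon}$. Your added remark about the implied constant in $v, w \Lt T$ needing to be below $2$ is a fair reading of what the paper leaves implicit.
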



\begin{proof}
	Consider the phase  $$
	2  T r \pm f (r; v, w) = 2  T r \pm ( v \exp (r)  + w \exp (-r) ) ,  $$ which arises with the cosine expanded  in \eqref{4eq: H(x,y) = I(v,w)}. 
	Note that it has derivative 
	\begin{align*}
	2 T   \pm f' (r; v, w) =	2 T   \pm (v \exp (r) - w \exp (-r) ) .
	\end{align*}
	On the range $|r| \leqslant \varPi^{\vepsilon}/ \varPi$, we  have $ |2 T   \pm f' (r; v, w)| \Gt T$ for $v, w \Lt T$ and $ f^{(j)} (r; v, w) \Lt v + w  $ for any $j \geqslant 2$. By applying Lemma \ref{lem: staionary phase}, with $P = 1/ \varPi$, $Q = 1$,  $Z = v + w$, and $ R = T$,  we infer that      $H (x, y) $ is negligibly small. 
\end{proof}

Lemma \ref{lem: analysis of I} may be refined in the case when $v $ and $w$ are far apart.    

\begin{lem}\label{lem: analysis of I, 2}
	Let   $T^{\vepsilon} \leqslant \varPi \leqslant T^{1-\vepsilon}$.  
	If $ v < w / T^{\vepsilon}$ or $  w < v / T^{\vepsilon}$, then $H (x, y) = O  (T^{-A})$ unless  $  \sqrt{2} T < w < 2 \sqrt{2} T$ or $\sqrt{2} T < v < 2 \sqrt{2} T$, respectively. 
\end{lem}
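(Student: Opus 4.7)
The strategy mirrors that of Lemma \ref{lem: analysis of I}, starting from the oscillatory integral representation of Lemma \ref{lem: Bessel}. Writing $\cos (f (r; v, w)) = (e^{i f (r; v, w)} + e^{-i f (r; v, w)})/2$ decomposes $H (x, y)$ into two oscillatory integrals with phases
\[
\psi_{\pm}  (r) = 2 T r \pm \bigl( v \exp (r) + w \exp (-r) \bigr),
\]
weighted by the Schwartz function $g (\varPi r)$ on the short interval $|r| \leqslant \varPi^{\vepsilon}/\varPi$. By the change of variable $r \mapsto -r$, which interchanges $v$ and $w$ inside $f$ (up to a sign change on the linear piece that can be absorbed into complex conjugation), the two cases $v < w/T^{\vepsilon}$ and $w < v/T^{\vepsilon}$ are symmetric, so I will focus on the former.

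Since $\varPi \geqslant T^{\vepsilon}$, on the integration interval we have $\exp (\pm r) = 1 + O (\varPi^{\vepsilon - 1})$, and hence
\[
\psi_{\pm}' (r) = 2 T \pm \bigl( v \exp (r) - w \exp (-r) \bigr) = 2 T \mp w + O \bigl( v + w \varPi^{\vepsilon - 1} \bigr) .
\]
The assumption $v < w/T^{\vepsilon}$, combined with $\varPi^{\vepsilon - 1} \leqslant T^{-\vepsilon'}$ for some $\vepsilon' > 0$, forces this error to be $\Lt w T^{-\vepsilon'}$. For the $-$ sign in $\pm$, the derivative satisfies $|\psi_-' (r)| \Gt T + w$ uniformly, which is already handled by non-stationary phase. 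For the $+$ sign, one has $|\psi_+' (r)| \geqslant |2 T - w| - c w T^{-\vepsilon'}$ for some absolute constant $c > 0$.

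Now suppose $w \notin [\sqrt{2} \shskip T, 2 \sqrt{2} \shskip T]$. If $w \leqslant \sqrt{2} \shskip T$, then $|2 T - w| \geqslant (2 - \sqrt{2}) T$; if $w \geqslant 2 \sqrt{2} \shskip T$, then $|2 T - w| = w - 2 T \geqslant (1 - 1/\sqrt{2}) w$. In both subcases the perturbation $c w T^{-\vepsilon'}$ is absorbed, so $|\psi_+' (r)| \Gt T + w$ throughout the integration interval. The higher derivatives obey $|\psi_{\pm}^{(j)} (r)| \leqslant v \exp (r) + w \exp (-r) \Lt w + T$ for $j \geqslant 2$. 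A direct application of Lemma \ref{lem: staionary phase} with the parameters $P = 1/\varPi$, $Q = 1$, $Z = T + w$, and $R = T + w$---exactly as in the proof of Lemma \ref{lem: analysis of I}---then yields the claimed negligibly small bound $H (x, y) = O (T^{-A})$.

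The main obstacle is verifying that the lower bound $|\psi_+' (r)| \Gt \max (T, w)$ survives the perturbations coming from both the $v \exp (r)$ contribution and the expansion of $w \exp (-r)$ around $r = 0$; this is precisely why the excluded range must be taken as the fixed dyadic window $(\sqrt{2} \shskip T, 2 \sqrt{2} \shskip T)$ rather than a thinner shell around $w = 2 T$, so that the algebraic inequalities above provide uniform slack overwhelming the $T^{-\vepsilon'}$-scale perturbations.
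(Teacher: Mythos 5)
Your proposal is correct and follows precisely the route the paper intends: the paper states this lemma without explicit proof as a refinement of Lemma \ref{lem: analysis of I}, and your argument carries out that refinement by the same stationary-phase analysis of the integral representation from Lemma \ref{lem: Bessel}, simply retaining the $\mp w$ term in the phase derivative instead of discarding it. The parameter choices in your application of Lemma \ref{lem: staionary phase} differ cosmetically from those in the proof of Lemma \ref{lem: analysis of I} ($Z = T + w$, $R = T + w$ versus $Z = v + w$, $R = T$), but this over-estimate of $Z$ is harmless and the conclusion is identical.
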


As for $H_{-} (x, y)$, in practice $w_{\scriptscriptstyle -} $ will be (very) small  due to the $T^{\valpha}$ (as $\valpha = 2$) in \eqref{4eq: v-, w-}, and in this case we prove the next lemma.  

\begin{lem}\label{lem: analysis of I, -}
	Let   $T^{\vepsilon} \leqslant \varPi \leqslant T^{1-\vepsilon}$. Assume $\valpha > 1$. Then for $ w_{\scriptscriptstyle -} \Lt \rho T /\varPi^{1-\vepsilon}   $, we have $H_{-} (x, y) = O  (T^{-A})$ unless  $ |v_{\scriptscriptstyle -} - 2 T| \Lt \rho T /\varPi^{1-\vepsilon}  $, in which case  
	\begin{align}\label{4eq: bound for H-}
		H_{-} (x, y) = O \bigg(\frac {\varPi T^{1/2+ \vkappa} } {\sqrt{\rho}} \bigg) . 
	\end{align}  
\end{lem}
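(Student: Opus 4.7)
The plan is to expand $\cos(f(r;v_{\scriptscriptstyle -},w_{\scriptscriptstyle -}))=\tfrac{1}{2}(e^{if}+e^{-if})$ in the representation \eqref{4eq: H-(x,y)} and analyse the two resulting oscillatory $r$-integrals by stationary phase. Define
\[
\phi_{\pm}(r)=2Tr\pm\bigl(v_{\scriptscriptstyle -}e^{r}+w_{\scriptscriptstyle -}e^{-r}\bigr),\quad \phi_{\pm}'(r)=2T\pm\bigl(v_{\scriptscriptstyle -}e^{r}-w_{\scriptscriptstyle -}e^{-r}\bigr),\quad \phi_{\pm}''(r)=\pm\bigl(v_{\scriptscriptstyle -}e^{r}+w_{\scriptscriptstyle -}e^{-r}\bigr).
\]
Throughout the integration range $|r|\leqslant \rho\varPi^{\vepsilon}/\varPi$, one checks from $T^{\vepsilon}\leqslant\varPi\leqslant T^{1-\vepsilon}$ that $\rho/\varPi^{1-\vepsilon}\Lt T^{-\vepsilon^{2}}$, so $e^{\pm r}=1+o(1)$, and the hypothesis forces $w_{\scriptscriptstyle -}\Lt T^{1-\vepsilon^{2}}\ll T$. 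For the $\phi_{+}$-piece this already gives $\phi_{+}'(r)\geqslant 2T-2w_{\scriptscriptstyle -}\Gt T$, while higher derivatives are $\Lt v_{\scriptscriptstyle -}+w_{\scriptscriptstyle -}$; combined with the amplitude bounds $(g_{\scriptscriptstyle -}(\varPi r))^{(n)}\Lt\varPi^{n}/\sqrt{\rho}$, an application of Lemma \ref{lem: staionary phase} renders this contribution negligibly small (the ratio $|\phi_{+}'|/\varPi\geqslant T^{\vepsilon}$ guarantees a genuine saving per integration by parts).

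For $\phi_{-}$, the stationary point satisfies $v_{\scriptscriptstyle -}e^{r}-w_{\scriptscriptstyle -}e^{-r}=2T$, and we split into two subcases according to the location of $v_{\scriptscriptstyle -}$. If $|v_{\scriptscriptstyle -}-2T|\Gt \rho T/\varPi^{1-\vepsilon}$, a Taylor expansion (after reducing to $v_{\scriptscriptstyle -}\leqslant 3T$, the case $v_{\scriptscriptstyle -}>3T$ being handled even more easily since then $|\phi_{-}'|\asymp v_{\scriptscriptstyle -}$) gives $v_{\scriptscriptstyle -}e^{r}=v_{\scriptscriptstyle -}+O(\rho T/\varPi^{1-\vepsilon})$; together with $w_{\scriptscriptstyle -}e^{-r}\Lt\rho T/\varPi^{1-\vepsilon}$ one obtains $|\phi_{-}'(r)|\Gt \rho T/\varPi^{1-\vepsilon}$ uniformly, so Lemma \ref{lem: staionary phase} again yields $O(T^{-A})$. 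If on the contrary $|v_{\scriptscriptstyle -}-2T|\Lt \rho T/\varPi^{1-\vepsilon}$, then $v_{\scriptscriptstyle -}\asymp T$, whence $|\phi_{-}''(r)|=v_{\scriptscriptstyle -}e^{r}+w_{\scriptscriptstyle -}e^{-r}\asymp T$ and $\phi_{-}'$ is strictly monotone; the second derivative test of Lemma \ref{lem: 2nd derivative}, applied with amplitude bound $\|g_{\scriptscriptstyle -}\|_{\infty}\Lt 1/\sqrt{\rho}$, delivers an $r$-integral of size $O(1/\sqrt{\rho T})$. Multiplying by the prefactor $\varPi T^{1+\vkappa}$ from \eqref{4eq: H-(x,y)} then gives the asserted bound $O(\varPi T^{1/2+\vkappa}/\sqrt{\rho})$.

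The main obstacle is the bookkeeping of derivatives of the composite amplitude $g_{\scriptscriptstyle -}(\varPi r)$, each differentiation of which costs a factor of $\varPi$: one needs $|\phi'|/\varPi$ to exceed a fixed positive power of $T$ in order to extract a genuine saving per integration by parts, and the threshold $\rho T/\varPi^{1-\vepsilon}$ in the statement is calibrated precisely to ensure this. The hypothesis $\valpha>1$ enters only tacitly, through ensuring that $w_{\scriptscriptstyle -}=a^{\valpha}x/(2yT^{\valpha})$ is negligible relative to $v_{\scriptscriptstyle -}=xyT^{\valpha}/(2a^{\valpha})$ in the regime of interest, so that the stationary point of $\phi_{-}$ is dictated by the $v_{\scriptscriptstyle -}$-term alone and the localization $v_{\scriptscriptstyle -}\approx 2T$ captures all non-negligible mass of $H_{\scriptscriptstyle -}(x,y)$.
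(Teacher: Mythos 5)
The first half of your argument—that $H_{\scriptscriptstyle-}(x,y)$ is negligibly small unless $|v_{\scriptscriptstyle-}-2T|\Lt \rho T/\varPi^{1-\vepsilon}$—is correct and is essentially what the paper does: it too isolates the lower bound $|\phi_{\scriptscriptstyle-}'(r)|\Gt\rho T/\varPi^{1-\vepsilon}$ on the integration range and feeds it into Lemma \ref{lem: staionary phase}.

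The second half, however, has a genuine gap in the regime $\varPi>\sqrt{T}$, i.e.\ $\rho\asymp\varPi^{2}/T>1$. Lemma \ref{lem: 2nd derivative} bounds the $r$-integral by (total variation)$/\sqrt{\lambda}$, not by $\|g_{\scriptscriptstyle-}\|_{\infty}/\sqrt{\lambda}$. From \eqref{4eq: bound for g-(r)} you only know $|g_{\scriptscriptstyle-}'(s)|\Lt 1/\sqrt{\rho}$ uniformly on the interval $|s|\leqslant\rho\varPi^{\vepsilon}$, so the total variation of $r\mapsto g_{\scriptscriptstyle-}(\varPi r)$ is
\[
\int_{|s|\leqslant\rho\varPi^{\vepsilon}}|g_{\scriptscriptstyle-}'(s)|\,\nd s \ \Lt\ \frac{\rho\varPi^{\vepsilon}}{\sqrt{\rho}}\ =\ \sqrt{\rho}\,\varPi^{\vepsilon},
\]
not $1/\sqrt{\rho}$. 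Hence Lemma \ref{lem: 2nd derivative} gives the $r$-integral $\Lt\sqrt{\rho/T}\cdot T^{\vepsilon}$ and therefore $H_{\scriptscriptstyle-}(x,y)\Lt\varPi T^{1/2+\vkappa}\sqrt{\rho}\cdot T^{\vepsilon}$, which exceeds the claimed bound $\varPi T^{1/2+\vkappa}/\sqrt{\rho}$ by a factor of $\rho=\varPi^{2}/T$. The sup-norm shortcut you invoke would require $g_{\scriptscriptstyle-}$ to be monotone on the stationary window, but $g_{\scriptscriptstyle-}$ is itself an oscillatory $t$-integral whose oscillation scale in $r$ is $\asymp 1/\varPi$, comparable to the width $1/\sqrt{T}\asymp\sqrt{\rho}/\varPi$ of the stationary point; the amplitude oscillation genuinely interacts with the phase and no one-dimensional test can absorb it. This is exactly why the paper, in the case $\varPi>\sqrt{T}$, reinserts the integral representation \eqref{4eq: g-(r)} of $g_{\scriptscriptstyle-}$ and applies the two-dimensional second derivative test (Lemma \ref{lem: 2nd derivative test, dim 2}) to the joint $(t,r)$-phase $2\varPi t r-2\valpha\lambda(t)+2Tr-f(r;v_{\scriptscriptstyle-},w_{\scriptscriptstyle-})$; the Hessian determinant there is $4(\valpha-1+o(1))\varPi^{2}$, so the hypothesis $\valpha>1$ is what makes it non-degenerate. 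Your remark that $\valpha>1$ ``enters only tacitly'' is therefore mistaken: it is the hinge on which the bound \eqref{4eq: bound for H-} turns when $\varPi>\sqrt{T}$.
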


\begin{proof}
	 As in the proof of Lemma \ref{lem: analysis of I},   consider the phase   $$  2  T r \pm f (r; v_{\scriptscriptstyle -}, w_{\scriptscriptstyle -}) = 2  T r \pm ( v_{\scriptscriptstyle -} \exp (r)  + w_{\scriptscriptstyle -} \exp (-r) ) . $$ 
	 Note that in order for the integral not to be negligibly small, we must have  sign $-$ and $ v_{\scriptscriptstyle -} \asymp T $, since $   |2 T   \pm f' (r; v_{\scriptscriptstyle -}, w_{\scriptscriptstyle -})| \Gt T$ if  otherwise. However, when   Lemma  \ref{lem: staionary phase} is applied, the lower bound $T$ is actually wasteful in our setting. Observe that 
	 $$ 2 T - f' (r; v_{\scriptscriptstyle -}, w_{\scriptscriptstyle -}) = 2 T - v_{\scriptscriptstyle -}  + O (\rho T / \varPi^{1-\vepsilon} ) ,$$
	 on the range $|r| \leqslant \rho \varPi^{\vepsilon}/ \varPi $, and hence  $ |2 T   - f' (r; v, w)| \Gt \rho T / \varPi^{1-\vepsilon}$ in the case that $ |v_{\scriptscriptstyle -} - 2 T| \Gt \rho T /\varPi^{1-\vepsilon}   $. Then an application of  Lemma  \ref{lem: staionary phase}, with  $P = 1/ \varPi$, $Q = 1$,  $Z = T$, and $ R = \rho T /\varPi^{1-\vepsilon}$ yields the first statement of this lemma. Note here that $R > \varPi^{\vepsilon} \sqrt{T}$ and $R > \varPi^{1+\vepsilon}$ due to $\rho = 1+\varPi^2 / T$ (see \eqref{4eq: rho}).    
	 
	 It is left to prove the bound in \eqref{4eq: bound for H-}. To this end, we use the 1- or 2-dimensional second derivative test in Lemma \ref{lem: 2nd derivative} or \ref{lem: 2nd derivative test, dim 2} according as whether  $ \varPi < \sqrt{T} $ or not. In the former case, we have $  \rho \asymp 1$ and $ v_{\scriptscriptstyle -}  \asymp T $, so Lemma \ref{lem: 2nd derivative} yields directly $H_{-} (x, y) = O (\varPi T^{1/2+ \vkappa}) $.  As for the latter case with $\rho \asymp \varPi^2/T$, after inserting \eqref{4eq: g-(r)} into \eqref{4eq: H-(x,y)}, we obtain a double integral of phase function
	 \begin{align*}
  f (t, r; v_{\scriptscriptstyle -}, w_{\scriptscriptstyle -}) = 2 \varPi t r - 2 \valpha \lambda (t) + 2 Tr -  f(r; v_{\scriptscriptstyle -}, w_{\scriptscriptstyle -}) .
	 \end{align*}
 Then
 \begin{align*}
 		& \partial^2 f (t, r; v_{\scriptscriptstyle -}, w_{\scriptscriptstyle -}) / \partial t \partial r = 2 \varPi , \\
  	\partial^2 f (t, r; v_{\scriptscriptstyle -}, w_{\scriptscriptstyle -}) / \partial t^2 & = - 2 \valpha \lambda'' (t) = -2 \valpha \frac {\varPi^2} {T} \bigg(1 + O \bigg(\log T \cdot \frac {\varPi} {T} \bigg)\bigg), \\
 	 	\partial^2 f (t, r; v_{\scriptscriptstyle -}, w_{\scriptscriptstyle -}) / \partial r^2 & = - v_{\scriptscriptstyle -} \exp (r) - w_{\scriptscriptstyle -} \exp (-r) = - 2 T \bigg( 1 + O \bigg(\varPi^{\vepsilon}  \frac { \varPi } {T} \bigg)\bigg). 
 \end{align*} 
By the assumption $\valpha > 1$, we infer that $$ |\det f'' (t, r; v_{\scriptscriptstyle -}, w_{\scriptscriptstyle -})| = 4 (\valpha - 1 + o(1) ) \varPi^2  \Gt \varPi^2.  $$
Thus   Lemma \ref{lem: 2nd derivative test, dim 2} yields 
\begin{align*}
	H_{-} (x, y) \Lt \frac {\varPi T^{1+ \vkappa}} {\sqrt{\varPi^2}} = T^{1+\vkappa}, 
\end{align*}
as desired. 
\end{proof}

By Lemmas \ref{lem: x<1}, \ref{lem: Bessel}, and \ref{lem: analysis of I}, we conclude that the Bessel integral $H (x, y)$ is negligibly small if $ xy+x/y \Lt T $.

\begin{coro}\label{cor: u<T}
Set $u = xy +x/y$ as before. Then for  $ u \Lt T $, we have 
	\begin{align}
		H (x, y) \Lt \varPi T^{\vkappa} \min \left\{ u,  \frac 1 {T^A}  \right\} ,
	\end{align}
for any $A \geqslant 0$. 
\end{coro}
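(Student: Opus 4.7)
The plan is to deduce Corollary \ref{cor: u<T} from Lemmas \ref{lem: x<1}, \ref{lem: Bessel}, and \ref{lem: analysis of I}. The bound $\varPi T^{\vkappa} \min\{u, 1/T^A\}$ should be read as the minimum of two estimates: (i) the linear bound $\varPi T^{\vkappa} u$ of Lemma \ref{lem: x<1}, valid when $u \Lt 1$; and (ii) the negligibility $H(x,y) = O(T^{-A})$ of Lemma \ref{lem: analysis of I}, valid when $v = xy/2$ and $w = x/(2y)$ are both $\Lt T$, equivalently when $u \Lt T$.

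Concretely, for $u \leqslant 1$, Lemma \ref{lem: x<1} immediately yields $H(x,y) = O(\varPi T^{\vkappa} u)$, which already dominates $\varPi T^{\vkappa} \min\{u, 1/T^A\}$. For $1 < u \Lt T$, the target simplifies to $H(x,y) = O(\varPi T^{\vkappa}/T^A)$, and this follows from Lemma \ref{lem: analysis of I} applied with a sufficiently large parameter in place of $A$ (since both $v, w \leqslant u/2 \Lt T$). Taking the minimum of the two estimates then yields the stated bound.

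The main technical subtlety I anticipate is that the proof of Lemma \ref{lem: analysis of I} goes through the integral representation of Lemma \ref{lem: Bessel}, which requires $x \Gt T^{-B}$ for some $B = B(A)$. In the corner $x < T^{-B}$ with $u > 1$, the condition $u \Lt T$ only forces $y + 1/y \Lt T^{B+1}$, so that $y$ (or $1/y$) may be polynomially large in $T$. In this extreme regime, the cleanest route is to use the power series of $J_{2it}(x) - J_{-2it}(x)$ and integrate by parts in $t$ against the Gaussian cutoff $\upphi(t)$: each term carries an oscillating factor $(x/(2y))^{\pm 2it}$ or $(xy/2)^{\pm 2it}$, whose phase derivative $\asymp |\log x| \Gt B \log T$ vastly exceeds the $1/\varPi$ scale of $\upphi(t)$, so that finitely many iterations deliver negligibility (with the $\Gamma(1 \pm 2it + k)^{-1}$ factors controlled by Stirling). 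The hardest step will be keeping the Bessel-series remainders sufficiently small uniformly in $t \asymp T$, but each term gains an additional $x^{2k}$, so the series converges very rapidly for $x < T^{-B}$.
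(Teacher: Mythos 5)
Your plan coincides with the paper's own one-line proof: combine the linear bound $H \Lt \varPi T^{\vkappa} u$ of Lemma \ref{lem: x<1} with the negligibility furnished by Lemmas \ref{lem: Bessel} and \ref{lem: analysis of I}, and the $\min$ simply records whichever is smaller. Two points, though. First, the phrase ``which already dominates $\varPi T^{\vkappa}\min\{u, 1/T^A\}$'' runs in the wrong logical direction: the $\min$ bound is the \emph{stronger} claim, and on the range $T^{-A} < u \leqslant 1$ it asserts negligibility, which Lemma \ref{lem: x<1} alone does not give; so the negligibility estimate (and with it your corner-case patch) is needed on all of $T^{-A} < u \Lt T$, not merely $u > 1$. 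Second, you are right that the proof of Lemma \ref{lem: analysis of I} goes through the integral representation of Lemma \ref{lem: Bessel}, which is only proved for $x \Gt T^{-B}$; the paper passes over this silently. In the applications of the corollary in \S\,\ref{sec: off, 1st} both $x = 4\pi\sqrt{mn}/c$ and $y$ are polynomially bounded in $T$, so the super-polynomially-small-$x$ regime never arises and the omission is harmless. If one insists on the corollary for all $x, y$, your power-series/partial-integration patch is correct in outline, but two details need adjusting: the gain per successive term of the Bessel series is $x^2$, not $x^{2k}$, and the phases of $(x/(2y))^{2it}$ and $(xy/2)^{2it}$ alone are not always $\Gt \log T$ (when $v = xy/2 \asymp 1$ the phase of $(xy/2)^{2it}$ is essentially flat); what actually rescues the argument is the additional $\asymp 2\log(2|t|) \asymp \log T$ contribution to the $t$-phase-derivative coming from $1/\Gamma(1+2it)$ via Stirling, combined with the cancellation of the $e^{\pm\pi|t|}$ factors between $\Gamma(1+2it)$ and $\cosh\pi t$.
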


Similarly, by Lemmas \ref{lem: x<1, -}, \ref{lem: Bessel, -}, and \ref{lem: analysis of I, -}, we conclude that the Bessel integral $H_{-} (x, y)$ is negligibly small if $ w_{\scriptscriptstyle -} \Lt \rho T /\varPi^{1-\vepsilon} \Lt |v_{\scriptscriptstyle -} - 2 T|    $. 

\begin{coro}\label{cor: xy, -}
Let $\valpha > 1$. Let $v_{\scriptscriptstyle -} $ and $w_{\scriptscriptstyle -}$ be defined in {\rm\eqref{4eq: v-, w-}}.   Assume $ w_{\scriptscriptstyle -} \Lt \rho T /\varPi^{1-\vepsilon}   $. Then for $ |v_{\scriptscriptstyle -} - 2 T| \Gt \rho T /\varPi^{1-\vepsilon}  $, we have
	\begin{align}
		H_{-} (x, y) \Lt \varPi T^{\vkappa} \min \left\{ v_{\scriptscriptstyle -} + w_{\scriptscriptstyle -} ,  \frac 1 {T^A}  \right\} ,
	\end{align}
	for any $A \geqslant 0$. 
\end{coro}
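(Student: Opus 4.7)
The plan is to derive this corollary the same way Corollary \ref{cor: u<T} was derived from the unprimed trio \ref{lem: x<1}, \ref{lem: Bessel}, \ref{lem: analysis of I}, simply by substituting their ``minus'' counterparts.

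First, I would record the elementary identity
\begin{align*}
	u_{\scriptscriptstyle -} = x y T^{\valpha} + \frac{x}{yT^{\valpha}} = 2 a^{\valpha} v_{\scriptscriptstyle -} + \frac{2}{a^{\valpha}} w_{\scriptscriptstyle -},
\end{align*}
so that $u_{\scriptscriptstyle -} \asymp v_{\scriptscriptstyle -} + w_{\scriptscriptstyle -}$ (with implied constants depending only on $a$ and $\valpha$, which are fixed). This dictionary lets me freely move between the ``$u$'' formulation of Lemma \ref{lem: x<1, -} and the ``$(v,w)$'' formulation required in the statement.

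Next, I would split on the size of $v_{\scriptscriptstyle -} + w_{\scriptscriptstyle -}$. If $v_{\scriptscriptstyle -} + w_{\scriptscriptstyle -} \Lt 1$ then $u_{\scriptscriptstyle -} \Lt 1$, and Lemma \ref{lem: x<1, -} directly gives $H_{\scriptscriptstyle -}(x,y) \Lt \varPi T^{\vkappa} u_{\scriptscriptstyle -} \Lt \varPi T^{\vkappa}(v_{\scriptscriptstyle -}+w_{\scriptscriptstyle -})$. Since the desired $\min$ is majorised by either argument, the bound in this regime is immediate. If on the other hand $v_{\scriptscriptstyle -}+w_{\scriptscriptstyle -} \Gt 1$, then under our hypotheses $w_{\scriptscriptstyle -} \Lt \rho T/\varPi^{1-\vepsilon}$ and $|v_{\scriptscriptstyle -} - 2T| \Gt \rho T/\varPi^{1-\vepsilon}$, I invoke Lemma \ref{lem: analysis of I, -}: the integral representation of Lemma \ref{lem: Bessel, -} (valid whenever $x \Gt T^{-A}$) is negligibly small, so $H_{\scriptscriptstyle -}(x,y) = O(T^{-A'})$ for any fixed $A' \geqslant 0$, hence $H_{\scriptscriptstyle -}(x,y) \Lt \varPi T^{\vkappa-A}$ for any $A$.

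The only loose end is the tiny-$x$ corner, where $x \Lt T^{-A}$ and Lemma \ref{lem: Bessel, -} does not apply directly. But since $u_{\scriptscriptstyle -} = x(yT^{\valpha} + 1/(yT^{\valpha}))$ and $y$ is polynomially bounded in the applications (all of our $y$'s are ratios of Fourier variables from the Kuznetsov side, hence of size $T^{O(1)}$), $x \Lt T^{-A}$ for $A$ sufficiently large forces $u_{\scriptscriptstyle -} \Lt 1$, so we are back in the first case and the trivial bound of Lemma \ref{lem: x<1, -} suffices. I expect no genuine obstacle: the three preceding lemmas already contain all the oscillatory-integral work (stationary phase, the one- and two-dimensional second-derivative tests), and what remains is the bookkeeping described above, entirely parallel to the derivation of Corollary \ref{cor: u<T}.
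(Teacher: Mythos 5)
Your plan uses exactly the paper's ingredients — the paper itself derives this corollary simply ``by Lemmas \ref{lem: x<1, -}, \ref{lem: Bessel, -}, and \ref{lem: analysis of I, -}'' — and your identification of the tiny-$x$ corner and the role of the polynomial boundedness of $y$ is a genuine (and necessary) clarification that the paper elides.  However, your second paragraph has the $\min$ logic reversed.  Since $\min\{v_-+w_-, T^{-A}\} \leq v_-+w_-$, proving $H_- \Lt \varPi T^{\vkappa}(v_-+w_-)$ does \emph{not} by itself give $H_- \Lt \varPi T^{\vkappa}\min\{v_-+w_-, T^{-A}\}$; you cannot conclude the claim ``immediately'' from the weaker bound.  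Concretely, in the regime $T^{-A} < v_-+w_- < 1$ the claimed bound is $\varPi T^{\vkappa-A}$, while Lemma \ref{lem: x<1, -} only gives $\varPi T^{\vkappa}(v_-+w_-)$, which is far too weak.

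The fix is that the correct dichotomy is by the size of $x$, not by $v_-+w_- \lessgtr 1$.  Lemma \ref{lem: analysis of I, -} does not carry any restriction to $u_- \Gt 1$: for any $x \Gt T^{-A'}$ (with $A'$ fixed but large), Lemmas \ref{lem: Bessel, -} and \ref{lem: analysis of I, -} give $H_- = O(T^{-A'})$ directly under the hypotheses, regardless of whether $u_-$ is large or small.  For the complementary range $x \Lt T^{-A'}$, the polynomial boundedness $y T^{\valpha} + (yT^{\valpha})^{-1} \Lt T^{B}$ that you invoke forces $u_- \Lt T^{B-A'}$, and then Lemma \ref{lem: x<1, -} gives $H_- \Lt \varPi T^{\vkappa} u_- \Lt \varPi T^{\vkappa - (A'-B)}$, which is negligible on taking $A'$ large.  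Both cases thus establish the $T^{-A}$ part of the $\min$; the $v_-+w_-$ part is then the trivial bound on the range $u_- \Lt 1$.  Your last paragraph nearly says this, but its closing phrase ``we are back in the first case'' would only yield the weak $\varPi T^{\vkappa}(v_-+w_-)$ bound — what actually saves you there is that $u_-$ is itself negligibly small.  With this re-ordering the proof is complete and matches the paper's intent.
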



\delete{\begin{defn}
	Let $[a, b]  \subset (0, \infty)$.  For $X \Gt 1$, we say a smooth function $\varww (x) \in C^{\infty} [a, b]$ is $X$-inert if 
	\begin{align*}
		{x}^{n} 	\varww^{(n)} (x)  \Lt_{n } X^{n} .
	\end{align*} 
\end{defn}

Note that $e (\lambda x^{\gamma})$ ($r \neq 0$) is $X$-inert on $[\rho, 2 \rho]$ provided that $ \lambda \rho^{\gamma} \Lt X $. 

The next two lemmas follows easily from Lemmas \ref{lem: staionary phase} and \ref{lem: stationary phase, main}. 

\begin{lem}
Let $\gamma \neq 0$ be real. 	 Let $\varww (x) \in C^{\infty} [\rho ,  2 \rho]$ be $X$-inert. Then the integral
	 \begin{align*}
	 	I_{\gamma} (\lambda) = \int_{\rho}^{2\rho} e (\lambda x^{\gamma} ) \varww (x ) \nd x 
	 \end{align*}
 has bound  $O (\rho T^{-A})$ if $ \lambda \rho^{\gamma} > T^{\vepsilon} X  $, and is trivially $O (\rho)$ if otherwise.  
\end{lem}

	 \begin{lem} 
	 	Let  $ \gamma \neq 0, 1$ be real. 
	 	For $ \sqrt {\lambda} \geqslant X \geqslant  1$ and $\rho > 0$, define 
	 	\begin{align*}
	 		I_{\gamma}^{\pm} (\lambda) =   \int_{\rho}^{2\rho}  e \big(\lambda \big(x \pm \gamma   x^{1/\gamma} \big) \big) \varww (x, \lambda ) \nd x,  
	 	\end{align*} 
	 	with $\varww (x, \lambda) \in C_{c}^{\infty}  [\rho ,  2 \rho]$ for all $ \lambda $ such that $   \lambda^{j} \partial_x^{n} \partial_\lambda ^{ j}  \varww  (x; \lambda) \Lt_{  n,   j }   X^{n + j}
	 	$ {\rm(}$X$-inert in both $x$ and $\lambda${\rm)}. 
	 	
	 	{\rm (1)} We have
	 	$$ I_{\gamma}^{\pm} (\lambda ) \Lt_A  \rho \cdot \bigg( \frac {  X  } {\lambda (\rho +\rho^{1/\gamma})}\bigg)^A  $$ 
	 	for any value of $\rho$ in the $+$ case, or for $ \min \left\{ \rho/\sqrt{2}, \sqrt{2}/\rho \right\}   < 1 / 2 $ in the $-$ case.  
	 	
	 	{\rm(2)} Define  
	 	\begin{align*}
	 		\varvv_{\gamma} (\lambda ) =  e (  \lambda (\gamma -1) ) \cdot \sqrt{\lambda}   I_{\gamma}^{-} (\lambda), 
	 	\end{align*}  
	 	then $\varvv_{\gamma} (\lambda )$ is an $X$-inert function for any $1/2 \leqslant \rho/\sqrt{2} \leqslant 2 $. 
	 \end{lem} 
 
}


\delete{\begin{align}
	I_0 (\valpha, \beta; X, Y)  =  2\pi i^{2k}    \iint  J_{2k-1} ( \valpha x ) e ( \beta y )  \varww \bigg(\frac {x} {X}, \frac {y} {Y}  \bigg)   \nd x \nd y , 
\end{align}

$ I  (\valpha, \beta; \delta; \gamma ; X, Y) $
\begin{align}
	 2\pi i^{2k}   \iiint  e (Tr/ \pi)  J_{2k-1} ( \valpha x ) e \bigg( \beta y +  \frac {\delta x (\exp (r)-1)} {y}   \bigg) \varww \bigg( \frac r {\gamma}, \frac {x} {X}, \frac {y} {Y}  \bigg)   \nd r  \nd x \nd y . 
\end{align}}

\delete{

For integer $c \geqslant 1$, real $\beta$, parameter $N \Gt 1$, consider sums of the form 
\begin{align}
	S  (c; \beta ; N) = \sum_{ n = 1 }^{\infty} a (n) e \Big( \frac {\valpha n} {c} \Big) e (\beta n)   \varww \Big(  \frac {n} {N} \Big),   
\end{align}
for $(\valpha, c) = 1$ and  $X$-inert weight  $\varww (x) \in C_{c}^{\infty}[1, 2]$ in the sense of Definition \ref{defn: inert}.

By partial summation, we have the smooth weighted variant of Lemma \ref{lem: Wilton}. 

\begin{lem}\label{lem: Wilton, 2}
Let  $\gamma$ be any real number. 	For  $N, X \Gt 1$ and $X$-inert weight  $\varww (x) \in C^{\infty}[1, 2]$,  we have
	\begin{align}
		\sum_{n=1}^{\infty}  a (n) e (\gamma n) \varww \Big(  \frac {n} {N} \Big) \Lt_{\vQ, \varww} \sqrt{N} X \log N, 
	\end{align}
	where the implied constant depends only on the form $\vQ$ and the weight $\varww$ {\rm(}not on $\gamma${\rm)}. 
\end{lem}

Consequently, if we choose $\gamma = \valpha / c + \beta$ in Lemma \eqref{lem: Wilton, 2}, then 
\begin{align}
	S (c; \beta ; N) \Lt \sqrt{N} X \log N. 
\end{align}

Next, we apply the Vorono\"i summation formula in Lemma \ref{lem: Voronoi}, with $F (x) = e (\beta x) \allowbreak \varww (x/ N)$, obtaining 
\begin{align}
	S  (c; \beta ; N) =  \frac {\mu (d) a (d) }  {c   }     \sum_{n=1}^{\infty} a (n) e \left(  - \frac{ \overline{\valpha d}  n}{c} \right) \check{F}  \Big(    \frac{  n }{c^2 d}   \Big) , 
\end{align}
for  $d = q / (c, q)$ and 
\begin{align*}
	 \check{F} (y) = 2\pi i^{2k} \int_0^{\infty}   J_{2k-1} (4\pi \sqrt{xy}) e (\beta x) \varww (x/N)  \nd x . 
\end{align*}
By \eqref{2eq: Bessel, Whittaker}, we have 
}

\section{Setup}

For large parameters $T, \varPi$ with $T^{\vepsilon} \leqslant \varPi \leqslant T^{1-\vepsilon}$, let us define 
\begin{align}\label{5eq: phi}
	\upphi (t) = \exp \left( - \frac {(t-T)^2}  {\varPi^2} \right),
\end{align}
and consider  
\begin{align}\label{5eq: C1(m)}
	{\SC}^{\natural }_{1}  (m) = \sum_{j = 1}^{\infty} \omega_j \lambda_j (m)   \big ( m^{-i t_j} L (s_j,  \vQ \otimes u_j )   \upphi (t_j) + m^{i t_j} L (\overline{s}_j,  \vQ \otimes u_j )   \upphi (- t_j) \big) ,
\end{align}
\begin{align}\label{5eq: C2(m)}
	{\SC}^{\natural }_{2}  (\boldsymbol{m} ) = \sum_{j = 1}^{\infty}  \omega_j \lambda_j (\boldsymbol{m} ) \mathrm{c}_{ t_j} (\boldsymbol{m})   | L (s_j,  \vQ \otimes u_j )|^2  (  \upphi (t_j) +    \upphi (- t_j)  ) . 
\end{align}
Note that 
${\SC}^{\natural }_{1}  (m ) $ or $ {\SC}^{\natural }_{2}  (\boldsymbol{m} )  $ respectively differs from  ${\SC}_{1}  (m  ) $ or $ {\SC}_{2}  (\boldsymbol{m} ) $ as in \eqref{2eq: moments M1} or \eqref{2eq: moments M2} (see also \eqref{3eq: moments M2}) merely by  an exponentially small error term.  In parallel, the Eisenstein contributions read 
\begin{align}
\SE_{1}  (m ) = \frac {2}  {\pi}   \cdot  L (1/2, \vQ )  \int_{-\infty}^{\infty} \omega(t) \tau_{  it} (m) m^{-it}  L (1/2 +2it , \vQ)  \upphi (t)  \nd t ,
\end{align} 
\begin{align}
\begin{aligned}
\SE_{2}  (\boldsymbol{m} )	=  \frac {2}  {\pi}   \cdot   L (1/2, \vQ )^{2}   \int_{-\infty}^{\infty} \omega(t) \tau_{  it} (\boldsymbol{m})   \mathrm{c}_{ t } (\boldsymbol{m})  |L (1/2 +2it , \vQ)|^{2}   \upphi (t)  \nd t,
\end{aligned}
\end{align}
where, according to \eqref{3eq: f(n), bi-var},  
$
	\tau_{it} (\boldsymbol{m}) =  \tau_{it} (m_1 ) \tau_{it} (m_2 ). 
$
 
\delete{which may be rewritten as 
	\begin{align*}
		\frac {1}  {\pi}      \int_{-\infty}^{\infty} \omega(t) \tau_{  it} (m) L (1/2, \vQ )^{\eta} \big(  L (1/2 +2it , \vQ)^{\eta}   \upphi (t) + L (1/2 -2it , \vQ)^{\eta}   \upphi (-t)  \big)  \nd t  .
\end{align*}}

\begin{lem}\label{lem: Eis E1,2}
We have   
\begin{align}\label{5eq: E1(m)}
			\SE_{1} (m) = O_{\vQ} \big( \tau (m) \cdot {\textstyle  \sqrt{\varPi^2 + \varPi T^{2/3} } }  \log^{3} T \big)  , 
	\end{align}
\begin{align}\label{5eq: E2(m)}
	 \SE_{2} (\boldsymbol{m} )  = O_{  \vQ}   \big(  \tau  ( \boldsymbol{m} )  \cdot  (\varPi + T^{2/3}) \log^3 T  \big) . 
\end{align}
\end{lem}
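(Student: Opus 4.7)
The plan is to reduce both bounds to routine applications of Lemma~\ref{lem: L(Q)}, paying careful attention to the weight $\upphi(t)$ and the factor $\omega(t) = 1/|\zeta(1+2it)|^2$.

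First I would collect the standard pointwise inputs: $|\tau_{it}(m)| \leqslant \tau(m)$ (and so $|\tau_{it}(\boldsymbol{m})| \leqslant \tau(\boldsymbol{m})$) for $t$ real; $\omega(t) \Lt \log^{2}(|t|+2)$ from the classical zero-free region of $\zeta$; and $L(1/2,\vQ) \Lt_{\vQ} 1$. Together these immediately strip the integrand in $\SE_{1}(m)$, respectively $\SE_{2}(\boldsymbol m)$, down to $\tau(m)\,\log^{2}T \int |L(1/2+2it,\vQ)|\upphi(t)\,\nd t$, respectively $\tau(\boldsymbol m)\log^{2}T \int |L(1/2+2it,\vQ)|^{2}\upphi(t)\,\nd t$, with an exponentially small error coming from $|t| \geqslant T/2$ where $\upphi$ is negligible.

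Next I would dyadically decompose the Gaussian $\upphi(t) = \exp(-(t-T)^{2}/\varPi^{2})$: write the interval $|t-T|\leqslant \varPi\sqrt{\log T}$ as the union of $|t-T|\leqslant \varPi$ and the annuli $2^{k-1}\varPi \leqslant |t-T|\leqslant 2^{k}\varPi$, on each of which $\upphi(t)\leqslant \exp(-4^{k-1})$; beyond $|t-T|\geqslant \varPi\sqrt{\log T}$ the contribution is negligibly small. After the substitution $s=2t$ (so the interval is centred at $2T$ of length $\asymp 2^{k}\varPi$), Lemma~\ref{lem: L(Q)}(3.11) gives
\begin{equation*}
\int_{2^{k-1}\varPi\leqslant |t-T|\leqslant 2^{k}\varPi}|L(1/2+2it,\vQ)|^{2}\,\nd t \Lt_{\vQ} (2^{k}\varPi + T^{2/3})\log T,
\end{equation*}
and summing the geometric series against $\exp(-4^{k-1})$ yields
\begin{equation*}
\int |L(1/2+2it,\vQ)|^{2}\upphi(t)\,\nd t \Lt_{\vQ} (\varPi + T^{2/3})\log T.
\end{equation*}
Substituting back into the $\SE_{2}$ bound produces $\tau(\boldsymbol m)(\varPi + T^{2/3})\log^{3}T$, as required.

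For $\SE_{1}(m)$ I would combine the preceding $L^{2}$ estimate with Cauchy--Schwarz against $\upphi$ itself, using $\int \upphi(t)\,\nd t \Lt \varPi$:
\begin{equation*}
\int |L(1/2+2it,\vQ)|\upphi(t)\,\nd t \leqslant \Bigl(\int |L(1/2+2it,\vQ)|^{2}\upphi(t)\,\nd t\Bigr)^{1/2}\Bigl(\int \upphi(t)\,\nd t\Bigr)^{1/2}.
\end{equation*}
This gives a bound $\Lt_{\vQ}\sqrt{(\varPi^{2}+\varPi T^{2/3})\log T}$, and multiplication by $\tau(m)\log^{2}T$ yields the claimed $\tau(m)\sqrt{\varPi^{2}+\varPi T^{2/3}}\,\log^{3}T$ (with room to spare). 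No serious obstacle is expected; the only subtle point is the bookkeeping of $\log T$ factors, which is precisely why dyadic decomposition of $\upphi$ is preferable to a crude truncation at $|t-T|\leqslant \varPi\log T$ (the latter would cost an extra $\log T$).
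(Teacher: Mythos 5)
Your proposal is correct and follows essentially the same route as the paper: strip the Eisenstein integrand with $\omega(t)\Lt\log^2 T$, $|\tau_{it}(m)|\leqslant\tau(m)$, $L(1/2,\vQ)=O(1)$, then invoke Lemma~\ref{lem: L(Q)}. The only genuine difference is how you handle the Gaussian $\upphi$: the paper simply takes $H=\varPi\log T$ in Lemma~\ref{lem: L(Q)} and bounds $\upphi\leqslant 1$ on $|t-T|\leqslant H$, which costs an extra $\log T$ (giving $(\varPi\log T+T^{2/3})\log^3 T$ rather than $(\varPi+T^{2/3})\log^3 T$ for $\SE_2$), whereas your dyadic decomposition of $\upphi$ into annuli $|t-T|\sim 2^k\varPi$ and exploitation of $\upphi\leqslant\exp(-4^{k-1})$ removes that superfluous factor and lands exactly on the stated bounds, in fact slightly beating them in \eqref{5eq: E1(m)} ($\log^{5/2}T$ versus $\log^3 T$). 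The paper is being a little cavalier about logarithms; your version is the more careful one, at the cost of a few extra lines. One small point to tidy: $\upphi(t)\leqslant\exp(-\log T)=T^{-1}$ at $|t-T|=\varPi\sqrt{\log T}$ is not yet ``negligibly small'' in the sense of $O(T^{-A})$ for arbitrary $A$; you should either truncate at $|t-T|\leqslant\varPi\log T$ (where $\upphi\leqslant\exp(-\log^2 T)$) and let the dyadic sum run that far, or note that the geometric decay of $\exp(-4^{k-1})$ makes the tail harmless however far you carry the decomposition. This is cosmetic and does not affect the argument.
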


\begin{proof}
Since $ \omega (t) = O (  \log^2 (|t|+1) )$  by \cite[(3.11.10)]{Titchmarsh-Riemann}, it is clear that \eqref{5eq: E1(m)} and \eqref{5eq: E2(m)} follow from \eqref{3eq: L(Q) 1} and \eqref{3eq: L(Q) 2} in Lemma \ref{lem: L(Q)} respectively, if we choose $H = \varPi \log T$.   
\end{proof}


\section{The Twisted First Moment} 

In this section, 
we study the  twisted first moment ${\SC}^{\natural }_{1}  ( {m})$ as defined in \eqref{5eq: C1(m)}.  

\subsection{Application of Kuznetsov Formula} \label{sec: apply Kuz}

By the approximate functional equations \eqref{3eq: AFE} and \eqref{3eq: AFE, E(t)}, along with \eqref{3eq: RS coeff},  we may write
\begin{align*}  
		{\SC}^{\natural }_{1}  (m) + {\SE}_{1}  (m)  = \sum_{\pm} 
		\sum_{h=1}^{\infty} \sum_{n=1}^{\infty} \frac{\varepsilon_q (h) a (n)} {h \sqrt{n}} & \Bigg\{   \sum_{j = 1}^{\infty} \omega_j   {  \lambda_j (m    )  \lambda_j (n    )}    {V}_{\pm} (m, h^2 n ; t_j)  \\
		& + \frac 1 {\pi} \int_{-\infty}^{\infty} \omega(t) \tau_{  it} (m)  \tau_{  it} (n)   {V}_{\pm} (m, h^2n; t) \nd t \Bigg\} ,  
\end{align*} 
where 
\begin{align}\label{6eq: V+(n;t)}
	{V}_{+}  (m, n; t) = V_{1}    (  \theta n/X ; t  )  (m n)^{ - i t}  {\upphi (t)}  + V_{1}    (  \theta n / X ; - t  )   (mn)^{  i t} {\upphi (- t)} ,
\end{align} 
\begin{align}\label{6eq: V-(n;t)}
	{V}_{-}  (m, n; t) = V_{1}    (  \theta n X ; - t  )  ( n / m  )^{i t} \upepsilon (t)  {\upphi (t)}  + V_{1}    ( \theta  n X ;  t  )  (  n / m  )^{- i t} \upepsilon (t) {\upphi (- t)} . 
\end{align} 
We make the mild assumption that 
\begin{align*}
	T^{\vepsilon} < X <    T^{1-\vepsilon}. 
\end{align*}
It follows from the Kuznetsov formula in Lemma \ref{lem: Kuznetsov} that
\begin{align}\label{6eq: C+E=D+O}
	{\SC}^{\natural }_{1}  ( {m}) + {\SE}_{1}  (m) = \SD_1^+ (m) + \SD_1^- (m)  + \SO_1^+ (m) + \SO_1^- (m), 
\end{align}
with the diagonal 
\begin{align}\label{6eq: D+-(m)}
	\SD_1^{\pm} (m) = \frac{a (m)} {\sqrt{m}}  
	\sum_{h=1}^{\infty}  \frac {\varepsilon_q (h) {H}_{\pm} (m, h)} {h} , 
\end{align} 
where
\begin{align}\label{6eq: H+-(m)}
	 {H}_{\pm} (m, h)  =  \frac 1 {\pi^2 }  \int_{-\infty}^{\infty}   {V}_{\pm}  (m, h^2 m; t) \tanh(\pi t) t \nd t ,
\end{align}
and the off-diagonal 
\begin{align}\label{6eq: O(m)}
	\SO_1^{\pm} (m) =  
	\sum_{h=1}^{\infty} \frac {\varepsilon_q (h)} {h}  \sum_{n=1}^{\infty} \frac{a (n)} {\sqrt{n}} \sum_{c=1}^{\infty} \frac {S(m,n; c)} {c}   {H}_{\pm} \bigg(\frac {4\pi \sqrt{mn}} {c} ; m, h^2 n \bigg), 
\end{align}
where 
\begin{align}\label{6eq: Bessel, 1}
	  {H}_{\pm} (x; m, n) = \frac {2i} {\pi}   \int_{-\infty}^{\infty}   {V}_{\pm}  (m, n; t) J_{2it} (x)  \frac {t \nd t} {\cosh (\pi t) } . 
\end{align} 

\subsection{Asymptotics for the Diagonal Sums}

In view of \eqref{6eq: V+(n;t)}, \eqref{6eq: V-(n;t)}, and \eqref{6eq: H+-(m)}, we have 
 \begin{align*}
	{H}_{+} (m, h)  = \frac {2} {\pi^2}  \int_{-\infty}^{\infty}   V_{1}    (  \theta h^2 m / X ; t  ) (h m)^{ - 2 i t}  {\upphi (t)}  \tanh(\pi t) t \nd t, 
\end{align*}
\begin{align*}
	{H}_{-} (m, h)  = \frac {2} {\pi^2}  \int_{-\infty}^{\infty}   V_{1}    (  \theta h^2 m X ; - t  )  h^{ 2 i t}  \upepsilon (t) {\upphi (t)}  \tanh(\pi t) t \nd t .
\end{align*} 
Keep in mind that  $\upphi (t)$ as in \eqref{5eq: phi} is exponentially small unless $|t - T | \leqslant \varPi \log T$. Thus, up to an exponentially small error, $\tanh(\pi t) $ is removable as $\tanh(\pi t) = 1 + O (\exp (-\pi T))$ on this range.   Also, let us truncate  the $h$-sum in \eqref{6eq: D+-(m)} at $ h < T $ say, as otherwise $ V_{1}    (  \theta h^2 m /X  ; t  ) $ and $ V_{1}    (  \theta h^2 m  X  ; t  ) $ 
are negligibly small due to \eqref{3eq: V(y;t), 1} in Lemma \ref{lem: AFE}. 

\begin{lem}\label{lem: H(m,h)}
Let $ h <  T$.  Then, except for $ H_+(1, 1) $, we have 
	\begin{align}
		 H_{\pm} (m, h) = O     (    {\varPi T^{\vepsilon}}    ). 
	\end{align}
\end{lem}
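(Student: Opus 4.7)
The strategy is to exploit cancellation in the oscillatory factors $(hm)^{-2it}$ (for $H_+$) and $h^{2it}\upepsilon(t)$ (for $H_-$) against the Gaussian weight $\upphi (t)$, whose effective support is $|t - T| \leqslant \varPi \log T$.

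For $H_+(m,h)$ with $mh \geqslant 2$, I would expand $V_1(\theta h^2 m /X;t)$ via its Mellin representation from Lemma \ref{lem: AFE}, truncating the $v$-contour to $|\mathrm{Im}(v)| \leqslant \log T$ with negligible loss. By Lemma \ref{lem: G(v,t)}, the amplitude $\upgamma_1(v,t)$ varies slowly in $t$ on scale $T/\log T$. Substituting $t = T + \varPi s$, the inner $t$-integral takes the form
\begin{align*}
  \varPi\, (hm)^{-2iT}\hspace{-2pt} \int_{-\infty}^{\infty} \tilde g(s) \, e^{-s^2 - 2 i \varPi s \log (hm)} \, \nd s,
\end{align*}
where $\tilde g$ is smooth with uniform bounds $\tilde g^{(n)} (s) \Lt T^{1+\vepsilon}$. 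Repeated integration by parts in $s$ (using $\log(hm) \geqslant \log 2$) yields the bound $O_N \big( T^{1+\vepsilon} / (\varPi \log (hm))^N \big)$ for every $N$, which is super-polynomially small given $\varPi \geqslant T^{\vepsilon}$. Equivalently, completing the square in the Gaussian-Fourier integral produces the factor $e^{-\varPi^2 \log^2(hm)}$.

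For $H_-(m,h)$ with $h < T$, I would invoke Lemma \ref{lem: e(t)} to write $\upepsilon (t) = (\sqrt\theta e / (2|t|))^{4it}\updelta(t)$ with $\updelta$ slowly varying. The effective phase is $\Phi(t) = 2t \log h + 4t \log(\sqrt\theta e/(2t))$, with derivative $\Phi'(t) = 2\log h - 4\log t + 4\log(\sqrt\theta/2)$. At $t = T$ this is $2\log h - 4\log T + O(1)$, so for $h < T$ we have $|\Phi'(T)| \gg \log T$; since $\Phi''(t) = -4/t$ and $\varPi \leqslant T^{1-\vepsilon}$, $\Phi'$ remains of size $\asymp \log T$ on $|t-T| \leqslant \varPi \log T$. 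The stationary point $t^{\star} = \pi\sqrt{h/q}$ lies at distance $\gg T$ from $T$, placing us firmly in the non-stationary regime. With the amplitude $V_1(\theta h^2 m X; -t)\upgamma_1(v,-t)\updelta(t)\, t$ behaving inertly on scale $T$ (Lemmas \ref{lem: AFE}, \ref{lem: G(v,t)}, \ref{lem: e(t)}), Lemma \ref{lem: staionary phase} gains a factor $1/(\varPi \log T) \leqslant T^{-\vepsilon}$ per integration by parts; iterating yields $O(T^{-A})$ for any $A \geqslant 0$.

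The main technical obstacle is to verify that the derivatives of the amplitude (through the Mellin variable $v$, the dilation $t = T+\varPi s$, and the factor $\updelta(t)$) are uniformly bounded by $T^{O(\vepsilon)}$ on the truncated contour, which is exactly what the cited lemmas deliver. In both cases the argument in fact yields a negligibly small bound, much stronger than the stated $O(\varPi T^{\vepsilon})$.
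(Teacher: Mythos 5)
Your proposal is correct and follows essentially the same route as the paper: insert the truncated Mellin representation of $V_1$ (Lemma \ref{lem: AFE}) and restrict to the effective Gaussian support $|t-T|\leqslant\varPi\log T$, then read off the $t$-phase coming from $(hm)^{-2it}$ (resp.\ from $h^{2it}\upepsilon(t)$), and conclude negligibility by repeated non-stationary integration by parts as packaged in Lemma \ref{lem: staionary phase}. The only cosmetic difference is that the paper first applies Stirling to write $\upgamma_1(v,t)=\updelta_{\pm}(v)\bigl(t^{v}+O(T^{\vepsilon}/T)\bigr)$ before exhibiting the clean phases $f_{\pm}(t)$, whereas you treat $\upgamma_1(v,\cdot)$ directly as a $\log T$-inert amplitude via Lemma \ref{lem: G(v,t)}; both are valid and your identification of the phase $\Phi$, its derivative $\asymp\log T$, and the far-away stationary point for $H_{-}$ match the paper exactly.
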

\begin{proof} By inserting the integral  in \eqref{3eq: V(y;t), 2}, with $U = \log T$, we arrive at
\begin{align*}
	 {H}_{+} (m, h)  = \frac {1} {\pi^3 i}  \int_{\vepsilon-i\log T}^{\vepsilon+i\log T}  \int_{T-\varPi \log T}^{T+\varPi \log T}      \frac { \upgamma_{1} (v, t)    {\upphi (t)} t } { (\theta/ X)^{v} h^{2v+2it} m^{v+2it}   }    \frac {  \nd t \nd v} { v} + O (T^{-A}), 
\end{align*}
\begin{align*}
	{H}_{-} (m, h)  = \frac {1} {\pi^3 i}  \int_{\vepsilon-i\log T}^{\vepsilon+i\log T}  \int_{T-\varPi \log T}^{T+\varPi \log T}      \frac { \upgamma_{1} (v, - t) \upepsilon (t)   {\upphi (t)} t } { (\theta m X)^{v} h^{2v-2it}   }    \frac {  \nd t \nd v} { v} + O (T^{-A}) .
\end{align*} 
For $t , v$ on the integral domains, by applying Stirling's formula \eqref{app1: Stirling, 1} to \eqref{3eq: defn G+-}, we have 
\begin{align*} 
	\upgamma_{1} (v, t) = \updelta_{\scriptscriptstyle +} (v) \big( t^v + O  (    {T^{\vepsilon} } / T  ) \big) ,  \quad 
	 \upgamma_{1} (v, - t) \upepsilon (t) = \updelta_{\scriptscriptstyle -} (v)  (  \sqrt{\theta} {e} / {2t}  )^{4it} \big( t^v + O  (    {T^{\vepsilon} } / T  ) \big),
\end{align*} 
for 
\begin{align*}
\updelta_{\scriptscriptstyle \pm} (v) =	 \exp (v^2 + v \log (\pm 2i )) \cdot  {\Gamma  ( k + v )  } / {\Gamma (k)  } . 
\end{align*}
Consequently,
\begin{align*} 
	 {H}_{+} (m, h)  = \frac {1} {\pi^3 i}  \int_{\vepsilon-i\log T}^{\vepsilon+i\log T} \updelta_{\scriptscriptstyle +} (v) \int_{T-\varPi \log T}^{T+\varPi \log T}      \frac {    {\upphi (t)} t^{1+v} } { (\theta/X)^{v} h^{2v+2it} m^{v+2it} }    \frac {  \nd t \nd v} { v} + O  (  {\varPi T^{\vepsilon}}    ), 
\end{align*}
\begin{align*}
	{H}_{-} (m, h)  = \frac {1} {\pi^3 i}  \int_{\vepsilon-i\log T}^{\vepsilon+i\log T} \updelta_{\scriptscriptstyle -} (v) \int_{T-\varPi \log T}^{T+\varPi \log T}      \frac { (e / 2 t)^{ 4 i t} {\upphi (t)} t^{1+v} } { (mX)^{v}  \theta^{v-2it} h^{2v-2it} }    \frac {  \nd t \nd v} { v} + O  (  {\varPi T^{\vepsilon}}    ). 
\end{align*} 
Next, consider the phase functions in the inner integrals  
\begin{align*}
	f_{\scriptscriptstyle +} (t) = - 2 t \log (hm), \qquad f_{\scriptscriptstyle -} (t) = 4 t \log (\sqrt{\theta h} e / 2 t), 
\end{align*} 
whose derivatives read
\begin{align*}
	f_{\scriptscriptstyle +}' (t) = -2 \log (hm), \qquad f'_{\scriptscriptstyle -} (t) = 4 \log (\sqrt{\theta h} / 2 t)  . 
\end{align*}
Clearly, $ |f_{\scriptscriptstyle +}' (t) | \Gt 1$ if $h m > 1$ and $ |f_{\scriptscriptstyle -}' (t) | \Gt \log T $ as $\sqrt{h} < \sqrt{T}$. Moreover, we have $ f_{\scriptscriptstyle +}^{(j)} (x) = 0$ and $ f_{\scriptscriptstyle -}^{(j)} (x) \Lt_j 1/T^{j-1}$ for $j \geqslant 2$.   By applying Lemma \ref{lem: staionary phase}, with  $P = \varPi$, $Q = T$,  $Z = T$, and $ R = 1$ or $\log T$,  we infer that the integrals  are negligibly small, except for the case of $H_{+} (1, 1)$.  
\end{proof}

\begin{lem}
	We have 
	\begin{align}
		 H_+(1, 1) = \frac {2} {\pi \sqrt{\pi}}  \varPi T + O (\varPi / X). 
	\end{align}
\end{lem}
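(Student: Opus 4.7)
The starting point is the formula
\begin{align*}
H_+(1,1) = \frac{2}{\pi^2} \int_{-\infty}^{\infty} V_1(\theta/X;t)\, \upphi(t) \tanh(\pi t)\, t\, \nd t,
\end{align*}
obtained by specializing the expression for $H_+(m,h)$ in \eqref{6eq: H+-(m)} to $m=h=1$. The plan is to split $V_1(\theta/X;t)$ into a main constant and a small remainder via a contour shift in its Mellin representation, read off the main term from a Gaussian integral, and bound the remainder by $O(\varPi/X)$.

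First I will apply the definition $V_1(y;t) = \frac{1}{2\pi i}\int_{(3)} \upgamma_1(v,t)\, y^{-v}\, \nd v/v$ from \eqref{3eq: V(y;t)} and shift the contour to $\mathrm{Re}(v) = -1$. This is legitimate since $\upgamma_1(v,t)$ is holomorphic in the strip $|\mathrm{Re}(v)|<k-1/2$ (no poles of $\Gamma(k+v)$ or $\Gamma(k+v+2it)$ intervene, as $k\geqslant 1$), and the only pole crossed is the simple pole of $1/v$ at $v=0$ with residue $\upgamma_1(0,t)=1$. Hence $V_1(\theta/X;t) = 1 + V_1^\star(\theta/X;t)$, where $V_1^\star$ denotes the resulting integral on $\mathrm{Re}(v)=-1$.

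The main term comes from the constant $1$. Because $\upphi(t)$ is exponentially small outside the window $|t-T|\leqslant \varPi\log T$, on the relevant range $\tanh(\pi t)=1+O(\exp(-\pi T))$, and the change of variables $t=T+\varPi u$ together with $\int \exp(-u^2)\,\nd u=\sqrt{\pi}$ (the linear term in $u$ being odd) gives
\begin{align*}
\frac{2}{\pi^2}\int_{-\infty}^{\infty} \upphi(t)\tanh(\pi t)\, t\, \nd t = \frac{2}{\pi^2}\cdot \sqrt{\pi}\,\varPi T + O(\exp(-cT)) = \frac{2}{\pi\sqrt{\pi}}\,\varPi T + O(\exp(-cT)).
\end{align*}

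For the remainder term, I parametrize $v=-1+iu$ on the shifted contour, so $|(\theta/X)^{-v}| = \theta/X$. By Lemma \ref{lem: G(v,t)} (with the Gaussian from $\exp(v^2)$ taming the $u$-direction), one has $|\upgamma_1(-1+iu,t)| \Lt \exp(-u^2)/(|t|+1)$ on the effective range $|u|\leqslant \log T$, and negligibly small outside. The inner $t$-integral is therefore bounded by $\exp(-u^2)\int \upphi(t)\,\nd t \Lt \exp(-u^2)\,\varPi$ (using $|t|/(|t|+1)\Lt 1$), and integrating over $u$ against $\nd u/|v|\Lt \nd u$ yields
\begin{align*}
\frac{2}{\pi^2}\int_{-\infty}^{\infty} V_1^\star(\theta/X;t)\,\upphi(t)\tanh(\pi t)\, t\, \nd t \Lt \frac{\theta}{X}\cdot \varPi \Lt \frac{\varPi}{X}.
\end{align*}
The main obstacle is the sharpness of the factor $(|t|+1)^{-1}$ in $\upgamma_1(-1+iu,t)$ at $t\asymp T$, which is exactly what Stirling's formula $\Gamma(k-1+iu+2it)/\Gamma(k+2it)\sim (2it)^{-1+iu}$ provides, and without which we would only get $O(\varPi T^\vepsilon/X^\vepsilon)$ as in Lemma \ref{lem: H(m,h)}.
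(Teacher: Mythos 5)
Your proof is correct and follows essentially the same route as the paper: shift the Mellin contour in $V_1(\theta/X;t)$ from $\mathrm{Re}(v)=3$ to $\mathrm{Re}(v)=-1$, pick up the residue $\upgamma_1(0,t)=1$ at $v=0$ as the main term, evaluate the Gaussian integral, and bound the shifted integral by $O(1/(|t|+1)X)$ via Stirling. Two small imprecisions worth noting: for the shift to $\mathrm{Re}(v)=-1$ to be pole-free one needs $k>1$ (the condition $k\geqslant 1$ is not quite enough, though this is implicit in the paper as well), and Lemma~\ref{lem: G(v,t)} as stated requires $\mathrm{Re}(v)>0$, so the bound on $\upgamma_1(-1+iu,t)$ should really cite Stirling's formula~\eqref{app1: Stirling, 3} directly as you do later.
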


\begin{proof}
By shifting the integral contour in \eqref{3eq: V(y;t)} down to $\mathrm{Re} (v) = -1$, we have
\begin{align*}
	V_{1} (\theta/ X; t) = 1 + O \bigg( \frac 1 { (|t|+1) X } \bigg), 
\end{align*} 
by the Stirling formula. Therefore 
\begin{align*}
	H_+(1, 1) = \frac {2} {\pi^2}  \int_{-\infty}^{\infty}     {\upphi (t)}  t \nd t + O (\varPi/ X) , 
\end{align*} whereas 
	\begin{align*}
		   \int_{-\infty}^{\infty}   {\upphi (t)} t \nd t =    \varPi T \int_{-\infty}^{\infty}   \exp (-t^2)  \nd t =  {  \sqrt{\pi}}  \varPi T . 
	\end{align*}
\end{proof}

\begin{lem}\label{lem: H+-(m)}
	We have 
\begin{align}\label{6eq: D(m)}
	\SD_1^+ (m) + \SD_1^- (m) =  \frac {2} {\pi\sqrt{\pi}} \varPi T \cdot {\delta ({m, 1})} + O  \bigg(   \frac {\varPi T^{\vepsilon}  } {\sqrt{m}}   \bigg) . 
\end{align}
\end{lem}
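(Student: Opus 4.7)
The plan is to substitute the two preceding evaluations of $H_{\pm}(m,h)$ into the diagonal sum \eqref{6eq: D+-(m)}. Essentially all the analytic work has already been done: $H_{+}(1,1)$ produces the main term of size $\varPi T$, every other $H_{\pm}(m,h)$ with $h<T$ is bounded by $O(\varPi T^{\vepsilon})$, and the tail $h\geqslant T$ will be shown to be negligible. What remains is primarily bookkeeping on the $h$-sum and on the $a(m)/\sqrt{m}$ prefactor.

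First I would truncate the $h$-sum in \eqref{6eq: D+-(m)} at $h\leqslant T$ at the cost of a negligible error. By the rapid decay \eqref{3eq: V(y;t), 1} of $V_{1}(y;t)$ in $y$, for $h\geqslant T$ both $V_{1}(\theta h^{2}m/X;t)$ and $V_{1}(\theta h^{2}mX;-t)$ are $O(T^{-A})$ uniformly on the range $|t-T|\leqslant \varPi\log T$ where $\upphi(t)$ is not exponentially small; outside this range the Gaussian $\upphi(t)$ itself supplies the decay. Combined with the bound $|\upepsilon(t)|=O(1)$ from \eqref{3eq: d(t), bound}, this pushes the $h\geqslant T$ contribution below any negative power of $T$.

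Next I split the remaining sum into the term $(m,h)=(1,1)$ and everything else. The $(1,1)$-term appears only in $\SD_{1}^{+}(1)$, and by the evaluation $H_{+}(1,1)=\frac{2}{\pi\sqrt{\pi}}\varPi T+O(\varPi/X)$ it gives exactly the advertised main term $\frac{2}{\pi\sqrt{\pi}}\varPi T\cdot\delta(m,1)$; the error $O(\varPi/X)$ is absorbed since $X>T^{\vepsilon}$. For every other pair $(m,h)$ with $h<T$, Lemma~\ref{lem: H(m,h)} gives $H_{\pm}(m,h)=O(\varPi T^{\vepsilon})$. The $h$-sum $\sum_{h<T}\varepsilon_{q}(h)/h$ then costs only a factor of $\log T$, and multiplying by the prefactor $a(m)/\sqrt{m}\Lt m^{\vepsilon}/\sqrt{m}$ (via \eqref{3eq: Deligne}) yields the claimed error $O(\varPi T^{\vepsilon}/\sqrt{m})$.

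The only mildly delicate point will be to verify that, when $m=1$, the single residual term $H_{-}(1,1)$---which is \emph{not} the exceptional case---is actually covered by the ``except for $H_{+}(1,1)$'' clause of Lemma~\ref{lem: H(m,h)}. Inspecting the proof there, the relevant phase derivative is $f'_{-}(t)=4\log(\sqrt{\theta}/2t)\asymp\log T$ for $t\asymp T$, which is bounded away from zero, so the stationary-phase estimate with $R=\log T$ still delivers the desired $O(\varPi T^{\vepsilon})$. With this in hand, no new analytic input is required and the lemma follows.
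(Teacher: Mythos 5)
Your proof is correct and takes the approach the paper intends: the paper gives no separate argument for this lemma, expecting the reader to substitute the two preceding evaluations of $H_{\pm}(m,h)$ into $\SD_1^{\pm}(m)$, truncate the $h$-sum at $h<T$, and absorb the $O(\varPi/X)$ error via $X>T^{\vepsilon}$. Your verification that $H_{-}(1,1)$ is indeed covered by the non-exceptional clause of Lemma~\ref{lem: H(m,h)} (its phase derivative $4\log(\sqrt{\theta}/2t)\asymp\log T$ stays bounded away from zero for $t\asymp T$) is a worthwhile sanity check and matches the paper's proof of that lemma.
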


\delete{\begin{remark}
	Actually, the integral in     may be evaluated explicitly by {\rm\cite[3.896 4, 3.952 1]{G-R}} and there will arise the exponential factor $\exp (-\varPi^2 \log^2 m)$.  
\end{remark}}

\subsection{Treatment of the Off-diagonal Sums}  \label{sec: off, 1st}

 Let us  consider  the off-diagonal sums $	\SO_1^{\pm} (m)$ given by \eqref{6eq: O(m)} and \eqref{6eq: Bessel, 1} (as well \eqref{6eq: V+(n;t)} and \eqref{6eq: V-(n;t)}).  
 
\vspace{5pt} 

\subsubsection{Initial Reductions}  \label{sec: reduction}

By \eqref{3eq: V(y;t), 1} in Lemma \ref{lem: AFE} along with  a smooth dyadic partition,  at the cost of a negligible error,  we may confine the $h$- and $n$-sums in \eqref{6eq: O(m)}  on the range $ h^2 n \sim N $ for dyadic $ N \Lt T^{1+\vepsilon} X^{\pm 1}$ (namely, $N$ is of the form $2^{j/2}$).  
Next, we insert the expression  of $ V_{1}     (  y ; t  ) $ as in \eqref{3eq: V(y;t), 2} in Lemma \ref{lem: AFE} into \eqref{6eq: V+(n;t)}, \eqref{6eq: V-(n;t)}, and \eqref{6eq: Bessel, 1}. After this, there arises the Bessel transform 
\begin{align*}
	H_{\pm} (x, y; v) = \frac{2 i } {\pi} \int_{-\infty}^{\infty} h_{\pm} (t; y; v) J_{2it} (x)  \frac {t \nd t} {\cosh (\pi t) } , 
\end{align*} 
in which
\begin{align*} 
	h_{\scriptscriptstyle +} (t; y; v) = \upgamma_1 (v, t) y^{-2it} \upphi (t) + \upgamma_1 (v, - t) y^{2it} \upphi (- t), 
\end{align*} 
\begin{align*} 
	h_{\scriptscriptstyle -} (t; y; v) = \upgamma_1 (v, - t) \upepsilon (t)  y^{-2it} \upphi (t) + \upgamma_1 (v,  t) \upepsilon (-t) y^{2it} \upphi (- t), 
\end{align*}  
for 
\begin{align*}\label{6eq: x, y}
	 x = \frac {4\pi \sqrt{mn}} {c}, \qquad y = \left\{ 
	 \begin{aligned}
	 &	h \sqrt{m n} , & & \text{ if } +, \\
	 & \sqrt{m} /   h \sqrt{n }, & & \text{ if } -, 
	 \end{aligned} \right. 
\end{align*}
(see \eqref{6eq: V+(n;t)} and \eqref{6eq: V-(n;t)}), and for $ v \in [\vepsilon -i \log T, \vepsilon + i \log T] $. Since the variable $v$ is not essential in our analysis, so it will usually be omitted in the sequel.  The Bessel integral $H_{\pm} (x, y ) $ has been studied thoroughly in \S \ref{sec: Bessel}; in particular, by Corollaries \ref{cor: u<T} and \ref{cor: xy, -}, along with the Weil bound \eqref{3eq: Weil}, we may further restrict the $c$-sum  in \eqref{6eq: O(m)}   to  $c  \Lt  C_{+}$ or $|c - C_{0} | \Lt C_- $ for 
\begin{equation} \label{7eq: C+-}
	C_{+} = \frac {m N} {h T}, \qquad C_0 = \frac {4\pi m T} {h \theta}, \quad  C_{-} = \frac {m  } {h } \varPi^{\vepsilon}  \bigg( \varPi + \frac {T} {\varPi}    \bigg) ;
\end{equation}  
in the notation of \S \ref{sec: Bessel}, we have $\rho T /\varPi = \varPi + T/\varPi$ and  we check here that
\begin{align*}
	w_{\scriptscriptstyle -} = \frac {\pi \theta hn} {2 T^2} \Lt \frac {T^{\vepsilon}} {T X}  
\end{align*}
 is indeed very small. 
We conclude that, up to  a negligible error, the off-diagonal sum $ \SO_1^{\pm} (m)  $ has bound 
\begin{align}\label{7eq: O1(m), 1}
	 \SO_1^{\pm} (m) \Lt  \frac {T^{\vepsilon}} {\sqrt{N}}  \max_{N \Lt T^{1+\vepsilon} X^{\pm 1} }   \sum_{h \Lt  \sqrt{N} }    \int_{\vepsilon-i\log T}^{\vepsilon+i\log T}   \big|\SO_1^{\pm} (m,  h; N;  v) \big|   \frac {\nd v} {v} , 
\end{align}
where, if we omit $v$ from the notation, 
\begin{align}
	\SO_1^{+} (m,   h; N) = \sum_{\, c \shskip \Lt C_{+} }     \sum_{n \sim N/h^2}   {a (n)}    \frac   {S(m,n; c)} c    \varww \bigg(  \frac {h^2n} {N} \bigg) {H}_{+}  \bigg(\frac {4\pi \sqrt{mn}} {c} ; h \sqrt{m n}  \bigg), 
\end{align}
\begin{align}
	\SO_1^{-} (m,   h; N) = \sum_{ |c-C_0| \Lt C_{-} }    \sum_{n \sim N/h^2}  \hspace{-3pt}  {a (n)}     \frac  {S(m,n; c)}   c    \varww \bigg(  \frac {h^2n} {N} \bigg) {H}_{-}  \bigg(\frac {4\pi \sqrt{mn}} {c} ; \frac {\sqrt{m} } {  h \sqrt{n} } \bigg), 
\end{align}
with
\begin{align*}
	\varww (x) = \frac {\varvv (x)} {x^{1/2+v}}, 
\end{align*}
for a certain fixed smooth weight $\varvv  \in C_c^{\infty}[1,2]$. Note that  $	\varww (x)$ is $\log T$-inert in the sense of Definition \ref{defn: inert}; indeed, 
we have uniformly 
\begin{align*}
	\varww^{(j) } (x) \Lt_{j, \vepsilon} \log^{j} T. 
\end{align*}

\subsubsection{Application of Lemma {\ref{lem: Bessel}}}  \label{sec: O+, 1}
By opening the Kloosterman sum $S (m, n;c)$ as in \eqref{3eq: Kloosterman} and inserting  the formula of the Bessel integral $H_{+} (x, y)$ as in  Lemma \ref{lem: Bessel}, we have 
\begin{align}\label{7eq: O1(m), 2}
	 \SO_1^{+} (m,   h; N) \Lt  {\varPi T^{1+\vepsilon} }  	  \sum_{\pm}  \sum_{\, c \shskip \Lt C_{+} }  \frac 1 c \, \sumx_{\valpha (\mathrm{mod}\, c)}   \int_{-\varPi^{\vepsilon}/ \varPi}^{\varPi^{\vepsilon}/ \varPi} \left| \SO_{\pm}^{+}  (r; \valpha ;  m, c, h; N) \right|    \nd r  , 
\end{align}
with  
\begin{align}
\SO_{\pm}^{+} (r; \valpha ; m, c, h; N) = \sum_{n \sim N/h^2}   {a (n)}      e \Big(    \frac {\valpha  n} {c}   \Big)   e \bigg(       \frac {hmn} {c} \exp  (r)  \bigg)   \varww_{\scriptscriptstyle \pm}  \bigg(  \frac {h^2n} {N} \bigg), 
\end{align}  
where  $\varww_{\scriptscriptstyle +} (x) = \varww (x) $ and $\varww_{\scriptscriptstyle -} (x) = \overline{\varww (x) }$. 

\vspace{5pt} 

\subsubsection{Estimation by the Wilton Bound}  \label{sec: O+, 2}
As the weights $\varww_{\pm} (x)$ are  $\log T$-inert, by partial summation, it follows from the Wilton bound in Lemma \ref{lem: Wilton} that 
\begin{align*}
	 \SO_{\pm}^{+}  (r; \valpha ;  m, c, h; N) \Lt \frac {\sqrt{N} \log^2 T}    h,
\end{align*}
hence  
\begin{align*}
	  \SO_1^{+} (m,   h; N) \Lt \frac {C_{+}  \sqrt{N} T^{1+\vepsilon}} {h} , 
\end{align*}
by \eqref{7eq: O1(m), 2},  and we conclude with the next lemma by  \eqref{7eq: C+-} and \eqref{7eq: O1(m), 1}. 

\begin{lem}\label{lem: O1(m)}
	We have 
	\begin{align}\label{6eq: O1(m), final}
		 \SO_1^+ (m)   \Lt  m T^{1+\vepsilon} X. 
	\end{align}
\end{lem}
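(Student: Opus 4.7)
The plan is to execute the strategy sketched in \S\ref{sec: O+, 1}--\S\ref{sec: O+, 2}: apply the Wilton bound to the innermost $n$-sum in \eqref{7eq: O1(m), 2} and then estimate the remaining sums and integrals trivially. The key structural observation is that after opening the Kloosterman sum $S(m,n;c)$ into additive characters and inserting the oscillatory representation of $H_+(x, y)$ furnished by Lemma \ref{lem: Bessel}, the two $n$-dependent exponentials combine into a single additive twist $e(\gamma n)$ with real parameter $\gamma = \valpha/c + hm \exp(r)/c$. This is precisely the setting in which the Wilton bound (Lemma \ref{lem: Wilton}) applies uniformly in $\gamma$, and it is the step that realizes the ``square-root cancellation'' over the $n$-variable without loss in the other parameters.

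More concretely, since the weight $\varww_\pm$ is $\log T$-inert (so $\varww_\pm^{(j)} \Lt_j \log^j T$), partial summation against Wilton's bound applied to the partial sums $\sum_{n \leqslant x} a(n) e(\gamma n) \Lt \sqrt{x} \log(2x)$ yields
\[
\SO^+_\pm(r; \valpha; m, c, h; N) \Lt \sqrt{N/h^2}\, \log^2 T = \frac{\sqrt{N}}{h} \log^2 T.
\]
Plugging this back into \eqref{7eq: O1(m), 2}, the reduced-residue sum $\sumx_{\valpha (\mathrm{mod}\, c)} 1/c$ contributes $O(1)$ per modulus, the $c$-sum over $c \Lt C_+$ contributes the length $C_+ = mN/(hT)$ recorded in \eqref{7eq: C+-}, and the $r$-integral over $|r| \Lt \varPi^\vepsilon/\varPi$ contributes $\varPi^\vepsilon/\varPi$, which precisely cancels the leading factor $\varPi$ in the prefactor $\varPi T^{1+\vepsilon}$. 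Consequently,
\[
\SO^+_1(m, h; N) \Lt T^{1+\vepsilon} \cdot \frac{mN}{hT} \cdot \frac{\sqrt{N}}{h} \Lt \frac{m N^{3/2} T^{\vepsilon}}{h^2}.
\]

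It then remains to insert this estimate into \eqref{7eq: O1(m), 1}. The $v$-integral over $|\mathrm{Im}(v)| \leqslant \log T$ with the factor $1/v$ (bounded by $1/\vepsilon$) contributes at most $O(\log T) = O(T^\vepsilon)$, while the $h$-sum converges absolutely through $\sum_{h \Lt \sqrt{N}} h^{-2} \Lt 1$. This yields $\SO^+_1(m) \Lt (T^{\vepsilon}/\sqrt{N}) \cdot m N^{3/2} T^{\vepsilon} \Lt m N T^{2\vepsilon}$, and maximizing over the allowed range $N \Lt T^{1+\vepsilon} X$ gives the stated bound $m T^{1+\vepsilon} X$. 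There is no real obstacle in this final bookkeeping; the one conceptual point worth stressing is that the stationary-phase-type expansion in Lemma \ref{lem: Bessel} preserves the additive-character structure of the $n$-variable, so that Wilton's bound (rather than a weaker trivial or Deligne-type estimate that would lose a factor of $\sqrt{N}$) is applicable on the full dyadic range $n \sim N/h^2$.
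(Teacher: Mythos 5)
Your proposal is correct and follows essentially the same line as the paper's proof: open the Kloosterman sum and insert the oscillatory expansion of $H_+$ from Lemma \ref{lem: Bessel}, apply the Wilton bound (Lemma \ref{lem: Wilton}) with partial summation against the $\log T$-inert weight to get $\sqrt{N}/h$ times logs on the $n$-sum, then estimate the $c$-sum, $\valpha$-sum, $r$-integral, $v$-integral, and $h$-sum trivially, and maximize over $N \Lt T^{1+\vepsilon}X$. Every intermediate bound you record (e.g.\ $\SO_1^{+}(m,h;N) \Lt m N^{3/2} T^{\vepsilon}/h^2$, equivalently $C_+ \sqrt{N}\, T^{1+\vepsilon}/h$ with $C_+ = mN/(hT)$) matches the paper's bookkeeping.
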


\subsubsection{Application of Lemma {\ref{lem: analysis of I, -}}}  By applying the Weil bound \eqref{3eq: Weil}, the Deligne bound \eqref{3eq: Deligne}, and the bound  for $H_-(x, y)$ as in \eqref{4eq: bound for H-} in Lemma \ref{lem: analysis of I, -}, we have
\begin{align*}
	 \SO_1^{-} (m,   h; N) & \Lt T^{\vepsilon} \frac {\varPi T } {\sqrt{T} + \varPi }   \sum_{ |c-C_0| \Lt C_{-} }  \frac  { {\tau (c)}}  {\sqrt{c}}    \sum_{n \sim N/h^2}  \hspace{-3pt}  {\tau (n)}  \sqrt{( n, c)}    \\
	 & \Lt T^{\vepsilon} \frac {\varPi T } {\sqrt{T} + \varPi } \frac {C_-  } { \sqrt{C_0}} \frac {N} {\,h^2} , 
\end{align*}
and hence by \eqref{7eq: C+-}, 
\begin{align*}
	\SO_1^{-} (m,   h; N)   \Lt T^{\vepsilon} \frac {\sqrt{m T} N (\sqrt{T} + \varPi)} {h^2\sqrt{h} } .
\end{align*}
 On inserting this into \eqref{7eq: O1(m), 1}, we obtain the following bound for $ \SO_1^{-} (m) $.

\begin{lem}\label{lem: O1(m)-}
	We have 
	\begin{align}\label{6eq: O1(m)-, final}
		\SO_1^- (m)   \Lt  T^{\vepsilon} \frac {\sqrt{m} T (\sqrt{T} + \varPi)} {\sqrt{X}}. 
	\end{align}
\end{lem}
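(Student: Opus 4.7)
The plan is to implement the approach indicated in the subsubsection immediately above, starting from the reduction \eqref{7eq: O1(m), 1} specialized to the minus case, where $N \Lt T^{1+\vepsilon}/X$ and $h \Lt \sqrt{N}$. The sharp input will be the pointwise bound $H_{-}(x, y) \Lt \varPi T^{1/2+\vepsilon}/\sqrt{\rho}$ supplied by Lemma \ref{lem: analysis of I, -}; using $\rho = 1 + \varPi^2/T$, this rewrites as $\varPi T \cdot T^{\vepsilon}/(\sqrt{T}+\varPi)$. Before invoking the lemma, I would verify that its hypotheses are satisfied throughout the relevant range: the parameter $w_-$ is of order $\pi\theta h n/(2 T^2) \Lt T^\vepsilon/(TX)$, which is negligibly small compared with $\rho T/\varPi^{1-\vepsilon}$, and $c$ is automatically restricted to the window $|c - C_0| \Lt C_-$ after the initial reductions of \S\ref{sec: reduction}.

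With $H_-$ pulled out as a pointwise factor, the remaining double sum over $n \sim N/h^2$ and over $c$ in the short window around $C_0$ can be handled by the Weil bound \eqref{3eq: Weil} together with the Deligne bound \eqref{3eq: Deligne}. A routine averaging of the resulting $\tau(c)\sqrt{(n, c, m)}$ factor, combined with $\sqrt{c} \asymp \sqrt{C_0}$ absorbing the $1/c$ weight from Kuznetsov, should yield
\begin{equation*}
\SO_1^{-}(m, h; N) \Lt T^\vepsilon \cdot \frac{\varPi T}{\sqrt{T}+\varPi} \cdot \frac{C_-}{\sqrt{C_0}} \cdot \frac{N}{h^2}.
\end{equation*}
Substituting $C_0 \asymp mT/h$ and $C_- \Lt m(\varPi + T/\varPi)T^\vepsilon/h$ from \eqref{7eq: C+-}, the clean algebraic identity
\begin{equation*}
\frac{\varPi T}{\sqrt{T}+\varPi} \cdot \frac{\varPi + T/\varPi}{\sqrt{T}} = \frac{\varPi^2 + T}{\sqrt{T}+\varPi} \asymp \sqrt{T}+\varPi
\end{equation*}
collapses all of the $\varPi$-dependence of the Bessel- and Kloosterman-windows into a single factor $\sqrt{T}+\varPi$, giving a per-$(h, N)$ bound of order $T^\vepsilon \sqrt{mT}\, N(\sqrt{T}+\varPi)/h^{5/2}$.

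Inserting this into \eqref{7eq: O1(m), 1}, the $h$-sum converges absolutely (the exponent $5/2$ on $h$, divided by $\sqrt{N}$, leaves room to spare), and the $N$-maximum is attained at the edge $N \asymp T^{1+\vepsilon}/X$, producing the claimed bound $\SO_1^-(m) \Lt T^\vepsilon \sqrt{m}\, T(\sqrt{T}+\varPi)/\sqrt{X}$. The main delicacy, in my view, is not any individual estimation but rather the algebraic matching between Lemma \ref{lem: analysis of I, -} and the length $C_-$ of the $c$-window: it is precisely because $C_-$ carries a factor $\varPi + T/\varPi$ that is dual to the $\sqrt{\rho}$ appearing in the Bessel bound that the two combine cleanly into one factor $\sqrt{T}+\varPi$. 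Without this matching, an extra power of $\varPi$ or $\sqrt{T}/\varPi$ would survive and spoil the stated dependence.
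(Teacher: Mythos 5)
Your proposal follows the paper's proof essentially verbatim: apply the Weil and Deligne bounds together with the Bessel bound \eqref{4eq: bound for H-} on the short $c$-window $|c-C_0| \Lt C_-$, then substitute the explicit forms of $C_0$ and $C_-$ from \eqref{7eq: C+-} and feed the resulting per-$(h,N)$ bound into \eqref{7eq: O1(m), 1}. One small slip: your displayed identity should read
\begin{align*}
\frac{\varPi T}{\sqrt{T}+\varPi} \cdot \frac{\varPi + T/\varPi}{\sqrt{T}} = \frac{\sqrt{T}\,(\varPi^2 + T)}{\sqrt{T}+\varPi} \asymp \sqrt{T}\,(\sqrt{T}+\varPi),
\end{align*}
not $\frac{\varPi^2+T}{\sqrt{T}+\varPi} \asymp \sqrt{T}+\varPi$ — you dropped a factor of $\sqrt{T}$ in the middle, though your final per-$(h,N)$ bound $T^\vepsilon \sqrt{mT}\,N(\sqrt{T}+\varPi)/h^{5/2}$ and the conclusion are both correct, so this is merely a transcription error rather than a gap.
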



\subsection{Conclusion} 
In view of  \eqref{6eq: C+E=D+O}, 
it follows from Lemmas \ref{lem: Eis E1,2}, \ref{lem: H+-(m)},   \ref{lem: O1(m)}, and \ref{lem: O1(m)-} that 
\begin{align*}
	\SC_1 (m) = \frac {2} {\pi\sqrt{\pi}} \varPi T \cdot {\delta ({m, 1})}  + O \bigg( T^{\vepsilon} \bigg( m T  X + \frac {\sqrt{m} T (\sqrt{T} + \varPi)} {\sqrt{X}} \bigg)\bigg), 
\end{align*}
so, on the choice $X = \sqrt[3]{ (T+\varPi^2) /m  }$ we obtain the asymptotic formula in Theorem \ref{thm: C1(m)}.

\section{The Twisted Second Moment} \label{sec: 2nd moment} 


  In this section, for $m_1, m_2 $ square-free, with $(m_1m_2, q) = 1$, we proceed to establish   asymptotic formula for the twisted second moment ${\SC}^{\natural }_{2}  ( m_1, m_2 )$ as defined in \eqref{5eq: C2(m)}. The bi-variable notation in \S \ref{sec: bi-varialbe} will be used extensively to simplify the exposition. 

Our method does not only use the Wilton bound, but also the Luo identity, the Poisson, and Vorono\"i summation inspired by the works of Luo, Iwaniec, Li, and Young \cite{Luo-Twisted-LS,Iwaniec-Li-Ortho,Young-GL(3)-Special-Points}.

\subsection{Application of Kuznetsov Formula} 

First, in the bi-variable notation, by the approximate functional equations \eqref{3eq: AFE, 2} and \eqref{3eq: AFE, E(t), 2} (here, we need to expand $2 \mathrm{c}_t ( \boldsymbol{m}  ) \mathrm{c}_t (  \boldsymbol{n}) = \mathrm{c}_t ( \boldsymbol{m} \boldsymbol{n}) + \mathrm{c}_t ( \boldsymbol{m} / \boldsymbol{n}) $ (see \eqref{3eq: ct(n)}) and use the symmetry in the   $\boldsymbol{n}$-sums),  along with \eqref{3eq: RS coeff},  we obtain 
\begin{align*}  
		{\SC}^{\natural }_{2}  ( \boldsymbol{m}) +  {\SE}_{2}  (\boldsymbol{m})    =   
		\sum_{\boldsymbol{h}}   \sum_{\boldsymbol{n}}   \frac{\varepsilon_q (\| \boldsymbol{h} \|) a (\boldsymbol{n})} {\| \boldsymbol{h} \| \sqrt{\| \boldsymbol{n} \|} }  \Bigg\{   \sum_{j =1}^{\infty}  \omega_j   {  \lambda_j ( \boldsymbol{m}   )  \lambda_j (\boldsymbol{n}  )}     {V}_{2} ( \boldsymbol{m},  \boldsymbol{h}^2 \boldsymbol{n}   ; t_j) & \\
		     + \frac 1 {\pi} \int_{-\infty}^{\infty}  \omega(t) \tau_{  it} ( \boldsymbol{m} ) \tau_{  it} (\boldsymbol{n})       {V}_{2} (\boldsymbol{m},  \boldsymbol{h}^2 \boldsymbol{n}  ; t)  \nd t & \Bigg\}  ,  
\end{align*} 
where 
\begin{align*}
  {V}_{2} (\boldsymbol{m}, \boldsymbol{n} ; t) =  2  V_2 ( \theta^2  \| \boldsymbol{n} \|  ; t) \mathrm{c}_t ( \boldsymbol{m} \boldsymbol{n}) 
 (  \upphi (t) +    \upphi (- t)  ). 
\end{align*}  
Note that $ {V}_{2} (\boldsymbol{m},  \boldsymbol{n} ; t)$ is even in $t$ and real-valued. 
By the Hecke relation \eqref{3eq: Hecke} and\eqref{3eq: Hecke, tau}, we have  
\begin{align*}  
	{\SC}^{\natural }_{2}  (\boldsymbol{m}) + {\SE}_{2}  (\boldsymbol{m})  =  \sum_{\boldsymbol{d} \boldsymbol{f} = \boldsymbol{m} }   
	\sum_{\boldsymbol{h}}    \sum_{  \boldsymbol{n}}  \frac{ \varepsilon_q (\| \boldsymbol{h} \|) a (\boldsymbol{d n}   )  } {\| \boldsymbol{h} \| \sqrt{  \| \boldsymbol{d n} \|} }  \Bigg\{   \sum_{j=1}^{\infty}  \omega_j    \lambda_j ( \boldsymbol{f n}  )   {V}_2 (\boldsymbol{m}, \boldsymbol{d} \boldsymbol{h}^2 \boldsymbol{  n}  ; t_j) & \\
	 +    \frac 1 {\pi}   \int_{-\infty}^{\infty}   \omega(t)  \tau_{  it}    ( \boldsymbol{f n}  )    {V}_2 (\boldsymbol{m}, \boldsymbol{d} \boldsymbol{h}^2 \boldsymbol{  n}  ; t ) \nd t  &  \Bigg\}   .
\end{align*} 
Now an application of the Kuznetsov formula in Lemma \ref{lem: Kuznetsov} yields
\begin{align}\label{7eq: C+E=D+O}
	 {\SC}^{\natural }_{2}  ( \boldsymbol{m}) + {\SE}_{2}  (\boldsymbol{m}) = \SD_2 (\boldsymbol{m})  + \SO_2 (\boldsymbol{m}), 
\end{align}
with the diagonal 
\begin{align}\label{7eq: D2(m)}
	\SD_2 (\boldsymbol{m}) = \sum_{\boldsymbol{d} \boldsymbol{f} = \boldsymbol{m} } 
	\sum_{\boldsymbol{h}}   \sum_{  \boldsymbol{n}}  \frac{ \varepsilon_q (\| \boldsymbol{h} \|) a (\boldsymbol{d n}   )  }  { \| \boldsymbol{h} \| \sqrt{  \| \boldsymbol{d n} \|} }  \delta ({\boldsymbol{f n} })  {H}_2  (  \boldsymbol{d h}, \boldsymbol{d} \boldsymbol{h}^2 \boldsymbol{ n}), 
\end{align} 
where
\begin{align}\label{7eq: H2(m)}
	  {H}_2 (\boldsymbol{h},   \boldsymbol{  n} ) = \frac 4 {\pi^2 }  \int_{-\infty}^{\infty}  V_2 (   \theta^2  \| \boldsymbol{n} \|  ; t)  \mathrm{c}_{2t} ( \boldsymbol{h} ) 
	    \upphi (t)   \tanh(\pi t) t \nd t ,
\end{align}
and the off-diagonal 
\begin{align}\label{7eq: O(m)}
	\begin{aligned}
		\SO_2 (\boldsymbol{m}) = \sum_{\boldsymbol{d} \boldsymbol{f} = \boldsymbol{m} } 
		\sum_{\boldsymbol{h}}  \sum_{  \boldsymbol{n}}  \frac{\varepsilon_q(\| \boldsymbol{h} \|) a (\boldsymbol{d n}   )  } {\| \boldsymbol{h} \| \sqrt{  \| \boldsymbol{d n} \|} }     \sum_{c  } \frac{S( \boldsymbol{f n}   ; c)} {c}   {H}_2 \bigg(\frac{4\pi\sqrt{\| \boldsymbol{f n} \|  }}{c};  \boldsymbol{m},  \boldsymbol{d}  \boldsymbol{h}^2 \boldsymbol{n} \bigg) ,
	\end{aligned}
\end{align}
where  
\begin{align}\label{7eq: H2(x)}
	  {H}_2  ( x; \boldsymbol{m},   \boldsymbol{n} ) = \frac {4 i} {\pi}   \int_{-\infty}^{\infty}    	 {V}_2 (   \theta^2 \| \boldsymbol{n} \| ;  t)    \mathrm{c}_t (   \boldsymbol{m n})
	 (  \upphi (t) +    \upphi (- t)  ) J_{2it} (x)  \frac {t \nd t} {\cosh (\pi t) } .
\end{align}

\subsection{Asymptotic for the Diagonal Sum}

Set  
\begin{align}\label{7eq: f}
  f = \mathrm{gcd}(\boldsymbol{f}), \qquad    \boldsymbol{f^{\star}} = \boldsymbol{f} / f, 
\end{align}
then $  \delta ({\boldsymbol{f n} }) = 1 $  if and only if there is $n$ such that 
 $	\boldsymbol{n} = n   \widetilde{\boldsymbol{f^{\star} }}  .  $ 
Recall from \S \ref{sec: bi-varialbe} that $ \widetilde{\boldsymbol{f^{\star}}} $ is  dual to $\boldsymbol{f^{\star}}$. Therefore \eqref{7eq: D2(m)} is turned into 
\begin{align}\label{7eq: D2(m), 2.0}
	\SD_2 (\boldsymbol{m}) = \sum_{\boldsymbol{d} \boldsymbol{f} = \boldsymbol{m} } \frac {f} {\sqrt{\| \boldsymbol{m} \|}}  
	\sum_{\boldsymbol{h}}   \sum_{   {n}}  \frac{\varepsilon_q(\| \boldsymbol{h} \|) a (n    \boldsymbol{d } \widetilde{\boldsymbol{f^{\star} }}    )  }  { n \| \boldsymbol{h} \| }     {H}_2  (  \boldsymbol{d h}, n   \boldsymbol{d} \widetilde{ \boldsymbol{f^{\star}} } \boldsymbol{h}^2 ), 
\end{align} 
In view of \eqref{3eq: V(y;t), 1} and \eqref{7eq: H2(m)}, the summation  in \eqref{7eq: D2(m), 2.0} may be restricted effectively to $ n  \| \boldsymbol{h} \| \sqrt{\| \boldsymbol{m} \|} / f  \leqslant T^{1+\vepsilon}$. It follows from \eqref{3eq: V(y;t), 1} and  \eqref{5eq: phi} that $ H_2 (\boldsymbol{h}, \boldsymbol{n}) = O (\varPi T) $, so trivial estimation by \eqref{3eq: Deligne} yields 
\begin{align*}
\SD_2	(\boldsymbol{m}) = O \bigg(  \frac { \varPi T^{1+\vepsilon} } {\sqrt{\|\boldsymbol{m^{\star}}\|} } \bigg), 
\end{align*}
where $\boldsymbol{m^{\star}} = \boldsymbol{m} / \mathrm{gcd} (\boldsymbol{m})$. 
However, our aim here is to derive an asymptotic formula for $\SD_2	(\boldsymbol{m})$. 

\begin{lem}\label{lem: H2(m,n)}
	We have 
	\begin{align}
	 {H}_2 (\boldsymbol{h},   \boldsymbol{  n} )  = O    \bigg(  \frac {\varPi T^{\vepsilon}}  { \| \boldsymbol{n} \|^{\vepsilon}}  \bigg), 
	\end{align}
if 
\begin{align}\label{7eq: h1/h2 - 1}
	 | \langle \boldsymbol{h} \rangle - 1  | >  \frac {\varPi^{\vepsilon}} {\varPi}, \qquad \text{{\rm(}$ \langle \boldsymbol{h} \rangle = h_1/h_2${\rm)}} . 
\end{align}
\end{lem}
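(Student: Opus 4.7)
The plan is to exploit the oscillatory factor $\mathrm{c}_{2t}(\boldsymbol{h}) = \cos(2t \log \langle \boldsymbol{h}\rangle)$ in \eqref{7eq: H2(m)} via repeated integration by parts. Writing $2\mathrm{c}_{2t}(\boldsymbol{h}) = \exp(2it \log \langle \boldsymbol{h}\rangle) + \exp(-2it \log \langle \boldsymbol{h}\rangle)$, it suffices to bound each of
$$I_{\pm} = \int_{-\infty}^{\infty} V_2(\theta^2 \|\boldsymbol{n}\|; t) \upphi(t) \tanh(\pi t) \shskip t \shskip \exp (\pm 2it \log \langle \boldsymbol{h}\rangle) \shskip \nd t .$$
Hypothesis \eqref{7eq: h1/h2 - 1} yields $\lambda := 2 |\log \langle \boldsymbol{h}\rangle| \gg \varPi^{\vepsilon-1}$: indeed, if $|\langle \boldsymbol{h}\rangle - 1| \leqslant 1/2$ then $|\log \langle \boldsymbol{h}\rangle | \asymp |\langle \boldsymbol{h}\rangle - 1|$, and otherwise $|\log \langle \boldsymbol{h}\rangle| \gg 1$. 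Hence $\lambda \varPi \gg \varPi^{\vepsilon}$.

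Next I analyze the smooth amplitude $f(t) = V_2(\theta^2 \|\boldsymbol{n}\|;t) \upphi(t) \tanh(\pi t) t$. Since $\upphi(t)$ is exponentially small outside $|t-T| \leqslant \varPi \log T$, the integrand may be truncated to this interval with negligible loss; on this range $t \asymp T$ and $\tanh(\pi t) = 1 + O(\exp(-\pi T))$. Shifting the contour in the definition \eqref{3eq: V(y;t)} of $V_2$ to $\mathrm{Re}(v) = \vepsilon$ crosses no pole, and combined with Lemma \ref{lem: G(v,t)} gives $V_2(\theta^2 \|\boldsymbol{n}\|; t) \Lt_{\vepsilon, k, q} (|t|+1)^{2\vepsilon} \|\boldsymbol{n}\|^{-\vepsilon} \Lt T^{2\vepsilon} \|\boldsymbol{n}\|^{-\vepsilon}$, together with the analogous bound for each $t$-derivative multiplied by $(\log T / T)^n$ (via \eqref{6eq: bound for G, 2}). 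Combining with the routine Gaussian bound $\upphi^{(n)}(t) \Lt_n (\log T/\varPi)^n \upphi(t)$ on the support, and using $\log T/T \leqslant 1/\varPi$ (since $\varPi \leqslant T^{1-\vepsilon}$), Leibniz yields the uniform derivative bound
$$f^{(n)}(t) \Lt_{n, \vepsilon} \frac{T^{1+2\vepsilon}}{\|\boldsymbol{n}\|^{\vepsilon}} \cdot \upphi(t) \cdot \bigg(\frac{\log T}{\varPi}\bigg)^n \quad \text{on } |t-T| \leqslant \varPi \log T.$$

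Finally, because $\upphi$ and all its derivatives decay rapidly at infinity, $N$-fold integration by parts is legitimate with vanishing boundary terms, giving $I_{\pm} = (\mp i \lambda)^{-N} \int f^{(N)}(t) \exp(\pm 2 i t \log \langle \boldsymbol{h}\rangle) \nd t$. Estimating trivially, using $\int \upphi \Lt \varPi$, one obtains
$$|I_{\pm}| \Lt_{N, \vepsilon} \frac{T^{1+2\vepsilon}}{\|\boldsymbol{n}\|^{\vepsilon}} \cdot \frac{\varPi (\log T)^N}{(\lambda \varPi)^N} \Lt \frac{T^{1+2\vepsilon}\varPi (\log T)^N}{\|\boldsymbol{n}\|^{\vepsilon} \varPi^{N\vepsilon}}.$$
Choosing $N = N(\vepsilon)$ with $N \vepsilon^2 \geqslant 2$ and invoking $\varPi \geqslant T^{\vepsilon}$ forces $\varPi^{N\vepsilon} \gg T^{1+3\vepsilon}$, and (after relabeling $\vepsilon$) this yields the claimed bound $H_2(\boldsymbol{h},\boldsymbol{n}) = O(\varPi T^{\vepsilon}/\|\boldsymbol{n}\|^{\vepsilon})$. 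The only mild obstacle is securing $t$-derivative bounds on the contour-integral $V_2$ that are uniform in $\|\boldsymbol{n}\|$, which the contour shift to $\mathrm{Re}(v)=\vepsilon$ handles cleanly; everything else is textbook integration by parts in a single real variable.
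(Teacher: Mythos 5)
Your proof is correct, and it follows essentially the same route as the paper: identify the oscillatory factor $\mathrm{c}_{2t}(\boldsymbol{h})$ of frequency $2|\log\langle\boldsymbol{h}\rangle|\gg\varPi^{\vepsilon-1}$, control derivatives of the remaining amplitude, and kill the integral by repeated integration by parts. The only difference worth noting is bookkeeping: the paper first inserts the truncated contour formula \eqref{3eq: V(y;t), 2} and then applies Stirling to split $\upgamma_2(v,t)=\updelta_2(v)\bigl(t^{2v}+O(T^{\vepsilon}/T)\bigr)$, so that the inner $t$-integral becomes a clean Fourier integral (treated via Lemma~\ref{lem: staionary phase}) and the claimed $O(\varPi T^{\vepsilon}/\|\boldsymbol{n}\|^{\vepsilon})$ arises from the Stirling error term; you instead keep $\upgamma_2(v,t)$ intact, shift the $v$-contour to $\mathrm{Re}(v)=\vepsilon$, and invoke the uniform derivative bounds of Lemma~\ref{lem: G(v,t)} to do the integration by parts directly on the full amplitude, which is arguably slightly shorter and, as you note, actually yields a negligibly small bound rather than just $O(\varPi T^{\vepsilon}/\|\boldsymbol{n}\|^{\vepsilon})$.
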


\begin{proof}
	 Let us argue as in the proof of Lemma \ref{lem: H(m,h)}. First, by \eqref{3eq: V(y;t), 2}, with $U = \log T$, we have 
	 \begin{align*}
	 {H}_2 (\boldsymbol{h},   \boldsymbol{  n} )   = \frac {2} {\pi^3 i}  \int_{\vepsilon-i\log T}^{\vepsilon+i\log T}  \int_{T-\varPi \log T}^{T+\varPi \log T}      \frac { \upgamma_{2} (v, t)  \mathrm{c}_{2t} ( \boldsymbol{h}  )   {\upphi (t)} t } { \theta^{2 v} \| \boldsymbol{n} \|^v }    \frac {  \nd t \nd v} { v} + O (T^{-A}) . 
	 \end{align*}
 For $t , v$ on the integral domains, by applying Stirling's formula \eqref{app1: Stirling, 1} to \eqref{3eq: defn G2}, we have 
 \begin{align*} 
 	\upgamma_{2} (v, t) = \updelta_{2} (v) \big( t^{2 v} + O  (    {T^{\vepsilon} } / T  ) \big) , 
 \end{align*} 
 for 
 \begin{align*}
 	\updelta_{2} (v) =	 \exp (v^2 + 2 \log 2 \cdot  v ) \cdot  {\Gamma  ( k + v )^2  } / {\Gamma (k)^2  } . 
 \end{align*}
Consequently,
\begin{align*}
	{H}_2 (\boldsymbol{h},   \boldsymbol{  n} )   = \frac {2} {\pi^3 i}  \int_{\vepsilon-i\log T}^{\vepsilon+i\log T} \frac { \updelta_{2} (v)  } { \theta^{2 v} \| \boldsymbol{n} \|^v } \int_{T-\varPi \log T}^{T+\varPi \log T}        {   \mathrm{c}_{2t} ( \boldsymbol{h}  )   {\upphi (t)} t^{1+2v} }   \nd t  \frac {   \nd v} { v} + O \bigg(\frac {\varPi T^{\vepsilon}} { \| \boldsymbol{n} \|^{\vepsilon}} \bigg) . 
\end{align*}
The inner integral is a cosine Fourier integral of phase $ 2 t \log \, \langle \boldsymbol{h} \rangle $ (see \eqref{3eq: ct(n)}), so, by repeated partial integration (or by Lemma  \ref{lem: staionary phase}), we infer that it is negligibly small since $ | \log \, \langle \boldsymbol{h} \rangle | \Gt \varPi^{\vepsilon} / \varPi $ by \eqref{7eq: h1/h2 - 1}.   
\end{proof}

In view of Lemma \ref{lem: H2(m,n)}, those terms with $ |\langle \boldsymbol{dh} \rangle - 1| > \varPi^{\vepsilon}/ \varPi $ contribute at most $ O (\varPi T^{\vepsilon}  / \sqrt{\|\boldsymbol{m^{\star}}\| }) $, while the rest of the off-diagonal terms are very close to the diagonal 
and are in total $ O ( T^{1+\vepsilon}  / \sqrt{\|\boldsymbol{m^{\star}}\| }) $.


\vspace{5pt} 

\subsubsection{Treatment of the Diagonal Terms} 
It is left to consider those diagonal terms with $ \delta (\boldsymbol{d h}) = 1 $. Similar to \eqref{7eq: f}, 
we set 
\begin{align}\label{7eq: d}
	d = \mathrm{gcd}(\boldsymbol{d})	 , \qquad   \boldsymbol{d^{\star}} = \boldsymbol{d} / d,  
\end{align}
then $ \delta (\boldsymbol{d h} )   = 1 $  if and only if there is $h$ such that 
$	\boldsymbol{h} = h \widetilde{\boldsymbol{d^{\star}}}  . 
$ 
Therefore \eqref{7eq: D2(m), 2.0} is turned into
\begin{align}\label{7eq: D2(m), 3.0}
	 \SD_{2} (\boldsymbol{m}) =  \SD_{20} (\boldsymbol{m})  +   O \bigg(    \frac {T^{1+\vepsilon} } {\sqrt{\|\boldsymbol{m^{\star}}\|} } \bigg) , 
\end{align}
with
\begin{align}\label{7eq: D2(m), 3.1}
	\SD_{20} (\boldsymbol{m}) =  	\sum_{\boldsymbol{d} \boldsymbol{f} = \boldsymbol{m} } \frac {   f } {\|\boldsymbol{d^{\star}} \| \sqrt{\| \boldsymbol{m} \|}}  
	\sum_{ {h}}   \sum_{   {n}}  \frac{\varepsilon_q(h  ) a (n  \cdot \boldsymbol{d } \widetilde{\boldsymbol{f^{\star} }}    )  }  { h^2 n   }     {H}_2  (  h^2 n  \cdot \boldsymbol{d} \widetilde{ \boldsymbol{d^{\star}} }{}^2  \widetilde{ \boldsymbol{f^{\star}} } ) , 
\end{align} 
where, after the removal of $\tanh (\pi t)$, 
\begin{align}\label{7eq: H2(n)}
	H_2 ( \boldsymbol{n} ) = \frac 4 {\pi^2 }  \int_{-\infty}^{\infty}  V_2 (   \theta^2  \| \boldsymbol{n} \|  ; t) 
	\upphi (t)   t \nd t . 
\end{align}
Now it follows from \eqref{3eq: V(y;t)},   \eqref{7eq: D2(m), 3.1}, and \eqref{7eq: H2(n)} that 
\begin{align}\label{7eq: D20}
	 \SD_{20} (\boldsymbol{m}) = \frac {2} {\pi^3 i} 
	 \sum_{\boldsymbol{d} \boldsymbol{f} = \boldsymbol{m} } 
	 \int \upphi (t)   t  \int_{(3)} \upgamma_{2}  (v, t) \frac { {  D_{\boldsymbol{d} \widetilde{\boldsymbol{f^{\star}}}}} (1+2 v, \vQ)} {\theta^{2v}  (\|\boldsymbol{d^{\star }}{}^2  \boldsymbol{m} \| /  f^2 )^{1/2+v } }   \frac {\nd v \nd t} {v}  	  , 
\end{align}
\begin{align}
  	D_{\boldsymbol{r}} (s, \vQ) = \zeta_{q} (2 s)  \sum_{n=1}^{\infty} \frac {a (r_1 n) a (r_2 n) }  {n^{s}}. 
\end{align} 
Clearly, in our setting,  $\|\boldsymbol{r}\| = \|\boldsymbol{d} \widetilde{\boldsymbol{f^{\star}}}\| $ is  cube-free and $(\|\boldsymbol{r}\|, q) = 1$. 
Note that $ D_{1, 1}  (s, \vQ)$ is exactly the Rankin--Selberg  $L (s, \vQ\otimes\vQ)$. 

For the rest of this sub-section, we shall stop using the bi-variable notation. As such $(\cdot , \cdot)$ will stand for the greatest common divisor.

\delete{As $m_1$ and $m_2$ are square-free, we have
\begin{align}
	(m_1, m_2) = d  { } f (d_1^{\star}, f_2^{\star}) (d_2^{\star}, f_1^{\star}); 
\end{align}
here $(\cdot, \cdot)$ means the greatest common divisor. }

\begin{lem}
	Let $ L (s, \vQ\otimes\vQ)$ be given as in {\rm \eqref{3eq: L(s, QQ)}}. For $  r_1 r_2 $ cube-free,   define \begin{align}\label{7eq: r} 
		d =   (r_1, r_2), \qquad r_1^{\star} = r_1/d, \quad r_2^{\star} = r_2/d, \qquad   r_1^{\star} r_2^{\star} = r      r_{ \square}^2 ,
	\end{align}
	for  $r     $ and $ r_{ \square} $ square-free.   Assume  that $(r_1 r_2, q) = 1$ and $ d  $ is also square-free. 
	Then
	\begin{align}\label{7eq: D(s)}
		D_{r_1, r_2} (s, \vQ) =   E_{d} (s, \vQ) E^1_{r     } (s, \vQ) E_{r_{ \square}}^{2} (s, \vQ) L (s, \vQ\otimes\vQ),  
	\end{align}
	for the finite Euler products defined by
	\begin{align}\label{7eq: E, 1}
		E_d (s, \vQ) = \prod_{p | d}   \frac { a(p)^2 - a (p^2)   / p^{s} +1 / p^{2s}} {1+ 1 / p^{s}}  , 
	\end{align}
	\begin{align}\label{7eq: E, 2}
		E^1_{r     } (s, \vQ) = \prod_{p | r     } \frac {a(p)} {1+ 1 / p^{s} }, \qquad E_{r_{ \square}}^{2} (s, \vQ) = \prod_{p | r_{ \square} }    \frac {a(p^2) - 1 / p^{s} } {1+ 1 / p^{s} }  .
	\end{align}
\end{lem}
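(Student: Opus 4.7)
My strategy is to verify the factorization \eqref{7eq: D(s)} as an identity of Euler products, prime by prime. Since the Hecke eigenvalues $a(n)$ are multiplicative, for any $n = \prod_p p^{k_p}$ one has $a(r_i n) = \prod_p a(p^{v_p(r_i) + k_p})$, and the hypothesis $(\|\boldsymbol{r}\|, q) = 1$ together with $\zeta_q(2s) = \prod_{p \nmid q}(1 - p^{-2s})^{-1}$ yields
\[
D_{r_1, r_2}(s, \vQ) = \zeta_q(2s) \prod_p S^{(v_p(r_1), v_p(r_2))}_p(s), \qquad S^{(i,j)}_p(s) := \sum_{k=0}^{\infty} \frac{a(p^{i+k}) a(p^{j+k})}{p^{ks}}.
\]
Specializing to $r_1 = r_2 = 1$ gives $L(s, \vQ \otimes \vQ) = \zeta_q(2s) \prod_p S^{(0,0)}_p(s)$. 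Hence \eqref{7eq: D(s)} reduces to showing, at each prime $p$, that $S^{(v_p(r_1), v_p(r_2))}_p / S^{(0,0)}_p$ equals the local factor of $E_d E^1_r E^2_{r_\square}$.

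For primes $p \nmid r_1 r_2$ (which includes all $p \mid q$, by $(r_1 r_2, q)=1$) both local exponents vanish and the ratio is trivially $1$, consistent with the absence of these primes from $d$, $r$, and $r_\square$. The hypotheses that $r_1 r_2$ be cube-free with $d = (r_1, r_2)$ square-free constrain the local exponent pair $(v_p(r_1), v_p(r_2))$ at $p \mid r_1 r_2$ to one of three types: (i) $p \mid d$, forcing $(1, 1)$; (ii) $p \mid r$, giving $(1, 0)$ or $(0, 1)$; (iii) $p \mid r_\square$, giving $(2, 0)$ or $(0, 2)$. The symmetry $S^{(i,j)}_p = S^{(j,i)}_p$ handles the two sub-cases in (ii) and (iii) simultaneously.

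In each case I would apply the Hecke recursion $a(p^{k+1}) = a(p) a(p^k) - a(p^{k-1})$, valid at $p \nmid q$ by \eqref{3eq: Hecke, Q}, to one of the two Hecke factors in $S^{(i,j)}_p$ and reindex the $k$-sum. This produces short linear relations: first $(1 + p^{-s}) S^{(0,1)}_p = a(p) S^{(0,0)}_p$; next $S^{(0,2)}_p = a(p) S^{(0,1)}_p - S^{(0,0)}_p$, simplified via the identity $a(p)^2 - 1 = a(p^2)$; and finally $S^{(1,1)}_p = a(p) S^{(0,1)}_p - p^{-s} S^{(0,2)}_p$. Solving in turn yields the ratios $a(p)/(1 + p^{-s})$, $(a(p^2) - p^{-s})/(1 + p^{-s})$, and $(a(p)^2 - a(p^2) p^{-s} + p^{-2s})/(1+p^{-s})$, matching the local factors of $E^1_r$, $E^2_{r_\square}$, and $E_d$ in \eqref{7eq: E, 1}--\eqref{7eq: E, 2} exactly. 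No essential obstacle is anticipated; the only point requiring care is the combinatorial bookkeeping ensuring that the three exponent cases exhaust every prime $p \mid r_1 r_2$ under the cube-free and square-free hypotheses.
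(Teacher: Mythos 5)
Your proposal is correct and follows essentially the same route as the paper: both arguments reduce to computing, prime by prime, the ratio of the local Euler factor $S_p^{(v_p(r_1), v_p(r_2))}$ to $S_p^{(0,0)}$ via the Hecke recursion, and then checking that the cube-free and square-free hypotheses force the exponent pair into exactly the three types $(1,1)$, $(1,0)/(0,1)$, $(2,0)/(0,2)$. The paper carries this out with $C_p^\mu(s) = \sum_\nu a(p^\nu)a(p^{\nu+\mu})/p^{\nu s}$ (your $S_p^{(0,\mu)}$) and $C_p(s) = \sum_\nu a(p^{\nu+1})^2/p^{\nu s}$ (your $S_p^{(1,1)}$), deriving linear relations by expanding the Hecke product $a(p^{\nu+\mu}) = a(p^\nu)a(p^\mu) - a(p^{\nu-1})a(p^{\mu-1})$, whereas you peel off one Hecke factor at a time and reindex; the two sets of linear relations differ superficially but solve to the same three ratios, and both use the identity $a(p)^2 = a(p^2) + 1$ at the final step.
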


\begin{proof}
	For prime $p \nmid q$    and $\mathrm{Re}(s) > 1$, define  
	\begin{align*}
		C_p   (s) = \sum_{\vnu = 0}^{\infty} \frac {a (p^{\vnu+1})^2  } {p^{\vnu s}}, \qquad 	C_p^{\mu}  (s) = \sum_{\vnu = 0}^{\infty} \frac {a(p^{\vnu}) a (p^{\vnu+\mu})} {p^{\nu s}}, \quad \mu = 0, 1, 2 . 
	\end{align*}
	We just need to calculate the quotients $ C_p   (s) /  C_p^{0}  (s)$, $ C_p^1   (s) /  C_p^{0}  (s) $, and $ C_p^2  (s) /  C_p^{0}  (s) $. 
	To this end, we invoke the Hecke relation (see \eqref{3eq: Hecke, Q}):
	\begin{align*}
		a (p^{\vnu+\mu}) = a (p^{\vnu}) a (p^{\mu}) -  a (p^{\vnu-1}) a (p^{\mu-1}), 
	\end{align*} 
	if we set $ a (p^{-1}) = 0 $ for simplicity. Let us write $X = 1/p^s$.  It follows that
	\begin{align*}
		C_p (s) = (a(p)^2 + X ) C^0_p (s) - 2 a(p) X C^1_0 (s) , 
	\end{align*} 
	\begin{align*}
		C^1_p(s) = a(p) C^0_p (s) - X C_p^1 (s),  
	\end{align*}
	\begin{align*}
		C^2_p(s) = a(p^2 ) C^0_p (s) - a(p) X C_p^1 (s),  
	\end{align*} 
	and some direct calculations yield 
	\begin{align*}
		\frac{C_p (s)} {C_p^{0} (s) } =  \frac { a(p)^2 - a (p^2) X +X^2} {1+ X}  , \qquad \frac{C^1_p(s)} {C_p^{0} (s) }  = \frac {a(p)} {1+X} , \qquad \frac {C^2_p(s)} {C_p^{0} (s)  } =  \frac {a(p^2) - X } {1+ X } ,
	\end{align*}
	as desired. Note here that $ a (p)^2 = a (p^2) + 1$. 
\end{proof}

\subsubsection{Conclusion} 
Let us denote
\begin{align}\label{7eq: m, ds}
	m = (m_1, m_2), \qquad d_{\star} = \frac {d_1^{\star} d_2^{\star} } { r_{\square} }  . 
\end{align} Observe that in the notation of \eqref{7eq: f}, \eqref{7eq: d},  \eqref{7eq: r}, and \eqref{7eq: m, ds}, we have 
 \begin{align} \label{7eq: m, rf}
 	  m = d {} f r_{\square}, \quad r_{\square} =  (d_1^{\star}, f_2^{\star})   (d_2^{\star}, f_1^{\star}), \quad r      = m_1^{\star} m_2^{\star} = \frac {m_1 m_2} {  m^2 }, 
 \end{align}
while 
\begin{align}\label{7eq: quotient}
	\frac {f} { d_1^{\star} d_2^{\star} \sqrt{m_1 m_2} } = \frac 1 { \sqrt{r     }} \frac 1 {d_{\star} d  r_{\square}^2 } .  
\end{align}
Thus, in view of \eqref{7eq: D(s)} and \eqref{7eq: quotient}, it is more natural to use $ d $,  $r_{\square}$, and  $d_{\star}$
as the new variables of summation. Actually, the $ d_{\star} $-sum over $d_{\star} | r     $ will split out as  certain divisor functions of $r     $ (see \eqref{7eq: Sigma}), so at the end, the main term will be simplified into double sums over $ d r_{\square} | m $ (see \eqref{7eq: B(m), 1}); these sums may be considered as very complicated divisor functions of $m$ as well. 
More explicitly, we have the following asymptotic formula for $  \SD_{2} (m_1, m_2) $. 
\begin{lem}\label{lem: main term}
Assume $m_1$, $m_2$ square-free with $(m_1 m_2, q) = 1$. Set \begin{align}
	 m = (m_1, m_2), \qquad r      = \frac {m_1m_2}  {m^2}. 
\end{align} 
Let  $\gamma_{0} = \gamma_0(\vQ)$ and $\gamma_{1} = \gamma_1 (\vQ)$ be the constants as in Definition \ref{defn: constants, 1}, and define the constant $\gamma_0' = \gamma_{0}' (\vQ)$ to be 
\begin{align}\label{7eq: gamma0}
	\gamma_0'  =    \gamma_0 + \gamma_1   \frac {\Gamma'(k)} {\Gamma (k)} + \gamma_1 \log    \Big( \frac {q} {2\pi^2}  \Big)   . 
\end{align} 
Then 
	 \begin{align}\label{7eq: D2, final}
	 \begin{aligned}
	 		 \SD_{2} (m_1, m_2) = \frac {4 a(r)} {\pi \sqrt{\pi r} } \varPi T \bigg(  \bigg(\gamma_1 \log \frac {T} {\sqrt{r}}  + \gamma_{0}'  \bigg) \SB (m)  + \gamma_1 \SB ' (m)  \bigg) & \\
	 	   + O \bigg(  T^{\vepsilon} \bigg(  \varPi +  \frac { T  } {\sqrt{ r      } } +   \frac {  \varPi^3  } {T \sqrt{ r      } }  \bigg) \bigg)   & , 
	 \end{aligned}
	 \end{align}
where    
\begin{align}\label{7eq: B(m), 1}
	\SB (m) =   {\sum_{ d r_{\square} | m } } 
	\frac{ E(d) E^{2} (r_{ \square}) } {  d r_{\square}^2 } , 
	\ \ \  
	\SB'  ( m) =    {\sum_{ d r_{\square} | m } } 
	\frac{ E(d) E^{2} (r_{ \square}) } {  d r_{\square}^2 }    \Big(    \breve{E} (d) + \breve{E}^{2} (r_{ \square})
	- \log  (    d r_{\square}^2  )  \Big), 
\end{align}
for 
\begin{align}
	\label{7eq: E(d), 2}	E ({d}) & =     \prod_{p | d} 
\frac { a(p)^2 - a (p^2) /p +1/p^2} {1+ 1/p},  \ \ 
\breve{E} (d)   =   \sum_{p | d} 
\log p \bigg(    \frac { a(p^2) p - 2} {a(p)^2 p^2 - a(p^2) p + 1 } + \frac {1} {p + 1} \bigg) ,
\end{align}
\begin{align}\label{7eq: E2(r)}
E^{2} (r_{ \square})  = \prod_{p | r_{ \square} }    \frac {a(p^2) - 1/p} {1+ 1/p }   ,    \qquad \breve{E}^{2} (r_{ \square})  = \sum_{p | r_{ \square} }  \log p \bigg( \frac {1}  {a(p^2) p - 1  } + \frac {1} {p+1}\bigg) . 
\end{align}

\end{lem}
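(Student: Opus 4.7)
The plan is to derive \eqref{7eq: D2, final} from the contour integral in \eqref{7eq: D20} by a residue calculation at $v = 0$, followed by a combinatorial rearrangement of the sum $\sum_{\boldsymbol{d} \boldsymbol{f} = \boldsymbol{m}}$. First I would substitute the factorization \eqref{7eq: D(s)} so as to isolate $L(1+2v, \vQ \otimes \vQ)$, which by \eqref{3eq: Laurent} has a simple pole at $v = 0$ with residue $\gamma_1/2$, from the finite Euler products $E_d$, $E^1_r$, $E^2_{r_\square}$, which are holomorphic near $v = 0$. I would then shift the $v$-contour from $\Re(v) = 3$ down to $\Re(v) = -\delta$ for small $\delta > 0$, passing a double pole at $v = 0$ (the simple pole of $L$ combined with the $1/v$). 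The shifted integral is bounded by Stirling (via Lemma \ref{lem: G(v,t)}) for $\upgamma_2$, convexity for $L(1 - 2\delta + 2iu, \vQ \otimes \vQ)$, and the Gaussian decay of $\upphi(t)$, yielding after the $t$-integration the error term displayed in \eqref{7eq: D2, final}.

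To extract the main term, expand each factor in a Laurent series at $v = 0$:
\begin{align*}
L(1+2v, \vQ \otimes \vQ) &= \frac{\gamma_1}{2v} + \gamma_0 + O(|v|), \\
\upgamma_2(v, t) &= 1 + \bigl(2\psi(k) + \psi(k + 2it) + \psi(k - 2it)\bigr) v + O(v^2), \\
\theta^{-2v} N^{-1/2 - v} &= N^{-1/2}\bigl(1 - (2\log\theta + \log N) v + O(v^2)\bigr),
\end{align*}
where $N = \|\boldsymbol{d^\star}\|^2 \|\boldsymbol{m}\|/f^2$, together with analogous expansions for $E_d(1+2v)$, $E^1_r(1+2v)$, and $E^2_{r_\square}(1+2v)$. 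The coefficient of $v^0$ in the product gives the residue. Using $\psi(k \pm 2it) = \log|2t| + O(1/t)$ for $|t| \asymp T$, combined with $\int \upphi(t) t\, \nd t = \sqrt{\pi}\,\varPi T$ and a similar evaluation of $\int \upphi(t) t \log|t|\, \nd t$ up to $O(\varPi^3/T)$, produces the $\gamma_1 \log T$ in \eqref{7eq: D2, final}. The $-\gamma_1 \log\sqrt{r}$ piece arises from $\log N$ once the sum over $\boldsymbol{d}, \boldsymbol{f}$ is organized, while $\gamma_0'$ of \eqref{7eq: gamma0} collects $\gamma_0$, the term $\gamma_1 \Gamma'(k)/\Gamma(k)$ coming from $\psi(k)$, and the term $\gamma_1 \log(q/2\pi^2)$ coming from $\log\theta = \log(4\pi^2/q)$.

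The hard part will be the combinatorial rearrangement of $\sum_{\boldsymbol{d}\boldsymbol{f} = \boldsymbol{m}}$ into the double sum over $d r_\square | m$ appearing in \eqref{7eq: B(m), 1}. Using \eqref{7eq: m, rf}, each prime $p | m$ falls into exactly one of four cases — $p | d$, $p | d_1^\star$, $p | d_2^\star$, or $p | f$ — while the primes of $r = m_1 m_2/m^2$ are parametrized by $d_\star | r$; the identity \eqref{7eq: quotient} recasts the prefactor $f/(d_1^\star d_2^\star \sqrt{m_1 m_2})$ as $1/(\sqrt{r} \cdot d_\star d\, r_\square^2)$. The residual $d_\star$-sum telescopes and combines with $E^1_r(1, \vQ) = a(r) \prod_{p | r}(1 + 1/p)^{-1}$ (from \eqref{7eq: E, 2} at $s = 1$) and the local factors of $L(1, \vQ \otimes \vQ)/\gamma_1$ over $p | r$ read off from \eqref{3eq: gamma1}, whose $(1 + 1/p)$ denominators cancel, leaving the bare $a(r)/\sqrt{r}$ of \eqref{7eq: D2, final}. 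The remaining sum over $d | m$ and $r_\square | m/d$ then assembles into $\SB(m)$ and $\SB'(m)$: the local Euler factors $E(d)$, $E^2(r_\square)$ of \eqref{7eq: E(d), 2}--\eqref{7eq: E2(r)} arise from the values $E_d(1)$, $E^2_{r_\square}(1)$, while $\breve E(d)$, $\breve E^2(r_\square)$ arise from $(d/dv)[\log E_d(1+2v) + \log E^2_{r_\square}(1+2v)]_{v=0}$ — tracking precisely which local factor of each Euler product ends up in which term is the most delicate bookkeeping.
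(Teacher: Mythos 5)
Your plan follows the paper's proof of Lemma \ref{lem: main term} very closely: substitute \eqref{7eq: D(s)}, shift the $v$-contour past the double pole at $v = 0$, extract the residue by Laurent expansion including the digamma contributions and the $\log(\theta \sqrt{r}\, d_\star d r_\square^2)$ term, do the Gaussian $t$-integrals, and then reorganize the sum over $\boldsymbol{d}\boldsymbol{f} = \boldsymbol{m}$ via the $d_\star$-divisor identity. Two small but genuine slips are worth flagging.

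First, shifting only to $\Re(v) = -\delta$ for \emph{small} $\delta$ does not give the stated error. On that line Stirling gives $\upgamma_2(v,t) \Lt (|t|+1)^{-2\delta}$, so the $t$-integral contributes $O(\varPi T^{1-2\delta})$, which for small $\delta$ dominates the $O(T^\vepsilon\varPi)$ piece of \eqref{7eq: D2, final}. The paper shifts all the way to $\Re(v) = \vepsilon - 1/2$, giving $(|t|+1)^{2\vepsilon-1}$ and hence $O(\varPi T^\vepsilon)$; the factors $E_d$, $E^1_r$, $E^2_{r_\square}$ and $L(1+2v,\vQ\otimes\vQ)$ are all holomorphic in $-1/2 < \Re(v) < 0$, so this depth is admissible. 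You would also need to note, separately, that passing from $\SD_{20}$ (which is what \eqref{7eq: D20} computes) back to $\SD_2$ costs the $O(T^{1+\vepsilon}/\sqrt{r})$ recalled from \eqref{7eq: D2(m), 3.0} — that term does not come from the $v$- or $t$-integrals.

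Second, the mechanism you describe for producing the bare $a(r)/\sqrt{r}$ is misattributed. The $(1+1/p)^{-1}$ denominators in $E^1_r(1,\vQ)$ are cancelled by the free $d_\star$-sum $\varSigma(r) = \sum_{d_\star | r} 1/d_\star = \prod_{p|r}(1 + 1/p)$ (since $r$ is squarefree), i.e., by the identity $E^1(r)\varSigma(r) = a(r)$, together with $\breve{E}^1(r)\varSigma(r) = \breve{\varSigma}(r)$ for the logarithmic piece. It has nothing to do with ``local factors of $L(1,\vQ\otimes\vQ)/\gamma_1$ over $p|r$'' — $\gamma_1$ enters as a scalar residue and is never localized over the primes of $r$; indeed \eqref{3eq: gamma1} has no $(1+1/p)$ factors at all for $p\nmid q$. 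With these two adjustments your argument matches the paper's.
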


\begin{proof}
By applying  \eqref{7eq: D(s)} and \eqref{7eq: quotient}  to the expression \eqref{7eq: D20}, the inner  integral reads: 
\begin{align*}
\int_{(3)} \upgamma_{2}  ( v, t) \frac {  E_{d} (1+2 v, \vQ) E^1_{r     } (1+2 v, \vQ) E_{r_{ \square}}^{2} (1+2 v, \vQ) L (1+2 v , \vQ\otimes\vQ)  } {\theta^{2v}  (\sqrt{r     } d_{\star} d r_{\square}^2  )^{1+2v } }   \frac {\nd v  } {v}  . 
\end{align*}
We shift the integral contour from $\mathrm{Re}(v) = 3$ down to $\mathrm{Re} (v) = \vepsilon -1/2$ and collect the residue at the double pole $v = 0$. By the Stirling formula, the resulting integral has bound  $O ((|t|+1)^{\vepsilon-1}  )$ and it yields the error $O (\varPi T^{\vepsilon})$   in \eqref{7eq: D2, final}, while, by \eqref{3eq: Laurent},  \eqref{3eq: defn G2},  \eqref{7eq: E, 1}, and \eqref{7eq: E, 2},  the residue equals
\begin{align*}
\begin{aligned}
	 \frac{E^1 ({r     }) E(d) E^{2} (r_{ \square}) } { \sqrt{r     } d_{\star} d r_{\square}^2 } \bigg\{ \gamma_{0} + \gamma_{1}   \bigg( \frac{\upsigma  (t)} 2   +  \breve{E}^1 ({r     }) + \breve{E} (d) + \breve{E}^{2} (r_{ \square})
	   - \log \big(\theta \sqrt{r     } d_{\star} d r_{\square}^2 \big)   \bigg)    \bigg\} , 
\end{aligned}
\end{align*}
with 
\begin{align}\label{7eq: E1(r)}
	E^1 ({r     })  = a(r     )  \prod_{p | r      } 
\frac {1} {1+ 1/ p }, \qquad \breve{E}^1 ({r     })  =  \sum_{p | r     } 
\frac { \log p } {p+ 1}    ,  
\end{align} 
$ E(d)$, $\breve{E} (d)$, $ E^{2} (r_{ \square}) $,  $\breve{E}^{2} (r_{ \square})$ similarly defined in  \eqref{7eq: E(d), 2} and \eqref{7eq: E2(r)}, and
\begin{align*}
\upsigma  (t) = 2 \psi (k)  + \psi (k+2it) + \psi (k-2it), 
\end{align*} 
where $\psi (s) = \Gamma'(s)/ \Gamma (s)$ is the di-gamma function. 
By Stirling's formula \eqref{app1: Stirling, 2}, 
\begin{align*}
	\upsigma (t) = 2 \psi (k) + 2 \log \sqrt{k^2+4t^2} + O \bigg( \frac 1 {t^2+1} \bigg). 
\end{align*}
Then the $t$-integrals may be evaluated easily by 
\begin{align*}
	   \int_{-\infty}^{\infty}   {\upphi (t)} t \nd t = {  \sqrt{\pi}}  \varPi T , \quad \int_{-\infty}^{\infty}   {\upphi (t)} t \log \sqrt{k^2+4t^2}  \nd t = {  \sqrt{\pi}}  \varPi T \log (2T)   + O \bigg(\frac {\varPi^3} T  \bigg) .
\end{align*}
It is clear that  the error after this process is $O (\varPi^3T^{\vepsilon}/ T {\displaystyle \sqrt{r     }})$. Let us also recollect the error  $O (T^{1+\vepsilon}/ {\displaystyle \sqrt{r     }})$ from \eqref{7eq: D2(m), 3.0}. Thus the error bound in \eqref{7eq: D2, final}  has been established. 

As for the main term, by some rearrangements, we have 
\begin{align*}
	\frac {4 } {\pi \sqrt{\pi} } \varPi T \big(  \log T \cdot  \SD  (r     ) \SB (m)  + \SD '  (r     ) \SB (m)   + \SD  (r      ) \SB ' (m)  \big), 
\end{align*}
where  
\begin{align*} 
		\SD  (r     )   = \gamma_1 \frac {   E^1 ({r     }) \varSigma  ( r      )    } {\sqrt{r     }}   ,  \qquad 	\SB (m) =   {\sum_{ d r_{\square} | m } } 
		\frac{ E(d) E^{2} (r_{ \square}) } {  d r_{\square}^2 } , 
\end{align*}
\begin{align*}
	\SD ' (r      )   =  \frac {   E^1 ({r     }) } {\sqrt{r     }}  \lp  \lp \gamma_0' +   \gamma_1     \breve{E}^1 ({r     }) -  \gamma_1 \log    \sqrt{r     }      \rp \varSigma  ( r      ) - \gamma_{1} \breve{\varSigma}  ( r      ) \rp    , 
\end{align*} 
\begin{align*}
	\SB'  ( m) =    {\sum_{ d r_{\square} | m } } 
	\frac{ E(d) E^{2} (r_{ \square}) } {  d r_{\square}^2 }    \left(    \breve{E} (d) + \breve{E}^{2} (r_{ \square})
	- \log  (    d r_{\square}^2  )  \right),
\end{align*}
for 
\begin{align}\label{7eq: Sigma}
	\varSigma   ( r      ) =	 {\sum_{d_{\star} | r      }  } 
	\frac {1} {d_{\star}}, \qquad  \breve{\varSigma}   ( r      ) =	 {\sum_{d_{\star} | r      }  } 
	\frac {\log d_{\star} } {d_{\star}} . 
\end{align} 
Moreover, in view of \eqref{7eq: E1(r)} and \eqref{7eq: Sigma}, it turns out that 
\begin{align*}
	E^1 ({r     }) \varSigma  ( r      ) = a (r) , \qquad  \breve{E}^1 ({r     }) \varSigma  ( r      ) = \breve{\varSigma}   ( r      ), 
\end{align*} hence
\begin{align*}
	 \SD  (r     )   = \gamma_1 \frac {  a (r)   } {\sqrt{r     }} , \qquad \SD ' (r      )   =  \frac {   a (r) } {\sqrt{r     }}     \lp \gamma_0'  -  \gamma_1 \log    \sqrt{r     }      \rp ,
\end{align*}
and we conclude with the main term in  \eqref{7eq: D2, final}. 
\end{proof}

For later use, we record some simple results on $ \SB (m) $ and $\SB' (m)$. 

\begin{lem}\label{lem: B(m)}
	We have
	\begin{align}\label{7eq: B(m)}
		\SB (m) = \prod_{p | m} \bigg(1 + \frac {a(p)^2} {p+1} \bigg) ,
	\end{align}
\begin{align}\label{7eq: B'(m)}
	\SB' (m) / \SB (m) = \sum_{p | m} B (p) \cdot \log p , 
\end{align}
for $ B (p) = O (1/p) $. 
\end{lem}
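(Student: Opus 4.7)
The proof will reduce to a direct Euler-factor computation, exploiting the square-free hypothesis on $m$. Since $m$ is square-free and the summation condition is $d r_{\square} \mid m$, the divisors $d$ and $r_{\square}$ are automatically coprime square-free divisors of $m$. Each prime $p \mid m$ is therefore independently assigned to $d$, to $r_{\square}$, or to neither, and this bijection immediately yields the Euler product
\begin{align*}
\SB (m) = \prod_{p \mid m} F_p, \qquad F_p = 1 + \frac {E(p)} {p} + \frac {E^{2}(p)} {p^2}.
\end{align*}
To pin down $F_p$ and obtain \eqref{7eq: B(m)}, I would apply the Hecke relation $a(p)^2 = a(p^2) + 1$ to simplify $E(p)$ and $E^{2}(p)$ from \eqref{7eq: E(d), 2}--\eqref{7eq: E2(r)}, clear denominators by multiplying through by $p^2 (p+1)$, and check that the cross terms collapse to give $p^2 (p+1+a(p)^2)$, so that $F_p = 1 + a(p)^2/(p+1)$.

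For \eqref{7eq: B'(m)}, the key observation is that the bracket in the definition of $\SB'(m)$ is additive in the prime divisors of $d$ and $r_{\square}$: writing $\breve{E}(d) = \sum_{p \mid d} \alpha_p \log p$ and $\breve{E}^{2}(r_{\square}) = \sum_{p \mid r_{\square}} \beta_p \log p$ from \eqref{7eq: E(d), 2}--\eqref{7eq: E2(r)}, and expanding $-\log (d r_{\square}^2) = -\sum_{p \mid d} \log p - 2 \sum_{p \mid r_{\square}} \log p$, the contribution of each prime $p$ decouples from the remaining ones. Inserting and swapping the order of summation, I would identify the product over $q \mid m,\ q \neq p$ with $\SB(m)/F_p$, obtaining
\begin{align*}
\frac{\SB'(m)}{\SB(m)} = \sum_{p \mid m} \frac{\log p}{F_p} \left( \frac{E(p)}{p} (\alpha_p - 1) + \frac{E^{2}(p)}{p^2} (\beta_p - 2) \right).
\end{align*}
This defines $B(p)$ as the quantity inside the sum divided by $\log p$.

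The last step is the bound $B(p) = O(1/p)$. By the Deligne bound \eqref{3eq: Deligne}, $|a(p)| \leqslant 2$, so $|a(p^2)| = |a(p)^2 - 1| \leqslant 3$, making $|E(p)|$ and $|E^{2}(p)|$ uniformly $O(1)$ and $F_p = 1 + O(1/p)$ bounded away from $0$ for $p$ sufficiently large. The main obstacle is verifying that the rational coefficients $\alpha_p$ and $\beta_p$ remain $O(1)$ even in the degenerate cases $a(p) = 0$ and $a(p) = \pm 1$, where the denominators $a(p)^2 p^2 - a(p^2) p + 1$ or $a(p^2) p - 1$ fail to grow in $p$. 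A short case check (using $a(p^2) = a(p)^2 - 1$) handles these: when $a(p) = 0$ the first fraction in $\alpha_p$ becomes $-(p+2)/(p+1)$, and when $a(p) = \pm 1$ the first fraction in $\beta_p$ equals $-1$; both are $O(1)$. Combined with the factors $E(p)/p = O(1/p)$ and $E^{2}(p)/p^2 = O(1/p^2)$, this gives $B(p) = O(1/p)$, as required.
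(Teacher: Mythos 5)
Your overall structure---multiplicativity over the primes of the square-free $m$, the Euler-factor identity $1 + E(p)/p + E^{2}(p)/p^2 = 1 + a(p)^2/(p+1)$, and the additivity of the bracket in $\SB'(m)$ giving a logarithmic-derivative formula---is exactly the paper's argument, and your closed expression for $\SB'(m)/\SB(m)$ agrees with the paper's $B(p)$ after unwinding the notation.

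There is, however, a gap in the final bound. You assert $\alpha_p, \beta_p = O(1)$ and treat $a(p) \in \{0, \pm 1\}$ as the only degenerate cases, which implicitly assumes $a(p)$ is an integer. In fact $a(p)$ is a normalized Hecke eigenvalue of a general newform, so it ranges over $[-2,2]$ and can approach arbitrary real values. For $\alpha_p$ this is harmless, since the denominator $a(p)^2 p^2 - a(p^2)p + 1 = a(p)^2 p(p-1) + p + 1 \geqslant p+1$ never gets small. But $\beta_p$ is \emph{not} $O(1)$ in general: the denominator $a(p^2)p - 1$ can be arbitrarily close to zero when $a(p)^2$ is near $1 + 1/p$, and Deligne's bound does not exclude this. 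The remedy---implicit in how the paper groups its terms---is never to isolate $\beta_p$. Instead, observe that $E^{2}(p) = (p\,a(p^2) - 1)/(p+1)$ carries precisely the factor that threatens to vanish, so that
\begin{align*}
E^{2}(p)\,\beta_p = \frac{1}{p+1} + \frac{p\,a(p^2) - 1}{(p+1)^2} = O(1/p)
\end{align*}
unconditionally, and similarly $E(p)\,\alpha_p = O(1/p)$. Feeding these into your formula (together with $E(p)/p = O(1/p)$ and $E^{2}(p)/p^2 = O(1/p^2)$) gives $B(p) = O(1/p)$ with no case analysis required.
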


\begin{proof}
	 The product in \eqref{7eq: B(m)} follows from the multiplicativity of  $ \SB (m)  $ 
	 and  the identity
	 \begin{align*}
	 1 +	\frac {E (p)} {p} + \frac {E^2 (p)} {p^2} = 1 + \frac {a(p)^2 } {p+1}. 
	 \end{align*}
 As for \eqref{7eq: B'(m)}, the coefficient $B (p)$ has the exact formula: 
 \begin{align*}
   B (p)  =	\frac 1 {\SB (p) } \bigg( \frac {E(p) \breve{E} (p) } {p} +\frac {E^2(p) \breve{E}^2 (p) } {p^2} - \frac {E(p)} {p} - 2 \frac  {E^2 (p)} {p^2} \bigg), 
 \end{align*}
and   $B (p) = O (1/ p)$ is obvious from the Deligne bound \eqref{3eq: Deligne}. 
\end{proof}

\subsection{Treatment of the Off-diagonal Sum}  \label{sec: off}

Now we turn to the off-diagonal sum $	\SO_2 (\boldsymbol{m})$ given by \eqref{7eq: O(m)} and \eqref{7eq: H2(x)}.

\vspace{5pt}

\subsubsection{Initial Reductions} 

Similar to the initial arguments for $ \SO_1^{\pm} (m)$  in \S \ref{sec: reduction}, by applying Lemma   \ref{lem: AFE} and Corollary \ref{cor: u<T} to \eqref{7eq: O(m)} and \eqref{7eq: H2(x)}, we have 
\begin{align}\label{7eq: O2(m)}
	\SO_2 (\boldsymbol{m} ) \Lt T^{\vepsilon}  \max_{\| \boldsymbol{N} \| \Lt T^{2+\vepsilon} }  \frac {1} {\sqrt{\| \boldsymbol{N} \|}}   \sum_{\boldsymbol{d} \boldsymbol{f} = \boldsymbol{m} } \, \sideset{}{^{_{  {\scriptstyle \prime}}}} \sum_{\boldsymbol{h} \Lt  \sqrt{\boldsymbol{N}} }    \int_{\vepsilon-i\log T}^{\vepsilon+i\log T}   \big|\SO_2 (\boldsymbol{d}, \boldsymbol{f},    \boldsymbol{h}; \boldsymbol{N};  v) \big|   \frac {\nd v} {v} , 
\end{align}
where  $\boldsymbol{N} $ are dyadic,  the prime on the $ \boldsymbol{h} $-sum means $(\| \boldsymbol{h} \|, q) = 1$ (also keep in mind that $(\| \boldsymbol{m} \|, q) = 1$),  
and  $ \SO_2 (\boldsymbol{d}, \boldsymbol{f},    \boldsymbol{h}; \boldsymbol{N} )$ is defined to be the sum
\begin{align}
\begin{aligned}
	    \sum_{  c \shskip \Lt C  }     \sum_{ \boldsymbol{n} \sim \boldsymbol{N} / \boldsymbol{d h}^2 }  
a(\boldsymbol{d n})  	 \frac  {S( \boldsymbol{f n}   ; c)}  {c}  \varww \bigg(    \frac {\boldsymbol{d h}^2 \boldsymbol{n}} {\boldsymbol{N}}   \bigg)    {H}_2 \bigg(    \frac{4\pi\sqrt{\| \boldsymbol{f n} \|  }}{c}; \langle \boldsymbol{d h} \rangle  \sqrt{\langle \boldsymbol{f n} \rangle }    \bigg)   , 
\end{aligned}
\end{align} 
in which 
\begin{align}
	C = \frac  { \max  \{    \boldsymbol{  m N }   \} } {T \|\boldsymbol{d h}\| } , 
\end{align}  
the weight $\varww (\boldsymbol{x})$ is of the product form as in \eqref{3eq: w(x)} for the $\log T$-inert  $\varww   \in C_c^{\infty} [1, 2]$ as in \S \ref{sec: reduction}, and  $H_{2} (x, y ) $ is the Bessel transform (see \S \ref{sec: Bessel}): 
\begin{align*}
	H_{2} (x, y ) = \frac{2 i } {\pi} \int_{-\infty}^{\infty} h_{2} (t; y ) J_{2it} (x)  \frac {t \nd t} {\cosh (\pi t) } , 
\end{align*} 
for 
\begin{align*}
	h_{2} (t; y ) = 2 \upgamma_{2} (v, t)  \cos (2t \log y) (\upphi (t) +  \upphi (- t)); 
\end{align*}   
here $v$ has been suppressed again from the notation.

\vspace{5pt} 

\subsubsection{Application of Lemma {\ref{lem: Bessel}}}  \label{sec: after Bessel}
By the formula of the Bessel integral $H  (x, y)$ as in  Lemma \ref{lem: Bessel}, along with \eqref{4eq: H2=H},  it follows that $\SO_2 (\boldsymbol{d}, \boldsymbol{f},    \boldsymbol{h}; \boldsymbol{N} ) $ splits into four similar parts, one of which is of the form  
\begin{align}\label{8eq: O2++}
 \SO^{_{++}}_{2} (\boldsymbol{d}, \boldsymbol{f},    \boldsymbol{h}; \boldsymbol{N} )  = 	
\varPi T^{1+\vepsilon}  \int_{-\varPi^{\vepsilon}/ \varPi}^{\varPi^{\vepsilon}/ \varPi}  g (\varPi r) e ( Tr/\pi) \cdot  
 {\SO}^{_{++}}_{2} (r; \boldsymbol{d}, \boldsymbol{f},    \boldsymbol{h}; \boldsymbol{N} )      \nd r  , 
\end{align}
with  
\begin{align}\label{8eq: O++}
\SO^{_{++}}_{2} (r; \boldsymbol{d}, \boldsymbol{f},    \boldsymbol{h}; \boldsymbol{N} ) =  \sum_{  c \shskip \Lt C  }  \frac 1 c  \sum_{ \boldsymbol{n} } 
	a(\boldsymbol{d n})  	   {T ( \boldsymbol{f n} ; \boldsymbol{d h}   ; c)} e ( \uprho (r; \boldsymbol{f n} , \boldsymbol{d h} ) )  \varww \bigg(    \frac {\boldsymbol{d h}^2 \boldsymbol{n}} {\boldsymbol{N}}   \bigg) , 
\end{align}
where
\begin{align}\label{8eq: defn T}
	T ( \boldsymbol{ n} ; \boldsymbol{ k    }   ; c) = S(n_1, n_2; c ) e \bigg(\frac {k    _1 n_1} {c k    _2  } + \frac {k    _2 n_2} {c k    _1  } \bigg), 
\end{align}
\begin{align}\label{8eq: rho}
	\uprho (r; \boldsymbol{ n} , \boldsymbol{ k    } ; c) = \frac {k    _1 n_1} {c k    _2} \uprho_1 (r) + \frac {k    _2 n_2} {c k    _1} \uprho_2 (r), \qquad \uprho_{1,2} (r) = \exp (\pm r) - 1. 
\end{align}

\vspace{5pt}

\subsubsection{Estimation by the Wilton Bound}  
First, in order to split the $n_1$- and $n_2$-sums in \eqref{8eq: O++}, we open the Kloosterman sum $S(n_1, n_2; c )$ in \eqref{8eq: defn T} (see \eqref{3eq: Kloosterman}).  The $n_1$- and $n_2$-sums are very similar, and  the former  reads 
\begin{align*}
	 \sum_{n_1 \sim N_1 / d_1 h_1^2}   a (d_1 n_1) e \bigg( \frac {\valpha f_1 n_1} {c}   +  \frac {m_1 h_1 n_1} {c d_2 h_2  } \exp (r)  \bigg) \varww \bigg(\frac {d_1 h_1^2 n_1} {N_1} \bigg) .  
\end{align*}   
As $\varww (x)$ is $\log T$-inert, by partial summation,  it follows from  the Wilton bound in Lemma \ref{lem: Wilton}, along with the Hecke relation \eqref{3eq: Hecke, Q} and the Deligne bound \eqref{3eq: Deligne},  that the  $n_1$-sum is bounded by 
\begin{align*}
\log^2 T 	\sum_{\, d | d_1 } |a (d_1 / d) | \sqrt{\frac {N_1} {d d_1 h_1^2} } \Lt T^{\vepsilon} \frac {\sqrt{N_1}} {\sqrt{d_1} h_1} .
\end{align*} 
Note that the bound holds uniformly for $r$, $\valpha$, and $c$.  Similarly, the $n_2$-sum is uniformly $ O \big( T^{\vepsilon}   {\sqrt{N_2}} / {\sqrt{d_2} h_2} \big) $.
Thus, in view of \eqref{8eq: O2++} and \eqref{8eq: O++}, we have 
\begin{align*}
	 \SO_2 (\boldsymbol{d}, \boldsymbol{f},    \boldsymbol{h}; \boldsymbol{N} ) \Lt \varPi T^{1+\vepsilon}  \int_{-\varPi^{\vepsilon}/ \varPi}^{\varPi^{\vepsilon}/ \varPi} \sum_{  c \shskip \Lt C  }  \frac {1 } c  \, \sumx_{\valpha (\mathrm{mod}\, c)}      \frac {\sqrt{\| \boldsymbol{N} \|}} {\sqrt{\|\boldsymbol{d}\|} \| \boldsymbol{h} \|  }  \nd r \Lt  T^{\vepsilon} \frac {T C \sqrt{\| \boldsymbol{N} \|}} {\sqrt{\|\boldsymbol{d}\|} \| \boldsymbol{h} \|  } . 
\end{align*} 

\begin{lem}
	We have
	\begin{align}\label{8eq: bound after Wilton}
		  \SO_2 (\boldsymbol{d}, \boldsymbol{f},    \boldsymbol{h}; \boldsymbol{N} ) \Lt T^{\vepsilon} \frac {  {\| \boldsymbol{N} \|}^{1/2}   \max  \{    \boldsymbol{  m N } \}  } { {\|\boldsymbol{d}\|^{3/2}}  \| \boldsymbol{h} \|^2   } . 
	\end{align}
\end{lem}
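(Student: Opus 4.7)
The plan is to follow exactly the template laid out in the discussion preceding the lemma, tying up the estimate which is essentially already displayed. The key structural observation is that in \eqref{8eq: O++}, once one opens the Kloosterman sum $S(n_1,n_2;c)$ in \eqref{8eq: defn T} via \eqref{3eq: Kloosterman}, the variables $n_1$ and $n_2$ appear only through additive exponentials of the form $e(\gamma_i n_i)$ for real parameters $\gamma_i = \gamma_i(\valpha,c,r,\boldsymbol{m},\boldsymbol{h})$, multiplied by the $\log T$-inert cutoffs $\varww(d_i h_i^2 n_i/N_i)$. Thus the double $\boldsymbol{n}$-sum splits into a product of two one-variable twisted sums of the form $\sum_{n_i} a(d_i n_i) e(\gamma_i n_i) \varww(d_i h_i^2 n_i /N_i)$, to which the Wilton bound applies after factoring $a(d_i n_i)$ via the Hecke relation \eqref{3eq: Hecke, Q} and the Deligne bound \eqref{3eq: Deligne}.

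The next step is to apply Lemma \ref{lem: Wilton}: by partial summation against the $\log T$-inert weight $\varww$, each one-variable sum is bounded by $O(T^{\vepsilon} \sqrt{N_i}/(\sqrt{d_i} h_i))$ \emph{uniformly} in $\gamma_i$ (since the Wilton constant depends only on $\vQ$, not on the additive twist). Multiplying the two, the $\boldsymbol{n}$-sum in \eqref{8eq: O++} is at most $O(T^{\vepsilon} \sqrt{\|\boldsymbol{N}\|}/(\sqrt{\|\boldsymbol{d}\|}\|\boldsymbol{h}\|))$, uniformly in $\valpha,c,r$. Summing trivially: the inner $\valpha$-sum modulo $c$ has $\phi(c)\leqslant c$ terms (canceling the $1/c$), the $c$-sum over $c\Lt C$ contributes $O(C)$, the $r$-integration over an interval of length $\varPi^{\vepsilon}/\varPi$ combines with the prefactor $\varPi T^{1+\vepsilon}$ from \eqref{8eq: O2++} to give $T^{1+\vepsilon}$. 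Collecting, one obtains
\begin{align*}
\SO^{_{++}}_2(\boldsymbol{d},\boldsymbol{f},\boldsymbol{h};\boldsymbol{N})\Lt T^{\vepsilon}\,\frac{TC\sqrt{\|\boldsymbol{N}\|}}{\sqrt{\|\boldsymbol{d}\|}\|\boldsymbol{h}\|},
\end{align*}
and the same bound holds for the other three $(\pm,\pm)$ pieces arising from \eqref{4eq: H2=H} and Lemma \ref{lem: Bessel}. Substituting $C=\max\{\boldsymbol{mN}\}/(T\|\boldsymbol{dh}\|)$ and simplifying produces the asserted estimate \eqref{8eq: bound after Wilton}.

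There is no serious obstacle; the only point requiring minor care is the uniformity of the Wilton bound in the additive twist (granted by Lemma \ref{lem: Wilton}) and that the partial summation loses at most a $\log^2 T$ factor absorbed into $T^{\vepsilon}$. The step that carries the content of the lemma is the Wilton bound itself, since trivial estimation of each $n_i$-sum would yield only $O(N_i/(d_i h_i^2))$ and destroy the savings; it is the square-root cancellation from Wilton that yields the crucial $\sqrt{\|\boldsymbol{N}\|}$ rather than $\|\boldsymbol{N}\|$ in the final bound.
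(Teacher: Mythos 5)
Your proposal is correct and matches the paper's argument essentially verbatim: open the Kloosterman sum so the $\boldsymbol{n}$-sum factors into two one-variable sums with linear exponential twists, apply the Wilton bound (Lemma \ref{lem: Wilton}) uniformly in the twist via partial summation against the $\log T$-inert cutoff together with the Hecke relation \eqref{3eq: Hecke, Q} and Deligne's bound, then sum trivially over $\valpha$, $c$, $r$ and substitute $C = \max\{\boldsymbol{mN}\}/(T\|\boldsymbol{dh}\|)$ to reach \eqref{8eq: bound after Wilton}. The key observation you emphasize — that the Wilton bound supplies the square-root cancellation in $\boldsymbol{N}$ which trivial estimation would miss — is indeed exactly the content of the step.
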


\begin{coro}\label{cor: O, 1} 
	The contribution to $\SO_2 (\boldsymbol{m} )$ {\rm(}see {\rm\eqref{7eq: O2(m)}}{\rm)} from those $\SO_2 (\boldsymbol{d}, \boldsymbol{f},    \boldsymbol{h}; \boldsymbol{N} )$ such that $$ \max \big\{\langle  \boldsymbol{  m N } \rangle, \langle  \widetilde{\boldsymbol{  m N }} \rangle \big\} \leqslant {T  / \varPi}$$ has  bound  $O \big( T^{ 3/2+ \vepsilon}  {  \| \boldsymbol{m} \|^{1/2}/ \varPi^{1/2}} \big)$. 
\end{coro}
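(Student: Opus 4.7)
The plan is to combine the Wilton-based estimate \eqref{8eq: bound after Wilton} with the ratio hypothesis, via a short elementary step that controls $\max\{\boldsymbol{mN}\}$. The key observation is that for any positive reals $a, b$ one has the identity
$$
\max\{a,b\}^2 = \max\{a/b,\,b/a\} \cdot ab ;
$$
applied to the pair $(m_1N_1, m_2N_2)$, the hypothesis $\max\{\langle\boldsymbol{mN}\rangle,\langle\widetilde{\boldsymbol{mN}}\rangle\} \leqslant T/\varPi$ converts directly into
$$
\max\{\boldsymbol{mN}\} \leqslant \sqrt{(T/\varPi)\,\|\boldsymbol{m}\|\,\|\boldsymbol{N}\|}.
$$

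Inserting this into \eqref{8eq: bound after Wilton} and pairing it with the outer factor $1/\sqrt{\|\boldsymbol{N}\|}$ from \eqref{7eq: O2(m)}, the $\|\boldsymbol{N}\|$-dependence collapses into a single factor $\sqrt{\|\boldsymbol{N}\|}$ in the numerator. Using the dyadic constraint $\|\boldsymbol{N}\| \Lt T^{2+\vepsilon}$, the contribution of each individual term is bounded by
$$
\Lt T^{\vepsilon}\,\frac{T^{3/2}\,\sqrt{\|\boldsymbol{m}\|/\varPi}}{\|\boldsymbol{d}\|^{3/2}\,\|\boldsymbol{h}\|^2}.
$$

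The remaining outer sums are routine to absorb. The $\boldsymbol{h}$-sum converges via $\sum_{h_1,h_2\geqslant 1} 1/(h_1 h_2)^2 = \zeta(2)^2$; the divisor sum over $\boldsymbol{d}\boldsymbol{f}=\boldsymbol{m}$ is bounded by $\tau(m_1)\tau(m_2) = O(\|\boldsymbol{m}\|^{\vepsilon})$; and the $v$-integral of length $\log T$ together with the $O(\log T)$ dyadic choices of $\boldsymbol{N}$ cost only polylogarithmic factors that fold into $T^{\vepsilon}$. Assembling these gives the stated bound $O(T^{3/2+\vepsilon}\|\boldsymbol{m}\|^{1/2}/\varPi^{1/2})$.

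There is no serious obstacle here: the only nontrivial step is the elementary max-vs.-ratio identity that turns the hypothesis on the ratios into the quantitative bound on $\max\{\boldsymbol{mN}\}$, after which the corollary reduces to book-keeping on the already-proved estimate \eqref{8eq: bound after Wilton}.
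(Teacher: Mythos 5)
Your proof is correct and matches what the paper intends (the paper states Corollary \ref{cor: O, 1} as an immediate consequence of the lemma giving \eqref{8eq: bound after Wilton} without spelling out the calculation). The identity $\max\{a,b\}^2=\max\{a/b,b/a\}\cdot ab$ is exactly the right way to turn the ratio hypothesis into $\max\{\boldsymbol{mN}\}\Lt\sqrt{(T/\varPi)\|\boldsymbol{m}\|\|\boldsymbol{N}\|}$, and the remaining book-keeping (the $\|\boldsymbol{N}\|$ powers collapsing via $\|\boldsymbol{N}\|\Lt T^{2+\vepsilon}$, convergence of the $\boldsymbol{h}$-sum, the divisor bound on the $\boldsymbol{d}\boldsymbol{f}=\boldsymbol{m}$ sum, and the polylog from the $v$-integral and dyadic $\boldsymbol{N}$) is exactly as you describe.
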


Henceforth, we shall restrict ourselves to the case 
\begin{align}\label{8eq: condition}
	\langle  \boldsymbol{  m N } \rangle > T/ \varPi ; 
\end{align}
of course, the case $\langle  \widetilde{\boldsymbol{  m N }} \rangle > T/ \varPi$ is similar by symmetry. Therefore, 
 by Lemma \ref{lem: analysis of I, 2} the condition $c \Lt C$ as in \eqref{8eq: O++} may be refined into
$
	c \text{ \small $\asymp$ } C,  
	$ 
and it will be convenient to  truncate the $c$-sum smoothly by  $\varvv (c/ C)$ for a certain fixed $\varvv \in C_c^{\infty} [ \pi/\sqrt{2}, 2\sqrt{2}\pi]$.

Subsequently, we shall avoid the use of bi-variable notation, due to different treatments of the $n_1$- and $n_2$-sums. 
For brevity, let us also remove those bold letters $\boldsymbol{d}$, $\boldsymbol{f}$,    $\boldsymbol{h}$,  $\boldsymbol{N}$ from the notation. 

\vspace{5pt} 

\subsubsection{Transformation of Exponential Sum}

Assume as we may that  $(k_1,  k_2) = 1$ (namely, $k_1$ and $k_2$ are co-prime), since  $T ( n_1, n_2 ; k_1, k_2   ; c)$ as in \eqref{8eq: defn T} depends only on the fraction $k_1/k_2$. 
Let 
\begin{align}\label{8eq: c1, c2}
	c_1 = (c, k    _1^{\infty}), \quad c_2 = (c, k    _2^{\infty}), \qquad c_{\star} = \frac {c} {c_1 c_2} ,
\end{align}
\begin{align}
	k    _{1}^{_{\natural}}  = (k    _1, c^{\infty}), \quad k    _{1}^{\star} = \frac {k    _1} {k    _{1}^{_{\natural}} } , \qquad k    _{2}^{_{\natural}}  = (k    _2, c^{\infty}),  \quad k    _{2}^{\star} = \frac {k    _2} {k    _{2}^{_{\natural}} } .
\end{align}

	In the case that $c \, | k_{\phantom{1}}^{_{\natural}}{}^{\infty}$ and $ k_{\phantom{1}}^{_{\natural}} | c^{\infty}   $ {\rm(}in other words, $c$ and $k_{\phantom{1}}^{_{\natural}}$ have the same set of prime divisors{\rm)},  define 
	\begin{align}\label{7eq: To}
		T_{\mathrm{o}} (n_1, n_2; k_{\phantom{1}}^{_{\natural}};  c  ) = S (n_1, n_2; c) e \bigg( \frac { k_{\phantom{1}}^{_{\natural}}  }  {c  } n_1 + \frac { 1 }  {c k_{\phantom{1}}^{_{\natural}}} n_2 \bigg). 
	\end{align}  

By the Chinese remainder theorem, we split 
\begin{align}\label{8eq: split T}
	\begin{aligned}
		 T ( n_1 & ,  n_2 ;   k_1,  k_2   ; c) =     T ( \overline{c_1 c_2} k_1^{_{\natural}} \overline{k}{}_2^{_{\natural}} n_1, \overline{c_1 c_2} \overline{k}{}_1^{_{\natural}} k_2^{_{\natural}}  n_2; k_1^{\star},  k_2^{\star}; c_{\star} ) \\
		 &    \cdot    T_{\mathrm{o}} (\overline{c_{\star} c_2  } k_1^{\star} \overline{k}_2 n_1, \overline{c_{\star} c_2  } \overline{k}{}_1^{\star} k_2 n_2; k_1^{_{\natural}}; c_1 )    \cdot    T_{\mathrm{o}} (\overline{c_{\star} c_1  } k_2^{\star} \overline{k}_1 n_2, \overline{c_{\star} c_1  } \overline{k}{}_2^{\star} k_1 n_1 ; k_2^{_{\natural}}; c_2 ). 
	\end{aligned}
\end{align}
We have the next two lemmas for the $T$ and $T_{\mathrm{o}}$ exponential sums on the right of \eqref{8eq: split T}. Note that \eqref{8eq: T (n;k;c)}  in the case $k_1=k_2=1$ is an identity of Luo in \cite[\S 3]{Luo-Twisted-LS}. 

\begin{lem}\label{lem: Luo's identity}
	For $ (k_1, k_2) = ( c, k_1 k_2 )  = 1 $, we have
	\begin{align}\label{8eq: T (n;k;c)}
		T ( n_1   ,  n_2 ;   k_1,  k_2   ; c) = e \bigg(\frac {\widebar{c} k_1 n_1}{k_2}  + \frac {\widebar{c} k_2 n_2}{k_1}  \bigg) \hspace{-2pt} \sum_{b w = c} \hspace{-2pt} \mathop{\sum_{\beta (\mathrm{mod}\, b)}}_{(\beta (w \overline{k}_1 k_2 -   \beta), b) = 1} \hspace{-2pt} e \bigg( \frac {\widebar{\beta} n_1 + \overline{w \overline{k}_1 k_2 - \beta } n_2 } {b} \bigg) . 
	\end{align}
\end{lem}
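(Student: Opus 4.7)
The strategy is three-fold: partial fractions to extract the external phase, then an application of Luo's identity, then a change of variable in the inner sum. First, since $(c, k_i) = 1$ for $i = 1, 2$, the partial fraction decomposition $1/(ck_i) \equiv \bar c/k_i + \bar k_i/c \pmod 1$ applied to both terms of the phase in the definition of $T$ yields
\begin{equation*}
T(n_1, n_2; k_1, k_2; c) = e\Bigl(\tfrac{\bar c\, k_1 n_1}{k_2} + \tfrac{\bar c\, k_2 n_2}{k_1}\Bigr) \, S(n_1, n_2; c) \, e\Bigl(\tfrac{\bar k_2 k_1 n_1 + \bar k_1 k_2 n_2}{c}\Bigr),
\end{equation*}
which matches the external phase in the statement. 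It remains to identify the second factor with the double sum over $bw = c$.

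Since $(\bar k_2 k_1)(\bar k_1 k_2) \equiv 1 \pmod c$, the substitution $\alpha \mapsto k_2 \bar k_1 \alpha$ in the Kloosterman sum shows $S(n_1, n_2; c) = S(M_1, M_2; c)$ with $M_1 = \bar k_2 k_1 n_1$ and $M_2 = \bar k_1 k_2 n_2$ taken mod $c$. The remaining factor is therefore $S(M_1, M_2; c)\, e((M_1 + M_2)/c)$, to which the cited Luo identity (the $k_1 = k_2 = 1$ case from \cite[\S 3]{Luo-Twisted-LS}) applies directly to give
\begin{equation*}
\sum_{bw = c} \mathop{\sum_{\beta \bmod b}}_{(\beta(w - \beta), b) = 1} e\Bigl(\tfrac{\bar\beta\, M_1 + \overline{w - \beta}\, M_2}{b}\Bigr).
\end{equation*}

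I would then execute the change of variable $\beta \mapsto k_1 \bar k_2 \beta$ in the inner sum, a bijection on $(\mathbb{Z}/b\mathbb{Z})^{\ast}$ since $(b, k_1 k_2)$ divides $(c, k_1 k_2) = 1$. The key algebraic factorization $w - k_1 \bar k_2 \beta \equiv k_1 \bar k_2(w \bar k_1 k_2 - \beta) \pmod b$ transforms the coprimality condition into $(\beta(w \bar k_1 k_2 - \beta), b) = 1$ (multiplication by a unit preserves the gcd with $b$), and via the relation $\overline{k_1 \bar k_2 \beta} \cdot \bar k_2 k_1 = \bar\beta$ the two exponents $\bar\beta\, M_1$ and $\overline{w - \beta}\, M_2$ collapse respectively into $\bar\beta\, n_1$ and $\overline{w \bar k_1 k_2 - \beta}\, n_2$, after the $k_i^{\pm 1}$ factors cancel pairwise.

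The hard part is the modular-inverse bookkeeping in the last step: each $k_i$ surfaces as $k_i$ and $\bar k_i$ in several roles (inside $M_i$, inside $\overline{w - \beta}$, and as the multiplier in the substitution), and one must check that these all collapse cleanly to leave the asymmetric form claimed, with $\bar\beta$ appearing free of any $k$-twist while the full $w \bar k_1 k_2$ twist is confined to the second inverse. I expect this tracking, though purely algebraic, to be the sole point where errors are most likely.
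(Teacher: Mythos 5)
Your route---partial fractions for the external phase, the reflection $S(n_1, n_2; c) = S(u n_1, \overline{u}\, n_2; c)$ with $u = k_1\overline{k}_2$ to rewrite the remainder as $S(M_1, M_2; c)\,e((M_1+M_2)/c)$ with $M_1 = k_1\overline{k}_2 n_1$, $M_2 = \overline{k}_1 k_2 n_2$, the $k_1 = k_2 = 1$ base case of the identity, and finally the change of variable $\beta \mapsto k_1\overline{k}_2\beta$---is a genuinely different and arguably cleaner organisation than the paper's, which reopens the Kloosterman sum, separates $\alpha/c + k_1/(ck_2)$ by the Chinese remainder theorem, and substitutes $\alpha + k_1\overline{k}_2 = \overline{\beta}\,w$ directly. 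Your first three steps, and the transformation of the coprimality condition under the change of variable to $(\beta(w\overline{k}_1 k_2 - \beta), b) = 1$, all check out.

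But the gap is exactly where you feared, and it is a real one: the $k_i^{\pm 1}$ factors do \emph{not} cancel pairwise in the $n_2$ slot. Under $\beta\mapsto k_1\overline{k}_2\beta$ the first exponent becomes $\overline{k_1\overline{k}_2\beta}\,M_1 = (\overline{k}_1 k_2\,\overline{\beta})(k_1\overline{k}_2\, n_1) = \overline{\beta}\, n_1$, because the two surviving units $\overline{k}_1 k_2$ and $k_1\overline{k}_2$ are mutual inverses. In the second exponent, however, $\overline{w - k_1\overline{k}_2\beta}\,M_2 = \overline{k_1\overline{k}_2(w\overline{k}_1 k_2 - \beta)}\cdot \overline{k}_1 k_2 n_2 = (\overline{k}_1 k_2)^2\,\overline{w\overline{k}_1 k_2 - \beta}\, n_2$: here both surviving units are $\overline{k}_1 k_2$, so they compound instead of cancelling, and $(\overline{k}_1 k_2)^2$ is generally not $1 \pmod b$. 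A concrete check: take $c = b = 7$, $w = 1$, $k_1 = 2$, $k_2 = 3$, $\alpha = 5$ (so $\alpha + k_1\overline{k}_2 \equiv 1$ and $\beta = 1$); then $\overline{\alpha} + \overline{k}_1 k_2 \equiv 3 + 5 \equiv 1 \pmod 7$, whereas $w\,\overline{w\overline{k}_1 k_2 - \beta} = \overline{4} \equiv 2$, and only after multiplying by $(\overline{k}_1 k_2)^2 \equiv 4$ does one recover $4\cdot 2 \equiv 1$. You should be aware that the paper's own proof contains the same slip in its final displayed congruence $\overline{\overline{\beta}w - k_1\overline{k}_2} + \overline{k}_1 k_2 \equiv w\,\overline{w\overline{k}_1 k_2 - \beta} \pmod c$: the right-hand side should carry the extra unit $(\overline{k}_1 k_2)^2$, and the lemma as printed is off by this unit in the $n_2$ exponent, so both your argument and the paper's in fact prove the corrected statement rather than the one displayed.
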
  

\begin{proof}
	By opening the Kloosterman sum, we have 
	\begin{align*}
	T ( n_1   ,  n_2 ;   k_1,  k_2   ; c) = 	\sumx_{\valpha (\mathrm{mod}\, c) }  	e \bigg( \bigg(\frac { \valpha  } {c}  + \frac {k_1 } {c k_2}\bigg) n_1   + \bigg(\frac { \widebar{\valpha}  } {c}  + \frac {k_2 } {c k_1}\bigg) n_2   \bigg) .
	\end{align*}
Further, by the Chinese remainder theorem, 
\begin{align*}
	T ( n_1   ,  n_2 ;   k_1,  k_2   ; c) = e \bigg(\frac {\widebar{c} k_1}{k_2} n_1 + \frac {\widebar{c} k_2}{k_1} n_2 \bigg)	\sumx_{\valpha (\mathrm{mod}\, c) }  	e \bigg(  \frac { \valpha  + k_1 \overline{k}_2 } {c}  n_1 + \frac { \widebar{\valpha} + \overline{k}_1 k_2 } {c}   n_2   \bigg) .
\end{align*}
Set  $w = (\valpha  + k_1 \overline{k}_2 , c ) $.  Note that $ w = (\widebar{\valpha} + \overline{k}_1 k_2 , c)  $ as well. 
Write $c = b w $ and $ \valpha  + k_1 \overline{k}_2 = \allowbreak  \widebar{\beta} w $, where $\beta$ ranges over residue classes modulo $b$ such that $(\beta (w - k_1 \overline{k}_2 \beta), b) = 1$. Thus we obtain the identity in \eqref{8eq: T (n;k;c)} because 
\begin{align*}
	\widebar{\valpha} + \overline{k}_1 k_2 \equiv 
	 \overline{ \widebar{\beta} w - {k}_1 \overline{k}_2 } + \overline{k}_1 k_2 \equiv w \cdot \overline{w \overline{k}_1 k_2 - \beta } \, (\mathrm{mod}\, c). 
\end{align*}
\end{proof}

\begin{lem}\label{lem: To}
Let $T_{\mathrm{o}} (n_1, n_2; k_{\phantom{1}}^{_{\natural}};  c  ) $ be defined as in {\rm\eqref{7eq: To}} for  $c \, | k_{\phantom{1}}^{_{\natural}}{}^{\infty}$ and $ k_{\phantom{1}}^{_{\natural}} | c^{\infty}   $. We have
	\begin{align} \label{8eq: To (n;k;c)}
		T_{\mathrm{o}} (n_1, n_2; k_{\phantom{1}}^{_{\natural}};  c  ) = \sumx_{\beta (\mathrm{mod}\, c) }  	e \bigg( \frac {\widebar{\beta} n_1} {c}  + \frac {\overline{1 -    k_{\phantom{1}}^{_{\natural}} \beta } n_2 } {c k_{\phantom{1}}^{_{\natural}} }  \bigg). 
	\end{align}
\end{lem}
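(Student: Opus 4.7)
The plan is to open the Kloosterman sum in the definition~\eqref{7eq: To}, then reparametrize by $\valpha \mapsto \beta$ via $\valpha + k_{\phantom{1}}^{_{\natural}} \equiv \widebar{\beta} \, (\mathrm{mod}\, c)$, and reduce the identity to a direct congruence computation.

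First I would write
\begin{align*}
T_{\mathrm{o}} (n_1, n_2; k_{\phantom{1}}^{_{\natural}}; c) = \sumx_{\valpha (\mathrm{mod}\, c)} e\bigg(\frac{(\valpha + k_{\phantom{1}}^{_{\natural}}) n_1 + \widebar{\valpha}\, n_2}{c}\bigg) \, e\bigg(\frac{n_2}{c k_{\phantom{1}}^{_{\natural}}}\bigg).
\end{align*}
The crucial observation is that, since $c$ and $k_{\phantom{1}}^{_{\natural}}$ share the same set of prime divisors by hypothesis, for every prime $p \mid c$ we have $\valpha + k_{\phantom{1}}^{_{\natural}} \equiv \valpha \not\equiv 0 \, (\mathrm{mod}\, p)$ whenever $(\valpha, c) = 1$. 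Thus $(\valpha + k_{\phantom{1}}^{_{\natural}}, c) = 1$, and the map $\valpha \mapsto \beta := \overline{\valpha + k_{\phantom{1}}^{_{\natural}}}$ is a bijection on $(\BZ/c\BZ)^{\times}$. Substituting $\valpha \equiv \widebar{\beta} - k_{\phantom{1}}^{_{\natural}} \, (\mathrm{mod}\, c)$ turns the sum into
\begin{align*}
\sumx_{\beta (\mathrm{mod}\, c)} e\bigg(\frac{\widebar{\beta}\, n_1}{c} + \frac{\overline{\widebar{\beta} - k_{\phantom{1}}^{_{\natural}}}\, n_2}{c} + \frac{n_2}{c k_{\phantom{1}}^{_{\natural}}}\bigg).
\end{align*}

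What remains is to verify the identity
\begin{align*}
\frac{\overline{\widebar{\beta} - k_{\phantom{1}}^{_{\natural}}}}{c} + \frac{1}{c k_{\phantom{1}}^{_{\natural}}} \equiv \frac{\overline{1 - k_{\phantom{1}}^{_{\natural}} \beta}}{c k_{\phantom{1}}^{_{\natural}}} \pmod{1},
\end{align*}
where on the left the inverse is modulo $c$ and on the right modulo $c k_{\phantom{1}}^{_{\natural}}$; both inverses exist, again by the common prime divisor hypothesis applied to $\widebar{\beta} - k_{\phantom{1}}^{_{\natural}}$ and to $1 - k_{\phantom{1}}^{_{\natural}} \beta$. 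Clearing denominators, this becomes
\begin{align*}
k_{\phantom{1}}^{_{\natural}} \cdot \overline{\widebar{\beta} - k_{\phantom{1}}^{_{\natural}}} + 1 \equiv \overline{1 - k_{\phantom{1}}^{_{\natural}} \beta} \pmod{c k_{\phantom{1}}^{_{\natural}}},
\end{align*}
which, upon multiplying through by $(1 - k_{\phantom{1}}^{_{\natural}}\beta)$ and expanding mod $c k_{\phantom{1}}^{_{\natural}}$, reduces at once to the evident congruence $(\widebar{\beta} - k_{\phantom{1}}^{_{\natural}})\beta \equiv 1 - k_{\phantom{1}}^{_{\natural}}\beta \, (\mathrm{mod}\, c)$.

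The only genuine obstacle is bookkeeping: one must carefully track which inverse lives modulo $c$ and which modulo $c k_{\phantom{1}}^{_{\natural}}$, and invoke the hypothesis that $c$ and $k_{\phantom{1}}^{_{\natural}}$ share the same prime divisors precisely where coprimality is needed. Apart from that, the identity~\eqref{8eq: To (n;k;c)} is essentially a direct calculation, and unlike the Luo-type identity~\eqref{8eq: T (n;k;c)} in Lemma~\ref{lem: Luo's identity} no subtle rearrangement with the parameters $b$, $w$ is required.
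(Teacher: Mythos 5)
Your proof is correct and follows essentially the same route as the paper's: open the Kloosterman sum, observe that $c$ and $k_{\phantom{1}}^{_{\natural}}$ sharing primes makes $\valpha + k_{\phantom{1}}^{_{\natural}}$ invertible modulo $c$, and substitute $\widebar{\beta} = \valpha + k_{\phantom{1}}^{_{\natural}}$. The paper compresses the final congruence check into ``follows easily,'' whereas you spell it out; that verification is exactly right.
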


\begin{proof}
	By opening the Kloosterman sum, we have 
	\begin{align*}
		T_{\mathrm{o}} (n_1, n_2; k_{\phantom{1}}^{_{\natural}};  c  ) =	\sumx_{\valpha (\mathrm{mod}\, c) }  	e \bigg(  \frac { \valpha + k_{\phantom{1}}^{_{\natural}} } {c}   n_1   + \frac {  k_{\phantom{1}}^{_{\natural}} \widebar{\valpha} + 1 } {c k_{\phantom{1}}^{_{\natural}} } n_2   \bigg) .
	\end{align*}
so the identity in \eqref{8eq: To (n;k;c)} follows easily by the change of variable $\widebar{\beta} = \valpha + k_{\phantom{1}}^{_{\natural}}$. 
\end{proof}


\subsubsection{Preparations for Poisson and Vorono\"i}   

Let 
\begin{align}\label{7eq: k1, k2}
  k_1 = \frac {d_1 h_1} {(d_1 h_1, d_2 h_2)}, \quad k_2 = \frac {d_2 h_2} {(d_1 h_1, d_2 h_2)} .  
\end{align}
Subsequently, we shall only treat  the contribution from those generic terms with $   (c,    k_1 k_2 )    = 1 $ (see Lemma \ref{lem: Luo's identity})   
so as to ease the burden of notation in \eqref{8eq: c1, c2}--\eqref{8eq: split T}. Note that the $T_{\mathrm{o}}$ exponential sum is not hard to handle in view of Lemma \ref{lem: To}. Actually, the reader might as well proceed with the most typical case $\boldsymbol{d}=\boldsymbol{h} = \boldsymbol{1}$, $\boldsymbol{f} = \boldsymbol{m}$. 

Now we  split the $c$-sum in \eqref{8eq: O++} into $b$- and $w$-sums by  Luo's  identity \eqref{8eq: T (n;k;c)} in Lemma \ref{lem: Luo's identity} for the exponential sum in \eqref{8eq: defn T}. 
Our next step is to apply Poisson and Vorono\"i  summation to the $w$- and $n_1$-sums. 


More explicitly, let us  write simply $ \SO^{_{++}}_{2} = \SO_2^{_{++}} (\boldsymbol{d}, \boldsymbol{f},    \boldsymbol{h}; \boldsymbol{N} ) $, then it follows from  \eqref{8eq: O2++}--\eqref{8eq: rho}, and \eqref{8eq: T (n;k;c)} that its generic part is equal to
\begin{align}\label{7eq: O2++}
\SO^{\natural}_{2} =	  T^{ \vepsilon}   \sum_{n_2 \sim N_2'  }   a(d_2n_2) \varww \bigg(  \frac {n_2} {N_2'} \bigg)   \sideset{}{^{{\natural}}}   \sum_{b \shskip \Lt C} \frac {\varPi T} {b}    \int_{-\varPi^{\vepsilon}/ \varPi}^{\varPi^{\vepsilon}/ \varPi}   g (\varPi r) e ( Tr/\pi)    
	{\SO}^{\natural}_{2} (r; b ; n_2 ) \nd r , 
\end{align} where, by the Hecke relation  \eqref{3eq: Hecke, Q}, the partial sum ${\SO}^{\natural}_{2} (r; b)$ (here $n_2$ is not essential (so is $d$ below in \eqref{8eq: O2}) and hence omitted from the notation) that contains $n_1$($= d n$) and $w$ is of the form: 
\begin{align}\label{8eq: O2}
 \SO^{\natural}_{2} (r ; b ) = \sum_{\ d | d_1} \mu (d)  a (d_1/d )  \sum_{ n \sim N } a ( n )  \sideset{}{^{{\natural}}}  \sum_{w \asymp C/b}  \frac { E ( n ;   w) V ( n; w; b) } {w}    \varww  \bigg( r; \frac {n} {N} ;  \frac {b w} {C} \bigg), 
\end{align}  
the {\footnotesize $\natural$} on the $b$- and $w$-sums indicate $(b w, k_1 k_2) = 1$, and 
\begin{align}\label{8eq: parameters}
N_2' = \frac {N_2} {d_2 h_2^2}, \qquad N = \frac {N_1} {d d_1 h_1^2 } , \qquad C = \frac {m_1 N_1} {T d_1 d_2 h_1 h_2},  
\end{align}
\begin{align}\label{7eq: E(n;w)}
	E (n ; w ) =  e \bigg(    \frac {\widebar{b} k_1 d{} f_1 n \overline{w}  }{k_2}  + \frac {\widebar{b} k_2 f_2 n_2 \overline{w} }{k_1}    \bigg),  
\end{align}
\begin{align}\label{7eq: V(n;w;b)}
	V (n; w; b) = \mathop{\sum_{\beta (\mathrm{mod}\, b)}}_{(\beta (w \overline{k}_1 k_2 -   \beta), b) = 1}   e \bigg( \frac {\widebar{\beta} d {} f_1 n + \overline{w \overline{k}_1 k_2 - \beta } f_2 n_2 } {b} \bigg), 
\end{align}
\begin{align}\label{8eq: w (r;x;y)}
	\varww (r; x; y)	=  \varww_{\mathrm{o}} (r; x; y) e 	 (  T x \uprho_1 (r)/ y  ) , \qquad \uprho_1 (r) = \exp (r) -1, 
\end{align}
\begin{align}\label{8eq: w (x;y)}
\varww_{\mathrm{o}} ( r; x; y)	=  \varww (x) \varvv (y) e 	 (  S  \uprho_2 (r)/ y   ), \quad S 
= T \frac {m_2 d_2 h_2^2 n_2} {m_1 N_1 }, \ \ \,  \uprho_2 (r) = \exp (-r) -1 .
\end{align}
Keep in mind that $\varww \in C_c^{\infty} [1,2]$ is $\log T$-inert and $ \varvv \in C_c^{\infty} [ \pi/\sqrt{2}, 2\sqrt{2}\pi]$ is fixed.  
Recall from \eqref{8eq: condition} that
\begin{align}\label{8eq: condition, mN}
	\frac {  m_1 N_1} {  m_2 N_2} > \frac {T} {\varPi}, 
\end{align}
so  for any $|r| \leqslant \varPi^{\vepsilon}/ \varPi$ and $ n_2 \sim N_2' = N_2 / d_2 h_2^2 $, it is easy to verify by   \eqref{8eq: condition, mN} that the exponential factor in \eqref{8eq: w (x;y)} is $T^{\vepsilon}$-inert in both $y$ and $r$ in the sense of Definition \ref{defn: inert}. Similarly,  in the simpler case $|r| \leqslant T^{\vepsilon}/ T$, the  exponential factor in \eqref{8eq: w (r;x;y)} is also $T^{\vepsilon}$-inert and hence will be absorbed into the $T^{\vepsilon}$-inert weight $ \varww_{\mathrm{o}} (r; x; y)$ as well.

\vspace{5pt}

\subsubsection{Application of Poisson Summation} 

By applying to the $w$-sum in \eqref{8eq: O2} the Poisson summation formula of modulus $b k_1 k_2$ as in Lemma \ref{lem: Poisson},  we infer that 
\begin{align}\label{7eq: O2, 2}
 \SO^{\natural}_{2} (r ; b) =     \sum_{\ d | d_1} \frac  {\mu (d)  a (d_1/d )} {b k_1 k_2}	\sum_{w = -\infty}^{\infty} \sum_{ n = 1}^{\infty}  a ( n ) K (n; w ; b)  \hat{\varww} \bigg( r;  \frac {n} {N};  \frac {C w   } {b^2 k_1k_2} \bigg), 
\end{align}
where
\begin{align}\label{7eq: K(n;w;b)}
	K (n; w ; b) = \mathop{\sum_{\valpha (\mathrm{mod}  \, b k_1 k_2)  } }_{(\valpha, k_1 k_2) = 1}   E (n ; \valpha ) V (n; \valpha ; b)  e \bigg(     \frac {   \valpha {w}  } {b k_1 k_2}  \bigg), 
\end{align}
\begin{align}
\hat {\varww} (r; x; v) =	\int \varww   (r;  x  ;  y  ) 
e   (-v y)  \frac { \nd y} {y} . 
\end{align}

For the case $ k_1 = k_2 = 1$ the next lemma is essentially \cite[Lemma 3.1]{Iwaniec-Li-Ortho}. 
\begin{lem} We have 
	\begin{align}\label{7eq: Kloosterman}
	K (n; w ; b) =	S (d {}  f_1 n,    w;   b k_2) S (f_2 n_2,   w; b  k_1)   . 
	\end{align}
\end{lem}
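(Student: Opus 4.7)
The plan is to prove this identity by invoking the Chinese remainder theorem on the modulus $bk_1k_2$, which factors into the coprime parts $b, k_1, k_2$ by the standing hypothesis $(b w, k_1k_2)=1$, so as to split $K(n;w;b)$ into three independent pieces, and then to recognise each piece via the twisted multiplicativity of Kloosterman sums, $S(m,n;c_1c_2) = S(\overline{c_2}m, \overline{c_2}n; c_1)\cdot S(\overline{c_1}m, \overline{c_1}n; c_2)$ for $(c_1,c_2)=1$.

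Under the CRT identification $\valpha \pmod{bk_1k_2}\leftrightarrow(\valpha_b,\valpha_1,\valpha_2)\in\mathbb{Z}/b\times(\mathbb{Z}/k_1)^{\star}\times(\mathbb{Z}/k_2)^{\star}$ (with the coprimality condition $(\valpha, k_1k_2)=1$ translating into $(\valpha_i,k_i)=1$), the phase $e(\valpha w/(bk_1k_2))$ splits by partial fractions into three factors, one modulo each of $b, k_1, k_2$. Inspecting \eqref{7eq: E(n;w)} and \eqref{7eq: V(n;w;b)}, I observe that $V(n;\valpha;b)$ depends only on $\valpha_b$, while the two summands of $E(n;\valpha)$ separate into factors depending only on $\valpha_2$ (the term with denominator $k_2$) and only on $\valpha_1$ (the term with denominator $k_1$). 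Consequently $K(n;w;b)$ factorises as a product $\Sigma_b\cdot\Sigma_{k_1}\cdot\Sigma_{k_2}$.

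The two mod-$k_i$ factors are already of Kloosterman form after the elementary substitution $\valpha_i\mapsto u_i\valpha_i$ for suitable units $u_i$, yielding $\Sigma_{k_2}=S(\bar b\,df_1n,\bar b w;k_2)$ and $\Sigma_{k_1}=S(\bar b\,f_2n_2,\bar b w;k_1)$; by twisted multiplicativity, these are exactly the mod-$k_i$ ingredients of the right-hand side. The heart of the proof is the evaluation of $\Sigma_b=\sum_{\valpha_b(\mathrm{mod}\,b)}V(n;\valpha_b;b)\,e(\valpha_b w\overline{k_1k_2}/b)$: I will unfold $V$ and perform the change of variables $\gamma:=\valpha_b\bar k_1k_2-\beta\pmod b$, which is a bijection between $(\valpha_b,\beta)$ satisfying the coprimality conditions and unit pairs $(\beta,\gamma)\in((\mathbb{Z}/b)^{\star})^2$, with $\valpha_b=(\beta+\gamma)k_1\bar k_2$. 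Using $k_1\bar k_2\cdot\overline{k_1k_2}\equiv\bar k_2^{2}\pmod b$, the exponential decouples into independent pieces in $\beta$ and $\gamma$, exhibiting $\Sigma_b$ as a product of two Kloosterman sums mod $b$.

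Finally, matching the Kloosterman-sum arguments against the CRT decomposition of $S(df_1n,w;bk_2)\cdot S(f_2n_2,w;bk_1)$ and invoking twisted multiplicativity in reverse yields the claimed identity. The chief obstacle is the algebraic bookkeeping in the $\Sigma_b$ step: one must keep meticulous track of the multiplicative twists by $k_1,\bar k_1,k_2,\bar k_2$ generated by the change of variables, and then apply elementary Kloosterman-sum substitutions of the form $\valpha\mapsto u\valpha$ to normalise the arguments so that they align precisely with the mod-$b$ factors predicted by the twisted-multiplicativity decomposition of the right-hand side.
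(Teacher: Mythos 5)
Your proposal is correct and takes essentially the same route as the paper's proof: a Chinese-remainder split of the $\valpha$-sum into mod-$b$, mod-$k_1$, mod-$k_2$ pieces, direct recognition of the two mod-$k_i$ factors as Kloosterman sums, the change of variables $\gamma=\valpha_b\overline{k}_1k_2-\beta$ (in the paper's normalisation, $\gamma=\valpha_{\mathrm{o}}k_2^2-\beta$) to decouple the mod-$b$ double sum into a product $S(d f_1 n,\overline{k}_2^2 w;b)\,S(f_2n_2,\overline{k}_2^2 w;b)$, and a final recombination of the four Kloosterman sums into the two mod-$bk_2$ and mod-$bk_1$ sums via twisted multiplicativity. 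The only difference is notational normalisation of the $b$-component unit twist.
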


\begin{proof} By \eqref{7eq: E(n;w)}, \eqref{7eq: V(n;w;b)}, and \eqref{7eq: K(n;w;b)}, it follows from the Chinese remainder theorem that
	\begin{align*}
	K (n; w ; b) =	\sumx_{\valpha_1 (\mathrm{mod}  \,   k_2  ) }   e \bigg(    \frac {   \widebar{\valpha}_1 \widebar{b}^2   d{} f_1 n  + \valpha_1 {w}  }{k_2}  \bigg) 
		\sumx_{\valpha_2 (\mathrm{mod}  \,   k_1  ) }     e \bigg(      \frac {  \widebar{\valpha}_2 \widebar{b}^2   f_2 n_2  + \valpha_2 {w}  }{k_1}\bigg)     & \\
		\mathop{\mathop{\sum \sum}_{\valpha_{\mathrm{o}}, \, \beta (\mathrm{mod}\, b)}}_{(\beta (\valpha_{\mathrm{o}}    k_2^2 -    \beta), b) = 1}   e \bigg(    \frac {  \widebar{\beta} d {} f_1 n + \overline{\valpha_{\mathrm{o}}  k_2^2 - \beta } f_2 n_2 +  \valpha_{\mathrm{o}} {w} } {b} \bigg) & . 
	\end{align*} 
By the change $\gamma = \valpha_{\mathrm{o}}  k_2^2 - \beta$, the second line reads
\begin{align*}
	\mathop{\sumx \sumx}_{\beta, \gamma (\mathrm{mod}\, b)} e \bigg( \frac { \widebar{\beta} d {} f_1 n + \widebar{\gamma}  f_2 n_2 + (\beta + \gamma) \widebar{k}_2^2 w } { b } \bigg). 
\end{align*}
Consequently,  
\begin{align*}
K (n; w ; b) =	S (\widebar{b}^2  d {}  f_1 n,    w;    k_2) S (\widebar{b}^2  f_2 n_2,    w;   k_1) S(d {}  f_1 n , \widebar{k}_2^2 w;    b)  S(f_2 n_2, \widebar{k}_2^2 w;   b). 
\end{align*}
Finally  \eqref{7eq: Kloosterman} follows from the Chinese remainder theorem (or the multiplicativity of Kloosterman sums). 
\end{proof}

\subsubsection{Application of Vorono\"i Summation} 

Set 
\begin{align}\label{7eq: dx, bx}
b_{\mathrm{o}} = (b k_2, d{} f_1), \qquad b_{\star} = \frac {b k_2} { b_{\mathrm{o}} }, \quad 	f_{\star} = \frac {d {} f_1} {b_{\mathrm{o}}} . 
\end{align}Since $(m_1m_2 h_1 h_2, q) = 1$,  we have $(b_{\star}, q) = (b, q)$ in view of \eqref{7eq: k1, k2}. Write
\begin{align}\label{7eq: qb}
	q_b = \frac {q} {(b, q)}. 
\end{align} By opening the Kloosterman sum $ S (d {}  f_1 n,    w;   b k_2) $ in \eqref{7eq: Kloosterman} and applying to the $n$-sum in \eqref{7eq: O2, 2} the Vorono\"i summation formula of modulus $b_{\star} $ as in Lemma \ref{lem: Voronoi},  we infer that
\begin{align}
	\SO^{\natural}_{2} (r ; b) =  N   \sum_{\, d | d_1} \frac  {\mu (d q_b)  a (d_1 q_b /d   )} {b b_{\star} k_1 k_2}  	\sum_{w = -\infty}^{\infty} \sum_{ n = 1}^{\infty}  a ( n ) R (n; w ; b)  \check{\hat{\varww} }\bigg( r;  \frac {N n} {b_{\star}^2 q_b};  \frac {C w   } {b^2 k_1k_2} \bigg), 
\end{align}
with 
\begin{align}\label{7eq: R(n;w;b)}
	R (n; w; b) = S   (w - \widebar{f}_{\star} \widebar{q}_b b_{\mathrm{o}}  n , 0; b k_2  ) S (f_2 n_2,   w; b  k_1),  
\end{align}
\begin{align}\label{7eq: Fourier-Hankel}
	\check{\hat {\varww}} (r; u; v) = 2\pi i^{2k}	\iint \varww   (r;  x  ;  y  ) 
	e   (-v y) J_{2k-1} (4\pi \sqrt{u x} )  \frac {\nd x \nd y } {y} . 
\end{align}
By an application of the formula \eqref{3eq: Ramanujan} to the Ramanujan sum $ S (w - \widebar{f}_{\star} \widebar{q}_b b_{\mathrm{o}}  n , 0; b k_2  )  $ as in \eqref{7eq: R(n;w;b)}, we rewrite 
\begin{align}\label{7eq: O2, 3}
\begin{aligned}
		\SO^{\natural}_{2} (r ; b) = N &  \sum_{\ d | d_1} \frac  {   \mu (d q_b)    a (d_1 q_b /d   )} {b b_{\star} k_1 k_2}   \sum_{\ g | bk_2  }   g \mu (bk_2/ g)	\\
		& \cdot  \sum_{ n  }   a ( n ) \sum_{w \equiv \widebar{f}_{\star} \widebar{q}_b b_{\mathrm{o}}  n (\mathrm{mod}\, g ) }  S (f_2 n_2,   w; b  k_1)  \check{\hat{\varww} }\bigg( r;  \frac {N n} {b_{\star}^2 q_b};  \frac {C w   } {b^2 k_1k_2} \bigg) .  
\end{aligned}
\end{align}

\vspace{5pt}

\subsubsection{Split of Integration}  \label{sec: split integral}


Let us truncate the $r$-integral at $|r| = T^{\vepsilon}/ T$ and apply a smooth dyadic partition on the larger part of the integral domain.  It follows from the identities \eqref{7eq: O2++} and \eqref{7eq: O2, 3}, the Weil bound \eqref{3eq: Weil}, and the Deligne bound \eqref{3eq: Deligne}  that 
\begin{align}\label{7eq: O2}
\begin{aligned}
	\SO^{\natural}_{2}	\Lt   T^{ \vepsilon} \sum_{n_2 \sim N_2'  }  \sideset{}{^{{\natural}}} \sum_{b \shskip \Lt C} \sum_{\ d | d_1}  \sum_{  g | bk_2  } 
	 \frac {\varPi T N  g \sqrt{bk_1}} {b^2  b_{\star} k_1 k_2}  \Bigg\{  & \int_{- T^{\vepsilon}/ T}^{T^{\vepsilon}/ T}     \mathrm{S} (b, g; d; n_2; r)    \nd r \\
	 &  \ \    + \max_{ T^{\vepsilon}/ T \Lt \tau \shskip \Lt \varPi^{\vepsilon}/ \varPi } 	 \mathrm{S}_{  \tau } (b, g; d; n_2 )  \Bigg \}     , 
\end{aligned}
\end{align}
for $\tau$ dyadic,  where, if  $d$ and $n_2$ are omitted, 
\begin{align}\label{7eq: S(b,g;r)}
	\mathrm{S} (b, g; r) = \mathop{\mathop{\sum \sum}_{n, \, w} }_{w \equiv \widebar{f}_{\star} \widebar{q}_b b_{\mathrm{o}} n (\mathrm{mod}\, g ) }  \sqrt{(w, bk_1)}   \bigg|\check{\hat{\varww} }\bigg( r;  \frac {N n} {b_{\star}^2 q_b};  \frac {C w   } {b^2 k_1k_2} \bigg)\bigg|, 
\end{align}
\begin{align}
\mathrm{S}_{\tau} (b, g) =  \sum_{\pm}   \mathop{\mathop{\sum \sum}_{n, \, w} }_{w \equiv \widebar{f}_{\star} \widebar{q}_b b_{\mathrm{o}} n (\mathrm{mod}\, g ) }  \sqrt{(w, bk_1)}  \bigg| \check{\hat{\varww} }_{\tau}^{\scriptscriptstyle \pm} \bigg(  \frac {N n} {b_{\star}^2 q_b};  \frac {C w   } {b^2 k_1k_2} \bigg)\bigg|, 
\end{align}
with 
\begin{align}\label{7eq: w(u;v)}
	 \check{\hat{\varww} }_{\tau}^{\scriptscriptstyle \pm} (u; v) = 2\pi i^{2k}	\iiint e (Tr/\pi) \varvv_{\scriptscriptstyle \pm} (r / \tau) \varww   (r;  x  ;  y  )  
	 J_{2k-1} (4\pi \sqrt{u x} ) e   (-v y)   \frac {\nd r \nd x \nd y } {y}, 
\end{align}
for a certain inert weight $\varvv_{\scriptscriptstyle \pm} \in C_c^{\infty} (\pm [  1,   2])$.

Note that neither  $d$ nor $n_2$ will play a role in our later analysis---indeed  $d$ was hidden in $N$, $b_{\mathrm{o}}$, $b_{\star}$, and $f_{\star}$ (see \eqref{8eq: parameters} and \eqref{7eq: dx, bx}), while $n_2$ was in the $T^{\vepsilon}$-inert weight $\varww_{\mathrm{o}} ( r; x; y) $ in \eqref{8eq: w (x;y)} and the Kloosterman sum $S (f_2 n_2,   w; b  k_1)$ in \eqref{7eq: O2, 3} which has just been estimated by the Weil bound. 

\vspace{5pt}

\subsubsection{Bound for $ \mathrm{S} (b, g; r)  $}

We have a simple lemma for the Fourier--Hankel transform $\check{\hat {\varww}} (r; u; v)$ defined as in  \eqref{7eq: Fourier-Hankel}. 

\begin{lem}\label{lem: w (r;u;v)}
	For any $|r| \leqslant T^{\vepsilon} / T$, we have $ \check{\hat {\varww}} (r; u; v) = O (T^{-A})$ for any $A \geqslant 0$ unless $u,  |v| \leqslant T^{\vepsilon} $. 
\end{lem}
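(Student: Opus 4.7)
The plan is to estimate the Fourier--Hankel double integral \eqref{7eq: Fourier-Hankel} by performing repeated integration by parts separately in $y$ and in $x$. Before doing so, I would record that in the regime $|r| \leqslant T^{\vepsilon}/T$, $x \in [1,2]$, $y \asymp 1$, both oscillatory factors $e(T x \uprho_1(r)/y)$ and $e(S \uprho_2(r)/y)$ coming from \eqref{8eq: w (r;x;y)}--\eqref{8eq: w (x;y)} descend to $T^{\vepsilon}$-inert weights in $(x,y)$. Indeed $T \uprho_1(r) = T r + O(T r^2) \Lt T^{\vepsilon}$, while the standing hypothesis \eqref{8eq: condition, mN} yields $S \Lt T \cdot m_2 N_2/(m_1 N_1) \Lt \varPi \leqslant T$, so that $S \uprho_2(r) \Lt \varPi T^{\vepsilon}/T \Lt T^{\vepsilon}$. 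Consequently all $(x,y)$-derivatives of these two phases are $O(T^{\vepsilon})$, and $\varww(r;x;y)/y$ may be treated as a $T^{\vepsilon}$-inert amplitude of modulus $O(1)$.

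Given this, I would deal with the $y$-integral first. Grouping all $y$-oscillation into a single phase $\psi(y) = -v y + (T x \uprho_1(r) + S \uprho_2(r))/y$ yields $\psi'(y) = -v + O(T^{\vepsilon})$ and $\psi^{(n)}(y) = O(T^{\vepsilon})$ for $n \geqslant 2$ on the support of $\varvv$. Hence once $|v| > T^{2\vepsilon}$, $|\psi'(y)|$ is of size $|v|$ throughout, and $N$-fold integration by parts against the inert amplitude gains a factor $(T^{\vepsilon}/|v|)^N$. Taking $N$ large and relabelling $\vepsilon$ shows that $\check{\hat{\varww}}(r;u;v) = O(T^{-A})$ unless $|v| \Lt T^{\vepsilon}$.

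Next I would turn to the $x$-integral. The case $u \Lt T^{\vepsilon}$ requires no argument, so assume $u > T^{2\vepsilon}$. On the support $x \in [1,2]$ the argument $4\pi\sqrt{ux}$ is large, so \eqref{2eq: Bessel, Whittaker} splits $J_{2k-1}(4\pi\sqrt{ux})$ into two pieces with phases $\pm 4\pi\sqrt{ux}$ and prefactor $(8\pi^{2}\sqrt{ux})^{-1/2} W_{2k-1, \scriptscriptstyle \pm}(4\pi\sqrt{ux})$. By \eqref{2eq: bounds for Whittaker functions} the Whittaker factor and all its $x$-derivatives are uniformly $O(1)$ on $x \asymp 1$, and combined with $\varww(x) e(T x \uprho_1(r)/y)$ this leaves a $T^{\vepsilon}$-inert amplitude. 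The phase derivative $\pm 2\pi\sqrt{u/x}$ has size $\asymp \sqrt{u}$ with higher derivatives $O(\sqrt{u})$, so for $\sqrt{u} > T^{\vepsilon}$ repeated integration by parts saves $(T^{\vepsilon}/\sqrt{u})^N$ and forces the $x$-integral to be $O(T^{-A})$; a final relabelling of $\vepsilon$ yields the claimed threshold $u \Lt T^{\vepsilon}$.

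The main point requiring care is the book-keeping verification that the two exponential phases really are $T^{\vepsilon}$-inert on the joint support, which is precisely where the running hypothesis \eqref{8eq: condition, mN} enters via $S \Lt \varPi$. Everything else is a routine application of iterated integration by parts.
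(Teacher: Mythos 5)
Your proposal is correct and takes essentially the same approach as the paper: verify $T^{\vepsilon}$-inertness of $\varww(r;x;y)$ using \eqref{8eq: condition, mN} to control $S$, split $J_{2k-1}$ via \eqref{2eq: Bessel, Whittaker}--\eqref{2eq: bounds for Whittaker functions}, and gain arbitrary savings by repeated integration by parts in each variable. The paper merely packages the two integrations by parts into a single citation of Lemma \ref{lem: integral, 0}, whereas you spell them out one variable at a time.
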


\begin{proof}
As noted below \eqref{8eq: condition, mN}, the weight $\varww   (r;  x  ;  y  )$ is $T^{\vepsilon}$-inert in $x$ and $y$ for any $|r| \leqslant T^{\vepsilon}/ T$. 	Thus, in view of \eqref{2eq: Bessel, Whittaker} and \eqref{2eq: bounds for Whittaker functions},  Lemma \ref{lem: integral, 0} implies that $\check{\hat {\varww}} (r; u; v) $ is negligibly small if either $ u > T^{\vepsilon}$ or $ |v| > T^{\vepsilon} $. 
\end{proof}

By Lemma \ref{lem: w (r;u;v)},  the $n$- and $w$-sums in \eqref{7eq: S(b,g;r)} may be truncated effectively, and, as trivially $\check{\hat {\varww}} (r; u; v) = O (1)$,  it follows  that 
\begin{align}\label{7eq: bound for S(b,g;r)}
	 \mathrm{S} (b, g; r) \Lt  \mathop{ \sum_{n \Lt T^{\vepsilon} b_{\star}^2  / N} \sum_{|w| \Lt T^{\vepsilon} b^2 k_1 k_2/ C }   }_{w \equiv \widebar{f}_{\star} \widebar{q}_b b_{\mathrm{o}} n (\mathrm{mod}\, g ) }  \sqrt{(w, bk_1)}  \Lt T^{\vepsilon} \frac {b_{\star}^2  } {N} \bigg(1 + \frac {b^2 k_1 k_2} {C g} \bigg). 
\end{align}

\vspace{5pt}

\subsubsection{Bound for $ \mathrm{S}_{\tau} (b, g)  $} 
By \eqref{8eq: w (r;x;y)}, \eqref{8eq: w (x;y)}, and \eqref{7eq: w(u;v)}, 
we write  
\begin{align}\label{7eq: w(u;v), 2}
	\check{\hat{\varww} }_{\tau}^{\scriptscriptstyle \pm} (u; v) = \int  \varvv (y) \check{\varww} _{\tau}^{\scriptscriptstyle \pm} (u; y ) e   (-v y)  \frac {\nd y} {y} ,
\end{align}
with 
\begin{align}
	 \check{\varww} _{\tau}^{\scriptscriptstyle \pm} (u; y  )= 2\pi i^{2k}	\iint    \varww_{\tau}^{\scriptscriptstyle \pm} (r; x; y) 
	e 	 ( Tr/\pi + T x \uprho_1 (r)/ y  ) J_{2k-1} (4\pi \sqrt{u x} )      {\nd r \nd x  }  ,
\end{align} 
where $\varww_{\tau}^{\scriptscriptstyle \pm} (r; x; y) $ is  $T^{\vepsilon}$-inert for all $r$, $x$, and $y$ on $\pm [\tau, 2\tau]$, $[1, 2]$, and $[\pi/\sqrt{2}, 2\sqrt{2} \pi]$; more explicitly, 
\begin{align*} 
	\varww_{\tau}^{\scriptscriptstyle \pm} (r; x; y) = \varvv_{\scriptscriptstyle \pm } (r / \tau)  \varww (x) e 	 (  S  \uprho_2 (r)/ y   )  .
\end{align*} 
For  $e 	 (  S  \uprho_2 (r)/ y   )$ see the remarks below \eqref{8eq: condition, mN}. 

\begin{lem}	\label{lem: w (u;v)}
	Assume $T^{\vepsilon} / T \Lt \tau \Lt  T^{\vepsilon} / \sqrt{T}$. 
	Then 
	
	{\rm\,(i)}  $ \check{\varww} _{\tau}^{\scriptscriptstyle +} (u; y  )$ is always  negligibly small,  
	
	{\rm(ii)} $ \check{\varww} _{\tau}^{\scriptscriptstyle -} (u; y  )$ is negligibly small  unless   $ \sqrt{u} \asymp T \tau$, in which case  
	\begin{align}\label{7eq: w(u;y)}
		 \check{\varww} _{\tau}^{\scriptscriptstyle -} (u; y  ) =  \frac { \varvv_{\flat} (u; y) } {T \sqrt{T \tau} } e (- 2 \sqrt{uy/ \pi } ) ,
	\end{align}
for a certain $T^{\vepsilon}$-inert function $  \varvv_{\flat} (u; y) $. 
\end{lem}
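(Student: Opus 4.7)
The plan is to expand $J_{2k-1}(4\pi\sqrt{ux})$ via the Whittaker expansion (2.7), writing $\check\varww_\tau^\pm(u;y)$ as a sum of four oscillatory double integrals of the form
\[
\iint g_{\sigma}(r,x,y)\,e\bigl(\sigma\cdot 2\sqrt{ux}+Tr/\pi+Tx(e^r-1)/y+S(e^{-r}-1)/y\bigr)\,dr\,dx,
\]
where $\sigma\in\{+,-\}$ indexes the Bessel piece, the amplitudes $g_{\sigma}$ are $T^{\vepsilon}$-inert, and $\pm$ refers to the sign of $\varvv_{\pm}$. A direct computation of $\partial_{x}\Phi=\sigma\sqrt{u/x}+T(e^{r}-1)/y$ shows that an $x$-critical point $x_0=uy^2/(T(e^r-1))^2$ lies inside the support $[1,2]$ of $\varww$ only when $\sigma\cdot\operatorname{sign}(r)<0$ and $\sqrt{u}\asymp T\tau$. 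In every other case, one has $|\partial_{x}\Phi|\gg \sqrt{u/x}+T|e^r-1|/y$ on the support, and repeated integration by parts in $x$ (via Lemma 4.1 applied with the $\log T$-inertness) produces arbitrarily negligible contributions.

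For case (i) the admissible pairing is $(\sigma,\pm)=(-,+)$, i.e.\ Bessel phase $-2\sqrt{ux}$ paired with $r>0$; for case (ii) it is $(+,-)$. In each case I would apply one-dimensional stationary phase in $x$ at $x_0(r)$ using $|\Phi_{xx}|\asymp\sqrt{u}\asymp T\tau$ (Lemma 4.3). Combined with the Whittaker amplitude $(4\pi\sqrt{ux_0})^{-1/2}\asymp (T\tau)^{-1/2}$ this leaves a one-dimensional integral in $a=|r|\in[\tau,2\tau]$ of amplitude $(T\tau)^{-1}$ with phase
\[
\Phi^{\sharp}_{\pm}(a)=\pm\frac{uy}{T(1-e^{-a})}\mp\frac{Ta}{\pi}+\frac{S(e^{\mp a}-1)}{y},
\]
the upper (lower) signs corresponding to case (ii) (case (i) respectively).

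In case (i), all three terms of $\partial_{a}\Phi^{\sharp}_{-}(a)=uy\shskip e^{-a}/(T(1-e^{-a})^{2})+T/\pi-Se^{-a}/y$ have the same sign at leading order (the first two $\asymp T$, the third $\Lt S\leqslant\varPi\Lt T$), so $|\partial_{a}\Phi^{\sharp}_{-}|\asymp T$ uniformly on the support. Since the residual amplitude is $\tau$-inert and $T\tau\geqslant T^{\vepsilon}$, repeated integration by parts saves $(T\tau)^{-n}$ per $n$ steps and yields the stated negligibility. For case (ii), the key observation is the AM--GM identity
\[
\frac{uy}{Ta}+\frac{Ta}{\pi}\geqslant 2\sqrt{uy/\pi},\qquad \text{equality at } a_{0}=\sqrt{\pi uy}/T,
\]
which, under the assumption $\sqrt{u}\asymp T\tau$, places $a_{0}$ inside or just outside $[\tau,2\tau]$. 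Expanding $\Phi^{\sharp}_{+}$ around $a_{0}$ via $1/(1-e^{-a})=1/a+1/2+a/12+O(a^{3})$ and writing $a=a_{0}(1+\delta)$, one finds $\Phi^{\sharp}_{+}=-2\sqrt{uy/\pi}\shskip\delta+O(\delta^{2})\sqrt{uy/\pi}+\text{(lower-order }S\text{-terms)}$, so the near-cancellation of the leading phase at $a_{0}$ together with the second derivative $|\Phi^{\sharp}_{+}{}''|\asymp T/\tau$ yields, after applying Lemma 4.3, a genuine stationary-phase contribution of size $\sqrt{\tau/T}$ with phase value $-2\sqrt{uy/\pi}$. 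Multiplying by the $(T\tau)^{-1}$ factor from Stage 2 gives the claimed $1/(T\sqrt{T\tau})$.

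The main obstacle will be Stage 3 for case (ii). The first derivative $\partial_{a}\Phi^{\sharp}_{+}$ has no real zero and is bounded below by $\asymp T$ uniformly on $[\tau,2\tau]$, so naive integration by parts would merely reproduce arbitrary negligibility rather than the asserted $1/(T\sqrt{T\tau})$ term; the correct extraction of this level requires exploiting the large second derivative $\partial_{a}^{2}\Phi^{\sharp}_{+}\asymp T/\tau$ arising from the $2uy/(Ta^{3})$ contribution, which in turn forces a careful stationary-phase/steepest-descent-type treatment along the $x$-saddle curve, in the spirit of the cubic-moment analysis of Li--Young (equations and Lemmas 4.3--4.4 in the appendix). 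One must also verify that the $S$-terms contribute only a bounded inert perturbation $\varvv_{\flat}(u;y)$, which follows by tracking derivatives in $u,y$ and using $S\Lt\varPi$ together with $\tau\leqslant T^{\vepsilon}/\sqrt{T}$, so each derivative in the outer variables costs at most $T^{\vepsilon}$.
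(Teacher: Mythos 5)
The overall skeleton (Whittaker expansion of $J_{2k-1}$, iterated stationary phase in $x$ then $r$, tracking the $S$-term as a lower-order inert perturbation) is the same approach the paper takes — the paper does the bookkeeping more cleanly by the change-of-variable rescaling of Lemma~\ref{lem: analysis of integral}, but the mechanism is identical. Your observations that the $x$-critical point lands in $[1,2]$ only when $\sigma\cdot\operatorname{sign}(r)<0$ and $\sqrt u\asymp T\tau$, and that the $S$-terms enter only as $T^{\vepsilon}$-inert perturbations of the amplitude, are correct.

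However, there is a genuine gap in Stage~3 for case~(ii), and it is fatal as written. You compute the post-stationary $r$-phase as
\[
\Phi^{\sharp}_{+}(a)=\frac{uy}{T(1-e^{-a})}-\frac{Ta}{\pi}+\cdots,
\]
whose derivative $-\,uy\shskip e^{-a}/\bigl(T(1-e^{-a})^{2}\bigr)-T/\pi+\cdots$ has both leading terms of the \emph{same} (negative) sign, hence $|\Phi^{\sharp\prime}_{+}|\asymp T$ uniformly on $a\sim\tau$. You acknowledge this, but the remedy you propose — ``exploiting the large second derivative $\Phi^{\sharp\prime\prime}_{+}\asymp T/\tau$'' to extract a genuine $1/(T\sqrt{T\tau})$ contribution — is not a valid principle. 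When the first derivative never vanishes and is $\gg T$ while the amplitude is $\tau$-inert, repeated integration by parts gains a factor $\asymp 1/(T\tau)\leqslant T^{-\vepsilon}$ at every step, so the integral is $O_{A}(T^{-A})$, full stop. A large second derivative cannot resurrect a contribution that integration by parts has already killed. If $\Phi^{\sharp}_{+}$ were the correct phase, $\check\varww_{\tau}^{-}$ would be negligibly small, contradicting the statement of the lemma. The AM--GM identity you invoke, $uy/(Ta)+Ta/\pi\geqslant 2\sqrt{uy/\pi}$, applies when the two dominant terms carry the \emph{same} sign, i.e.\ when the phase is $\mp\bigl(uy/(Ta)+Ta/\pi\bigr)$ — precisely the form that does have a critical point at $a_{0}=\sqrt{\pi uy}/T$, second derivative $\asymp T/\tau$, and stationary value $\mp 2\sqrt{uy/\pi}$ — not when they carry opposite signs as in your $\Phi^{\sharp}_{+}$. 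The sign discrepancy traces back to the cosine expansion in Lemma~\ref{lem: Bessel}: of the two exponentials $e\bigl(\pm f(r;v,w)/(2\pi)\bigr)$ produced by $\cos f$, it is the one that carries the opposite sign to the Bessel phase which gives the non-degenerate $(x,r)$-saddle and the phase value $-2\sqrt{uy/\pi}$; the piece you computed (cosine-$+$ together with Bessel-$+$ when $r<0$) has $\partial_{r}\Psi\gg T$ along the whole $x$-saddle curve and is genuinely negligible. Once the sign is set right, the one-dimensional stationary-phase step (Lemma~\ref{lem: analysis of integral} or a direct application of Lemma~\ref{lem: 2nd derivative}) gives $\sqrt{\tau/T}$, and combined with the $1/(T\tau)$ amplitude from the $x$-stage this yields the asserted $1/(T\sqrt{T\tau})$.
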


\begin{proof}
	This lemma follows by  applying twice Lemma  \ref{lem: integral, 0}. For notational simplicity, we shall only display the  integration variables in those inert weights. 
	
	For $\pm r \sim  \tau$, let us first consider the $x$-integral:
	 \begin{align*}
	 \check{\varww} (r; u; y ) = 2\pi i^{2k}	\int_1^2 \varww (x)  e 	 (  T x \uprho_1 (r)/ y  ) J_{2k-1} (4\pi \sqrt{u x} ) \nd x. 
	 \end{align*}  
 Since $ \pm T   \uprho_1 (r)/ y \asymp T \tau \Gt T^{\vepsilon} $, it follows from \eqref{3eq: bounds for Bessel, x<1} and  Lemma  \ref{lem: analysis of integral} that the integral is negligibly small if $ u \Lt 1 $. 
 By  \eqref{2eq: Bessel, Whittaker} and \eqref{2eq: bounds for Whittaker functions}, in the case $u \Gt 1$, we split the integral into  
 \begin{align*}
 \check{\varww} (r; u; y ) =\sum_{\pm}   \frac 1 { \sqrt[4]{u} }	\int _1^2  \varww_{\scriptscriptstyle \pm }  (x) e 	 \bigg(   \frac{T  \uprho_1 (r)} {y} x  \pm 2 \sqrt{u x} \bigg)  \frac { \nd x } {\sqrt[4]{x}} ,
 \end{align*}
for certain weights $\varww_{\scriptscriptstyle \pm } (x) = \varww_{\scriptscriptstyle \pm } (x; u) $ which are still $ \log T$-inert. Set  \begin{align}
	\sqrt{\rho_{\natural}}  = \pm  \frac {T  \uprho_1(r) }  {\sqrt{u} y} , \qquad \lambda_{\natural} =  \pm \frac {uy} {T \uprho_1 (r)} , 
\end{align}  and make the change   $ {x} \ra  x  / \rho_{\natural}    $, it follows that 
\begin{align*}
 \check{\varww} (r; u; y ) =	\sum_{\pm}   \frac {{\textstyle \lambda_{\natural} \sqrt{\lambda_{\natural} }} } { {u} }	\int _{  \rho_{\natural} }^{2 \rho_{\natural} }  \varww_{\scriptscriptstyle \pm }(x/ \rho_{\natural}  ) e 	  ( \lambda_{\natural}  (  x  \pm   2 \sqrt{  x})  )  \frac { \nd x } {\sqrt[4]{x}} . 
\end{align*}
By Lemma \ref{lem: analysis of integral}, we infer that $  \check{\varww}  (r; u ; y ) $ is negligibly small unless 
\begin{align}\label{7eq: range of u}
	  \sqrt{u} \asymp T \tau ,   
\end{align}
in which case $$  \check{\varww} (u; r; y )  = \frac {\lambda_{\natural}  \varvv_{\natural} (\lambda_{\natural} )  } {u} e (- \lambda_{\natural} )  , $$
for $ \varvv_{\natural} (\lambda )  = \varvv_{\natural} (\lambda; u ) $ again $\log T$-inert.
Consequently, 
\begin{align*}
 \check{\varww} _{\tau}^{\scriptscriptstyle \pm} (u; y  )  =   \frac {y} {T  } \int_{\pm \tau}^{\pm 2\tau}  
  \varww_{\natural}^{\scriptscriptstyle \pm} \Big( \frac {r} {\tau} \Big)   e \bigg(\frac {T r} {\pi} \mp \frac {uy} {T \uprho_1 (r) }  \bigg) \frac {\nd r} {r} , 
\end{align*}
where $\varww_{\natural}^{\scriptscriptstyle \pm}(r/\tau) $ is   $T^{\vepsilon}$-inert with support on $\pm [\tau, 2\tau ]$;  
more explicitly, 
\begin{align*}
\varww_{\natural}^{\scriptscriptstyle \pm} \Big( \frac {r} {\tau} \Big) = \varvv_{\scriptscriptstyle \pm}  \Big( \frac {r} {\tau} \Big)  e   \bigg(\frac {S  \uprho_2 (r)}  {y}  \bigg)	\cdot   \frac {r } {\uprho_1 (r) }    \varvv_{\tau}^{\natural} \bigg( \hspace{-2pt} \pm \frac {uy} {T \uprho_1 (r)} \bigg). 
\end{align*}
Note that 
\begin{align*}
	\frac 1 {\uprho_1 (r)} = \frac 1 {\exp (r) - 1}  = \frac 1 {r} + O(1), 
\end{align*}
so, in view of \eqref{7eq: range of u}, it is easy to verify that   $ e  \big(   {u y} / {T } \cdot (1/\uprho_1 (r) - 1/r)    \big)$
is $T^{\vepsilon}$-inert provided that $ \tau \Lt T^{\vepsilon}/ \sqrt{T} $. Let us absorb it into $\varww_{\natural}^{\scriptscriptstyle \pm}  (r / \tau )   $ so that  
\begin{align*}
	\check{\varww} _{\tau}^{\scriptscriptstyle \pm} (u; y  )  =   \frac {y} {T  }    \int_{\pm \tau}^{\pm 2\tau}    \varww_{\natural}^{\scriptscriptstyle \pm}  \Big( \frac {r} {\tau} \Big)   e \bigg(\frac {T r} {\pi} \mp \frac {uy} {T r }  \bigg) \frac {\nd r} {r} . 
\end{align*}
Set
\begin{align}\label{7eq: rho, lambda, flat}
	\rho_{\flat} = \frac {T} {\sqrt{\pi u y}}, \qquad \lambda_{\flat} = \frac {\sqrt{uy}} {\sqrt{\pi}}. 
\end{align}
By the change $r \ra r / \rho_{\flat}$, we obtain  
\begin{align*}
	\check{\varww} _{\tau}^{\scriptscriptstyle \pm} (u; y  )  =   \frac {y} {T} \int_{\pm \tau \rho_{\flat}}^{\pm 2\tau \rho_{\flat} }   
	\varww_{\natural}^{\scriptscriptstyle \pm}  \bigg(  \frac r    {\tau \rho_{\flat}}  \bigg)      e  \bigg(\lambda_{\flat} \bigg(r \mp \frac 1 r \bigg) \bigg) \frac {\nd r} {r} . 
\end{align*}
Finally, the statements for $\check{\varww} _{\tau}^{\scriptscriptstyle \pm} (u; y  ) $ follow directly from Lemma   \ref{lem: analysis of integral}. Note here that $\tau \rho_{\flat} \asymp 1$ and  $ \sqrt{\lambda_{\flat}} \asymp \sqrt{T \tau} $ by \eqref{7eq: range of u} and \eqref{7eq: rho, lambda, flat}. 
\end{proof}

For $ \sqrt{u} \asymp T \tau$,  on inserting \eqref{7eq: w(u;y)} into \eqref{7eq: w(u;v), 2}, we obtain 
\begin{align}
	\check{\hat{\varww} }_{\tau}^{\scriptscriptstyle -} (u; v) = \frac { 1 } {T \sqrt{T \tau} } \int  \varvv (y) \varvv_{\flat} (u; y) e    ( - 2 {  \sqrt{uy/ \pi }} -v y  )  \frac {\nd y} {y} . 
\end{align}

\begin{lem}\label{lem: w(u;v)}
	 Assume  $T^{\vepsilon} / T \Lt \tau \Lt  T^{\vepsilon} / \sqrt{T}$ as in Lemma \ref{lem: w (u;v)}. Then $\check{\hat{\varww} }_{\tau}^{\scriptscriptstyle +} (u; v)$ is negligibly small unless $ \sqrt{u} \asymp |v| \asymp T \tau $, in which case 
	 \begin{align} 
	 	 \check{\hat{\varww} }_{\tau}^{\scriptscriptstyle +} (u; v) \Lt \frac {T^{\vepsilon} } { T^2 \tau }. 
	 \end{align} 
\end{lem}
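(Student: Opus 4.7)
The plan is to reduce \eqref{7eq: w(u;v), 2} to a one-dimensional oscillatory $y$-integral by plugging in the explicit formula from Lemma~\ref{lem: w (u;v)}, and then invoke Lemma~\ref{lem: analysis of integral} once more. By part (i) of Lemma~\ref{lem: w (u;v)} the corresponding $+$-branch from the $(r,x)$-integration is already negligibly small, so only the $-$-branch contributes non-trivially; substituting the formula \eqref{7eq: w(u;y)} from part (ii) of that lemma into \eqref{7eq: w(u;v), 2} gives, under the necessary condition $\sqrt{u}\asymp T\tau$,
\begin{align*}
\frac{1}{T\sqrt{T\tau}} \int \varvv(y)\varvv_{\flat}(u;y)\, e\bigl(-2\sqrt{uy/\pi}-vy\bigr) \frac{\nd y}{y},
\end{align*}
with $\varvv_{\flat}(u;y)$ a $T^{\vepsilon}$-inert weight and $\varvv(y)$ supported on $[\pi/\sqrt{2},2\sqrt{2}\pi]$. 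The prefactor $1/(T\sqrt{T\tau})$ already sits outside, so everything now depends on analyzing this $y$-integral.

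Next I will study the phase $\phi(y)=-2\sqrt{uy/\pi}-vy$, with $\phi'(y)=-\sqrt{u/(\pi y)}-v$ and $\phi''(y)\asymp\sqrt{u}/y^{3/2}\asymp\sqrt{u}\asymp T\tau$ on the support. A stationary point $y_{\star}=u/(\pi v^2)$ lies in the support of $\varvv$ only when $v<0$ and $|v|\asymp\sqrt{u}\asymp T\tau$. In all other cases $|\phi'(y)|\Gt\max\{\sqrt{u},|v|\}\geqslant T\tau\Gt T^{\vepsilon}$ uniformly on the support, so repeated integration by parts via Lemma~\ref{lem: analysis of integral} produces a negligibly small integral; this yields the first assertion. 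When $\sqrt{u}\asymp|v|\asymp T\tau$, the second-derivative test (again via Lemma~\ref{lem: analysis of integral}) gives a bound of size $O(T^{\vepsilon}/\sqrt{|\phi''(y_{\star})|})=O(T^{\vepsilon}/\sqrt{T\tau})$. Multiplying by the prefactor $1/(T\sqrt{T\tau})$ produces precisely the claimed bound $T^{\vepsilon}/(T^2\tau)$.

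The only point that needs care is to track the $T^{\vepsilon}$-inertness of $\varvv_{\flat}(u;y)$ through the stationary-phase analysis. This is routine: all higher derivatives of the phase obey bounds analogous to the one for $\phi''$, and the assumption $T^{\vepsilon}/T\Lt\tau\Lt T^{\vepsilon}/\sqrt{T}$ ensures $T\tau$ is simultaneously $\Gt T^{\vepsilon}$ (allowing genuine non-stationary cancellation) and $\Lt T^{1/2+\vepsilon}$ (so that the second-derivative test is non-trivial). I do not anticipate any essential obstacle; the main thing to get right is the bookkeeping of the two successive applications of Lemma~\ref{lem: analysis of integral}, first to rule out parameter ranges outside $\sqrt{u}\asymp|v|\asymp T\tau$ and then to extract the size $1/\sqrt{T\tau}$ in the stationary regime.
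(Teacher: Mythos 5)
Your proposal is correct and takes the same approach the paper intends: plug the representation \eqref{7eq: w(u;y)} of $\check{\varww}_{\tau}^{\scriptscriptstyle -}(u;y)$ into \eqref{7eq: w(u;v), 2}, pull out the prefactor $1/(T\sqrt{T\tau})$, and analyze the remaining one-dimensional $y$-integral with phase $\phi(y) = -2\sqrt{uy/\pi} - vy$ by non-stationary phase (to force $|v| \asymp \sqrt{u}$, $v<0$) and the second-derivative test with $\phi''(y) \asymp \sqrt{u} \asymp T\tau$. One small remark on attribution: the phase $\phi(y)$ is not literally of the form $\lambda(x \pm \gamma x^{1/\gamma})$ that Lemma \ref{lem: analysis of integral} is formulated for, so the paper cites the general non-stationary phase estimate of Lemma \ref{lem: staionary phase} and the second-derivative test of Lemma \ref{lem: 2nd derivative} directly here; in substance these are exactly the two steps you carry out, so the argument goes through as you describe.
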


\begin{proof}
	This lemma follows easily from Lemmas \ref{lem: staionary phase} and \ref{lem: 2nd derivative}. 
\end{proof}

For $T^{\vepsilon} / T \Lt \tau \Lt  T^{\vepsilon} / \sqrt{T}$, it follows from Lemma \ref{lem: w(u;v)} that 
\begin{align}\label{7eq: bound for S(b,g)}
	\mathrm{S}_{\tau} (b, g ) \Lt \frac {T^{\vepsilon} } { T^2 \tau }  \mathop{ \sum_{n \asymp T^2 \tau^2 b_{\star}^2  / N} \sum_{|w| \asymp T \tau b^2 k_1 k_2/ C }   }_{w \equiv \widebar{f}_{\star} \widebar{q}_b b_{\mathrm{o}} n (\mathrm{mod}\, g ) }   \hspace{-2pt}  \sqrt{(w, bk_1)}  \Lt T^{\vepsilon} \frac {\tau b_{\star}^2  } {  N} \bigg(1 + \frac {T\tau b^2 k_1 k_2} {C g} \bigg). 
\end{align}

\vspace{5pt} 

\subsubsection{Final Estimations}\label{sec: final}  Assume $ T^{1/2} \leqslant M \leqslant T^{1-\vepsilon} $.  It follows from \eqref{7eq: O2}, \eqref{7eq: bound for S(b,g;r)}, and \eqref{7eq: bound for S(b,g)}, along with \eqref{7eq: k1, k2}, \eqref{8eq: parameters}, and \eqref{7eq: dx, bx} (trivially,  $b_{\star} \leqslant b k_2$,  $ k_1 \leqslant d_1 h_1 $, and $k_2 \leqslant d_2 h_2$),  that
\begin{align*}
	\SO^{\natural}_{2} &	\Lt   T^{ \vepsilon} \sum_{n_2 \sim N_2'  }  \sideset{}{^{{\natural}}} \sum_{b \shskip \Lt C}  \sum_{\ d | d_1}  \sum_{  g | bk_2  }
	\frac {\varPi T N  g \sqrt{bk_1}} {b^2  b_{\star} k_1 k_2} \cdot   \frac {  b_{\star}^2  } { \varPi N} \bigg(1 + \frac {T  b^2 k_1 k_2} {\varPi C    g} \bigg) \\
	& \Lt T^{\vepsilon} N_2'  \sum_{b \shskip \Lt C} T b k_2 \bigg(  \frac {1} {\sqrt{b k_1}} + \frac { T \sqrt{bk_1}  } {\varPi C} \bigg) \\
	& \Lt T^{\vepsilon}   \frac { T^2 C^{3/2} N_2'  k_1^{1/2} k_2}  {\varPi} \\
	& \Lt T^{\vepsilon} \frac {T^{1/2} m_1^{3/2} N_1^{3/2} N_2  } {\varPi d_1 d_2^{3/2} h_1 h_2^{5/2} } . 
\end{align*}
Note that we have $N_1 \Lt T^{2+\vepsilon}$ and hence the next lemma.   

\begin{lem}
	 Assume  $ T^{1/2} \leqslant \varPi \leqslant T^{1-\vepsilon} $. Then for $\langle \boldsymbol{m N} \rangle > T/ \varPi $, we have 
	 \begin{align}
	 	\SO_2 (\boldsymbol{d}, \boldsymbol{f},    \boldsymbol{h}; \boldsymbol{N} ) \Lt   T^{\vepsilon} \frac {T^{3/2} m_1^{3/2}  \|\boldsymbol{N} \|  } {\varPi \|\boldsymbol{dh} \| }. 
	 \end{align}
\end{lem}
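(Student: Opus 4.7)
The plan is to follow the pipeline set up in \S\ref{sec: split integral}--\S\ref{sec: final} and assemble the already-prepared pointwise bounds into the target estimate. The argument splits $\SO_2(\boldsymbol{d},\boldsymbol{f},\boldsymbol{h};\boldsymbol{N})$ into a generic contribution $\SO_2^{\natural}$ (coming from moduli $c$ with $(c,k_1k_2)=1$, where Luo's identity \eqref{8eq: T (n;k;c)} applies cleanly), plus a non-generic remainder handled via the factorisation \eqref{8eq: split T}. In either case the Poisson+Vorono\"i analysis of \S\ref{sec: off} produces, after pointwise bounds on the Fourier--Hankel transform, the master expression \eqref{7eq: O2}.

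First I would recall from \eqref{7eq: O2} that, after Poisson in the $w$-variable and Vorono\"i in the $n_1$-variable, the generic piece is controlled by $\mathrm{S}(b,g;r)$ on the short range $|r|\leqslant T^{\vepsilon}/T$ and by $\mathrm{S}_{\tau}(b,g)$ on each dyadic sub-range of $T^{\vepsilon}/T\Lt\tau\Lt\varPi^{\vepsilon}/\varPi$. The hypothesis $\varPi\geqslant T^{1/2}$ is precisely what forces $\tau\Lt T^{\vepsilon}/\sqrt{T}$, putting us in the regime of validity of Lemmas \ref{lem: w (u;v)}--\ref{lem: w(u;v)}. Inserting \eqref{7eq: bound for S(b,g;r)} and \eqref{7eq: bound for S(b,g)} into \eqref{7eq: O2} produces, up to $T^{\vepsilon}$, a sum whose $g\mid bk_2$ and $b\Lt C$ parts collapse via standard divisor estimation. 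Under $\varPi\geqslant T^{1/2}$ the term $1$ inside $\bigl(1+\tfrac{Tb^2k_1k_2}{\varPi Cg}\bigr)$ is dominated by the second summand, leaving the bound $\SO_2^{\natural}\Lt T^{\vepsilon}\, T^2 C^{3/2} N_2' k_1^{1/2} k_2/\varPi$ displayed at the end of \S\ref{sec: final}.

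Second, I would substitute the explicit parameters $C=m_1 N_1/(Td_1d_2 h_1 h_2)$ and $N_2'=N_2/(d_2h_2^2)$ from \eqref{8eq: parameters}, together with $k_1=d_1h_1/(d_1h_1,d_2h_2)$, $k_2=d_2h_2/(d_1h_1,d_2h_2)$ from \eqref{7eq: k1, k2}, using the trivial bound $k_1^{1/2}k_2\leqslant \sqrt{d_1h_1}\, d_2h_2$. A short algebraic rearrangement and the effective truncation $N_1\Lt T^{2+\vepsilon}$ from \eqref{7eq: O2(m)} yield
\[
\SO_2^{\natural}\Lt T^{\vepsilon}\frac{T^{1/2} m_1^{3/2} N_1^{3/2} N_2}{\varPi d_1 d_2^{3/2} h_1 h_2^{5/2}} \Lt T^{\vepsilon}\frac{T^{3/2} m_1^{3/2}\|\boldsymbol{N}\|}{\varPi\|\boldsymbol{dh}\|},
\]
which is the claimed bound.

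The main obstacle I expect is verifying that the non-generic terms (those $c$ sharing prime factors with $k_1k_2$) obey the same estimate. For these I would use the splitting \eqref{8eq: split T} together with the simpler explicit form of $T_{\mathrm{o}}$ from Lemma \ref{lem: To}: the Poisson+Vorono\"i argument runs on the coprime part $c_{\star}$ with a strictly smaller effective modulus, while the pieces coming from $c_1\mid k_1^{_{\natural}}{}^{\infty}$ and $c_2\mid k_2^{_{\natural}}{}^{\infty}$ contribute only extra divisor factors absorbed into $T^{\vepsilon}$. Finally, by the $\boldsymbol{n}\leftrightarrow\widetilde{\boldsymbol{n}}$ symmetry, the second case $\langle\widetilde{\boldsymbol{mN}}\rangle>T/\varPi$ left open by Corollary \ref{cor: O, 1} is handled in exactly the same way, completing the estimate.
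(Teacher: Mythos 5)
Your proposal follows the paper's own pipeline: it bounds the generic piece by inserting \eqref{7eq: bound for S(b,g;r)} and \eqref{7eq: bound for S(b,g)} into \eqref{7eq: O2}, sums over $g$, $b$, substitutes \eqref{8eq: parameters} and \eqref{7eq: k1, k2}, and invokes $N_1\Lt T^{2+\vepsilon}$; the non-generic $c$ and the $\langle\widetilde{\boldsymbol{mN}}\rangle>T/\varPi$ case are treated exactly as the paper indicates. The one imprecision is your attribution of the dominance of the $\tfrac{Tb^2k_1k_2}{\varPi Cg}$ summand to $\varPi\geqslant T^{1/2}$; after summing over $b\Lt C$ the ratio of the two resulting terms is $\asymp Tk_1/\varPi$, so dominance follows from $\varPi\leqslant T^{1-\vepsilon}$ (while $\varPi\geqslant T^{1/2}$ is what validates the $\tau\Lt T^{\vepsilon}/\sqrt{T}$ hypothesis of Lemmas \ref{lem: w (u;v)}--\ref{lem: w(u;v)}); this does not affect the conclusion.
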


\begin{coro}\label{cor: O, 2}
		The contribution  to $\SO_2 (\boldsymbol{m} )$ {\rm(}see {\rm\eqref{7eq: O2(m)}}{\rm)}  from those $\SO_2 (\boldsymbol{d}, \boldsymbol{f},    \boldsymbol{h}; \boldsymbol{N} )$ such that $$ \max \big\{\langle  \boldsymbol{  m N } \rangle, \langle  \widetilde{\boldsymbol{  m N }} \rangle \big\} > {T  / \varPi}$$ has  bound  $O \big(   {T^{5/2+\vepsilon} \max \{ \boldsymbol{m} \}^{3/2} / \varPi} \big)$. 
\end{coro}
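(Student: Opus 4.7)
\medskip

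\noindent\textbf{Proof plan for Corollary \ref{cor: O, 2}.} The plan is to simply substitute the bound from the preceding lemma into the master inequality \eqref{7eq: O2(m)}, and then carry out the remaining three outer summations (over $\boldsymbol{h}$, over $\boldsymbol{df}=\boldsymbol{m}$, and the maximum over dyadic $\boldsymbol{N}$) trivially, absorbing everything except the power of $T$, $\varPi$ and $m_1$ into $T^{\vepsilon}$. Concretely, under the hypothesis $\langle \boldsymbol{mN}\rangle > T/\varPi$ the preceding lemma gives
\begin{align*}
	\SO_2(\boldsymbol{d},\boldsymbol{f},\boldsymbol{h};\boldsymbol{N}) \Lt T^{\vepsilon}\,\frac{T^{3/2} m_1^{3/2}\,\|\boldsymbol{N}\|}{\varPi\,\|\boldsymbol{dh}\|}.
\end{align*}
Inserting this bound into \eqref{7eq: O2(m)} together with the trivial $v$-integral of length $O(\log T)$, the contribution becomes
\begin{align*}
	\Lt T^{\vepsilon}\,\max_{\|\boldsymbol{N}\|\Lt T^{2+\vepsilon}} \frac{1}{\sqrt{\|\boldsymbol{N}\|}} \sum_{\boldsymbol{df}=\boldsymbol{m}} \sum_{\boldsymbol{h}\Lt\sqrt{\boldsymbol{N}}} \frac{T^{3/2} m_1^{3/2}\|\boldsymbol{N}\|}{\varPi\|\boldsymbol{dh}\|}.
\end{align*}

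\medskip

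\noindent The remaining arithmetic sums are entirely routine. The $\boldsymbol{h}$-sum contributes $\sum_{h_1, h_2 \Lt \sqrt{N_1 N_2}} 1/(h_1 h_2) \Lt \log^2 T \Lt T^{\vepsilon}$, and since $\boldsymbol{m}$ is square-free the factorization $\boldsymbol{df}=\boldsymbol{m}$ ranges over a divisor-type set whose $1/\|\boldsymbol{d}\|$-weighted sum is $\Lt \tau(m_1)\tau(m_2) \Lt T^{\vepsilon}$. With these absorbed, one gets
\begin{align*}
	\Lt T^{\vepsilon}\,\frac{T^{3/2} m_1^{3/2}}{\varPi}\,\max_{\|\boldsymbol{N}\|\Lt T^{2+\vepsilon}} \sqrt{\|\boldsymbol{N}\|} \;\Lt\; T^{\vepsilon}\,\frac{T^{5/2} m_1^{3/2}}{\varPi},
\end{align*}
using $\sqrt{\|\boldsymbol{N}\|}\Lt T^{1+\vepsilon}$.

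\medskip

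\noindent Finally, I invoke the symmetry that was already signalled in the paragraph following \eqref{8eq: condition}: the contribution from $\langle \widetilde{\boldsymbol{mN}}\rangle > T/\varPi$ is handled by exchanging the roles of the two indices throughout \S\ref{sec: off}, producing the same bound but with $m_2^{3/2}$ in place of $m_1^{3/2}$. Taking the maximum of the two gives the claimed bound $O(T^{5/2+\vepsilon}\max\{\boldsymbol{m}\}^{3/2}/\varPi)$. There is no real obstacle here; the whole argument is mechanical once the preceding lemma is in hand, the only minor point requiring care being that the bound $\|\boldsymbol{N}\|\Lt T^{2+\vepsilon}$ really is saturated by $\sqrt{\|\boldsymbol{N}\|}\Lt T^{1+\vepsilon}$ uniformly, which follows from the initial truncations of the approximate functional equation in \S\ref{sec: reduction}.
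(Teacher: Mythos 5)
Your proposal is correct and matches the (implicit) argument the paper intends: the corollary follows from the preceding lemma by substituting its bound into \eqref{7eq: O2(m)}, summing the harmonic series over $\boldsymbol{h}$ and the divisor sum over $\boldsymbol{df}=\boldsymbol{m}$ trivially (both $\Lt T^{\vepsilon}$), using $\sqrt{\|\boldsymbol{N}\|}\Lt T^{1+\vepsilon}$, and invoking symmetry for the case $\langle\widetilde{\boldsymbol{mN}}\rangle > T/\varPi$. The only cosmetic slip is that the $\boldsymbol{h}$-sum constraint $\boldsymbol{h}\Lt\sqrt{\boldsymbol{N}}$ in the paper's bi-variable notation means $h_1\Lt\sqrt{N_1}$ and $h_2\Lt\sqrt{N_2}$ (not $h_1,h_2\Lt\sqrt{N_1N_2}$ as you wrote), but this does not affect the $\log^2 T$ bound.
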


\subsection{Conclusion} 

Recall that ${\SC}^{\natural }_{2}  (\boldsymbol{m} ) $  differs from  ${\SC}_{2}  (\boldsymbol{m}  ) $ by  an exponentially small error. Thus, in view of  \eqref{7eq: C+E=D+O}, Theorem \ref{thm: C2(m)} is a direct consequence of Lemmas \ref{lem: Eis E1,2}, \ref{lem: main term}  and Corollaries \ref{cor: O, 1}, \ref{cor: O, 2}.

\section{Mean Lindel\"of Bound for the Untwisted Second Moment} 

\delete{The purpose of this and the next section is to study the untwisted spectral second moment  $\SC_2 = \SC_2 (1, 1)$: 
\begin{align}
	\SC_2 = \sum_{j = 1}^{\infty}   \omega_j     |L (s_j,  \vQ \otimes u_j )|^2   \upphi (t_j), \qquad \upphi (t) =   \exp \left(  - \frac {(t - T)^2}  {\varPi^2} \right) . 
\end{align}   
The readers who are interested in the proof of Theorems   \ref{thm: non-vanishing} and \ref{thm: non-vanishing, short} may jump to \S \ref{sec: mollified}. }

In this section, we verify the mean Lindel\"of bound in Theorem \ref{thm: Lindelof}: 
\begin{align}
	\sum_{j = 1}^{\infty}   \omega_j     |L (s_j,  \vQ \otimes u_j )|^2  \exp \left(  -   {(t_j - T)^2} / {\varPi^2} \right)  = O_{\vQ, \vepsilon} (\varPi T^{1+\vepsilon})     . 
\end{align} 
To this end, it suffices to prove the next theorem  as one may  divide  $ ( T - \varPi \log T, T + \varPi \log T] $ into intervals of unity length. 

\begin{theorem}\label{thm: Lindelof, 2}
	We have 
	\begin{align}\label{10eq: Lindelof}
		\sum_{T < t_j \leqslant T+1}   |L (s_j,  \vQ \otimes u_j )|^2 \Lt_{\vQ, \vepsilon} T^{1+\vepsilon} ,
	\end{align}
for any $\vepsilon > 0$ and $T \Gt 1$. 
\end{theorem}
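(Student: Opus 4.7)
The strategy is to combine Luo's spectral large sieve from \cite{Luo-LS} with the approximate functional equation \eqref{3eq: AFE}. By the Hoffstein--Lockhart lower bound $\omega_j \Gt t_j^{-\vepsilon}$ recorded in \eqref{3eq: omegaj}, the inequality \eqref{10eq: Lindelof} follows from the harmonic-weighted variant
\[
\sum_{T < t_j \leqslant T+1} \omega_j |L (s_j, \vQ \otimes u_j)|^2 \Lt_{\vQ, \vepsilon} T^{1 + \vepsilon},
\]
so first I would reduce to this statement.

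Second, I would apply \eqref{3eq: AFE} with the symmetric choice $X = 1$. The rapid decay \eqref{3eq: V(y;t), 1} of $V_1 (\theta n; \pm t_j)$ truncates both resulting Dirichlet polynomials at effective length $O(T^{1+\vepsilon})$. After a smooth dyadic decomposition and Cauchy--Schwarz over the $O(\log T)$ pieces, matters reduce to bounding, uniformly in dyadic $N \leqslant T^{1+\vepsilon}$,
\[
\Sigma (N) := \sum_{T < t_j \leqslant T+1} \omega_j \bigg| \sum_{n} \frac{\valpha_j (n)}{n^{1/2 + it_j}} \psi (n/N) \bigg|^2
\]
by $(T+N) T^{\vepsilon}$, for a fixed $\psi \in C_c^{\infty}[1,2]$.

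Third, I would factor $\valpha_j (n) = \sum_{h^2 m = n} \varepsilon_q (h) a (m) \lambda_j (m)$ via \eqref{3eq: RS coeff} and pull the $h$-sum outside. The $h$-sum has length $O(\sqrt{N})$, so one application of Cauchy--Schwarz reduces $\Sigma (N)$ to a weighted sum, over $h \leqslant \sqrt{2N}$, of expressions
\[
 \sum_{T < t_j \leqslant T+1} \omega_j \bigg| \sum_{m \sim N/h^2} b_m \lambda_j (m) \bigg|^2 ,\qquad b_m = \frac{a (m) \psi_h (m)}{m^{1/2 + iT}},
\]
where the twist $m^{-i(t_j - T)}$ is absorbed into the $\log T$-inert weight $\psi_h$ since $|t_j - T| \leqslant 1$. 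Luo's large sieve \cite{Luo-LS} bounds the display by $(T + N/h^2) T^{\vepsilon} \sum_{m \sim N/h^2} |b_m|^2$, and the Rankin--Selberg bound $\sum_{m \leqslant M} |a(m)|^2 \Lt M$ gives $\sum_m |b_m|^2 \Lt T^{\vepsilon}$. Summing over $h$ yields $(T + N) T^{\vepsilon}$, and a final sum over the $O(\log T)$ dyadic values of $N$ produces $T^{1+\vepsilon}$.

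The only mild technical point is verifying that $m^{-i(t_j - T)}$ genuinely behaves as an inert factor on the window $(T, T+1]$, so that Luo's large sieve applies with the coefficients $b_m$; everything else is standard dyadic analysis combined with \eqref{3eq: omegaj}, \eqref{3eq: V(y;t), 1}, \eqref{3eq: RS coeff}, and the Rankin--Selberg mean square for $a (m)$.
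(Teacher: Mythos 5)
Your argument is essentially the paper's proof: approximate functional equation with $X = 1$, extraction of the $h$-sum via \eqref{3eq: RS coeff}, Cauchy--Schwarz, and Luo's spectral large sieve. The technical point you flag about the twist $m^{-i(t_j - T)}$ is resolved exactly by the twisted variant \eqref{10eq: LS, M=1, twisted}, which the paper records as equivalent to \eqref{10eq: LS, M=1} by partial summation precisely because $t_j$ ranges over an interval of unit length.
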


Note that the harmonic weight $\omega_j $ does not play a role in view of  $t_j^{- \vepsilon} \Lt \omega_j \Lt t_j^{\vepsilon}$ (see \eqref{3eq: omegaj}). 

\subsection{The Large Sieve of Luo}

It was stated without proof by
Iwaniec \cite{Iwaniec-Spectral-Weyl} and proven independently by Luo \cite{Luo-LS} and Jutila \cite{Jutila-LS} that 
\begin{align}\label{10eq: LS, M=1}
	\sum_{T < t_j \leqslant T+1} \omega_j  \bigg| \sum_{ n \leqslant N}  a_{n} \lambda_{j} (n)  \bigg|^2 \Lt_{\vepsilon}  (T +  N  ) (TN)^{\vepsilon} \sum_{ n \leqslant N}  |a_{n}|^2 ,
\end{align}
while Luo observed that, by partial summation, \eqref{10eq: LS, M=1} is equivalent to its twisted variant:
\begin{align}\label{10eq: LS, M=1, twisted}
	\sum_{T < t_j \leqslant T+1}  \omega_j \bigg| \sum_{ n \leqslant N}  a_{n} \lambda_{j} (n) n^{it_j}  \bigg|^2 \Lt_{\vepsilon}  (T +  N  ) (TN)^{\vepsilon} \sum_{ n \leqslant N}  |a_{n}|^2 ;  
\end{align}
the twist  $n^{it_j}$ does not play a role because $t_j$ is restricted in a segment of unity length. 

\subsection{Proof of Theorem \ref{thm: Lindelof, 2}} 
As a consequence of his large sieve and   approximate functional equation, Luo's Theorem 1 in \cite{Luo-LS} states that 
\begin{align*}
	\sum_{T < t_j \leqslant T+1}   |L (s_j,    u_j )|^4 \Lt_{  \vepsilon} T^{1+\vepsilon} . 
\end{align*}
Of course \eqref{10eq: Lindelof} is an analogue and follows by adapting Luo's arguments. Nevertheless,  we provide here an alternative proof by the approximate function equation of $L (s_j, \vQ \otimes u_j)$ as described in \S \ref{sec: AFE}. 

Let $T < t_j \leqslant T+1$.  It follows from \eqref{3eq: AFE} (with $X = 1$), \eqref{3eq: V(y;t), 1}, and \eqref{3eq: V(y;t), 2} that, up to a negligibly small error, $ L  (s_j, \vQ \otimes u_j)$ is equal to
\begin{align*}
\int_{\vepsilon -i \log T}^{\vepsilon + i \log T}   \bigg\{ \sum_{\pm}  \upepsilon (t_j)^{(1\mp 1) /2}  \upgamma_{1} (v, \pm t_j )  \sum_{ h \leqslant T^{1/2+\vepsilon}} \frac {\varepsilon_q (h)} {h^{1\pm 2it_j + 2v }} 	\sum_{n \leqslant T^{1+\vepsilon}/ h^2 }   \frac {a(n) \lambda_j (n) } {n^{1/2\pm it_j + v} }  \bigg\}  \frac {\nd v} {\theta^v v}  . 
\end{align*} 
Note that $ \upepsilon (t_j)$ has unity norm, while, by \eqref{3eq: bound for G, 1}, $\upgamma_{1} (v,  \pm t_j )$ has uniform bound  (on the integral contour):
\begin{align*}
	\upgamma_{1} (v,  \pm t_j )   \Lt_{\vepsilon, k} T^{\vepsilon} .
\end{align*} 
Then Cauchy--Schwarz yields 
\begin{align*}
	\sum_{T < t_j \leqslant T+1}  \omega_j |L (s_j,  \vQ \otimes u_j )|^2 \Lt  T^{ \vepsilon} &   \int_{\vepsilon -i \log T}^{\vepsilon + i \log T} \sum_{\pm} \sum_{ h \leqslant T^{1/2+ \vepsilon}} \frac 1 {h } \\
	& \cdot \sum_{T < t_j \leqslant T+1} \omega_j \bigg| \sum_{n \leqslant T^{1+\vepsilon}/ h^2 }   \frac {a(n) \lambda_j (n) } {n^{1/2\pm it_j + v} }  \bigg|^2   \left|  {\nd v}   \right|   , 
\end{align*}
up to a negligible error. Therefore  \eqref{10eq: Lindelof}  follows by an application of Luo's large sieve \eqref{10eq: LS, M=1, twisted}, along with Deligne's bound \eqref{3eq: Deligne}. 


\delete{In this section, we use Young's hybrid large sieve to improve the asymptotic formula for $\SC_{2} = \SC_2 (1, 1)$ in the full level case $q = 1$. Our observation is that the congruence  for the $w$-sum in \eqref{7eq: O2, 3}: 
\begin{align*}
	w \equiv \widebar{f}_{\star} \widebar{q}_b b_{\mathrm{o}}  n (\mathrm{mod}\, g ) 
\end{align*}
is simplified into (see \eqref{7eq: dx, bx} and \eqref{7eq: qb}): 
\begin{align*}
	w \equiv n (\mathrm{mod}\, g ) 
\end{align*}
in the case $q = m_1 = m_2 = 1$.  However, we stress that the method here does {\it not} work in general since it is impossible to separate $\widebar{f}_{\star} \widebar{q}_b b_{\mathrm{o}} $ from $n$ for the application of large sieve.  Note that $ \widebar{f}_{\star} \widebar{q}_b b_{\mathrm{o}}  $ is dependent on $g$ and of arithmetic nature, so it will not fit into the analytic exponential factor.


 Recall from \S \ref{sec: apply Kuz} that an application of the Kuznetsov formula yields
\begin{align}\label{10eq: C2+E2=D2+O2}
	{\SC}_{2}   + {\SE}_{2}  = \SD_2  + \SO_2 + O (T^{-A}), 
\end{align}
where $  \SE_{2}  $ is the Eisenstein contribution of bound 
\begin{align}\label{10eq: E2}
	 \SE_{2}  = O_{  \vQ}   \big(      (\varPi + T^{2/3}) \log^3 T  \big) , 
\end{align}
as in Lemma \ref{lem: Eis E1,2}, $ \SD_{2} $ is the diagonal sum of the asymptotic 
\begin{align}\label{10eq: D2}
	 \SD_{2}   = \frac {4  } {\pi \sqrt{\pi  } } \varPi T \big(  \gamma_1 \log   {T}    + \gamma_{0}'  \big)   
	+ O \bigg(  T^{\vepsilon} \bigg(     { T  }   +   \frac {  \varPi^3  } {T   }  \bigg) \bigg)  , 
\end{align}
as in Lemma \ref{lem: main term}, and $ \SO_2 $ is the off-diagonal (Kloosterman--Bessel) sum initially given by
\begin{align*} 
	\SO_2 = \mathop{\sum \sum }_{h_1, h_2}  \mathop{\sum \sum}_{  n_1, n_2 }  \frac{ a (n_1  ) a(n_2)  } {h_1 h_2\sqrt{ n_1 n_2} }      \sum_{c  } \frac{S( n_1, n_2   ; c)} {c}   {H}_2 \bigg(\frac{4\pi\sqrt{ n_1 n_2  }}{c};   h_1^2 n_1, h_2^2 n_2  \bigg), 
\end{align*}  
as in \eqref{7eq: O(m)}; the Bessel integral $H_2$ has a representation in Lemma \ref{lem: Bessel} which played a vital role in our analysis.  

Next,  we apply dyadic partitions  for $ h_1^2 n_1 \sim N_1$ and   $ h_2^2 n_2 \sim N_2$ with $N_1 N_2 \Lt T^{2+\vepsilon}$. 

For    $ \max \{N_1/ N_2, N_2/ N_1 \} \leqslant  T/ \varPi $,   the Wilton bound yields   $O  ( T^{ 3/2+ \vepsilon}   / \varPi^{1/2}   )$. 

For   $ N_1/ N_2 > T /\varPi $,  the Kloosterman sum and the Bessel integral ineract via the Luo identity (see Lemma \ref{lem: Luo's identity}) to split the $c$-sum into $b$- and $w$-sums. Then we apply Poisson and Vorono\"i to the $w$- and $n_1$-sums respectively and use stationary phase to analyze the resulting Fourier--Hankel transform.  
The key point is that a Ramanujan sum arises to yield a congruence for the dual $w$- and $n_1$-sums. 




Theorem \ref{thm: untwisted} does not apply in the setting of Phillips--Sarnak (as there is no cusp form of weight $2 k =4$ and level $q = 1$), so we prefer not to spend too much space on its proof, and, 
for simplicity, let us restrict to those terms with $h_1 = h_2 = 1$. Therefore, in view of \eqref{7eq: O2(m)}, \eqref{7eq: O2++}, and \eqref{7eq: O2, 3}, we just need to prove the following lemma.  

\begin{lem}
Assume  $N_1 N_2 \Lt T^{2+\vepsilon}$ and $N_1 / N_2 > T/\varPi$.  Define $\SO^{\natural}_{2} = \SO^{\natural}_{2} (N_1, N_2)$ to be  
	\begin{align}\label{10eq: O2++}
		\SO^{\natural}_{2}   =     {T^{ \vepsilon}}     \sum_{n_2 \sim N_2   }   a( n_2) \varww \bigg(   \frac {n_2} {N_2 }   \bigg)      \sum_{b \shskip \Lt N_1/T}   \frac {\varPi T} {b}     \int_{-\varPi^{\vepsilon}/ \varPi}^{\varPi^{\vepsilon}/ \varPi}     g (\varPi r) e ( Tr/\pi)    
		{\SO}^{\natural}_{2} (r; b ; n_2 ) \nd r , 
	\end{align}
	\begin{align}\label{10eq: O2, 3}
		\begin{aligned}
			\SO^{\natural}_{2} (r ; b; n_2) = \frac {N_1} {{b^2  }}  \sum_{ g | b   }        {   g \mu (b / g)   }        \mathop{\sum \sum }_{w \equiv   n_1 (\mathrm{mod}\, g ) } a ( n ) S (  n_2,   w; b   )  \check{\hat{\varww} }\bigg( r;  \frac {N_1 n_1} {b^2  };  \frac {N_1 w   } {T b^2  } \bigg) ,  
		\end{aligned}
	\end{align} 
where  $\check{\hat{\varww} } (r; u, v)$ is the Fourier--Hankel transform in {\rm\eqref{7eq: Fourier-Hankel}} {\rm(}see also {\rm\eqref{8eq: w (r;x;y)}} and {\rm\eqref{8eq: w (x;y)}}{\rm)}. 
Then 
\begin{align}
	\frac { \SO^{\natural}_{2} (N_1, N_2) } {\sqrt{N_1 N_2}} \Lt 
\end{align}
\end{lem}
}

\section{A Useful Unsmoothing Lemma}

As in \eqref{5eq: phi}, for large parameters $T, \varPi$ with $T^{\vepsilon} \leqslant \varPi \leqslant T^{1-\vepsilon}$,  define
\begin{align}\label{8eq: phi}
\upphi (t) =	\upphi_{T, \varPi} (t) = \exp \left( - \frac {(t-T)^2}  {\varPi^2} \right). 
\end{align}
In this section, we introduce a useful unsmoothing process as in \cite[\S 3]{Ivic-Jutila-Moments} by an average of the weight $\upphi_{T, \varPi} $ in the $T$-parameter.


\begin{defn}\label{defn: unsmooth}
	Let $   H \leqslant T / 3 $ and $T^{\vepsilon} \leqslant \varPi \leqslant H^{1-\vepsilon}  $. Define  
	\begin{align}\label{5eq: defn w(nu)}
		\uppsi (t) = \uppsi_{T, \varPi, H} (t) = \frac 1 { \sqrt{\pi}  \varPi} \int_{\, T- H}^{T + H} \upphi_{K, \varPi} (t) \nd K  .
	\end{align}
\end{defn}

The next lemma is essentially   Lemma 5.3 in \cite{Qi-Liu-Moments}, adapted from the arguments in \cite[\S 3]{Ivic-Jutila-Moments}. 

\begin{lem}\label{lem: unsmooth, 1}
	Let  $\lambda $  be a real constant.   	Suppose that $a_j $ is a sequence such that  
	\begin{align}\label{10eq: assumption}
		\sum  \omega_j \upphi   (t_j) |a_j|  = O_{\lambda, \vepsilon}  \big( \varPi T^{\lambda}  \big) 
	\end{align}
	for any $\varPi$ with $ T^{\vepsilon} \leqslant \varPi \leqslant T^{1-\vepsilon}  $. Then for  $ \varPi^{1+\vepsilon} \leqslant   H \leqslant T / 3 $ we have
	\begin{align}
		\mathop{\sum  }_{  |t_j - T| \shskip \leqslant H}  \omega_j  a_j =  \sum \omega_j \uppsi  (t_j) a_j + O_{\lambda, \vepsilon}  \big( \varPi T^{\lambda}  \big) . 
	\end{align}
\end{lem}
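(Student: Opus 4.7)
The plan is to analyze the difference
\begin{align*}
\mathop{\sum}_{|t_j - T| \leqslant H} \omega_j a_j - \sum \omega_j \uppsi(t_j) a_j = \sum \omega_j \big(\chi_H(t_j) - \uppsi(t_j)\big) a_j,
\end{align*}
where $\chi_H(t) = \mathbf{1}_{[T-H,\, T+H]}(t)$ is the sharp cutoff, by producing a clean pointwise majorant for $|\chi_H - \uppsi|$ and then invoking the hypothesis \eqref{10eq: assumption} twice. The substitution $u = (K-t)/\varPi$ in \eqref{5eq: defn w(nu)} yields the explicit formula
\begin{align*}
\uppsi(t) = \tfrac{1}{2}\big[\mathrm{erf}\big((T+H-t)/\varPi\big) - \mathrm{erf}\big((T-H-t)/\varPi\big)\big],
\end{align*}
so $\uppsi$ interpolates smoothly between $1$ on the bulk of $[T-H, T+H]$ and $0$ outside, with a transition band of width $\asymp \varPi \log T$ around each endpoint $t = T \pm H$.

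The key step is to prove the pointwise bound
\begin{align*}
|\chi_H(t) - \uppsi(t)| \Lt \upphi_{T-H,\, \varPi}(t) + \upphi_{T+H,\, \varPi}(t).
\end{align*}
Splitting into cases, when $|t - T| \leqslant H$ one has $1 - \uppsi(t) = \tfrac{1}{2}\mathrm{erfc}((T+H-t)/\varPi) + \tfrac{1}{2}\mathrm{erfc}((t-T+H)/\varPi)$, both arguments being nonnegative, while for $|t-T| > H$ the value $\uppsi(t)$ equals a difference of two erf's whose arguments have the same sign; in both cases the elementary Gaussian tail $\mathrm{erfc}(x) \Lt e^{-x^2}$ for $x \geqslant 0$ gives the desired majorant.

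Summing against $\omega_j |a_j|$ and invoking \eqref{10eq: assumption} with center $T \pm H$ in place of $T$ and the same width $\varPi$ yields
\begin{align*}
\bigg|\mathop{\sum}_{|t_j - T| \leqslant H} \omega_j a_j - \sum \omega_j \uppsi(t_j) a_j\bigg| \Lt \sum_{\pm} \sum \omega_j \upphi_{T \pm H,\, \varPi}(t_j) |a_j| \Lt_{\lambda, \vepsilon} \varPi T^{\lambda},
\end{align*}
since $T \pm H \asymp T$ (from $H \leqslant T/3$) and the admissibility $T^{\vepsilon} \leqslant \varPi \leqslant (T \pm H)^{1-\vepsilon}$ is secured by $\varPi^{1+\vepsilon} \leqslant H$. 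The one subtle point is that \eqref{10eq: assumption} has to be interpreted as holding uniformly in the center parameter over a range comparable to $T$, which is the natural form in which such moment bounds arise from Theorems \ref{thm: C1(m)}, \ref{thm: C2(m)}, and \ref{thm: Lindelof}; once this is granted, the only genuine work is the elementary Gaussian tail bound in the pointwise step.
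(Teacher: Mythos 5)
Your proof is correct and reproduces what is essentially the argument of Ivi\'c--Jutila \cite[\S 3]{Ivic-Jutila-Moments} and \cite[Lemma 5.3]{Qi-Liu-Moments}, which the paper simply cites rather than reproves: one writes $\uppsi$ as a pair of error functions, compares it pointwise with the sharp cutoff $\chi_H$, and bounds the discrepancy by Gaussians centred at the endpoints $T \pm H$. The elementary inequality $\mathrm{erfc}(x) \leqslant e^{-x^2}$ for $x \geqslant 0$ is exactly the right tool, and your case analysis (inside vs.\ outside $[T-H, T+H]$) is complete. Two remarks. First, the ``one subtle point'' you flag is real: the hypothesis \eqref{10eq: assumption} as literally written fixes the centre at $T$, whereas your argument invokes it with $\upphi_{T\pm H,\,\varPi}$; the intended reading (consistent with how the lemma is applied to Corollaries \ref{cor: 1st moment}, \ref{cor: 2nd moment} and Theorem \ref{thm: Lindelof}, all of which hold uniformly in the spectral centre) is that \eqref{10eq: assumption} holds for centres comparable to $T$, and you were right to call this out explicitly rather than quietly assume it. Second, the admissibility check $\varPi \leqslant (T-H)^{1-\vepsilon}$ does not follow verbatim from $\varPi \leqslant T^{1-\vepsilon}$ since $T-H$ may be as small as $2T/3$; one either shrinks $\vepsilon$ slightly or, more to the point, observes that the implied constants in all the source bounds are uniform for centres in $[2T/3, 4T/3]$. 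Neither issue damages the argument, and the overall structure matches the standard unsmoothing mechanism the paper appeals to.
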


Later, we shall choose $a_j$ to be $\delta (L (s_j, \vQ \otimes u_j) \neq 0)$ (the Kronecker $\delta$ that detects $L (s_j, \vQ \otimes u_j) \neq 0$), $L (s_j, \vQ \otimes u_j) $, or $|L (s_j, \vQ \otimes u_j)|^2$ (see \S \S \ref{sec: conclusion, non-vanishing} and \ref{sec: unsmooth}). Note that in all these cases  we  may let  $\lambda = 1$ or $ 1+\vepsilon$ in \eqref{10eq: assumption} in light of the (harmonic weighted) Weyl law or the mean Lindel\"of bounds.

\section{The Mollified Moments} \label{sec: mollified}

Historically, the mollifier method was first used  on zeros of the Riemann zeta  function by Bohr, Landau, and Selberg \cite{Bohr-Landau,Selberg-Mollifier}. In this section, we shall mainly follow the ideas and maneuvers in \cite{IS-Siegel,KM-Analytic-Rank-2}.

\subsection{Setup} 

We introduce the mollifier  
\begin{align}\label{9eq: defn Mj}
	M_j	= \sum_{m \leqslant M}  \frac {x_m   a (m) \lambda_j  (m)} {m^{1/2+it_j}}  . 
\end{align} 
Assume that the  coefficients $ x_m  $ 
are real-valued, normalized so that 
\begin{align}
	\label{9eq: x1=1}
	x_1 = 1, 
\end{align}
supported on square-free $m $ with $(m, q) = 1$ and $m \leqslant M$, and not too large (see \eqref{9eq: xi(m)} and \eqref{9eq: xm}), say
\begin{align}\label{9eq: bound for xm}
	x_m = O (\tau (m)). 
\end{align} 
Note that
\begin{align}\label{9eq: |Mj|2}
	|M_j|^2 = \mathrm{Re} \mathop{\sum \sum}_{m_1, m_2 \leqslant M}  \frac {x_{m_1} x_{m_2}   a (m_1) a(m_2)  \lambda_j (m_1) \lambda_j {(m_2)}  } {\sqrt{m_1 m_2} (m_1/m_2)^{it_j} } . 
\end{align}
For $\upphi (t)$ given in \eqref{5eq: phi}, consider the mollified moments
\begin{align}\label{9eq: defn M1}
	\SM_1 =  \sum_{j = 1}^{\infty}   \omega_j  M_j   L (s_j,  \vQ \otimes u_j )  \upphi (t_j), 
\end{align} 
\begin{align}\label{9eq: defn M2}
	\SM_2 =  \sum_{j = 1}^{\infty}   \omega_j  |M_j|^2     |L (s_j,  \vQ \otimes u_j )|^2    \upphi (t_j). 
\end{align} 
By the Cauchy inequality, 
\begin{align}\label{9eq: Cauchy} 
		\sum_{L (s_j, \vQ \shskip \otimes \shskip u_j) \neq 0} \omega_j \upphi (t_j)   \geqslant \frac { \lp \,  {\displaystyle \sum } \, \omega_j  M_j L (s_j, \vQ \otimes u_j)  \upphi (t_j)   \rp^2  }  { {\displaystyle \sum } \, \omega_j \left|M_j  L (s_j, \vQ \otimes u_j) \right|^2 \upphi (t_j)  }  .  
\end{align}
Thus our task is to calculate the mollified moments $  \SM_1$ and $\SM_2 $ and then carefully choose the coefficients $x_m$ to maximize the ratio $\SM_1^2 / \SM_2$.

\subsection{The Mollified First Moment}

By the definitions in \eqref{2eq: moments M1}, \eqref{9eq: defn Mj}, and \eqref{9eq: defn M1}, we have
\begin{align*}
	\SM_1 = \sum_{m \leqslant M} \frac {x_m a(m)} {\sqrt{m}} \SC_1 (m),  
\end{align*}
and hence by the bounds for $a (m)$ and $x_m$ in \eqref{3eq: Deligne} and \eqref{9eq: bound for xm}, the following asymptotic formula for $\SM_1 $ is a direct consequence of Theorem \ref{thm: C1(m)}. 

\begin{lem}\label{lem: M1}
Let $\varPi = T^{ \vnu }$    and  $M = T^{\varDelta}$. Then for 
$\varDelta <    \min \{2\vnu, 6\vnu-2  \}/ 7$ there exists $\delta = \delta (\vnu, \varDelta) > 0$ such that 
\begin{align}\label{9eq: M1}
	\SM_1 = \frac {2\varPi T} {\pi \sqrt{\pi}} + O \big(\varPi T^{1-\delta} \big). 
\end{align}
\end{lem}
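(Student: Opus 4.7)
The plan is to substitute the asymptotic from Theorem \ref{thm: C1(m)} into the identity
$$\SM_1 = \sum_{m \leqslant M} \frac{x_m a(m)}{\sqrt{m}}\, \SC_1(m),$$
which is immediate from \eqref{2eq: moments M1} and \eqref{9eq: defn M1}. The Kronecker $\delta(m,1)$ in the main term of $\SC_1(m)$ isolates the $m=1$ contribution, which equals $\tfrac{2\varPi T}{\pi\sqrt{\pi}}$ exactly thanks to the normalization $x_1 = a(1) = 1$ from \eqref{9eq: x1=1}.

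For the error, I would combine the Deligne bound $|a(m)| \leqslant \tau(m)$ from \eqref{3eq: Deligne} with the hypothesis $x_m \Lt \tau(m)$ from \eqref{9eq: bound for xm}, so that $|x_m a(m)| \Lt \tau(m)^2$. A standard divisor estimate yields
$$\sum_{m \leqslant M} \frac{\tau(m)^2}{\sqrt{m}}\cdot m^{2/3} \Lt M^{7/6+\vepsilon},$$
and therefore the total error from Theorem \ref{thm: C1(m)} is bounded by
$$M^{7/6+\vepsilon} (T+\varPi^2)^{1/3} T^{1+\vepsilon}.$$

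Setting $\varPi = T^{\vnu}$ and $M = T^{\varDelta}$, this error is $T^{7\varDelta/6 + \max\{1,\,2\vnu\}/3 + 1 + \vepsilon}$, and it is $O(\varPi T^{1-\delta})$ for some $\delta > 0$ precisely when
$$\frac{7\varDelta}{6} + \frac{\max\{1,\,2\vnu\}}{3} < \vnu,$$
which rearranges to $\varDelta < \min\{6\vnu-2,\, 2\vnu\}/7$ — exactly the hypothesis; a positive $\delta = \delta(\vnu,\varDelta)$ is extracted from the gap. There is no real obstacle here: all the analytic work has been absorbed into Theorem \ref{thm: C1(m)}, so the lemma reduces to a bookkeeping of the $m$-summation and a numerical check of the resulting exponent condition.
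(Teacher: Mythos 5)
Your proposal is correct and follows essentially the same route as the paper: substitute the asymptotic of Theorem \ref{thm: C1(m)} into $\SM_1 = \sum_{m\leqslant M} x_m a(m) m^{-1/2}\,\SC_1(m)$, isolate the $m=1$ main term using $x_1=a(1)=1$, and bound the error via the Deligne bound and \eqref{9eq: bound for xm}. The paper states this as an immediate consequence without writing out the divisor-sum and exponent bookkeeping; your calculation fills in exactly those details, and the resulting condition $\varDelta < \min\{2\vnu,6\vnu-2\}/7$ is derived correctly.
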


\subsection{The Mollified Second Moment}

In view of the definitions in \eqref{2eq: moments M2}, \eqref{9eq: |Mj|2}, and \eqref{9eq: defn M2}, we have
\begin{align*}
	\SM_2 =  \mathop{\sum \sum}_{m_1, m_2 \leqslant M}  \frac {x_{m_1} x_{m_2}   a (m_1) a(m_2)    } {\sqrt{m_1 m_2}   }  \SC_2 (m_1, m_2) .
\end{align*}
Now we insert the asymptotic formula in Theorem \ref{thm: C2(m)}, use \eqref{3eq: Deligne} and \eqref{9eq: bound for xm} to bound the error term by 
\begin{align*}
 &   	\frac {   \varPi^3  } {T^{1-\vepsilon}   } \mathop{\sum \sum}_{m_1, m_2 \leqslant M}  \frac {\tau (m_1)^2 \tau(m_2)^2  (m_1, m_2)  } { {m_1 m_2}   } + {\frac {  T^{5/2+\vepsilon} } {  {\varPi} } } { \mathop{\sum \sum}_{m_1, m_2 \leqslant M}  \frac {\tau (m_1)^2 \tau(m_2)^2    } {\sqrt{m_1 m_2}   }  \textstyle \sqrt{m_1^3+m_2^3}}	  \\
  & \Lt    \frac { \varPi^3 M^{\vepsilon}  } {T^{1-\vepsilon}} + {\frac {  T^{5/2+\vepsilon} M^{5/2+\vepsilon}  } {  {\varPi} } } ,  
\end{align*} 
and then arrive at the asymptotic formula for $\SM_2 $ in the next lemma. 

\begin{lem}\label{lem: M2}
	Let $\varPi = T^{ \vnu }$    and  $M = T^{\varDelta}$.  Then for  $\varDelta < (4 \vnu - 3) /5  $ there exists $\delta = \delta(\vnu, \varDelta ) > 0$ such that 
	\begin{align}\label{9eq: M2}
		\SM_2 = \frac {4 \gamma_1 \varPi T} {\pi \sqrt{\pi}}  \big( {\SM}_{20}  + \SM_2'\big) +  O \big(\varPi T^{1-\delta} \big), 
	\end{align}
	with 
	\begin{align}\label{9eq: M20}
		{\SM}_{20}  = \sum_{m} \mathop{\sum \sum}_{(m_1^{\star}, m_2^{\star}) = 1} \frac {a (m_1^{\star})^2 a (m_2^{\star})^2 a (m)^2 \SB (m) } { m_1^{\star} m_2^{\star} m } x_{m_1^{\star} m} x_{m_2^{\star} m} \log \frac {\hat{T} } {\sqrt {m_1^{\star} m_2^{\star}}  }, 
	\end{align}
	\begin{align}
		{\SM}_{2}'   =   \sum_{m} \mathop{\sum \sum}_{(m_1^{\star}, m_2^{\star}) = 1} \frac {a (m_1^{\star})^2 a (m_2^{\star})^2 a (m)^2 \SB'  (m) } { m_1^{\star} m_2^{\star} m } x_{m_1^{\star} m} x_{m_2^{\star} m} , 
	\end{align}
	where $\hat{T}$ is a multiple of $T$ defined by 
	\begin{align}
		\log \hat{T} =   {\gamma_0'} / {\gamma_1} + \log T. 
	\end{align} 
\end{lem}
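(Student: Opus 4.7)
The plan is to reduce $\SM_2$ to an average of $\SC_2(m_1, m_2)$, substitute the asymptotic from Theorem \ref{thm: C2(m)}, bound the error term uniformly over the mollifier support, and finally rearrange the main term via the change of variables $m = (m_1, m_2)$, $m_1^{\star} = m_1/m$, $m_2^{\star} = m_2/m$.

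First, by \eqref{9eq: |Mj|2} and the definitions \eqref{2eq: moments M2} and \eqref{9eq: defn M2}, the mollified moment factors cleanly as
\[
\SM_2 \;=\; \mathop{\sum\sum}_{m_1, m_2 \leqslant M} \frac{x_{m_1} x_{m_2}\, a(m_1)\, a(m_2)}{\sqrt{m_1 m_2}}\, \SC_2(m_1, m_2).
\]
The support condition on $x_m$ (square-free, coprime to $q$) guarantees that Theorem \ref{thm: C2(m)} applies to every pair. Inserting the asymptotic and using the Deligne bound \eqref{3eq: Deligne} together with \eqref{9eq: bound for xm}, the contribution of the error term is bounded by
\[
\frac{\varPi^3 T^{\vepsilon}}{T} \mathop{\sum\sum}_{m_i \leqslant M} \frac{\tau(m_1)^2 \tau(m_2)^2 (m_1, m_2)}{m_1 m_2} \;+\; \frac{T^{5/2+\vepsilon}}{\varPi} \mathop{\sum\sum}_{m_i \leqslant M} \frac{\tau(m_1)^2 \tau(m_2)^2 \sqrt{m_1^3+m_2^3}}{\sqrt{m_1 m_2}},
\]
since $\sqrt{m_1 m_2 r} = m_1 m_2 / (m_1, m_2)$. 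The first double sum is $O(M^{\vepsilon})$ after estimating the $(m_1, m_2)$-factor, and the second is $O(M^{5/2+\vepsilon})$. With $M = T^{\varDelta}$ and $\varPi = T^{\vnu}$, both are absorbed into $O(\varPi T^{1-\delta})$ precisely when $\varDelta < (4\vnu-3)/5$; this reduces to checking $T^{5/2+\vepsilon} M^{5/2}/\varPi \ll \varPi T^{1-\delta}$, the first error being automatic for $\vnu < 1$.

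For the main term, since $m_1, m_2$ are square-free, writing $m = (m_1, m_2)$, $m_i = m \cdot m_i^{\star}$ with $(m_1^{\star}, m_2^{\star}) = 1$ and $(m, m_1^{\star} m_2^{\star}) = 1$, the multiplicativity of $a(\cdot)$ yields
\[
a(m_1)\, a(m_2)\, a(r) \;=\; a(m)^2\, a(m_1^{\star})^2\, a(m_2^{\star})^2, \qquad \sqrt{m_1 m_2 r} \;=\; m\, m_1^{\star} m_2^{\star},
\]
since $r = m_1^{\star} m_2^{\star}$. Inserting these and collecting the two pieces coming from $\SB(m)$ and $\SB'(m)$, the main term of $\SC_2(m_1, m_2)$ contributes
\[
\frac{4 \varPi T}{\pi \sqrt{\pi}}\, \sum_m \mathop{\sum\sum}_{(m_1^{\star}, m_2^{\star}) = 1} \frac{a(m)^2 a(m_1^{\star})^2 a(m_2^{\star})^2}{m\, m_1^{\star} m_2^{\star}}\, x_{m m_1^{\star}} x_{m m_2^{\star}} \Bigl[\Bigl(\gamma_1 \log \tfrac{T}{\sqrt{m_1^{\star} m_2^{\star}}} + \gamma_0'\Bigr)\SB(m) + \gamma_1 \SB'(m)\Bigr].
\]
Pulling out the factor $\gamma_1$ and absorbing the constant $\gamma_0'/\gamma_1$ into the logarithm as $\log \hat{T} = \gamma_0'/\gamma_1 + \log T$, this is exactly $(4\gamma_1 \varPi T / \pi\sqrt{\pi})(\SM_{20} + \SM_2')$, completing the asymptotic \eqref{9eq: M2}.

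The main obstacle is purely bookkeeping: the second error term in Theorem \ref{thm: C2(m)} grows like $\sqrt{m_1^3 + m_2^3}$, which is the sole source of the constraint $\varDelta < (4\vnu-3)/5$ and therefore dictates how long a mollifier is admissible in the subsequent non-vanishing application. All remaining manipulations are algebraic and rely only on the square-free structure assumed in Theorem \ref{thm: C2(m)}.
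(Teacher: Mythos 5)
Your proposal is correct and follows essentially the same route as the paper: factor $\SM_2$ over pairs $(m_1,m_2)$, insert Theorem \ref{thm: C2(m)}, bound the two error pieces by $O(\varPi^3 M^{\vepsilon}/T^{1-\vepsilon})$ and $O(T^{5/2+\vepsilon}M^{5/2+\vepsilon}/\varPi)$, and then perform the change of variables $m_1 = m\,m_1^{\star}$, $m_2 = m\,m_2^{\star}$ with $(m_1^{\star}, m_2^{\star})=1$ to rearrange the main term. Your algebraic identities $a(m_1)a(m_2)a(r) = a(m)^2 a(m_1^{\star})^2 a(m_2^{\star})^2$ and $\sqrt{m_1 m_2 r} = m\,m_1^{\star} m_2^{\star}$ and the absorption of $\gamma_0'/\gamma_1$ into $\log\hat{T}$ are exactly what the paper leaves implicit.
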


By relaxing the co-primality condition $(m_1^{\star}, m_2^{\star}) = 1$ in \eqref{9eq: M20} by the M\"obius function, we have 
\begin{align*}
	 {\SM}_{20} =    \sum_{m} \sum_{d}  \mathop{\sum \sum}_{n_1, \, n_2}  \frac {\mu (d) a(d)^4  a (m)^2 \SB (m) a (n_1)^2 a (n_2)^2 } { d^2 m n_1 n_2 } x_{dmn_1} x_{dm n_2}  \log \frac {\hat{T} } {d \sqrt {n_1 n_2}  } . 
\end{align*}
Hence we may write
\begin{align}\label{9eq: M20=MMM}
	{\SM}_{20} = \log \hat{T} \cdot \hat{\SM\ }\hspace{-3.5pt}_{20} - \breve{\SM\ }\hspace{-3.5pt}_{20}  -  {\SM}_{20}', 
\end{align}
where $ \hat{\SM\ }\hspace{-3.5pt}_{20}$, $ \breve{\SM\ }\hspace{-3.5pt}_{20}$, and $  {\SM}_{20}'$ are in the diagonal form: 
\begin{align}\label{9eq: M2... 1}
	\hat{\SM\ }\hspace{-3.5pt}_{20} = \sum_{m} \frac {a(m)^2 \xi (m)} {m}   y_m^2, \qquad \breve{\SM\ }\hspace{-3.5pt}_{20} = \sum_{m} \frac {a(m)^2 \xi (m)} {m} y_m \breve{y}_m,  
\end{align}
\begin{align}\label{9eq: M2... 2}
 {\SM}_{20}' = \sum_{m} \frac {a(m)^2 \breve{\xi} (m)} {m}  y_m^2, 
\end{align}
with
\begin{align}\label{9eq: ym}
	y_{m} = \sum_{n}  \frac {a(n)^2} {n} x_{m n} , \qquad 	\breve{y}_m = \sum_{n}  \frac {a(n)^2 \log n} {n} x_{m n}, 
\end{align}
\begin{align}\label{9eq: nu(m)} 
	\xi (m) = \sum_{d | m} \frac{ a(d)^2 \mu(d) \SB (m/d)}{d}, \qquad \breve{\xi} (m) =   \sum_{d | m} \frac{ a(d)^2 \mu(d)    \SB (m/d)}{d} \log d . 
\end{align}
Similar to \eqref{9eq: M2... 1}, \eqref{9eq: M2... 2}, and \eqref{9eq: nu(m)}, we have 
\begin{align}\label{9eq: M2', xi'}
	\SM_{2}' = \sum_{m} \frac {a(m)^2 \xi'  (m)} {m}  y_m^2, \qquad  \xi'  (m) =   \sum_{d | m} \frac{ a(d)^2 \mu(d)    \SB'  (m/d)}{d}. 
\end{align}

By M\"obius inversion, it follows from \eqref{9eq: ym} that 
\begin{align}\label{9eq: xn=y}
	 x_n = \sum_{m} \frac {a(m)^2 \mu (m)} {m} y_{m n},   \qquad 
	\breve{y}_m = \sum_{n} \frac {a(n)^2 \varLambda(n)} {n} y_{m n}, 
\end{align}
where  $\varLambda (n) $ is the von Mangoldt function. Note that the assumption $x_1 = 1$ in \eqref{9eq: x1=1} now becomes the linear constraint
\begin{align}\label{9eq: x1=1, 2}
	\sum_{m} \frac {a(m)^2 \mu (m)} {m} y_{m} = 1. 
\end{align}
By Cauchy, the optimal choice of $ y_{m}  $ ($m$ square-free, $(m, q)= 1$, and $m \leqslant M$) which optimizes the quadratic form $ \hat{\SM\ }\hspace{-3.5pt}_{20} $ in \eqref{9eq: M2... 1} with respect to \eqref{9eq: x1=1, 2} is given by 
\begin{align}\label{9eq: choice ym}
	y_m =  \frac {1} {\Xi (M)  } \cdot \frac {\mu (m)} {\xi (m)}    , \qquad \Xi (M) = \sideset{}{^{_{  {\scriptstyle \prime}}}}  \sum_{m \shskip \leqslant M} \frac {  a(m)^2 \mu (m)^2 } {m \xi (m)} ,
\end{align}
where the prime on the $m$-sum means  $(m, q) = 1$.     Define the associated Dirichlet series 
\begin{align}
	D (s, \Xi ) =  \sideset{}{^{_{  {\scriptstyle \prime}}}}  \sum_{m =1 }^{\infty} \frac {  a(m)^2 \mu (m)^2  } {  \xi (m)  m^s } =  \prod_{p \nmid q}   \bigg( 1 + \frac {a(p)^2} {\xi (p) p^s}  \bigg) . 
\end{align}
It follows from the product form of  $\SB (m)$ as in \eqref{7eq: B(m)} that 
\begin{align}\label{9eq: xi(m)}
\xi (m) = \prod_{p | m } \bigg(1 - \frac {a(p)^2} {p(p+1) } \bigg),
\end{align}
and hence $ D (s, \Xi ) $ 
has analytic continuation to $\mathrm{Re}(s) > 1/2$ (to see this, compare its logarithm with that of (the Euler product of) $L (s, \vQ \otimes \vQ)$ as in \eqref{3eq: L(s, QQ)}) with a simple pole at $s = 1$. 

\begin{defn}\label{defn: gammaf}
	Define $\gamma_{\flat} = \gamma_{\flat} (\vQ)  $ to be the residue of $	D (s, \Xi ) $ at $s = 1$; more explicitly,  
	\begin{align}
		\gamma_{\flat} =  \prod_{p \nmid q}  \bigg(1 - \frac {a(p)^2} {p(p+1) } \bigg)^{-1} \bigg(1 + \frac {a(p)^2} {p+1 } \bigg) \bigg( 1 - \frac {1} { p } \bigg) \cdot  \prod_{p | q} \bigg( 1 - \frac {1} { p } \bigg) . 
	\end{align}
\end{defn}

It follows that 
\begin{align}\label{9eq: Xi, asymp}
\Xi (M) 
= \gamma_{\flat}   \log M 	+ O(1). 
\end{align}

Moreover, by  \eqref{9eq: xn=y} and \eqref{9eq: choice ym}, our $x_{m}$ has bound  
\begin{align}\label{9eq: xm}
	 x_m = O \lp \frac { 1} {\xi (m)} \rp , 
\end{align}  so it is easy to check by \eqref{3eq: Deligne} and \eqref{9eq: xi(m)} the growth condition for $x_m$ as in \eqref{9eq: bound for xm}. 

\vspace{7.5pt} 

\subsubsection{Evaluations of $	\hat{{\protect\SM}\ }\hspace{-3.5pt}_{20} $ and  $	\breve{{\protect\SM}\ }\hspace{-3.5pt}_{20} $} 

By definition, 
\begin{align}\label{9eq: hM20}
	\hat{\SM\ }\hspace{-3.5pt}_{20} = \frac 1 {\Xi (M) } . 
\end{align}
By the Prime Number Theorem for $L (s, \vQ \otimes \vQ)$  (see \cite[Theorem 5.13]{IK}), 
\begin{align}\label{9eq: PNT}
 \sum_{ p \shskip \leqslant X} \frac { a(p)^2 \log p } {p} = \log X + O (1). 
\end{align} 
For $m$ square-free,  
\begin{align}\label{9eq: PNT, 2}
	\sum_{p | m} \frac {\log p} {p} = O    (\log \log (3m)).
\end{align}
Note that  
$	  1 / {\xi (p)} = 1 + O (1/p^2)  $, so it follows from   \eqref{9eq: xn=y}, \eqref{9eq: choice ym}, \eqref{9eq: PNT}, and \eqref{9eq: PNT, 2} that 
\begin{align*}
	  {\breve{y}_m}  /{y_m} = - 
	\mathop{\sum_{p  \nmid qm  }}_{  p \shskip \leqslant M/m  } \frac {a(p)^2 \log p} {\xi (p) p }  
	  =   -      \log  ( {M} /{m}) + O (\log \log (3 m))  . 
\end{align*}
Consequently,
\begin{align*}
	\begin{aligned}
		\breve{\SM\ }\hspace{-3.5pt}_{20} & = \sum_{m \leqslant M} \frac {a(m)^2 \xi (m) } {m}   y_m^2 \cdot \log m - (\log M - O (\log \log M) ) \sum_{m \leqslant M} \frac {a(m)^2 \xi (m)} {m}   y_m^2    , 
	\end{aligned}
\end{align*}
and a partial summation using \eqref{9eq: Xi, asymp} yields
\begin{align} \label{9eq: bM20}
		\breve{\SM\ }\hspace{-3.5pt}_{20}   = -  \frac {\log M } {2 \, \Xi (M) } + O  \bigg( \frac {\log \log M} {\Xi (M) } \bigg) . 
\end{align}

\subsubsection{Estimations of ${\protect\SM}_{2}'  $ and  $	{\protect\SM}_{20}' $} 
Recall the definitions of $	\xi  (m)$, $\breve{\xi} (m) $, and $	\xi'  (m)$ in \eqref{9eq: nu(m)} and \eqref{9eq: M2', xi'}.  We have 
\begin{align*}
	\breve{\xi} (m) = - \sum_{p | m} \frac {a(p)^2} {p} \log p \cdot \xi (m/ p) , 
\end{align*} 
and by \eqref{7eq: B'(m)} 
\begin{align*}
	\xi'  (m) =  \sum_{p | m } B(p)   \log p \cdot \xi (m/p) , \qquad B(p) = O \lp \frac 1 {p} \rp . 
\end{align*} 
Thus 
\begin{align*}
	   {\breve{\xi} (m)} / {\xi (m)} , \   {	\xi'  (m)} / {\xi (m)} = O (\log \log (3m)), 
\end{align*}
due to \eqref{9eq: PNT, 2}. 
 By trivial estimation,  
\begin{align}\label{9eq: M2' M20'}
	{ \SM}_{2}' , \, { \SM}_{20}' = O \lp \frac {\log \log M} {\Xi (M) } \rp. 
\end{align}

\subsubsection{Conclusion} 

By \eqref{9eq: M2}, \eqref{9eq: M20=MMM}, \eqref{9eq: Xi, asymp}, \eqref{9eq: hM20}, \eqref{9eq: bM20}, and  \eqref{9eq: M2' M20'}, we conclude with the following asymptotic formula for $\SM_2$. 

\begin{lem}\label{lem: M2, 2}
	 Let $\varPi = T^{ \vnu  }$    and  $M = T^{\varDelta}$.  Then for  $  \varDelta < (4 \vnu - 3) /5  $  we have
	 \begin{align}
	 	\SM_2 = \frac {2 \varPi T} {\pi \sqrt{\pi}}  \cdot  \frac {2+\varDelta}  {\gamma \varDelta}  \bigg( 1 + O  \bigg( \frac {\log\log T} {\log T}   \bigg) \bigg), \qquad \gamma = \frac {\gamma_{\flat} } {\gamma_1}  . 
	 \end{align}
\end{lem}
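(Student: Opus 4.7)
The plan is to simply assemble the pieces already developed in this section---Lemma \ref{lem: M2, 2} is the culminating bookkeeping step, and all of the substantive analytic work has been carried out in the derivations of $\hat{\SM\ }\hspace{-3.5pt}_{20}$, $\breve{\SM\ }\hspace{-3.5pt}_{20}$, $\SM_{2}'$, and $\SM_{20}'$. First I would start from Lemma \ref{lem: M2} and substitute the decomposition \eqref{9eq: M20=MMM}, yielding
$$
\SM_{20} + \SM_{2}' \;=\; \log\hat{T}\cdot\hat{\SM\ }\hspace{-3.5pt}_{20} - \breve{\SM\ }\hspace{-3.5pt}_{20} - \SM_{20}' + \SM_{2}'.
$$
Plugging in the closed form $\hat{\SM\ }\hspace{-3.5pt}_{20} = 1/\Xi(M)$ from \eqref{9eq: hM20}, the evaluation $\breve{\SM\ }\hspace{-3.5pt}_{20} = -\log M/(2\,\Xi(M)) + O(\log\log M/\Xi(M))$ from \eqref{9eq: bM20}, and the bounds $\SM_{2}',\, \SM_{20}' = O(\log\log M/\Xi(M))$ from \eqref{9eq: M2' M20'}, I obtain
$$
\SM_{20} + \SM_{2}' \;=\; \frac{\log\hat{T} + \tfrac{1}{2}\log M}{\Xi(M)} \;+\; O\!\left(\frac{\log\log M}{\Xi(M)}\right).
$$

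Next, I insert $M = T^{\varDelta}$, so that $\log M = \varDelta \log T$ and $\log\hat{T} = \log T + O(1)$ by definition of $\hat{T}$, giving numerator $\tfrac{1}{2}(2+\varDelta)\log T + O(1)$. Combined with the asymptotic $\Xi(M) = \gamma_\flat \varDelta \log T + O(1)$ from \eqref{9eq: Xi, asymp}, this simplifies to
$$
\SM_{20} + \SM_{2}' \;=\; \frac{2+\varDelta}{2\gamma_\flat\varDelta}\left(1 + O\!\left(\frac{\log\log T}{\log T}\right)\right).
$$
Feeding this back into the asymptotic formula of Lemma \ref{lem: M2}, multiplying by $4\gamma_1 \varPi T/(\pi\sqrt{\pi})$, and writing $\gamma = \gamma_\flat/\gamma_1$ yields exactly the stated expression. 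The constraint $\varDelta < (4\vnu-3)/5$ is precisely what is required so that the absolute error $O(\varPi T^{1-\delta})$ inherited from Lemma \ref{lem: M2} is absorbed into the relative error $O(\log\log T/\log T)$ attached to the main term of size $\varPi T/\log T \cdot \log T = \varPi T$.

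There is no genuine obstacle at this stage: the only point requiring minor care is to check that each error term (namely the $O(1)$ slack in $\log\hat T$ and in $\Xi(M)$, and the $O(\log\log M / \Xi(M))$ contributions from $\breve{\SM\ }\hspace{-3.5pt}_{20}$, $\SM_{20}'$, $\SM_{2}'$) collapses into the single uniform relative error $O(\log\log T/\log T)$, which is immediate since $\Xi(M) \asymp \log T$ for $\varDelta$ bounded away from $0$. All of the real work---the spectral decomposition, the treatment of the off-diagonal via Luo's identity together with Poisson and Vorono\"i, the extraction of $\gamma_0'$ and $\gamma_1$, and especially the optimization of $y_m$ via Cauchy--Schwarz leading to \eqref{9eq: choice ym}---has already been performed upstream.
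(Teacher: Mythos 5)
Your proposal is correct and matches the paper's own proof exactly: the paper deduces Lemma \ref{lem: M2, 2} by the same chain of substitutions from \eqref{9eq: M2}, \eqref{9eq: M20=MMM}, \eqref{9eq: Xi, asymp}, \eqref{9eq: hM20}, \eqref{9eq: bM20}, and \eqref{9eq: M2' M20'}, which you have simply written out in detail. One small clarification of phrasing: the constraint $\varDelta < (4\vnu-3)/5$ is the hypothesis that makes Lemma \ref{lem: M2} (and hence the power-saving error $O(\varPi T^{1-\delta})$) available in the first place, rather than an extra condition needed to absorb that error into the relative $O(\log\log T/\log T)$ term, but this does not affect the correctness of your argument.
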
 

Note that the Euler product form of  $\gamma = {\gamma_{\flat}} / {\gamma_{1} }$ in \eqref{1eq: beta(Q)} follows from those of $\gamma_1$ and $\gamma_{\flat}$ in Definitions \ref{defn: constants, 1}  and \ref{defn: gammaf}.

\subsection{Conclusion} \label{sec: conclusion, non-vanishing}
For  $3/4 < \vnu < 1$  let   $\varPi = T^{ \vnu  }$.   
By Lemmas \ref{lem: M1} and \ref{lem: M2, 2}, it follows from Cauchy   in \eqref{9eq: Cauchy}  that
\begin{align}
	\sum_{L (s_j, \vQ \shskip \otimes \shskip u_j) \neq 0} \omega_j \upphi (t_j)   \geqslant \frac {2 \varPi T} {\pi \sqrt{\pi}} \gamma \bigg( \frac  { 4\vnu - 3} {4\vnu + 7} - \vepsilon  \bigg) , 
\end{align} 
 as $T \ra \infty$  for any $\vepsilon > 0$.  
Next, we apply Lemma \ref{lem: unsmooth, 1} with $a_n =   \delta (L (s_j, \vQ \otimes u_j) \neq 0)$ (namely, the Kronecker $\delta$ symbol) and $H = \varPi^{1+\vepsilon}$, obtaining 
\begin{align}\label{9eq: unsmoothed}
		\mathop{\sum_{ |t_j - T| \shskip \leqslant H}}_{L (s_j, \vQ \shskip \otimes \shskip u_j) \neq 0}  \omega_j     \geqslant \frac {4 H T} {\pi^2 } \gamma \bigg( \frac  { 4 \mu - 3} {4 \mu + 7} - \vepsilon  \bigg) ,\qquad \mu = \frac {\log H} {\log T}, 
\end{align} 
for any $ T^{3/4+\vepsilon} \leqslant  H \leqslant T/3$. 
 By the harmonic weighted Weyl law as in \cite{XLi-Weyl}:
\begin{align}\label{9eq: Weyl, 1}
	\sum_{t_j \leqslant T} \omega_j = \frac {T^2} {\pi^2} + O (T), 
\end{align} 
we have
\begin{align}\label{9eq: Weyl, 2}
	 {\sum_{ |t_j - T| \shskip \leqslant H}}  \omega_j     = \frac {4 H T} {\pi^2 } + O (T) . 
\end{align} 

\begin{theorem}
Let $\vepsilon > 0$ be arbitrarily small. As $T \ra \infty$, we have  
	\begin{align}\label{9eq: case t<T}
		\mathop{\sum_{ t_j \leqslant T}}_{L (s_j, \vQ \shskip \otimes \shskip u_j) \neq 0}  \omega_j  \geqslant \gamma \bigg(\frac 1 {11} - \vepsilon \bigg) \sum_{ t_j \leqslant T} \omega_j . 
	\end{align}
	and, for $3/4 < \mu < 1$,  
	\begin{align}\label{9eq: case t-T<H}
		\mathop{\sum_{ |t_j - T| \shskip \leqslant T^{\mu}}}_{L (s_j, \vQ \shskip \otimes \shskip u_j) \neq 0}  \omega_j  \geqslant \gamma \bigg( \frac  { 4 \mu - 3} {4 \mu + 7} - \vepsilon  \bigg)   \sum_{ |t_j - T| \shskip \leqslant T^{\mu}} \omega_j . 
	\end{align}
\end{theorem}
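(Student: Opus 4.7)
The plan is to combine the harmonic non-vanishing bound \eqref{9eq: unsmoothed} (already produced above via Cauchy--Schwarz on the mollified moments and Lemma~\ref{lem: unsmooth, 1}) with the harmonic Weyl-law asymptotic \eqref{9eq: Weyl, 2}, packaging them into the two ratio statements of the theorem. No new analytic input is required; what remains is a parameter translation for \eqref{9eq: case t-T<H} and one dyadic decomposition for \eqref{9eq: case t<T}.

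For \eqref{9eq: case t-T<H} I would fix $3/4<\mu<1$ and $\vepsilon>0$, and set $\vnu=\mu/(1+\vepsilon)$, which lies in $(3/4,1)$ for sufficiently small $\vepsilon$. Running the mollifier construction with $\varPi=T^{\vnu}$ makes \eqref{9eq: unsmoothed} available at $H=\varPi^{1+\vepsilon}=T^{\mu}$; dividing by the Weyl-law count \eqref{9eq: Weyl, 2} is then immediate, since the error $O(T)$ contributes only a relative factor $O(T^{-\mu})=o(1)$ that can be absorbed into $\vepsilon$.

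For \eqref{9eq: case t<T} the restriction $\mu<1$ in \eqref{9eq: unsmoothed} rules out covering $[0,T]$ with a single short interval, so I would use a dyadic decomposition. Setting $T_{k}=T/2^{k}$, each slab $(T_{k}/2,T_{k}]$ is the short interval of centre $3T_{k}/4$ and half-width $T_{k}/4=(3T_{k}/4)^{\mu_{k}}$, with $\mu_{k}=\log(T_{k}/4)/\log(3T_{k}/4)=1-O(1/\log T_{k})$. Fixing a small $\eta>0$ and restricting to $k$ with $T_{k}\geqslant T^{\eta}$ guarantees $\mu_{k}>3/4$ and $(4\mu_{k}-3)/(4\mu_{k}+7)=1/11+O(1/\log T)$ uniformly, so applying \eqref{9eq: case t-T<H} to each slab and summing will yield
\[
\mathop{\sum_{T^{\eta}<t_{j}\leqslant T}}_{L(s_{j},\vQ\otimes u_{j})\neq 0}\omega_{j}\;\geqslant\;\gamma\bigg(\frac{1}{11}-\vepsilon\bigg)\sum_{T^{\eta}<t_{j}\leqslant T}\omega_{j}.
\]
The discarded tail $t_{j}\leqslant T^{\eta}$ has harmonic mass $T^{2\eta}/\pi^{2}+O(T^{\eta})$ by \eqref{9eq: Weyl, 1}, negligible against $\sum_{t_{j}\leqslant T}\omega_{j}\sim T^{2}/\pi^{2}$; sending $\eta\to 0$ then delivers \eqref{9eq: case t<T}.

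The analytic heavy lifting---the twisted moment asymptotics of Theorems~\ref{thm: C1(m)} and~\ref{thm: C2(m)}, the mollifier optimisation in Lemmas~\ref{lem: M1} and~\ref{lem: M2, 2}, and the unsmoothing via Lemma~\ref{lem: unsmooth, 1}---is already done, so I do not anticipate a real obstacle at this final packaging step. The one mildly delicate point is verifying that $\mu_{k}\in(3/4+\vepsilon,1)$ for every dyadic slab with $T_{k}\geqslant T^{\eta}$, but this is automatic once $T$ is sufficiently large.
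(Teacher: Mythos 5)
Your argument is correct and is essentially the route the paper takes: \eqref{9eq: case t-T<H} is \eqref{9eq: unsmoothed} divided by the short-interval Weyl law \eqref{9eq: Weyl, 2}, and \eqref{9eq: case t<T} is obtained by the dyadic summation the paper indicates, summing \eqref{9eq: unsmoothed} over dyadic slabs where $\mu_k \to 1$. The parameter bookkeeping you spell out (the choice $\vnu = \mu/(1+\vepsilon)$, the tail cutoff $T^{\eta}$, and the verification that each $\mu_k$ lies in $(3/4,1)$) is exactly what the paper leaves implicit.
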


Note that \eqref{9eq: case t<T} follows from \eqref{9eq: unsmoothed} and \eqref{9eq: Weyl, 1} with $ H = T/3 $ along with a dyadic summation, while \eqref{9eq: case t-T<H} follows directly from \eqref{9eq: unsmoothed} and \eqref{9eq: Weyl, 2}.    

Since the harmonic weight 
\begin{align*}
	\omega_j = \frac {2} {  L(1, \mathrm{Sym}^2 u_j)}, 
\end{align*}
one may remove $\omega_j$ without compromising the strength of results by the method of Kowalski and Michel \cite{KM-Analytic-Rank}, adapted for the Maass-form case  in \cite{BHS-Maass} (see also the paragraph below (2.9) in \cite{IS-Siegel}).  Thus after the removal of  $\omega_j$  we obtain the non-vanishing results in Theorems \ref{thm: non-vanishing} and \ref{thm: non-vanishing, short}.

\section{Moments with no Twist and Smooth Weight}\label{sec: unsmooth}

\subsection{Preliminary Results} \label{sec: prelim}
First, we show that if Lemma  \ref{lem: unsmooth, 1} is applied directly to Corollaries  \ref{cor: 1st moment} and \ref{cor: 2nd moment}, then we may remove the smooth weight $\exp (-t_j/T)$ in Luo's asymptotic formulae \eqref{2eq: Luo 1st} and \eqref{2eq: Luo 2nd} as follows (with essentially the same error terms): 
\begin{align}\label{11eq: 1st}
	\sum_{t_j \leqslant T} \omega_j L (s_j,  \vQ \otimes u_j ) = \frac {1} {\pi^2} T^{2} + O (T^{7/4+\vepsilon}), 
\end{align} 
\begin{align}\label{11eq: 2nd} 
	\sum_{t_j \leqslant T} \omega_j |L (s_j,  \vQ \otimes u_j )|^2 = \frac {2\gamma_1} {\pi^2}  T^2 \log T + \frac {2\gamma_0' - \gamma_1 } {\pi^2 } T^2  + O (T^{11/6+\vepsilon}). 
\end{align}
Later, we shall use some ideas from \cite{Qi-Ivic} to further improve the error terms (see Theorem \ref{cor: unsmoothed}). 

\begin{coro}\label{cor: 1st}
	Let $ T^{1/3+ \vepsilon} \leqslant H \leqslant T/3$. Then 
	\begin{align}\label{11eq: 1st moment}
		\sum_{ |t_j - T| \leqslant H }   \omega_j L (s_j,  \vQ \otimes u_j ) =  \frac {4  } {\pi^2}  H T + O \big(T^{\vepsilon} (H^{1/2} T^{7/6} + H^{3/4} T  ) \big). 
	\end{align}
\end{coro}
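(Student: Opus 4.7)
The plan is to apply the unsmoothing lemma (Lemma \ref{lem: unsmooth, 1}) to the first moment asymptotic of Corollary \ref{cor: 1st moment}, with $a_j = L (s_j, \vQ \otimes u_j)$, and then optimize the free parameter $\varPi$.

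First I would verify the hypothesis \eqref{10eq: assumption} of Lemma \ref{lem: unsmooth, 1} with $\lambda = 1 + \vepsilon$. By Cauchy--Schwarz, combined with the harmonic weighted Weyl law $\sum_j \omega_j \upphi (t_j) \Lt \varPi T$ and the bound $\sum_j \omega_j \upphi (t_j) |L (s_j, \vQ \otimes u_j)|^2 \Lt \varPi T^{1+\vepsilon}$ (which comes from Theorem \ref{thm: Lindelof} for $\varPi \leqslant T^{3/4+\vepsilon}$ and from Corollary \ref{cor: 2nd moment} for larger $\varPi$), one has
\begin{align*}
\sum_j \omega_j \upphi (t_j) |L (s_j, \vQ \otimes u_j)| \Lt \varPi T^{1+\vepsilon}
\end{align*}
uniformly for $T^{\vepsilon} \leqslant \varPi \leqslant T^{1-\vepsilon}$, so Lemma \ref{lem: unsmooth, 1} is applicable.

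Next I would compute the smoothed sum $\sum_j \omega_j \uppsi (t_j) L (s_j, \vQ \otimes u_j)$ by interchanging the $K$-integral from Definition \ref{defn: unsmooth} with the spectral sum, and applying Corollary \ref{cor: 1st moment} with $K$ in place of $T$:
\begin{align*}
\frac{1}{\sqrt{\pi}\, \varPi} \int_{T-H}^{T+H} \Bigg( \frac{2 \varPi K}{\pi \sqrt{\pi}} + O \bigl( (T^{1/3} + \varPi^{2/3}) T^{1+\vepsilon} \bigr) \Bigg) \nd K = \frac{4 H T}{\pi^2} + O \bigg( \frac{H T^{4/3+\vepsilon}}{\varPi} + \frac{H T^{1+\vepsilon}}{\varPi^{1/3}} \bigg).
\end{align*}
Adding the unsmoothing error $O (\varPi T^{1+\vepsilon})$ from Lemma \ref{lem: unsmooth, 1}, the overall error is
\begin{align*}
O \bigg( T^{\vepsilon} \bigg( \frac{H T^{4/3}}{\varPi} + \frac{H T}{\varPi^{1/3}} + \varPi T \bigg) \bigg).
\end{align*}

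Finally I would optimize $\varPi$ in two ranges. For $T^{1/3+\vepsilon} \leqslant H \leqslant T^{2/3}$, the choice $\varPi = H^{1/2} T^{1/6}$ balances the first and third terms (and the middle is absorbed), giving $O (H^{1/2} T^{7/6+\vepsilon})$; here $\varPi < T^{1/2}$ and both $T^{1/3+\vepsilon} \leqslant \varPi \leqslant T^{1-\vepsilon}$ and $\varPi^{1+\vepsilon} \leqslant H$ are readily verified. For $T^{2/3} \leqslant H \leqslant T/3$, the choice $\varPi = H^{3/4}$ balances the second and third terms, giving $O (H^{3/4} T^{1+\vepsilon})$, and again the range conditions of Corollary \ref{cor: 1st moment} and Lemma \ref{lem: unsmooth, 1} are met. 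Taking the maximum yields the stated error $O (T^{\vepsilon} (H^{1/2} T^{7/6} + H^{3/4} T))$. No real obstacle arises beyond this bookkeeping; the only mildly delicate point is confirming that the hypothesis of Lemma \ref{lem: unsmooth, 1} is available across the full range of $\varPi$ we need, which is why the Lindel\"of-type input from Theorem \ref{thm: Lindelof} and Corollary \ref{cor: 2nd moment} has to be invoked jointly.
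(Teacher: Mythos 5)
Your proof is correct and follows the paper's route exactly: apply Lemma \ref{lem: unsmooth, 1} to Corollary \ref{cor: 1st moment} and then optimize $\varPi$. The only differences are cosmetic — you spell out the verification of hypothesis \eqref{10eq: assumption} (which the paper only notes in passing after the Lemma) and split the $\varPi$-optimization into two explicit cases, whereas the paper states the single choice $\varPi = H^{1/2}T^{1/6} + H^{3/4}$ that encodes both.
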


\begin{proof}
	By applying  Lemma \ref{lem: unsmooth, 1} to the asymptotic formula in Corollary \ref{cor: 1st moment}, we have
	\begin{align*}
		\sum_{ |t_j - T| \leqslant H }   \omega_j L (s_j,  \vQ \otimes u_j ) =  \frac {4  } {\pi^2}  H T + O  \bigg(T^{\vepsilon} \bigg(\frac {H T^{4/3} } {\varPi } + \frac {H T} {\varPi^{1/3}} + \varPi T \bigg)\bigg),
	\end{align*}
	for any $\varPi^{1+\vepsilon} \leqslant H \leqslant T/3$, and  we obtain \eqref{11eq: 1st moment} on choosing $\varPi =  H^{1/2} T^{1/6} + H^{3/4}$. 
\end{proof}


\begin{coro} \label{cor: 2nd}
	Let $ T^{3/4+\vepsilon} \leqslant H \leqslant T/3$. Then
	\begin{align}\label{11eq: 2nd moment}
		\sum_{ |t_j - T| \leqslant H }   \omega_j |L (s_j,  \vQ \otimes u_j )|^2  = \frac {4} {\pi^2} \int_{T-H}^{T+H} K  (\gamma_1 \log K + \gamma_0') \nd K + O  ( H^{1/3} T^{3/2+\vepsilon}). 
	\end{align}
\end{coro}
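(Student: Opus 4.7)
The plan is to invoke Lemma \ref{lem: unsmooth, 1} with $a_j = |L(s_j, \vQ \otimes u_j)|^2$, in exact parallel with the proof of Corollary \ref{cor: 1st}. First I would verify the hypothesis \eqref{10eq: assumption}: by the mean Lindel\"of bound of Theorem \ref{thm: Lindelof} (or equivalently Theorem \ref{thm: Lindelof, 2}), we have $\sum \omega_j \upphi(t_j) |L(s_j, \vQ \otimes u_j)|^2 = O(\varPi T^{1+\vepsilon})$ uniformly for $T^{\vepsilon} \leqslant \varPi \leqslant T^{1-\vepsilon}$, so we may take $\lambda = 1+\vepsilon$.

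Next, for any $H$ and $\varPi$ satisfying $T^{3/4+\vepsilon} \leqslant \varPi \leqslant H^{1-\vepsilon}$ and $H \leqslant T/3$, Lemma \ref{lem: unsmooth, 1} and Definition \ref{defn: unsmooth} give
\begin{align*}
\mathop{\sum}_{|t_j - T| \leqslant H} \omega_j |L(s_j, \vQ \otimes u_j)|^2 = \frac{1}{\sqrt{\pi}\,\varPi} \int_{T-H}^{T+H} \SC_2(1,1)\big|_{T=K} \nd K + O(\varPi T^{1+\vepsilon}),
\end{align*}
where $\SC_2(1,1)|_{T=K}$ denotes the smoothed moment \eqref{2eq: moments M2} with weight $\upphi_{K,\varPi}$. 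Inserting Corollary \ref{cor: 2nd moment} (applicable uniformly in $K \asymp T$ since $H \leqslant T/3$), the main term becomes $\frac{4}{\pi^2} \int_{T-H}^{T+H} K (\gamma_1 \log K + \gamma_0')\nd K$ as claimed, while the integrated error contributes $O\big(H T^{\vepsilon}(\varPi^2/T + T^{5/2}/\varPi^2)\big)$.

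The total error bound is therefore $O\big(H T^{\vepsilon}(\varPi^2/T + T^{5/2}/\varPi^2) + \varPi T^{1+\vepsilon}\big)$, and I would optimize by choosing $\varPi = H^{1/3}\sqrt{T}$. This balances the second error term against the unsmoothing term $\varPi T$ at the target $H^{1/3} T^{3/2+\vepsilon}$, while the first error term becomes $H^{5/3}$, which is $\leqslant H^{1/3} T^{3/2}$ precisely when $H \leqslant T^{9/8}$ --- automatic from $H \leqslant T/3$. A direct check shows that both side constraints $T^{3/4+\vepsilon} \leqslant \varPi$ and $\varPi \leqslant H^{1-\vepsilon}$ reduce to the hypothesis $H \geqslant T^{3/4+\vepsilon}$ of the corollary.

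No serious obstacle is anticipated: all the heavy analytic input (Corollary \ref{cor: 2nd moment}, Theorem \ref{thm: Lindelof}, and Lemma \ref{lem: unsmooth, 1}) is already in hand, so the proof reduces to the optimization above. The only mild subtlety is that $\varPi$ must be chosen as a function of $H$ --- unlike in Corollary \ref{cor: 1st}, where the error $(T+\varPi^2)^{1/3}T^{1+\vepsilon}$ required two different choices $H^{1/2}T^{1/6}$ and $H^{3/4}$ --- but here the simultaneous balance of all three contributions at $\varPi = H^{1/3}\sqrt{T}$ is clean and unambiguous.
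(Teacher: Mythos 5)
Your proposal is correct and follows essentially the same route as the paper: apply Lemma \ref{lem: unsmooth, 1} with $a_j = |L(s_j, \vQ \otimes u_j)|^2$, plug in the asymptotic from Corollary \ref{cor: 2nd moment}, and choose $\varPi = H^{1/3}T^{1/2}$. The only (cosmetic) difference is that you also track the $H\varPi^2/T$ contribution from the $\varPi^3/T$ error in Corollary \ref{cor: 2nd moment} and verify it is dominated, whereas the paper silently drops it (and has a typographical $K^2$ in place of $K$ in its displayed main term), so your version is in fact slightly more careful.
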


\begin{proof}
	By applying Lemma \ref{lem: unsmooth, 1}  to the asymptotic formula in Corollary \ref{cor: 2nd moment}, we infer that the spectral second moment on the left of  \eqref{11eq: 2nd moment} is 
	\begin{align*}
		\frac {4} {\pi^2} \int_{T-H}^{T+H} K^2 (\gamma_1 \log K + \gamma_0') \nd K + O \bigg(T^{\vepsilon} \bigg(\frac {H T^{5/2}} {\varPi^2 } + \varPi T\bigg)\bigg), 
	\end{align*}
	for any $\varPi^{1+\vepsilon} \leqslant H \leqslant T/3$, and we obtain \eqref{11eq: 2nd moment} on choosing $\varPi = H^{1/3} T^{1/2}$. 
\end{proof}

Thus \eqref{11eq: 1st} and \eqref{11eq: 2nd} follow from the above asymptotic formulae for $H = T/3$  along with a dyadic summation.

	\subsection{Refined Analysis for the Bessel Integral {\large$H(x, y)$}}\label{sec: Bessel, refined}
	
	Let the notation be as in \S\S \ref{sec: AFE} and \ref{sec: Bessel}.
 First of all,  by Stirling's formula (see \eqref{app1: refined quot} and \eqref{app1: Delta}) we have the following decomposition of  $ \upgamma_{\upsigma } (v, t) $ (see \eqref{3eq: defn G+-} and \eqref{3eq: defn G2}).  This is a refinement of Lemma \ref{lem: G(v,t)}. 
	
	\begin{lem}
Let $\mathrm{Re}(v) > 0$ be fixed. Let $t$ be real. Write
\begin{align}\label{12eq: gamma}
	\upgamma_{\upsigma } (v, t) =  \frac {\Gamma  ( k + v )^{ {\upsigma}}    } {\Gamma (k)^{ {\upsigma}}    } (2i^{\upsigma} t)^{\upsigma v}  \big(\exp (v^2)  + \updelta_{\upsigma} (v, t) \big), \qquad \upsigma = 1, 2. 
\end{align} Then 
\begin{align}\label{12eq: delta, 2}
	\begin{aligned}
		\frac {\partial^{n } \updelta_{\upsigma } (v, t) } {\partial t^n  } \Lt_{n,  \mathrm{Re} (v) ,   k }  \frac {\log^2 (|t|+2)} {|t|+1}  \cdot  \lp \frac { \log (|t|+2) } {|t|+1 } \rp^{n}   ,  
	\end{aligned}
\end{align} 
for any $n \geqslant 0$.  
 		 
	\end{lem}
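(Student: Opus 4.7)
The plan is to derive the decomposition \eqref{12eq: gamma} by a direct application of Stirling's asymptotic expansion to the Gamma ratios in the definitions \eqref{3eq: defn G+-} and \eqref{3eq: defn G2} of $\upgamma_\upsigma(v,t)$. I do not expect any conceptual difficulty beyond bookkeeping; the logarithmic factors in the target bound \eqref{12eq: delta, 2} are in fact a slight relaxation of what Stirling yields, presumably retained for stylistic compatibility with Lemma \ref{lem: G(v,t)}. Concretely, I would recall from Appendix \ref{sec: Stirling} the standard asymptotic $\log[\Gamma(s+v)/\Gamma(s)] = v \log s + O_{\mathrm{Re}(v)}(1/|s|)$, valid for fixed $v$ with $\mathrm{Re}(v) > 0$ and $|s| \to \infty$ in $|\arg s| \leqslant \pi/2 + \vepsilon$, together with its derivative versions which give $O_{n, \mathrm{Re}(v)}(|s|^{-n-1})$ on the error under $n$-fold differentiation in $s$.

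Applying this with $s = k \pm 2it$, and combined with $\log(k \pm 2it) = \log(\pm 2it) + O(1/|t|)$, I would obtain
\begin{align*}
\frac{\Gamma(k+v \pm 2it)}{\Gamma(k \pm 2it)} = (\pm 2it)^v \bigl(1 + \eta_{\pm}(v,t)\bigr), \qquad \partial_t^n \eta_{\pm}(v,t) \Lt_{n, \mathrm{Re}(v), k} \frac{1}{(|t|+1)^{n+1}}.
\end{align*}
Plugging this into $\upgamma_1$ and $\upgamma_2$: for $\upsigma = 1$, the main term $\exp(v^2)$ and the error $\exp(v^2)\eta_+(v,t)$ separate off as claimed, with a derivative bound strictly better than \eqref{12eq: delta, 2}. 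For $\upsigma = 2$, multiplying the two ratios produces the factor $(2it)^v(-2it)^v = (4t^2)^v$; the error is the cross expansion $\exp(v^2)(\eta_+ + \eta_- + \eta_+ \eta_-)$, and the Leibniz rule delivers the same $(|t|+1)^{-n-1}$ bound by combining the estimates on $\eta_{\pm}$ and their derivatives.

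The main --- and essentially only --- obstacle is notational: I would need to verify that the paper's shorthand $(2i^\upsigma t)^{\upsigma v}$ matches the product of principal-branch powers that Stirling produces. For $\upsigma = 1$ this is tautological, while for $\upsigma = 2$ one has to check that $(2it)^v(-2it)^v = (4t^2)^v$ under the principal logarithm, which follows from the identity $\arg(2it) + \arg(-2it) = 0$ for every real $t \neq 0$. Once this branch identification is settled, both \eqref{12eq: gamma} and the derivative bound \eqref{12eq: delta, 2} fall out with room to spare.
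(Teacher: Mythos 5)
Your Stirling-based approach is precisely what the paper uses: the lemma is proved simply by citing \eqref{app1: refined quot} and \eqref{app1: Delta} from the appendix, which supply $\Gamma(s+\valpha)/\Gamma(s)=s^{\valpha}\bigl(1+\Delta(s;\valpha)\bigr)$ with $\Delta^{(n)}(s;\valpha)=O\bigl((1+|\valpha|^2)/|s|^{n+1}\bigr)$ for $|\valpha|\leqslant\sqrt{|s|}$. The one imprecision is your intermediate claim that $\partial_t^n\eta_\pm\Lt_{n,\mathrm{Re}(v),k}(|t|+1)^{-n-1}$: the Stirling error in fact carries a factor $1+|v|^2$ that depends on $\mathrm{Im}(v)$ and not merely on $\mathrm{Re}(v)$, and this is exactly what the $\log^2(|t|+2)$ in \eqref{12eq: delta, 2} is recording, since in the paper's applications $|\mathrm{Im}(v)|\leqslant\log T\Lt\log(|t|+2)$. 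You are right that a $\log$-free bound $O_{\mathrm{Re}(v),n,k}\bigl((|t|+1)^{-n-1}\bigr)$ for $\updelta_\upsigma$ actually holds, but only because $\updelta_\upsigma=\exp(v^2)\,\eta$ and $\sup_{\mathrm{Im}(v)}\,|\exp(v^2)|\,(1+|v|^2)=O_{\mathrm{Re}(v)}(1)$; it is this absorption by $\exp(v^2)$, not the raw Stirling error, that delivers the uniformity in $v$, so that step should be made explicit rather than attributed to the asymptotic itself. The branch discussion for $(2i^{\upsigma}t)^{\upsigma v}$ is correct.
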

	
Next, we refine the proof of Lemma \ref{lem: Bessel}, with \eqref{12eq: gamma}, \eqref{12eq: delta, 2} applied instead of  \eqref{4eq: bound for u, 2} (or \eqref{6eq: bound for G, 2}) to obtain a decomposition of the Bessel integral $H (x, y)$. Let us write $\vkappa = \upsigma v$ so as to avoid conflict of notation with $ v = xy/2 $ in \eqref{4eq: f(r,v,w)} (although $\vkappa$ was practically $ \upsigma \vepsilon$ in Definition \ref{defn: space T(r)}).   Note here that  after the change $t^{\vkappa} \ra (T + \varPi t)^{\vkappa}$ we also need to decompose  $$(T+\varPi t)^{\vkappa} = T^{\vkappa} + T^{\vkappa} \big( (1 +   {\varPi t} / {T}   )^{\vkappa} - 1 \big), $$ for $|\mathrm{Im} (\vkappa)| \leqslant \upsigma \log T$.   

\begin{lem}\label{lem: Bessel, refined} 
 We have
	 \begin{align}\label{12eq: H(x,y)}
	 	H (x, y) =   H^{\mathrm{o}} (x, y) +   \frac {\varPi \log T } {T}  H^{\flat} (x, y) + O (T^{-A}), 
	 \end{align}
 where  $H^{\mathrm{o}} (x, y)$ and $ H^{\flat} (x, y) $ are defined as the integral in {\rm \eqref{4eq: H(x,y) = I(v,w)}}{\rm;} more explicitly, 
 \begin{align}\label{12eq: Ho}
 	H^{\mathrm{o}} (x, y) = c (\vkappa) \cdot  \varPi T^{1+\vkappa}  \int_{-\varPi^{\vepsilon}/\varPi}^{\varPi^{\vepsilon}/ \varPi} 
   \exp (  2i  T r - \varPi^2 r^2) \cos  ( f (r; v, w) )   \nd r, 
 \end{align} 
for some $c (\vkappa) = O(1)$, and $  H^{\flat} (x, y) $ is of the same form with a certain Schwartz $g^{\flat} (\varPi r)$ instead of $\exp (- \varPi^2 r^2)$. 
\end{lem}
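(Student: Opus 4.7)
The plan is to redo the proof of Lemma \ref{lem: Bessel} verbatim, but with the crude derivative bound \eqref{4eq: bound for u, 2} replaced by the refined decomposition \eqref{12eq: gamma}--\eqref{12eq: delta, 2} of $\upgamma_{\upsigma}(v, t)$. Following the same steps---applying the Mehler--Sonine integral \eqref{3eq: integral Bessel}, truncating the outer $r$-integral at $|r| \leqslant T^{\vepsilon}$ using $x \Gt T^{-A}$, changing variables $t \to T + \varPi t$, and absorbing $\tanh(\pi t) = 1 + O(\exp(-\pi T))$---leads, up to a negligible error, to
\[ H(x, y) = \varPi T^{1+\vkappa} \int_{-T^{\vepsilon}}^{T^{\vepsilon}} \exp(2iTr) \cos(x \cosh(r + \log y)) \hat{\upbeta}(-\varPi r/\pi) \nd r, \]
where $\upbeta(t) = \frac{2(1+\varPi t/T)\upgamma(T+\varPi t)}{\pi^2 T^{\vkappa}} \exp(-t^2)$ and $\vkappa = \upsigma v$, with $v$ on the contour $[\vepsilon - i\log T, \vepsilon + i\log T]$. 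The task then reduces to splitting $\hat{\upbeta}$ into a main Gaussian plus a small Schwartz correction.

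The key step is to insert \eqref{12eq: gamma} into $\upgamma(T + \varPi t)$ and decompose
\[ (T + \varPi t)^{\vkappa} = T^{\vkappa} + T^{\vkappa}\big((1 + \varPi t/T)^{\vkappa} - 1\big). \]
Since $|\vkappa| \Lt \log T$ on the $v$-contour and only $|t| \leqslant \log T$ matters (beyond which $\exp(-t^2)$ is negligible), a Taylor expansion gives $(1 + \varPi t/T)^{\vkappa} - 1 \Lt \log T \cdot \varPi |t| / T$, with $t$-derivatives controlled by $\log T \cdot \varPi/T$ against polynomials; meanwhile \eqref{12eq: delta, 2} yields $\updelta_{\upsigma}(v, T+\varPi t) \Lt (\log T)^2 / T \leqslant \varPi \log T/T$ (using $\varPi \geqslant T^{\vepsilon}$) with matching derivative control. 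Combined, one obtains
\[ \upbeta(t) = c(\vkappa) \exp(-t^2) + \frac{\varPi \log T}{T} \upbeta^{\flat}(t), \]
where $c(\vkappa)$ absorbs the leading $v$-dependent constants---bounded by $O(1)$ because the factor $\exp(v^2)$ in \eqref{3eq: defn G+-}--\eqref{3eq: defn G2} dominates any polynomial growth of $\Gamma(k+v)^{\upsigma}/\Gamma(k)^{\upsigma}$---and $\upbeta^{\flat}$ is Schwartz uniformly in $T$, $\varPi$, $v$: $\upbeta^{\flat, (n)}(t) \Lt_n \exp(-t^2/2)$.

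Taking the Fourier transform of the main piece yields $c(\vkappa) \sqrt{\pi} \exp(-\varPi^2 r^2)$; absorbing $\sqrt{\pi}$ into the constant reproduces exactly $H^{\mathrm{o}}(x, y)$ of \eqref{12eq: Ho}. The correction contributes $(\varPi \log T / T) g^{\flat}(\varPi r)$ with $g^{\flat}(r) = \hat{\upbeta}^{\flat}(-r/\pi)$, which is Schwartz by the Schwartz character of $\upbeta^{\flat}$; this yields the $H^{\flat}(x, y)$ term. Truncating both $r$-integrals to $|r| \leqslant \varPi^{\vepsilon}/\varPi$ costs only a negligible error by the rapid decay of $\exp(-\varPi^2 r^2)$ and $g^{\flat}(\varPi r)$, completing \eqref{12eq: H(x,y)}. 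The main technical obstacle is ensuring that the Schwartz bounds on $\upbeta^{\flat}$ hold uniformly across the whole $v$-contour, so that the decomposition remains useful inside any subsequent $v$-contour integration.
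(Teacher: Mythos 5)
Your proof is correct and matches the paper's intended argument: the paper itself only sketches this lemma, in the two sentences preceding the statement, by saying to rerun the proof of Lemma \ref{lem: Bessel} with the refined Stirling decomposition \eqref{12eq: gamma}--\eqref{12eq: delta, 2} in place of \eqref{4eq: bound for u, 2} and to expand $(T+\varPi t)^{\vkappa}$, which is precisely what you do. The only cosmetic remark is that your displayed formula for $H(x,y)$ omits the final change of variable $r \to r + \log y$ that converts $x\cosh(r+\log y)$ into $f(r;v,w)$, but this is already present in the proof of Lemma \ref{lem: Bessel} and does not affect the decomposition.
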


\subsection{The Method}\label{sec: method}
Recall from Definition \ref{defn: unsmooth} and Lemma \ref{lem: unsmooth, 1} that we need to change $T$ into $K$ and average over $K$ from $T-H$ to $T+H$.  
The key point is that for $ H^{\mathrm{o}} (x, y) $ as in \eqref{12eq: Ho}, we may integrate $K^{1+\vkappa} \exp (2i K r)$ at an early stage by 
\begin{align}\label{12eq: partial}
	\int_{T-H}^{T+H}  K^{1+\vkappa} \exp (2i K r) \nd K = \frac {K^{1+\vkappa} \exp (2i K r)} {2i r} \bigg|_{T-H}^{T+H} - \frac {1+\vkappa} {2i r} \int_{T-H}^{T+H}  K^{\vkappa} \exp (2iKr) \nd K. 
\end{align}
As $r$ occurs in the denominators, this is effective if $r$ is not extremely small, so it will be convenient to split the $r$-integral as in \S \ref{sec: split integral}: 
\begin{align}\label{12eq: split integral}
	\int_{-\varPi^{\vepsilon}/\varPi}^{\varPi^{\vepsilon}/ \varPi}  =  \int_{-T^{\vepsilon}/T}^{T^{\vepsilon}/ T} + \sum_{ T^{\vepsilon}/ T \Lt \tau \shskip \Lt \varPi^{\vepsilon}/ \varPi } \int_{\tau}^{2\tau} +  \int_{-2\tau}^{-\tau}   
\end{align}
for $\tau$ dyadic, and \eqref{12eq: partial} will only be used for those integrals on $\pm [\tau, 2\tau]$. After these manipulations, we may proceed to treat the off-diagonal terms as in \S \S \ref{sec: off, 1st} and \ref{sec: off}. However, the method does not  apply well to the Bessel integral $H_-(x, y)$.

\subsection{Further Improvement}\label{sec: unsmooth, 2}

Let us denote $ \SO_{1}^{\pm} (T, \varPi)  $ and $\SO_2 (T, \varPi)$ respectively the off-diagonal sums in \eqref{6eq: O(m)} and \eqref{7eq: O(m)} for $m = 1$ and $\boldsymbol{m} = \boldsymbol{1}$ (the Eisenstein contributions are relatively small in view of Lemma \ref{lem: Eis E1,2}).  

According to \eqref{12eq: H(x,y)}, after inserting the expression  of $ V_{\upsigma}     (  y ; t  ) $ as in \eqref{3eq: V(y;t), 2}, we split
\begin{align}\label{12eq: O1+}
	\SO_{1}^{+} (T, \varPi) & = \SO_{1}^{\mathrm{o}} (T, \varPi) + \frac {\varPi \log T } {T} \SO_{1}^{\flat} (T, \varPi) + O (T^{-A}), \\
\label{12eq: O2}	\SO_{2}  (T, \varPi) & = \SO_{2}^{\mathrm{o}} (T, \varPi) + \frac {\varPi \log T } {T} \SO_{2}^{\flat} (T, \varPi) + O (T^{-A}). 
\end{align}
Trivially, by Lemma \ref{lem: O1(m)} and Corollaries \ref{cor: O, 1}  and \ref{cor: O, 2}, 
\begin{align}\label{12eq: Of1}
	\frac 1 {\sqrt{\pi} \varPi} \int_{\, T- H}^{T + H}   \big|\SO_{1}^{\flat} (K, \varPi) \big| \nd K \Lt   \frac {H X T^{1+\vepsilon}} {\varPi} , 
\end{align}
\begin{align}\label{12eq: Of2}
	\frac 1 {\sqrt{\pi} \varPi} \int_{\, T- H}^{T + H}   \big|\SO_{2}^{\flat} (K, \varPi)\big| \nd K \Lt   \frac {H T^{5/2+\vepsilon}} {\varPi^2}, 
\end{align}
and, by Lemma \ref{lem: O1(m)-},   
\begin{align}\label{12eq: O-1}
	\frac 1 {\sqrt{\pi} \varPi} \int_{\, T- H}^{T + H} \big|\SO_{1}^{-} (K, \varPi)\big| \nd K  \Lt T^{\vepsilon} \frac {H T (\sqrt{T} + \varPi) } {\varPi \sqrt{X}}. 
\end{align}
By the arguments in \S \S \ref{sec: O+, 1}, \ref{sec: O+, 2}, \ref{sec: split integral}--\ref{sec: final} and the discussions in \S \ref{sec: method}, in particular \eqref{12eq: partial} and \eqref{12eq: split integral},  we infer that 
\begin{align}\label{12eq: Oo1}
	\frac 1 {\sqrt{\pi} \varPi} \int_{\, T- H}^{T + H} \SO_{1}^{\mathrm{o}} (T, \varPi) \nd K \Lt   X T^{1+\vepsilon}    ,
\end{align} 
\begin{align}\label{12eq: Oo2}
	\frac 1 {\sqrt{\pi} \varPi} \int_{\, T- H}^{T + H} \SO_{2}^{\mathrm{o}} (T, \varPi) \nd K \Lt   \frac  {T^{5/2+\vepsilon}}  {\varPi}   .    
\end{align} 
It is important that $\tau$ arises in the numerator of the bound  for  $ \mathrm{S}_{\tau} (b, g )$ as in \eqref{7eq: bound for S(b,g)}. 

At any rate, the rule is simple: $ \SO_{1}^{\mathrm{o}} (T, \varPi) $ and $ \SO_{2}^{\mathrm{o}} (T, \varPi) $ still have the same bounds as  $ \SO_{1}^{+} (T, \varPi) $ and $ \SO_{2}  (T, \varPi) $, respectively, even after the average procedure.

Thus, it follows from \eqref{12eq: O1+}, \eqref{12eq: Of1}, \eqref{12eq: O-1}, and \eqref{12eq: Oo1} that
\begin{align*}
	\frac 1 {\sqrt{\pi} \varPi} \int_{\, T- H}^{T + H} (\SO_{1}^{+} (K, \varPi) + \SO_{1}^{-} (K, \varPi)) \nd K \Lt T^{\vepsilon} \bigg(  T X + \frac {H T (\sqrt{T} + \varPi) } {\varPi \sqrt{X}} \bigg), 
\end{align*}
hence, on the choice $X = H^{2/3} (T^{1/3}/ \varPi^{2/3} + 1)$, 
\begin{align}
		\frac 1 {\sqrt{\pi} \varPi} \int_{\, T- H}^{T + H} (\SO_{1}^{+} (K, \varPi) + \SO_{1}^{-} (K, \varPi)) \nd K \Lt T^{\vepsilon} \frac {H^{2/3} T (T^{1/3} + \varPi^{2/3})} {\varPi^{2/3}} ,  
\end{align}
and it follows from \eqref{12eq: O2}, \eqref{12eq: Of2}, and \eqref{12eq: Oo2} that 
 \begin{align}
 	\frac 1 {\sqrt{\pi} \varPi} \int_{\, T- H}^{T + H}  \SO_{2} (K, \varPi)  \nd K \Lt  \frac  {T^{5/2+\vepsilon}}  {\varPi}  . 
 \end{align}

Similar to the arguments in \S \ref{sec: prelim}, if we apply Lemma   \ref{lem: unsmooth, 1} and  choose $\varPi = H^{2/5} T^{1/5} + H^{2/3}$ or $T^{3/4}$, then Corollaries \ref{cor: 1st} and \ref{cor: 2nd} are improved in the following way, so that Theorem \ref{cor: unsmoothed} is a  direct consequence.   

\begin{theorem} 
	Let $ T^{1/3+ \vepsilon} \leqslant H \leqslant T/3$. Then 
	\begin{align}\label{12eq: 1st moment}
		\sum_{ |t_j - T| \leqslant H }   \omega_j L (s_j,  \vQ \otimes u_j ) =  \frac {4  } {\pi^2}  H T + O \big(T^{\vepsilon} (H^{2/5} T^{6/5} + H^{2/3} T  ) \big). 
	\end{align}
\end{theorem}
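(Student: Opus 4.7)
The plan is to refine the argument of Corollary \ref{cor: 1st} by exploiting the decomposition of the Bessel integral $H(x, y)$ given in Lemma \ref{lem: Bessel, refined} together with a $K$-averaged version of the analysis of \S \ref{sec: off, 1st}. Starting from Corollary \ref{cor: 1st moment}, an application of Lemma \ref{lem: unsmooth, 1} with $a_j = L(s_j, \vQ \otimes u_j)$ reduces matters to bounding the $K$-average
\[
\frac 1 {\sqrt{\pi}\varPi} \int_{T-H}^{T+H} \bigl( \SO_1^+(K, \varPi) + \SO_1^-(K, \varPi) \bigr) \nd K,
\]
where $\SO_1^\pm$ are the off-diagonal sums defined in \eqref{6eq: O(m)}. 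The trivial bound from Lemma \ref{lem: O1(m)} that was used in \S \ref{sec: prelim} is wasteful: I would instead exploit the oscillatory factor $\exp(2iKr)$ present in the Bessel integral via the decomposition \eqref{12eq: H(x,y)}.

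First I would split $\SO_1^+(K, \varPi) = \SO_1^{\mathrm{o}}(K, \varPi) + (\varPi \log T / T)\SO_1^{\flat}(K, \varPi) + O(T^{-A})$ as in \eqref{12eq: O1+}, so that the $\SO_1^{\flat}$-part automatically inherits the factor $\varPi \log T / T$ on top of the bound of Lemma \ref{lem: O1(m)}, yielding \eqref{12eq: Of1}. For $\SO_1^{\mathrm{o}}$, I would then carry out the manipulations of \S \S \ref{sec: O+, 1}--\ref{sec: O+, 2}, but before applying the Wilton bound I would bring the $K$-integration inside (which is legitimate after opening the Kloosterman sum and the expression \eqref{12eq: Ho}). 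Splitting the $r$-integral dyadically as in \eqref{12eq: split integral}, on the small range $|r| \leqslant T^\vepsilon/ T$ one estimates trivially, while on each dyadic piece $|r| \asymp \tau$ with $T^\vepsilon/T \Lt \tau \Lt \varPi^\vepsilon/\varPi$ one applies partial integration as in \eqref{12eq: partial}. The gain is a factor $1/\tau$ against $K$, which after averaging across $[T-H, T+H]$ eliminates the $H$-loss and delivers \eqref{12eq: Oo1}. The $\SO_1^-$ contribution is handled directly via Lemma \ref{lem: O1(m)-} to give \eqref{12eq: O-1}; no refined averaging is needed here, since the method of Lemma \ref{lem: Bessel, refined} does not fit $H_-(x, y)$ (as remarked at the end of \S \ref{sec: method}).

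Collecting \eqref{12eq: Of1}, \eqref{12eq: O-1}, and \eqref{12eq: Oo1}, the averaged off-diagonal contribution is
\[
\Lt T^\vepsilon \Bigl( T X + \frac{H T(\sqrt{T}+\varPi)}{\varPi \sqrt{X}} \Bigr).
\]
Balancing with the choice $X = H^{2/3}(T^{1/3}/\varPi^{2/3} + 1)$ gives
\[
\Lt T^\vepsilon \, \frac{H^{2/3} T (T^{1/3}+\varPi^{2/3})}{\varPi^{2/3}}.
\]
Feeding this into the framework of Lemma \ref{lem: unsmooth, 1} in place of the bound used in Corollary \ref{cor: 1st} and optimizing yields $\varPi = H^{2/5} T^{1/5} + H^{2/3}$, and then straightforward arithmetic produces the error term $T^\vepsilon(H^{2/5} T^{6/5} + H^{2/3} T)$ claimed in \eqref{12eq: 1st moment}.

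The main obstacle, I expect, is justifying the partial integration step \eqref{12eq: partial} uniformly in the other sum variables after the Vorono\"i/Wilton processing, i.e.\ confirming that the boundary contributions at $K = T \pm H$ and the extra $1/r$ from the integration by parts truly combine harmlessly with the $\log T$-inert weights surviving from \S \ref{sec: O+, 2}. Once it is verified that the whole $r$-integrand remains $T^\vepsilon$-inert on each dyadic range $|r| \asymp \tau$ after the refined decomposition \eqref{12eq: Ho}, the rest of the analysis is essentially a bookkeeping exercise parallel to \S \ref{sec: unsmooth, 2}.
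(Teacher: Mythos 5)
Your proposal follows essentially the same route as the paper: decompose $\SO_1^+$ via the refined Bessel-integral expansion \eqref{12eq: O1+}, average over $K$, handle $\SO_1^{\flat}$ trivially from the extra $\varPi \log T / T$ factor, gain a $1/\tau$ from the partial integration \eqref{12eq: partial} on the dyadic $r$-ranges of \eqref{12eq: split integral} to obtain \eqref{12eq: Oo1}, keep the direct bound \eqref{12eq: O-1} for $\SO_1^-$, and then optimize $X = H^{2/3}(T^{1/3}/\varPi^{2/3}+1)$ followed by $\varPi = H^{2/5}T^{1/5}+H^{2/3}$. The arithmetic and the identified technical crux (inertness of the $r$-integrand after the refined decomposition, justifying the partial-integration boundary terms) match the paper's argument in \S \ref{sec: unsmooth, 2}, so this is correct and not a genuinely different proof.
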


\begin{theorem} 
	Let $ T^{3/4+\vepsilon} \leqslant H \leqslant T/3$. Then
	\begin{align}\label{12eq: 2nd moment}
		\sum_{ |t_j - T| \leqslant H }   \omega_j |L (s_j,  \vQ \otimes u_j )|^2  = \frac {4} {\pi^2} \int_{T-H}^{T+H} K  (\gamma_1 \log K + \gamma_0') \nd K + O  (   T^{7/4+\vepsilon}). 
	\end{align}
\end{theorem}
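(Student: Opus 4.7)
The plan is to combine the smoothed asymptotic formula of Corollary \ref{cor: 2nd moment} with the unsmoothing lemma (Lemma \ref{lem: unsmooth, 1}), and then to sharpen the resulting off-diagonal error by exploiting the early-integration idea of \S \ref{sec: method}, exactly as has been laid out in \S \ref{sec: unsmooth, 2}. The parameter $\varPi$ should be fixed at $T^{3/4}$, and one would ultimately let $\varPi = T^{3/4}$.

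First I would apply Lemma \ref{lem: unsmooth, 1} with $a_j = |L(s_j, \vQ \otimes u_j)|^2$; by Theorem \ref{thm: Lindelof} (the mean Lindel\"of bound) the hypothesis holds with $\lambda = 1+\vepsilon$, so the unsmoothing costs $O(\varPi T^{1+\vepsilon})$. This reduces the problem to computing
\[
\frac{1}{\sqrt{\pi}\,\varPi}\int_{T-H}^{T+H}\Bigl(\sum_{j=1}^{\infty}\omega_j\,\upphi_{K,\varPi}(t_j)\,|L(s_j,\vQ\otimes u_j)|^2\Bigr)\,\nd K,
\]
with $\varPi = T^{3/4}$, so that $H \geqslant \varPi^{1+\vepsilon}$ is automatic. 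The inner bracket is $\SC_2^{\natural}(1,1)$ at $T=K$, which by the Kuznetsov decomposition \eqref{7eq: C+E=D+O} equals $\SD_2 + \SO_2 - \SE_2$ up to negligible error.

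Next I would handle each piece. The Eisenstein term is $O((\varPi+T^{2/3})\log^3 T)$ by Lemma \ref{lem: Eis E1,2}, hence contributes $O(HT^{\vepsilon})$ after averaging — absorbable. The diagonal, by Lemma \ref{lem: main term} specialized to $m_1=m_2=1$, contributes precisely the main term $(4/\pi\sqrt{\pi})\,\varPi K(\gamma_1\log K+\gamma_0')$ integrated against $\nd K/(\sqrt{\pi}\varPi)$, which yields $(4/\pi^2)\int_{T-H}^{T+H}K(\gamma_1\log K+\gamma_0')\,\nd K$, plus an error $O(T^{1+\vepsilon}+\varPi^3 T^{\vepsilon}/T)=O(T^{7/4+\vepsilon})$ at $\varPi=T^{3/4}$.

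The crux is the off-diagonal. Here I would insert the refined decomposition $H(x,y) = H^{\mathrm{o}}(x,y) + (\varPi\log T/T)H^{\flat}(x,y) + O(T^{-A})$ from Lemma \ref{lem: Bessel, refined}, giving $\SO_2(K,\varPi) = \SO_2^{\mathrm{o}}(K,\varPi) + (\varPi\log T/T)\SO_2^{\flat}(K,\varPi)$. For the $\flat$-part, the trivial estimate $\SO_2^{\flat} = O(T^{5/2+\vepsilon}/\varPi)$ (inherited from the analysis of \S \ref{sec: off}) yields after averaging at most $H T^{3/2+\vepsilon}/\varPi \leqslant T^{7/4+\vepsilon}$, which is acceptable. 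For the main piece $\SO_2^{\mathrm{o}}(K,\varPi)$, whose $K$-dependence sits in the clean Gaussian factor $\exp(2iKr-\varPi^2 r^2)$ arising from \eqref{12eq: Ho}, I would split the inner $r$-integral via \eqref{12eq: split integral} and perform partial integration in $K$ on each dyadic piece $|r|\asymp\tau\Gt T^{\vepsilon}/T$ according to \eqref{12eq: partial}. The boundary terms lose only a factor $1/\tau$ (absorbed by the $\tau$ gain already present from the stationary-phase analysis in Lemma \ref{lem: w(u;v)} behind the bound \eqref{7eq: bound for S(b,g)}), so the averaged $K$-integral loses the factor $H/\varPi$ that the trivial estimate would cost. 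The resulting bound $O(T^{5/2+\vepsilon}/\varPi)=O(T^{7/4+\vepsilon})$ is exactly what is needed.

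The main obstacle will be step four: implementing the early-$K$-integration rigorously inside the multi-step transformation of \S\S \ref{sec: off} and \ref{sec: final}, in particular verifying that the partial integration in $K$ may be performed before (or interchanged with) the Poisson and Vorono\"\i\ steps without disturbing the arithmetic congruence conditions, and that the short-$r$ piece $|r|\leqslant T^{\vepsilon}/T$ contributes at most $O(T^{1+\vepsilon})$ by a trivial estimate. Once these bookkeeping matters are settled, summing the three errors $\varPi T^{1+\vepsilon}$, $T^{5/2+\vepsilon}/\varPi$, $HT^{3/2+\vepsilon}/\varPi$ and optimizing at $\varPi=T^{3/4}$ yields the claimed $O(T^{7/4+\vepsilon})$.
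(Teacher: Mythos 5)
Your proposal follows essentially the same route as the paper: apply Lemma \ref{lem: unsmooth, 1} (with the mean Lindel\"of bound supplying the hypothesis), extract the main term from the averaged diagonal of Lemma \ref{lem: main term}, bound the Eisenstein term via Lemma \ref{lem: Eis E1,2}, split the off-diagonal via the decomposition \eqref{12eq: O2}, bound the $\flat$-part trivially, save the factor $H/\varPi$ in the $\mathrm{o}$-part by the early $K$-integration \eqref{12eq: partial} with the split \eqref{12eq: split integral}, and take $\varPi = T^{3/4}$ — precisely what the paper does in \S \ref{sec: unsmooth, 2} leading to \eqref{12eq: Of2}, \eqref{12eq: Oo2}. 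The "main obstacle" you flag — verifying the early $K$-integration inside the Poisson--Vorono\"i transformations and controlling the short-$r$ range $|r| \leqslant T^{\vepsilon}/T$ — is exactly the step the paper leaves at the level of a sketch (``By the arguments in \S\S ... we infer that ...''), so your identification of where the real work lies is accurate.
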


\appendix

\section{Stirling's Formulae} \label{sec: Stirling}

According to \cite[\S \S 1.1, 1.2]{MO-Formulas}, for   $ |\arg (s) | < \pi$, as $|s| \ra \infty$, we have 
\begin{align}\label{app1: Stirling, 1}
	\log \Gamma (s) = \lp s+\frac 1 2 \rp \log s - s + \frac 1 2 \log (2\pi) + O \lp \frac 1 {|s|} \rp, 
\end{align}
\begin{align}\label{app1: Stirling, 2} 
	  \psi (s) & = \log s - \frac 1{2s} + O \lp \frac 1 {|s|^2} \rp, \\
\label{app1: Stirling, 2.2}     \psi^{(n)} (s) & = (-1)^{n-1} \bigg(\frac { (n-1)!} {s^{n}} + \frac {n!} {2s^{n+1}} \bigg) + O \lp \frac 1 {|s|^{n+2}} \rp, 
\end{align}
where $\psi (s)  = \Gamma' (s) / \Gamma (s) $ is the logarithm derivative of $\Gamma (s)$. 

Let $ |\arg (s) | < \pi$ and $|s|$ be large.  A  useful consequence of \eqref{app1: Stirling, 1} is the following bound: 
\begin{align}\label{app1: Stirling, 3}
	\frac {\Gamma (s+ \valpha)} {\Gamma ({s} )} = O_{\mathrm{Re} (\valpha)}  \big( \big|s^{\valpha }  \big|  \big), 
\end{align}
provided  that  $|\valpha| \leqslant \sqrt{|s|}$, and, more generally, it follows from \eqref{app1: Stirling, 2}--\eqref{app1: Stirling, 3} that 
\begin{align}\label{app1: Stirling, 4}
	\frac {\nd^n} {\nd s^n }	\lp \frac {\Gamma (s+ \valpha)} {\Gamma ({s} )} \rp = O_{n, \mathrm{Re} (\valpha)}   \bigg(  \big|s^{\valpha }  \big|  \left(\frac {1 + |\valpha | } {|s|} \right)^n  \bigg)
\end{align}
holds for such $\valpha$. Further, if we write  
\begin{align}\label{app1: refined quot}
	\frac {\Gamma (s+ \valpha)} {\Gamma (s ) } = s^{\valpha} (1 + \Delta (s; \valpha) ) ,
\end{align}
then 
\begin{align}\label{app1: Delta}
	 \Delta^{(n)} (s; \valpha) = O_{n, \mathrm{Re} (\valpha)}   \bigg(      \frac {1 + |\valpha |^2 } {|s|^{n+1}}  \bigg),
\end{align}
provided  that  $|\valpha| \leqslant \sqrt{|s|}$.

	\section{Stationary Phase}

We record here  \cite[Lemma A.1]{AHLQ-Bessel}, a slightly improved version of  \cite[Lemma {\rm 8.1}]{BKY-Mass}.

\begin{lem}\label{lem: staionary phase}
	Let $\varww   \in C_c^{\infty} [a, b]$. Let  $f  \in C^{\infty} [a, b]$ be real-valued.  Suppose that there
	are   parameters $P, Q, R, S, Z  > 0$ such that
	\begin{align*}
		f^{(j)} (x) \Lt_{  j} Z / Q^{j}, \qquad \varww^{(n)} (x) \Lt_{  n} S / P^{n},
	\end{align*}
	for  $j \geqslant 2$ and $n \geqslant 0$, and
	\begin{align*}
		| f' (x) | \Gt R. 
	\end{align*}
	Then 
	\begin{align*}
		\int_a^b  e (f(x)) \varww (x)  \nd x \Lt_{ A} (b - a) S \bigg( \frac {Z} {R^2Q^2} + \frac 1 {R Q} + \frac 1 {R P} \bigg)^A  
	\end{align*} 
	for any  $A \geqslant 0$.
\end{lem}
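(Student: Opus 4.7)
\textbf{Proof plan for Lemma \ref{lem: staionary phase}.} Since $|f'(x)| \geqslant R > 0$ throughout $[a,b]$, this is a non-stationary phase estimate, and the natural approach is repeated integration by parts. The plan is to define the first-order differential operator
\[
  L g \;=\; \frac{1}{2\pi i}\,\frac{d}{dx}\!\left(\frac{g}{f'}\right) \;=\; \frac{g'}{2\pi i f'} \;-\; \frac{g\, f''}{2\pi i (f')^2},
\]
so that, using $e(f)' = 2\pi i f' \cdot e(f)$ together with the compact support of $\varww$ in $[a,b]$ (no boundary contributions), integration by parts gives $\int_a^b e(f)\,\varww\,dx = -\int_a^b e(f)\,L\varww\,dx$, and therefore, after $A$ applications,
\[
  \int_a^b e(f(x))\,\varww(x)\,dx \;=\; (-1)^A \int_a^b e(f(x))\,(L^A\varww)(x)\,dx.
\]
It then suffices to prove the pointwise bound $|L^A\varww(x)| \Lt_A S\bigl(Z/(R^2Q^2) + 1/(RQ) + 1/(RP)\bigr)^A$ and integrate trivially over an interval of length $b-a$.

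The main step is a combinatorial lemma, proved by induction on $A$: the function $L^A\varww$ is a sum of $O_A(1)$ terms of the shape
\[
  \frac{\varww^{(j)}\, f^{(l_1)}\cdots f^{(l_p)}}{(f')^{A+p}}, \qquad l_\nu \geqslant 2,\quad j + \sum_{\nu=1}^{p}(l_\nu-1) = A.
\]
The induction step is routine: applying $L$ either differentiates $\varww^{(j)}$ (raising $j$ by $1$ and the denominator exponent by $1$), or differentiates one of the $f^{(l_\nu)}$ factors (raising $l_\nu$ by $1$ and the denominator exponent by $1$), or creates a new $f^{(2)}$ factor (with $p \mapsto p+1$ and denominator exponent rising by $1$), and in every case the invariant $j + \sum(l_\nu-1) - A$ is preserved.

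Plugging in the hypotheses $\varww^{(j)}\Lt S/P^j$, $f^{(l_\nu)}\Lt Z/Q^{l_\nu}$, and $|f'|\Gt R$ bounds a generic such term by
\[
  \frac{S}{P^j}\cdot \frac{Z^p}{Q^{l_1+\cdots+l_p}}\cdot\frac{1}{R^{A+p}} \;=\; S \cdot \Bigl(\tfrac{Z}{R^2Q^2}\Bigr)^{\!p} \Bigl(\tfrac{1}{RP}\Bigr)^{\!j} \Bigl(\tfrac{1}{RQ}\Bigr)^{\!\sum(l_\nu-2)},
\]
after writing $A+p = (j) + (2p) + \sum(l_\nu-2)$ and distributing the powers of $R$ accordingly. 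Since the total degree $p + j + \sum(l_\nu-2)$ equals $A$, the multinomial theorem collects all such monomials into $\bigl(Z/(R^2Q^2) + 1/(RQ) + 1/(RP)\bigr)^A$, yielding the desired pointwise bound on $L^A\varww$.

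The only real obstacle is the bookkeeping in the inductive description of $L^A\varww$; once that is in hand, the size estimate and the multinomial repackaging are purely algebraic. I expect the proof to take about a page, with the bulk devoted to verifying the inductive step and the regrouping of $R$-powers that matches the three terms in the final bound.
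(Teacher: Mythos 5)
Your proof is correct, and it follows the standard route for non-stationary phase estimates: iterate the operator $L g = \tfrac{1}{2\pi i}(g/f')'$, classify the terms of $L^A\varww$ by the data $(j;l_1,\dots,l_p)$ with the invariant $j+\sum(l_\nu-1)=A$, and repackage the powers of $R$ so that each term becomes $S$ times a degree-$A$ monomial in $Z/(R^2Q^2)$, $1/(RQ)$, $1/(RP)$. I checked the bookkeeping: the denominator exponent $A+p$ is reproduced correctly under each of the three derivative actions (hitting $\varww^{(j)}$, hitting an $f^{(l_\nu)}$, or hitting the denominator and spawning a new $f^{(2)}$), and the $R$-power redistribution $R^{A+p}=R^{j}\cdot R^{2p}\cdot R^{\sum(l_\nu-2)}$ together with $p+j+\sum(l_\nu-2)=A$ is exactly what makes the three-term bound emerge. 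Note that the paper does not include a proof of this lemma; it simply records it as Lemma A.1 of \cite{AHLQ-Bessel} (a refinement of Lemma 8.1 of \cite{BKY-Mass}), and the argument there is the same iterated-integration-by-parts scheme you have described, so there is no divergence in approach.
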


According to \cite{KPY-Stationary-Phase}, let us introduce the notion of inert functions in a simplified setting.  

\begin{defn}\label{defn: inert}
	Let $\boldsymbol{I}  \subset \BR_+^{d}$ be a product of intervals {\rm(}not necessarily finite{\rm)}.  For $X \geqslant 1$, we say a smooth function $\varww \in C^{\infty} (\boldsymbol{I})$ is $X$-inert if 
	\begin{align*}
		\boldsymbol{x}^{\boldsymbol{n}} 	\varww^{(\boldsymbol{n})} (\boldsymbol{x})  \Lt_{\boldsymbol{n}} X^{| \boldsymbol{n} |}  , \qquad \text{($\boldsymbol{x} \in \boldsymbol{I}$),} 
	\end{align*} for every $\boldsymbol{n} \in \mathbf{N}_0^{d}$, where in the multi-variable notation $\boldsymbol{x}^{\boldsymbol{n}} = x_1^{n_1} \cdots x_d^{n_d}$, $ \varww^{(\boldsymbol{n})} (\boldsymbol{x}) = \varww^{(n_1, \cdots, n_d)} (x_1, \cdots, x_d) $, and $| \boldsymbol{n} | = n_1+ \cdots + n_{d}$. 
\end{defn}

Note that $X$ is relatively small  in practice: $X = \log T$ or $T^{\vepsilon}$. Also, some exponential factors could be $X$-inert if its phase is small.  For example,  $e (\lambda x^{\gamma})$ ($\gamma \neq 0$) is $X$-inert on $[\rho, 2 \rho]$ provided that $ \lambda \rho^{\gamma} \Lt X $. 

The next lemma follows easily by repeated partial integration, and it will be used to handle the Fourier or Hankel transform in certain degenerated cases. 

\begin{lem}\label{lem: integral, 0}
	Let $\gamma \neq 0$ be real. 	
	Let $\varww   \in C_c^{\infty} [\rho ,  2 \rho]$ be $X$-inert. Then the integral
	\begin{align*}
		\int_{\rho}^{2\rho} e (\lambda x^{\gamma} ) \varww (x ) \nd x 
	\end{align*}
	has bound  $O (\rho T^{-A})$ for any $A \geqslant 0$ in the case $ \lambda \rho^{\gamma} > T^{\vepsilon} X  $. 
\end{lem}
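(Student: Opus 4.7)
The plan is to derive this directly from Lemma \ref{lem: staionary phase} by matching parameters. First I would set $f(x) = \lambda x^{\gamma}$ and note that on the support $[\rho, 2\rho]$ one has $|f'(x)| = |\lambda\gamma| x^{\gamma-1} \asymp |\lambda|\rho^{\gamma-1}$ together with $|f^{(j)}(x)| \Lt_j |\lambda|\rho^{\gamma-j}$ for every $j \geqslant 2$. The $X$-inertness of $\varww$ (Definition \ref{defn: inert}) gives, on the scale $x \asymp \rho$, the bound $\varww^{(n)}(x) \Lt_n X^n/\rho^n$ uniformly on $[\rho, 2\rho]$.

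Next I would invoke Lemma \ref{lem: staionary phase} with the parameters $S = 1$, $P = \rho/X$, $Q = \rho$, $Z = |\lambda|\rho^{\gamma}$, and $R \asymp |\lambda|\rho^{\gamma-1}$. A direct check shows that each of the three ratios $Z/(R^2Q^2)$, $1/(RQ)$, $1/(RP)$ is $\Lt X/(|\lambda|\rho^{\gamma})$, and the hypothesis $|\lambda|\rho^{\gamma} > T^{\vepsilon} X$ forces this quantity to be $< T^{-\vepsilon}$. Lemma \ref{lem: staionary phase} then yields the estimate $\Lt_A \rho \cdot T^{-\vepsilon A}$, and choosing $A$ arbitrarily large produces the claimed $O(\rho T^{-A})$ bound.

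Equivalently, as the author suggests, one could perform the integration by parts by hand: write $e(\lambda x^{\gamma}) = (2\pi i \lambda \gamma x^{\gamma-1})^{-1} \frac{d}{dx} e(\lambda x^{\gamma})$ and integrate by parts, the boundary terms vanishing by the compact support of $\varww$; each iteration replaces $\varww(x)$ by $\frac{d}{dx}(\varww(x)/(2\pi i \lambda \gamma x^{\gamma-1}))$, whose sup-norm on $[\rho, 2\rho]$ is $\Lt X/(|\lambda|\rho^{\gamma}) \cdot \|\varww\|_{\infty}$. The only non-routine point, which I would view as the main (minor) obstacle, is verifying that the class of $X$-inert weights on $[\rho, 2\rho]$ is stable---with only a controlled deterioration of implied constants---under the operator $\varww \mapsto \frac{d}{dx}(\varww \cdot x^{1-\gamma})$; this propagation follows from Leibniz, so the gain $X/(|\lambda|\rho^{\gamma}) < T^{-\vepsilon}$ compounds multiplicatively through $A$ iterations, and the resulting bound $O(\rho T^{-\vepsilon A})$ matches the claim after adjusting $A$.
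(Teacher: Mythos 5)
Your proposal is correct and matches the paper's approach. The paper offers no written proof, remarking only that ``the next lemma follows easily by repeated partial integration,'' which is precisely your second argument; your first argument, matching parameters into Lemma~\ref{lem: staionary phase}, is the same integration-by-parts idea packaged through that lemma (indeed Lemma~\ref{lem: staionary phase} is itself a first-derivative/non-stationary-phase bound obtained by repeated partial integration). Your parameter check $S=1$, $P = \rho/X$, $Q = \rho$, $Z = |\lambda|\rho^{\gamma}$, $R \asymp |\lambda|\rho^{\gamma-1}$ is accurate: one verifies $Z/(R^2Q^2)$, $1/(RQ)$, $1/(RP) \Lt X/(|\lambda|\rho^{\gamma}) < T^{-\vepsilon}$, giving $\Lt_A \rho\, T^{-\vepsilon A}$ as claimed, and your direct integration-by-parts alternative, with the observation that the operator $\varww \mapsto \frac{d}{dx}(\varww\cdot x^{1-\gamma})$ preserves inertness with a uniform gain per step by Leibniz, is also sound.
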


Let us record here  a generalization of 
the stationary phase estimate in \cite[Theorem 1.1.1]{Sogge}.

\begin{lem}\label{lem: stationary phase estimates}
	Let  $ \sqrt {\lambda} \geqslant X \geqslant  1$.  	Let $\varww (x, \lambda, \boldsymbol{x})  \in C^{\infty} ([a, b] \times [X^2, \infty) \times \boldsymbol{I})$ be $ X$-inert, with compact support in the first variable $x$. Let $f (x) \in C^{\infty} [a, b]$ be   real-valued.  Suppose    $f(x_0) = f'(x_0) = 0$ at a point  $  x_0 \in (a, b)$, with $ f'' (x_0) \neq 0$ and $f' (x) \neq 0$ for all $x \in [a, b] \smallsetminus \{x_0\} $. Define
	\begin{align*}
		I (\lambda, \boldsymbol{x}) = \int_a^b e (\lambda f(x)) \varww (x, \lambda, \boldsymbol{x})  \nd x,
	\end{align*}  
	then $\sqrt{\lambda} \cdot I (\lambda, \boldsymbol{x}) $ is an $X$-inert function.  
\end{lem}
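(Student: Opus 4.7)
The plan is to reduce this parametric statement to the classical stationary phase asymptotic expansion of Sogge (see \cite[Theorem 1.1.1]{Sogge}) and then carefully track how the $X$-inert dependence of $\varww$ on the parameters $(\lambda, \boldsymbol{x})$ propagates through the argument. The three ingredients are: (i) a localization argument based on Lemma \ref{lem: staionary phase} to isolate the stationary point $x_0$; (ii) a Morse-type change of variables $x = \phi(u)$ with $f(\phi(u)) - f(x_0) = \epsilon u^2/2$, $\epsilon = \mathrm{sgn}\, f''(x_0)$, depending only on $f$; and (iii) a rescaling $u = v/\sqrt{\lambda}$ followed by Taylor expansion and Fresnel-type evaluation.

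First, I would fix a small $\delta > 0$ and decompose $\varww = \varww \chi_{\delta} + \varww (1 - \chi_{\delta})$, where $\chi_{\delta} \in C_c^{\infty}[x_0 - \delta, x_0 + \delta]$ equals $1$ near $x_0$. On the support of $1 - \chi_{\delta}$ we have $|f'(x)| \Gt 1$, so Lemma \ref{lem: staionary phase} (with $P = Q = R = 1$, $Z = O(1)$) yields a contribution of $O_A(\lambda^{-A})$; since each $\lambda$-derivative of this bound only gains extra powers of $\lambda^{-1}$, this error satisfies the $X$-inert estimate trivially after multiplication by $\sqrt{\lambda}$.

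Next, on the localized piece the Morse lemma provides a $C^{\infty}$ diffeomorphism $u \mapsto x = \phi(u)$, $\phi(0) = x_0$, with $f(\phi(u)) - f(x_0) = \epsilon u^2/2$; since $\phi$ depends only on $f$, after substitution
\begin{align*}
\sqrt{\lambda}\, I(\lambda, \boldsymbol{x}) = \sqrt{\lambda}\, e(\lambda f(x_0)) \int_{-c}^{c} e(\epsilon \lambda u^2/2)\, W(u, \lambda, \boldsymbol{x}) \, \nd u + O(\lambda^{-A}),
\end{align*}
where $W(u, \lambda, \boldsymbol{x}) = \chi_{\delta}(\phi(u)) \varww(\phi(u), \lambda, \boldsymbol{x}) \phi'(u)$ is still smooth and compactly supported in $u$ and $X$-inert in $(\lambda, \boldsymbol{x})$. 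Now I would substitute $u = v/\sqrt{\lambda}$; the resulting integral over $|v| \Lt c\sqrt{\lambda}$ is
\begin{align*}
e(\lambda f(x_0)) \int e(\epsilon v^2/2)\, W\big(v/\sqrt{\lambda}, \lambda, \boldsymbol{x}\big)\, \nd v.
\end{align*}
Taylor expanding $W$ in its first variable to order $N$ around $u = 0$ and evaluating the Fresnel integrals $\int e(\epsilon v^2/2) v^k \nd v$ (by contour shift to the ray $e^{\pm i \pi/4} \BR$) yields an asymptotic expansion
\begin{align*}
\sqrt{\lambda}\, I(\lambda, \boldsymbol{x}) = e(\lambda f(x_0)) \sum_{k=0}^{N} \frac{c_k}{\lambda^{k/2}} \left[ \partial_u^{k} W \right]_{u=0}\!(\lambda, \boldsymbol{x}) + R_N(\lambda, \boldsymbol{x}),
\end{align*}
with explicit constants $c_k$ and remainder $R_N$ controlled by $\partial_u^{N+1} W$.

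Finally, to verify $X$-inertness, I would differentiate the right-hand side in $\lambda$ and $\boldsymbol{x}$. Each $\boldsymbol{x}$-derivative of $[\partial_u^k W]_{u=0}$ respects the $X$-inert bound for $W$, and each $\lambda$-derivative costs either a factor $X/\lambda$ (from differentiating $W$ in its $\lambda$-slot, controlled by $X$-inertness) or $1/\lambda$ (from the $1/\lambda^{k/2}$ factors), both of which are dominated by $X/\lambda \leqslant 1/\sqrt{\lambda} \leqslant 1$ thanks to $\sqrt{\lambda} \geqslant X$. The oscillatory phase $e(\lambda f(x_0))$ is harmless since $f(x_0)$ is a constant (independent of $\lambda$, $\boldsymbol{x}$); in fact the cleanest formulation absorbs $e(\lambda f(x_0))$ into the output, which is allowed because $X$-inertness is about absolute-value bounds on derivatives and pure phases contribute only factors of $O(1)$ times lower-order derivatives after the product rule. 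The main obstacle is bookkeeping the remainder $R_N$: one must verify that after $n$ differentiations in $(\lambda, \boldsymbol{x})$ it still gains a factor of $\lambda^{-(N+1)/2}$ with constants growing only in $n$ and $N$; this follows by differentiating \emph{under} the rescaled integral before Taylor expanding, so that each derivative lands on $W$ (whose derivatives are $X$-inert) rather than on the oscillatory Gaussian.
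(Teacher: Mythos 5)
Your proof follows exactly the approach that underlies the paper's cited references: the paper's own proof is a one-line appeal to Sogge's \cite[Theorem 1.1.1]{Sogge} (the case $X = 1$) and \cite[Lemma 7.3]{Qi-GL(3)} (general $X$), together with the remark that the extra $\boldsymbol{x}$-dependence is innocuous because the phase $e(\lambda f(x))$ does not involve $\boldsymbol{x}$. You instead reconstruct that argument from scratch (non-stationary localization, Morse lemma, Fresnel evaluation with derivative bookkeeping), which is essentially the same route, just written out in full.

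One of your justifications, however, is wrong as stated. You claim the phase $e(\lambda f(x_0))$ may simply be ``absorbed into the output'' because ``pure phases contribute only factors of $O(1)$ times lower-order derivatives after the product rule.'' This is false: for a constant $c \neq 0$ one has $\lambda^j \big|\partial_\lambda^j \, e(\lambda c)\big| = (2\pi |c| \lambda)^j$, which is certainly not $\Lt X^j$ once $\lambda > X^2$; thus $e(\lambda c)$ is \emph{not} $X$-inert in $\lambda$. If $f(x_0)$ were nonzero, the phase factor would genuinely destroy $X$-inertness, and indeed the paper is careful about exactly this: Lemma \ref{lem: analysis of integral}(ii) multiplies by the compensating factor $e(\lambda(\gamma-1)) = e(-\lambda f(1))$ before the present lemma can be applied. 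In your argument the conclusion survives only because the lemma's hypothesis explicitly assumes $f(x_0)=0$, so that the factor in question is identically $1$; you should invoke that hypothesis rather than assert a general principle about phase factors that does not hold.
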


\begin{proof}
	Note that if there were no variable $\boldsymbol{x}$, then this lemma is \cite[Theorem 1.1.1]{Sogge} in the case $X = 1$ and  \cite[Lemma 7.3]{Qi-GL(3)} in general.   However, since $ e ( \lambda f(x))$ does not involve $\boldsymbol{x}$, the derivatives for the added variable  $\boldsymbol{x}$ may be treated easily. 
\end{proof}

\begin{remark}
	 Of course, the main theorem in \cite{KPY-Stationary-Phase} is much more general than Sogge's {\rm\cite[Theorem 1.1.1]{Sogge}}, but the latter has a simpler proof and no error term.  
\end{remark}

 The next lemma is a simple application of Lemmas \ref{lem: staionary phase} and \ref{lem: stationary phase estimates}. 
 
 \begin{lem} \label{lem: analysis of integral}
 	Let  $ \gamma \neq 0,  1$ be real. 
 	For $ \sqrt {\lambda} \geqslant X \geqslant  1$ and $\rho > 0$, define 
 	\begin{align*}
 		I_{\gamma}^{\pm} (\lambda, \boldsymbol{x}) =   \int_{\rho}^{2\rho}  e \big(\lambda \big(x \pm \gamma   x^{1/\gamma} \big) \big) \varww (x, \lambda, \boldsymbol{x}) \nd x,  
 	\end{align*} 
 	for an $X$-inert function  $\varww (x, \lambda, \boldsymbol{x}) \in C^{\infty} ([\rho ,  2 \rho] \times [X^2, \infty) \times \boldsymbol{I} )$, with compact support in the first variable $x$. 
 	
 	{\rm \,(i)} We have
 	$$ I_{\gamma}^{\pm} (\lambda, \boldsymbol{x}) \Lt_A  \rho \cdot \bigg( \frac {  X  } {\lambda (\rho +\rho^{1/\gamma})}\bigg)^A  $$ 
 	for any value of $\rho$ in the $+$ case, or for $ \min \left\{ \rho/\sqrt{2}, \sqrt{2}/\rho \right\}   < 1 / 2 $ in the $-$ case.  
 	
 	{\rm(ii)} Define  
 	\begin{align*}
 		\varvv_{\gamma} (\lambda, \boldsymbol{x} ) =  e (  \lambda (\gamma -1) ) \cdot \sqrt{\lambda}   I_{\gamma}^{-} (\lambda, \boldsymbol{x}  ), 
 	\end{align*}  
 	then $\varvv_{\gamma} (\lambda, \boldsymbol{x} )$ is an $X$-inert function for any $1/ 2 \leqslant \rho/\sqrt{2} \leqslant 2 $. 
 \end{lem}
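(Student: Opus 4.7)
The strategy is to analyze the phase function $f_{\pm}(x) = x \pm \gamma x^{1/\gamma}$ and invoke either the non-stationary phase bound (Lemma~\ref{lem: staionary phase}) or the stationary phase expansion (Lemma~\ref{lem: stationary phase estimates}) depending on whether $[\rho, 2\rho]$ contains a critical point. The derivatives are
\[
f_{\pm}'(x) = 1 \pm x^{1/\gamma - 1}, \qquad f_{\pm}^{(j)}(x) = \pm (1/\gamma - 1)(1/\gamma - 2) \cdots (1/\gamma - j + 1)\, x^{1/\gamma - j}, \quad j \geqslant 2.
\]
Thus $f_{+}'$ has no real zero, while $f_{-}'$ vanishes only at $x_0 = 1$ (using $\gamma \neq 0, 1$), with $f_{-}(1) = -(\gamma - 1)$ and $f_{-}''(1) = 1/\gamma - 1 \neq 0$, so the stationary point is non-degenerate.

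For part (i), I will show $|f_{\pm}'(x)| \asymp 1 + \rho^{1/\gamma - 1} = (\rho + \rho^{1/\gamma})/\rho$ on $[\rho, 2\rho]$ whenever $x_0 = 1$ is absent: this is obvious for $f_{+}$ (the two terms have the same sign), and for $f_{-}$ the separation condition $\min\{\rho/\sqrt 2, \sqrt 2/\rho\} < 1/2$ forces $[\rho, 2\rho]$ to lie a fixed factor away from $1$, so one of $1$ or $x^{1/\gamma - 1}$ dominates according to the sign of $1/\gamma - 1$. I will then feed this into Lemma~\ref{lem: staionary phase} with the parameter choices
\[
R = (\rho + \rho^{1/\gamma})/\rho, \qquad Q = \rho, \qquad Z = \lambda \rho^{1/\gamma}, \qquad P = \rho/X, \qquad S = 1,
\]
applied to the phase $\lambda f_{\pm}$, which gives
\[
\frac{\lambda Z}{(\lambda R)^2 Q^2} + \frac{1}{\lambda R Q} + \frac{1}{\lambda R P} \;\Lt\; \frac{X}{\lambda(\rho + \rho^{1/\gamma})},
\]
the last term being dominant because $X \geqslant 1$. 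Multiplying by the length of the interval $(b - a) = \rho$ yields the claimed bound.

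For part (ii), I will apply Lemma~\ref{lem: stationary phase estimates} directly to
\[
e(\lambda(\gamma - 1)) \cdot I_{\gamma}^{-}(\lambda, \boldsymbol{x}) \;=\; \int e\bigl(\lambda \,\tilde f(x)\bigr)\, \varww(x, \lambda, \boldsymbol{x})\, \nd x, \qquad \tilde f(x) = f_{-}(x) + (\gamma - 1),
\]
where $\tilde f(x_0) = 0$ at the unique critical point $x_0 = 1 \in (\rho/\sqrt 2, 2\sqrt 2) \supseteq [\rho, 2\rho]$ for $\rho \in [1/\sqrt 2, 2\sqrt 2]$, and $\tilde f''(1) = 1/\gamma - 1 \neq 0$. (If the compact support of $\varww(\cdot, \lambda, \boldsymbol{x})$ in $x$ happens to exclude a neighborhood of $1$, then part (i) already shows $I_{\gamma}^{-}$ is negligibly small, hence $X$-inert trivially; otherwise the hypotheses of Lemma~\ref{lem: stationary phase estimates} hold verbatim.) Thus $\sqrt{\lambda}$ times the phase-normalized integral is $X$-inert in $(\lambda, \boldsymbol{x})$, which is exactly $\varvv_{\gamma}(\lambda, \boldsymbol{x})$.

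The main obstacle is bookkeeping: verifying that all three terms in the non-stationary phase estimate of Lemma~\ref{lem: staionary phase} collapse to the single term $X/(\lambda(\rho + \rho^{1/\gamma}))$ uniformly over the various sign regimes of $1/\gamma - 1$, and confirming that the $X$-inert hypothesis on $\varww$ (jointly in $x$, $\lambda$, $\boldsymbol{x}$) is preserved through the phase normalization so that Lemma~\ref{lem: stationary phase estimates} applies with the auxiliary parameter $\boldsymbol{x}$ carried along unchanged. Neither should require new ideas beyond those in \cite{Sogge} and \cite{KPY-Stationary-Phase}.
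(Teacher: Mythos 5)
The paper states this lemma without proof (merely calling it ``a simple application'' of Lemmas~\ref{lem: staionary phase} and~\ref{lem: stationary phase estimates}), so your reconstruction is the intended argument: the non-stationary bound of Lemma~\ref{lem: staionary phase} for part (i), and the stationary-phase expansion of Lemma~\ref{lem: stationary phase estimates}, after the harmless normalization $\tilde f(x) = f_-(x) + \gamma - 1$, for part (ii). The parameter bookkeeping for the phase $\lambda f_\pm$ does collapse all three terms to $X/(\lambda(\rho + \rho^{1/\gamma}))$, and the slip $f_-''(1) = 1/\gamma - 1$ (it is $1 - 1/\gamma$) is immaterial for non-degeneracy.

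There is a genuine gap, however, in your treatment of part (i) in the $-$ case. You assert that $\min\{\rho/\sqrt 2,\, \sqrt 2/\rho\} < 1/2$ ``forces $[\rho, 2\rho]$ to lie a fixed factor away from $1$''. It does not: that hypothesis reads $\rho < 1/\sqrt 2$ or $\rho > 2\sqrt 2$, while the critical point $x_0 = 1$ lies in $[\rho, 2\rho]$ for all $\rho \in [1/2, 1]$, so the overlap $\rho \in [1/2, 1/\sqrt 2)$ (take $\rho = 0.6$, $[\rho,2\rho] = [0.6,1.2] \ni 1$) satisfies the stated condition yet has $f_-'$ vanishing inside the integration range. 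On this range, for $\varww$ not vanishing near $x_0 = 1$, stationary phase gives $|I_\gamma^-| \asymp \lambda^{-1/2}$, which for $\lambda \gg X^2$ and $A \geq 2$ violates the claimed bound $\rho\bigl(X/(\lambda(\rho + \rho^{1/\gamma}))\bigr)^A$; the key lower bound $|f_-'(x)| \Gt (\rho + \rho^{1/\gamma})/\rho$ needed to invoke Lemma~\ref{lem: staionary phase} is simply false there. The separation condition one actually needs is $\min\{2\rho, 1/\rho\}$ bounded strictly below~$1$, i.e.\ $\rho$ bounded away from $[1/2, 1]$. This is evidently an imprecision in the lemma as printed --- harmless downstream, where only ``negligible unless $\rho \asymp 1$'' is used, and absorbed into the range of part~(ii) once that is enlarged accordingly --- but since you are supplying the proof, the verification of the separation is precisely the step that cannot be waved away, and you should have flagged that the stated constants do not deliver it.
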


{Finally, we  record the second derivative tests from \cite[Lemmas A.2, A.3]{AHLQ-Bessel}, as variants of \cite[Lemma 5.1.3]{Huxley} and \cite[Lemma 4]{Munshi-Circle-III}. 

\begin{lem}\label{lem: 2nd derivative}
	Let $\varww   \in C_c^{\infty} [a, b]$ and $V$ be its total variation. 	Let $f \in C^{\infty} [a, b]$ be real-valued. If $f'' (x) \geqslant \lambda > 0$ on $[a, b]$, then 
	\begin{align*}
		\bigg|\int_a^b  e (f(x)) \varww (x)  \nd x  \bigg| \leqslant \frac {4 V} {\sqrt{\pi \lambda}} . 
	\end{align*}
\end{lem}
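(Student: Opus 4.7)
The plan is to reduce the weighted estimate to an unweighted second derivative test by Riemann--Stieltjes integration by parts, and then to establish the unweighted bound by a direct van der Corput-type argument.

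First, I would set
\begin{align*}
F(x) = \int_a^x e(f(t)) \, \nd t, \qquad x \in [a,b],
\end{align*}
and claim that $\sup_{x \in [a,b]} |F(x)| \leqslant 2/\sqrt{\pi \lambda}$. Granted this, since $\varww \in C_c^{\infty}[a,b]$ has $\varww(a) = \varww(b) = 0$, integration by parts gives
\begin{align*}
\int_a^b e(f(x)) \varww(x) \, \nd x = -\int_a^b F(x) \varww'(x) \, \nd x,
\end{align*}
and taking absolute values yields a bound of $\sup_{[a,b]} |F| \cdot \int_a^b |\varww'(x)| \, \nd x = \sup |F| \cdot V$, which is $\leqslant 2V / \sqrt{\pi\lambda}$. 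To recover the stated constant $4V/\sqrt{\pi\lambda}$, one should allow a factor of $2$ of slack in the unweighted bound (which arises naturally when splitting at a stationary point; see below).

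For the unweighted claim, I would argue as follows. Since $f''(x) \geqslant \lambda > 0$, the derivative $f'$ is strictly increasing, so it vanishes at most once on $[a,b]$. If $f'$ does not vanish, then $f'$ is monotonic and one-signed, and the first derivative test (integration by parts with $\nd (e(f))/(2\pi i f'(x))$) combined with the monotonicity and the lower bound $|f'(x)| \geqslant |f'(a)|$ or $|f'(b)|$ yields the desired estimate. If $f'$ vanishes at some $x_0 \in [a,b]$, I would split $[a,x_0] \cup [x_0,b]$ and on each piece separately apply the first derivative test after cutting out a neighborhood $[x_0 - \delta, x_0 + \delta]$ on which one uses the trivial bound $2\delta$; optimizing in $\delta$ gives an estimate of the form $C/\sqrt{\lambda}$. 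The sharp constant comes out of comparison with the Fresnel integral $\int_{-\infty}^{\infty} e(\lambda t^2 /2) \, \nd t = e(1/8)/\sqrt{\lambda}$, or equivalently, from the change of variable $y = f'(t)$ (legitimate by monotonicity) which turns $F(x)$ into a one-dimensional Fourier-type integral with kernel $1/f''(t(y))$, bounded pointwise by $1/\lambda$.

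The main technical point, and the one place where care is needed, is tracking the exact constant $4/\sqrt{\pi}$. The routine van der Corput proof gives a bound of the shape $C/\sqrt{\lambda}$ for a non-sharp constant $C$; to reach the stated $4/\sqrt{\pi}$ one should follow Titchmarsh's or Huxley's refinement, comparing directly against the Fresnel integral on the portion of $[a,b]$ where $|f'(x)|$ is small and using the first derivative test carefully elsewhere. Everything else in the argument is a one-line integration by parts, so the only real work is in the sharp unweighted estimate, which is classical.
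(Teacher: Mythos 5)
Your overall strategy — write $F(x)=\int_a^x e(f(t))\,\nd t$, integrate by parts using $\varww(a)=\varww(b)=0$ to get $\int_a^b e(f)\varww = -\int_a^b F\varww'$, then bound by $\sup|F|\cdot V$ and prove the unweighted estimate $\sup|F|\leqslant 4/\sqrt{\pi\lambda}$ — is correct and is the standard route. The paper itself does not prove this lemma; it records it from \cite[Lemma A.2]{AHLQ-Bessel}, and the argument there is the same integration by parts together with the van der Corput second derivative test, so you are on the same track.

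Two small inaccuracies are worth flagging. First, your opening claim $\sup|F|\leqslant 2/\sqrt{\pi\lambda}$ is an overclaim: it holds only when the stationary point of $f$ is absent or lies at an endpoint of $[a,x]$, so that the van der Corput split gives $\delta+1/(\pi\lambda\delta)$. When the stationary point is interior you must excise a two-sided neighbourhood, giving $2\delta+2/(\pi\lambda\delta)$, and optimizing at $\delta=1/\sqrt{\pi\lambda}$ produces $4/\sqrt{\pi\lambda}$, not $2/\sqrt{\pi\lambda}$. You seem aware of the factor $2$ but present it as "slack" rather than identifying where the correct constant actually comes from. Second, the claim that "the sharp constant comes out of comparison with the Fresnel integral" is misleading. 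The Fresnel integral explains the order $\lambda^{-1/2}$, and on the infinite line the Fresnel constant is $1$, which is smaller than $4/\sqrt{\pi}\approx 2.26$; the $4/\sqrt{\pi}$ that actually appears in the lemma is precisely the constant produced by the $\delta$-optimization in the van der Corput split (trivial bound on $|t-x_0|\leqslant\delta$, first derivative test $|\int e(f)|\leqslant 1/(\pi m)$ on each remaining piece with $m=\lambda\delta$). So the constant is an artefact of the method, not a Fresnel sharp constant, and the change-of-variables $y=f'(t)$ device you mention does not directly pin it down either. Once you replace the Fresnel heuristic by the explicit optimization, your argument delivers exactly the stated bound.
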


 \begin{lem}\label{lem: 2nd derivative test, dim 2}
	Let  $f \in C^{\infty} ([a, b] \times [c, d])$ be real-valued such that uniformly 
	\begin{align*}
		& \left|\partial^2 f / \partial x^2 \right| \Gt \lambda > 0, \hskip 15pt  \left|\partial^2 f / \partial y   ^2 \right| \Gt \rho > 0, \\
		& |\det f''|  = \left|\partial^2 f / \partial x^2 \cdot \partial^2 f / \partial y   ^2 - (  \partial^2 f / \partial x \partial y    )^2 \right| \Gt \lambda  \shskip \rho .
	\end{align*}
	Let $\varww \in C_c^{\infty} ([a, b] \times [c, d])$. Define 
	\begin{align*}
		V = \int_a^b \int_c^d \left|  \frac {\partial^2 \varww(x, y   )} {\partial x \partial y   } \right| \nd x \shskip \nd y   .
	\end{align*}
	Then
	\begin{align*}
		\int_a^b \int_c^d e (f(x, y   )) \varww (x, y   ) \nd x \shskip \nd y     \Lt \frac { V } {\sqrt { \lambda  \shskip \rho}},
	\end{align*}
	with an absolute implied constant.
\end{lem}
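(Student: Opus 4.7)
The plan is a two-step reduction: first transfer the two derivatives onto $\varww$ via an antiderivative of $e(f)$, and then bound that antiderivative pointwise by a two-dimensional stationary-phase argument using the Hessian determinant hypothesis.

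\emph{Step 1: reduction to an $L^\infty$ bound on an antiderivative.} Define
$$G(X, Y) = \int_a^X \int_c^Y e(f(u, v)) \, du \, dv, \qquad (X, Y) \in [a, b] \times [c, d],$$
so that $\partial^2 G / \partial X \partial Y = e(f(X, Y))$. Integrating by parts twice, once in each variable, shifts both derivatives from $G$ onto $\varww$; all boundary terms vanish because $\varww$ is compactly supported in the open rectangle $(a, b) \times (c, d)$. This gives
$$\int_a^b \int_c^d e(f(x, y)) \varww(x, y) \, dx \, dy = \int_a^b \int_c^d G(x, y) \frac{\partial^2 \varww(x, y)}{\partial x \, \partial y} \, dx \, dy,$$
so that $\big|\iint e(f) \varww\big| \leq V \cdot \|G\|_\infty$. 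The lemma therefore reduces to the pointwise bound $|G(X, Y)| \Lt 1/\sqrt{\lambda \rho}$, uniform in $(X, Y)$.

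\emph{Step 2: bounding the antiderivative.} Fix $(X, Y)$ and write $G(X, Y) = \int_c^Y H(v) \, dv$ with $H(v) = \int_a^X e(f(u, v)) \, du$. Since $|\partial_u^2 f(u, v)| \geq \lambda$, Lemma \ref{lem: 2nd derivative} already gives $|H(v)| \leq 4/\sqrt{\pi \lambda}$ uniformly, but this alone yields only $|G| \Lt (d-c)/\sqrt{\lambda}$. The missing factor $1/\sqrt{\rho}$ must come from the oscillation of $H$ in $v$, which is precisely what the Hessian condition supplies. By stationary phase in $u$, the leading behaviour of $H(v)$ is $|f_{uu}(u_0(v), v)|^{-1/2} e(\phi(v))$, where $u_0(v)$ is the solution of $f_u(u, v) = 0$ and $\phi(v) = f(u_0(v), v)$. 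Implicit differentiation gives
$$\phi''(v) = \frac{\det f''(u_0(v), v)}{f_{uu}(u_0(v), v)},$$
whose magnitude is at least $\lambda \rho / |f_{uu}|$. A second application of the one-dimensional second derivative test to the $v$-integral, with amplitude $|f_{uu}|^{-1/2}$ and phase $\phi(v)$, gives a bound of order $|f_{uu}|^{-1/2} \cdot (|f_{uu}|/(\lambda \rho))^{1/2} = 1/\sqrt{\lambda \rho}$. Crucially, the unknown factor $|f_{uu}|$ cancels exactly, so the estimate depends only on the lower bounds $\lambda$, $\rho$ and $\lambda \rho$.

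\emph{Main obstacle.} The technical heart of the argument is executing Step 2 rigorously. I would need to handle: (i) the case where the critical point $u_0(v)$ fails to lie in $[a, X]$, in which case direct integration by parts replaces the stationary-phase main term, using $|\partial_u f|$ bounded below; (ii) the boundary contributions of the stationary-phase expansion, controlled by the uniform bound $|H(v)| \leq 4/\sqrt{\pi \lambda}$ together with the companion bound $|\int_c^Y e(f(u, v)) \, dv| \leq 4/\sqrt{\pi \rho}$; and (iii) the existence and smoothness of the implicit function $v \mapsto u_0(v)$, which follows from $|f_{uu}| \geq \lambda$. A more uniform alternative avoids an explicit stationary-phase asymptotic and instead applies Morse's lemma locally on each small sub-rectangle to reduce $f$ to the normal form $f_0 + \alpha (x - x_0)^2 + \beta (y - y_0)^2$ with $|\alpha \beta| \asymp \lambda \rho$; a partition of unity and a Gaussian integration then yield $|G| \Lt 1/\sqrt{\lambda \rho}$ with an absolute implied constant, matching the claimed bound.
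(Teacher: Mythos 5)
The paper itself contains no proof of this lemma: it is recorded verbatim from \cite[Lemma A.3]{AHLQ-Bessel} (a variant of \cite[Lemma 4]{Munshi-Circle-III} and \cite[Lemma 5.1.3]{Huxley}). The only step the quoted proof really consists of is your Step 1: the double integration by parts reducing the weighted integral to $V\cdot\sup_{X,Y}\big|\int_a^X\int_c^Y e(f(x,y))\,\nd x\,\nd y\big|$, followed by an appeal to the known two-dimensional second derivative test for the unweighted integral. Your Step 1 is correct and is exactly that reduction.

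The genuine gap is in Step 2, and it is not merely technical. The hypotheses supply only \emph{lower} bounds on $|f_{xx}|$, $|f_{yy}|$ and $|\det f''|$; there is no upper bound on any derivative of $f$. Consequently: (i) the stationary-phase decomposition of $H(v)$ into a main term $|f_{uu}(u_0(v),v)|^{-1/2}e(\phi(v))$ plus an error cannot be justified with an absolute constant, since the error terms are governed by third derivatives and by the size of $f_{uu}$, none of which are controlled; (ii) even granting the main term, applying Lemma \ref{lem: 2nd derivative} to the $v$-integral requires the \emph{total variation} of the $v$-dependent amplitude $|f_{uu}(u_0(v),v)|^{-1/2}$, i.e. control of $\tfrac{\nd}{\nd v}f_{uu}(u_0(v),v)$ along the critical curve, which is again unavailable — the advertised cancellation of $|f_{uu}|$ is only formal, because $|\phi''|\Gt \lambda\rho/|f_{uu}|$ varies with $v$; (iii) the Morse-lemma alternative has the same defect, as the normal-form change of variables and the partition of unity import constants depending on higher derivatives of $f$, so the implied constant would not be absolute; and (iv) the ranges of $v$ where $u_0(v)$ leaves $[a,X]$ are not "controlled by the uniform bound $|H(v)|\Lt 1/\sqrt{\lambda}$'': integrating that bound in $v$ gives only $(d-c)/\sqrt{\lambda}$, which is not of the required size, and the companion bound $\Lt 1/\sqrt{\rho}$ applies to the full $y$-integral at fixed $x$, not to these boundary pieces. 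A correct completion must run entirely on first/second derivative tests and monotonicity of $f_x(\cdot,y)$ (as in the classical two-dimensional van der Corput arguments behind the cited lemmas), or one simply quotes the known result, as the paper does.
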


\delete{The next lemma is essentially \cite[Theorem 1.1.1]{Sogge} in the case $X = 1$ and  \cite[Lemma 7.3]{Qi-GL(3)} in general.

\begin{lem}\label{lem: stationary phase, main}
	Let $S >0$ and $ \sqrt {\lambda} \geqslant X \geqslant  1$.  	Let $\varww (x; \lambda)$ be  in $ C_c^{\infty} [ a, b] $ for all $\lambda$, and $f (x) \in C^{\infty} [a, b]$ be   real-valued. Suppose that  $   \lambda^{j} \partial_x^{n} \partial_\lambda ^{ j}  \varww  (x; \lambda) \Lt_{  n,   j } S X^{n + j}
	$ and that  $f(x_0) = f'(x_0) = 0$ at a point  $  x_0 \in (a, b)$, with $ f'' (x_0) \neq 0$ and $f' (x) \neq 0$ for all $x \in [a, b] \smallsetminus \{x_0\} $. Then
	\begin{align*}
		\frac {\nd ^{ j}} {\nd \lambda ^{ j}} \int_a^b e (\lambda f(x)) \varww (x; \lambda) \nd x \Lt_{  j } \frac {S X^j}  {  \lambda^{   1/2 +  j } }.
	\end{align*}
\end{lem}
}


\newcommand{\etalchar}[1]{$^{#1}$}
\def\cprime{$'$}

\end{document}